\documentclass[svgnames]{article}
\usepackage[margin=1.5in]{geometry}
\usepackage[utf8]{inputenc}
\usepackage[english]{babel}
\usepackage{hyperref}
\usepackage{csquotes}
\usepackage{amsmath,amssymb,amsthm}
\usepackage{mathtools}
\usepackage{enumerate}
\usepackage{tensor}
\usepackage{extarrows}
\usepackage{tikz-cd}
\usepackage[capitalize]{cleveref}
% For subfigures:
\usepackage{subcaption}
% For paths to graphics
\usepackage{import}
\usepackage{todo}

\hyphenation{finite-dimensional}

\newtheorem{thm}{Theorem}
\numberwithin{thm}{section}
\newtheorem{prp}[thm]{Proposition}
\newtheorem{cor}[thm]{Corollary}
\newtheorem{lem}[thm]{Lemma}

\theoremstyle{definition}
\newtheorem{dfn}[thm]{Definition}

\theoremstyle{remark}
\newtheorem{remark}[thm]{Remark}
\newtheorem{exm}[thm]{Example}

\numberwithin{equation}{section}
\numberwithin{figure}{section}

\begin{document}

\newcommand{\C}{\mathbb{C}}
\newcommand{\R}{\mathbb{R}}
\newcommand{\Q}{\mathbb{Q}}
\newcommand{\Z}{\mathbb{Z}}
\newcommand{\N}{\mathbb{N}}
\newcommand{\M}{\mathbb{M}}
\newcommand{\D}{\mathbb{D}}
\newcommand{\fl}{\mathfrak{fl}}
\newcommand{\op}[1]{\operatorname{#1}}
\newcommand{\cat}[1]{\mathbf{#1}}
\newcommand{\VectK}{\mathrm{Vect}_K}
\newcommand{\vectK}{\mathrm{vect}_K}
\renewcommand{\href}[2]{#2}

% For set builder notation, see
% https://tex.stackexchange.com/a/253234
\DeclarePairedDelimiterX\set[1]\lbrace\rbrace{\def\given{\;\delimsize\vert\;}#1}

% Colors for Booklet Example:
\newcommand{\CSpine}{MediumSeaGreen}
\newcommand{\CHorn}{OrangeRed}
\newcommand{\CCyl}{RoyalBlue}

\title{Structure and Interleavings of Relative Interlevel Set Cohomology}

\author{Ulrich Bauer \and Magnus Bakke Botnan \and Benedikt Fluhr}

\maketitle

\begin{abstract}
  The \emph{relative interlevel set cohomology (RISC)}
  is an invariant of real-valued continuous functions
  closely related to the \emph{Mayer--Vietoris pyramid}
  introduced by Carlsson, de Silva, and Morozov.
  As such, the relative interlevel set cohomology
  is a parametrization
  of
  the cohomology vector spaces of all open interlevel sets
  relative complements of closed interlevel sets.
  We provide a structure theorem,
  which applies to the RISC
  of real-valued continuous functions
  whose open interlevel sets have finite-dimensional cohomology in each degree.
  Moreover, we show this tameness assumption is in some sense equivalent
  to \emph{$q$-tameness} as introduced by Chazal, de Silva, Glisse, and Oudot.
  Furthermore, we provide the notion of an \emph{interleaving}
  for RISC and we show that it is stable
  in the sense that any space with two functions
  that are $\delta$-close
  induces a $\delta$-interleaving of the corresponding
  relative interlevel set cohomologies.
  Finally, we provide an elementary form
  of \enquote{quantitative homotopy invariance}
  for RISC.
\end{abstract}

\section{Introduction}

In the present work we study an invariant
of real-valued continuous functions closely related to
and mostly inspired by the \emph{Mayer--Vietoris pyramid}
introduced by \cite{Carlsson:2009:ZPH:1542362.1542408}.
We name this invariant
\emph{relative interlevel set cohomology (RISC)}.
Roughly speaking, the Mayer--Vietoris pyramid is a graded
square shaped diagram and the RISC arises
from gluing consecutive layers of the Mayer--Vietoris pyramid
in a functorial way to form one large diagram.
This procedure has already been suggested by
\cite{Carlsson:2009:ZPH:1542362.1542408}
and moreover,
the results by \cite{MR3031814} underpin that such a construction
with all squares \emph{Mayer--Vietoris diamonds}
should be possible.
More specifically, \cite[Figures 4 and 6]{MR3031814}
show that the supports of the indecomposables of each layer align exactly.
This raises the question, whether a decomposition
given by their structure theorem \cite[Theorem 1]{MR3031814}
is compatible with the connecting homomorphisms
gluing consecutive layers as suggested by
\cite{Carlsson:2009:ZPH:1542362.1542408}.
As it turns out,
even though this cannot be said about an arbitrary choice of a decomposition
as in \cite[Theorem 1]{MR3031814},
we show that such a global decomposition indeed exists.
To this end, we provide a structure theorem
for the relative interlevel set cohomology itself,
which yields the same indecomposables as \cite[Theorem 1]{MR3031814}
on each of the corresponding layers of the pyramid.

The strong tameness assumptions in \cite[Theorem 1]{MR3031814}
were weakened by \cite[Theorem 10.1]{botnan2020local}
to all layers of the pyramid being pointwise finite-dimensional (pfd).
The assumption to our structure theorem \ref{thm:decomp}
is that all open interlevel sets
have finite-dimensional cohomology in each degree.
We call a continuous function \emph{$K$-tame}, if it satisfies this property
with respect to the field $K$.
This is very reassuring, as it shows there is no loss of generality
when passing from the individual layers of the Mayer--Vietoris pyramid
as an invariant
to the whole relative interlevel set cohomology.
Moreover, in \cref{cor:qtameKtame}
we show that our tameness assumption is in some sense equivalent
to another tameness assumption referred to as \emph{$q$-tameness}
by \cite[Section 1.1]{MR3524869}.
% In the present work we show
% with \cref{prp:qtame}
% that the relative interlevel set cohomology
% is \emph{$q$-tame} \cite[Section 1.1]{MR3524869}
% iff it is pfd, which also suffices for our structure theorem to hold.
Furthermore, following \cite[Section 2.5]{MR3413628}
we provide a \emph{(super)linear family} on the indexing poset $\M$
and we obtain a stability theorem in \cref{sec:interleavings}.
The proof of our structure theorem, which is \cref{thm:decomp},
is inspired by \cite{MR3323327}.

The restriction of RISC to the subposet corresponding
to the south face of the pyramid yields essentially the same data
as Mayer--Vietoris systems introduced by \cite{2019arXiv190709759B}.
In this restricted setting the authors also provide a structure theorem
as well as a stability result.
While this restriction to the south face retains all information
on the level of objects,
we lose some information on the level of homomorphisms.
In particular, there are different RISC interleavings
restricting to identical interleavings of Mayer--Vietoris systems,
as shown in \cref{exm:hood}.

The tameness assumptions
from \cite{Carlsson:2009:ZPH:1542362.1542408,MR3031814}
were also weakened in \cite{MR3924175} by using measure theory.
Roughly speaking, the authors bypass the step
involving interleavings of generalized persistence modules \cite{MR3413628}
and map functions directly to measures,
which they compare
with the bottleneck distance of persistence diagrams \cite{MR2279866}.

We also note that our construction
of the relative interlevel set cohomology,
which applies to any cohomology theory,
has an analogous homological construction,
which is dual in the following sense.
For a homology theory valued in graded vector spaces
sending weak equivalences to isomorphisms,
we may consider the corresponding dual cohomology theory.
The resulting invariant is pfd iff
this is also the case for the corresponding invariant
defined in terms of homology.
Moreover, as the duality of vector spaces restricts to an equivalence
on finite-dimensional vector spaces,
any decomposition of this RISC yields a decomposition
of the corresponding \enquote{homological} invariant.
This way, homological decompositions can be obtained by duality
and hence are not treated explicitly in this paper.

In \cref{sec:risc} we introduce the relative interlevel set cohomology (RISC)
as an invariant of $K$-tame real-valued continuous functions.
Given a continuous function $f \colon X \to \R$,
the study of \emph{interlevel set persistent cohomology}
concerns the cohomology (with field coefficients)
of preimages $f^{-1}(I)$ of open intervals $I$.
This construction can be extended
to the relative cohomology of pairs $f^{-1}(I, C)$,
where $I\subseteq \mathbb R$ is an open interval
and $C \subseteq I$ is the complement of a closed interval.
Now taking the difference
\begin{equation*}
  (I, C) \mapsto I \setminus C
\end{equation*}
yields a bijection
between the set of all non-empty intervals in $\R$
and the set of all such pairs $(I, C)$ with $I \neq C$.
Moreover, for any pair of open subspaces $(U, V)$ of $\R$
with $U \setminus V = I \setminus C$
the cohomologies of $f^{-1}(U, V)$ and $f^{-1}(I, C)$
are naturally isomorphic by excision.
From our perspective the pair $(I, C)$ is a particularly convenient choice
to represent the interval $I \setminus C$,
see also \cref{prp:rho} below.
Furthermore,
given any pair of open subspaces $(U, V)$ of $\R$
such that any connected component of $U$ contains finitely many
connected components of $V$,
the cohomology of $f^{-1}(U, V)$
is naturally isomorphic to a product of cohomologies for pairs
$f^{-1}(I, C)$ as above.
More specifically,
for each such factor
the difference $I \setminus C$ is a connected components of $U \setminus V$.
We parametrize the set of all such pairs $(I, C)$
as well as the cohomological degrees
by a lattice $\M$.
As it turns out, any continuous function $f \colon X \to \R$ induces
a contravariant functor from $\M$ to the category of vector spaces,
with some of the internal maps induced by inclusions and the other maps
being differentials of a corresponding Mayer--Vietoris sequence.
The existence of these differentials is one of our motives
to consider preimages of open subsets as opposed to closed subsets of $\R$.
We refer to this functor as the
\emph{relative interlevel set cohomology (RISC)} of $f$
when $f \colon X \rightarrow \R$ is $K$-tame
and we show that it
satisfies certain exactness properties.
We call such functors \emph{cohomological};
this is \cref{dfn:cohomological}.
Furthermore, we show in \cref{lem:pfd,prp:cont}
that the RISC is pfd and \emph{sequentially continuous} (\cref{dfn:cont}).
As a byproduct,
any cohomology class from the RISC determines a natural transformation
from an indecomposable and vice versa.
Dually, natural transformations from a corresponding
homological construction to a \emph{sequentially cocontinuous} indecomposable
of a certain kind
are one-to-one with elements of the dual space of a corresponding
homology group.
However, this dual space is naturally isomorphic to cohomology.
Thus, cohomology even appears in the analogous construction
of a decomposition of the corresponding homological invariant.
This is part of the reason why we work with cohomology in place of homology.
As noted above, this
is no limitation in our context.

In \cref{sec:decomp} we show in \cref{thm:decomp}
that any pfd sequentially continuous cohomological functor
decomposes into a direct sum of indecomposables of a certain type.
Each indecomposable can be characterized by its support,
which is a maximal axis-aligned rectangle
as shown in \cref{fig:contraBlock}.
A posteriori, the upper left vertex of this rectangle
gives the corresponding vertex in the
\emph{extended persistence diagram}
as we define
it in \cref{dfn:diagramFunction}.
This close relationship between the indecomposables
and the extended persistence diagram
as well as the fact that there is just one type of indecomposable
was a major motivation for us
to glue the layers of the Mayer--Vietoris pyramid
to a single diagram.
We note that at this point,
one may also invoke \cite[Theorem 2.11]{botnan2020local}
in place of \cref{thm:decomp} to obtain a decomposition
of the RISC.
We are convinced that our proof of \cref{thm:decomp}
is relevant nevertheless,
as it is comparatively simple and more elementary
than the proof of \cite[Theorem 2.11]{botnan2020local}.
We also note that one may obtain
\emph{interlevel set cohomology} from RISC by restriction
to a subposet of $\M$.
Thus, under the assumption that $f \colon X \rightarrow \R$ is $K$-tame,
its interlevel set cohomology decomposes as well.
Similar results have been shown by
\cite[Section 9.3]{MR4057439}, \cite[Section 5]{MR4143378},
and \cite[Theorem 2.19]{2019arXiv190709759B}.

In \cref{sec:interleavings} we use the framework provided by
\cite[Section 2.5]{MR3413628}
to define the notion of an interleaving for contravariant functors on $\M$;
this is \cref{dfn:interleaving}.
Moreover, we show a stability result with \cref{thm:stability}.
In order to prove this theorem,
we cannot apply the framework by \cite{MR3413628} directly.
The reason for this is that the canonical \emph{(super)linear family}
on the indexing poset $\M$
does not preserve the \enquote{cohomological degree}.
As a result, the interleaving homomorphisms
will map some cohomology classes to a cohomology class
of one degree higher.
Resolving these subtleties requires us to study
the interplay of the elementary geometry of $\M$
and the relative interlevel set cohomology.
Furthermore, we provide \cref{exm:hood},
which shows that the induced interleavings of RISC
capture more information than the corresponding interleavings
of extended persistence or Mayer--Vietoris systems.
We end this paper with an elementary form
of \enquote{quantitative homotopy invariance} for RISC;
this is \cref{prp:quantHomotopyInv}.
The last two sections \ref{sec:decomp} and \ref{sec:interleavings}
can be read independently.

\section{The Relative Interlevel Set Cohomology}
\label{sec:risc}

We start with specifying the indexing poset $\M$
for the relative interlevel set cohomology.
To this end, let $\R$ and $\R^{\circ}$ denote the posets given by
the orders $\leq$ and $\geq$ on $\R$, respectively.
Then we may form the product poset $\R^{\circ} \times \R$,
which is a lattice and
whose underlying set is the Euclidean plane.
Let $l_0$ and $l_1$ be two lines of slope $-1$
in $\R^{\circ} \times \R$
with $l_1$ sitting above $l_0$
as shown in \cref{fig:incidenceT}.
Then the indexing poset is the sublattice $\M \subset \R^{\circ} \times \R$
given by the convex hull of $l_0$ and $l_1$.
The degree-shift in cohomology will correspond to the
\href{
  https://en.wikipedia.org/wiki/Center_(group_theory)
}{central}\footnote{
  By a central automorphism we mean an automorphism
  that commutes with any other lattice automorphism of $\M$.
}
automorphism $T \colon \M \rightarrow \M$
with the following defining property
(also see \cref{fig:incidenceT}):
\begin{quote}
  Let $u \in \M$,
  $h_0$ be the horizontal line through $u$,
  let $g_0$ be the vertical line through $u$,
  let $h_1$ be the horizontal line through $T(u)$, and let
  $g_1$ be the vertical line through $T(u)$.
  Then the lines $l_0$, $h_0$, and $g_1$ intersect in a common point, and
  the same is true for the lines $l_1$, $g_0$, and $h_1$.  
\end{quote}
We also note that $T$ is a glide reflection along
the bisecting line between $l_0$ and $l_1$,
and the amount of translation is the distance
of $l_0$ and $l_1$.
Moreover, as a space, $\M / \langle T \rangle$ is a Möbius strip;
see also \cite{Carlsson:2009:ZPH:1542362.1542408}.

\begin{figure}[t]
  \centering
  %% Creator: Inkscape 1.0 (4035a4fb49, 2020-05-01), www.inkscape.org
%% PDF/EPS/PS + LaTeX output extension by Johan Engelen, 2010
%% Accompanies image file 'IncidenceT.pdf' (pdf, eps, ps)
%%
%% To include the image in your LaTeX document, write
%%   \input{<filename>.pdf_tex}
%%  instead of
%%   \includegraphics{<filename>.pdf}
%% To scale the image, write
%%   \def\svgwidth{<desired width>}
%%   \input{<filename>.pdf_tex}
%%  instead of
%%   \includegraphics[width=<desired width>]{<filename>.pdf}
%%
%% Images with a different path to the parent latex file can
%% be accessed with the `import' package (which may need to be
%% installed) using
%%   \usepackage{import}
%% in the preamble, and then including the image with
%%   \import{<path to file>}{<filename>.pdf_tex}
%% Alternatively, one can specify
%%   \graphicspath{{<path to file>/}}
%% 
%% For more information, please see info/svg-inkscape on CTAN:
%%   http://tug.ctan.org/tex-archive/info/svg-inkscape
%%
\begingroup%
  \makeatletter%
  \providecommand\color[2][]{%
    \errmessage{(Inkscape) Color is used for the text in Inkscape, but the package 'color.sty' is not loaded}%
    \renewcommand\color[2][]{}%
  }%
  \providecommand\transparent[1]{%
    \errmessage{(Inkscape) Transparency is used (non-zero) for the text in Inkscape, but the package 'transparent.sty' is not loaded}%
    \renewcommand\transparent[1]{}%
  }%
  \providecommand\rotatebox[2]{#2}%
  \newcommand*\fsize{\dimexpr\f@size pt\relax}%
  \newcommand*\lineheight[1]{\fontsize{\fsize}{#1\fsize}\selectfont}%
  \ifx\svgwidth\undefined%
    \setlength{\unitlength}{306bp}%
    \ifx\svgscale\undefined%
      \relax%
    \else%
      \setlength{\unitlength}{\unitlength * \real{\svgscale}}%
    \fi%
  \else%
    \setlength{\unitlength}{\svgwidth}%
  \fi%
  \global\let\svgwidth\undefined%
  \global\let\svgscale\undefined%
  \makeatother%
  \begin{picture}(1,0.41666667)%
    \lineheight{1}%
    \setlength\tabcolsep{0pt}%
    \put(0,0){\includegraphics[width=\unitlength,page=1]{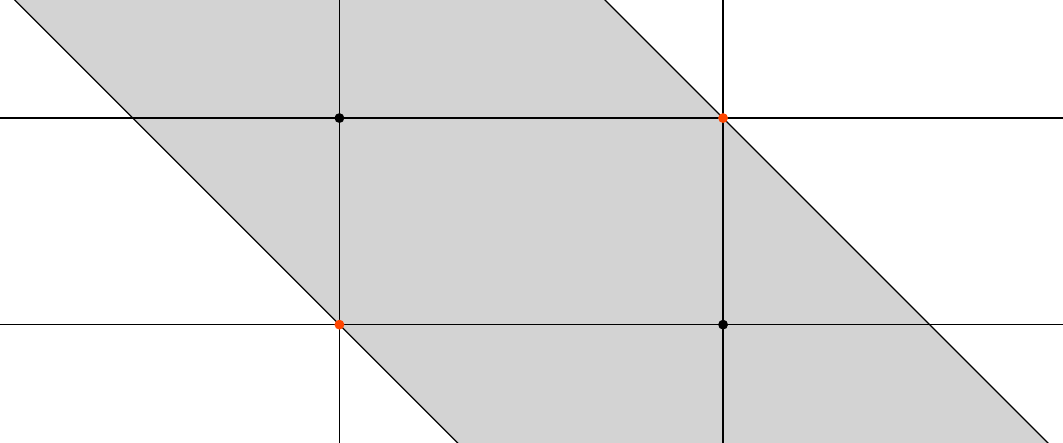}}%
    \put(0.34490735,0.22685196){\makebox(0,0)[t]{\lineheight{1.25}\smash{\begin{tabular}[t]{c}$g_1$\end{tabular}}}}%
    \put(0.4722223,0.31944436){\makebox(0,0)[t]{\lineheight{1.25}\smash{\begin{tabular}[t]{c}$h_1$\end{tabular}}}}%
    \put(0.65740735,0.18055564){\makebox(0,0)[t]{\lineheight{1.25}\smash{\begin{tabular}[t]{c}$g_0$\end{tabular}}}}%
    \put(0.51851863,0.07870368){\makebox(0,0)[t]{\lineheight{1.25}\smash{\begin{tabular}[t]{c}$h_0$\end{tabular}}}}%
    \put(0.2777777,0.31944436){\makebox(0,0)[t]{\lineheight{1.25}\smash{\begin{tabular}[t]{c}$T(u)$\end{tabular}}}}%
    \put(0.70138897,0.08101863){\makebox(0,0)[t]{\lineheight{1.25}\smash{\begin{tabular}[t]{c}$u$\end{tabular}}}}%
    \put(0.81454216,0.20860588){\makebox(0,0)[t]{\lineheight{1.25}\smash{\begin{tabular}[t]{c}$l_1$\end{tabular}}}}%
    \put(0.19392279,0.19033652){\makebox(0,0)[t]{\lineheight{1.25}\smash{\begin{tabular}[t]{c}$l_0$\end{tabular}}}}%
  \end{picture}%
\endgroup%

  \caption{
    Incidences defining $T$.
    The indexing poset $\M$ is shaded in grey.
  }
  \label{fig:incidenceT}
\end{figure}

\begin{figure}[t]
  \centering
  %% Creator: Inkscape 1.0 (4035a4fb49, 2020-05-01), www.inkscape.org
%% PDF/EPS/PS + LaTeX output extension by Johan Engelen, 2010
%% Accompanies image file 'embedding-reals.pdf' (pdf, eps, ps)
%%
%% To include the image in your LaTeX document, write
%%   \input{<filename>.pdf_tex}
%%  instead of
%%   \includegraphics{<filename>.pdf}
%% To scale the image, write
%%   \def\svgwidth{<desired width>}
%%   \input{<filename>.pdf_tex}
%%  instead of
%%   \includegraphics[width=<desired width>]{<filename>.pdf}
%%
%% Images with a different path to the parent latex file can
%% be accessed with the `import' package (which may need to be
%% installed) using
%%   \usepackage{import}
%% in the preamble, and then including the image with
%%   \import{<path to file>}{<filename>.pdf_tex}
%% Alternatively, one can specify
%%   \graphicspath{{<path to file>/}}
%% 
%% For more information, please see info/svg-inkscape on CTAN:
%%   http://tug.ctan.org/tex-archive/info/svg-inkscape
%%
\begingroup%
  \makeatletter%
  \providecommand\color[2][]{%
    \errmessage{(Inkscape) Color is used for the text in Inkscape, but the package 'color.sty' is not loaded}%
    \renewcommand\color[2][]{}%
  }%
  \providecommand\transparent[1]{%
    \errmessage{(Inkscape) Transparency is used (non-zero) for the text in Inkscape, but the package 'transparent.sty' is not loaded}%
    \renewcommand\transparent[1]{}%
  }%
  \providecommand\rotatebox[2]{#2}%
  \newcommand*\fsize{\dimexpr\f@size pt\relax}%
  \newcommand*\lineheight[1]{\fontsize{\fsize}{#1\fsize}\selectfont}%
  \ifx\svgwidth\undefined%
    \setlength{\unitlength}{306bp}%
    \ifx\svgscale\undefined%
      \relax%
    \else%
      \setlength{\unitlength}{\unitlength * \real{\svgscale}}%
    \fi%
  \else%
    \setlength{\unitlength}{\svgwidth}%
  \fi%
  \global\let\svgwidth\undefined%
  \global\let\svgscale\undefined%
  \makeatother%
  \begin{picture}(1,0.41666667)%
    \lineheight{1}%
    \setlength\tabcolsep{0pt}%
    \put(0,0){\includegraphics[width=\unitlength,page=1]{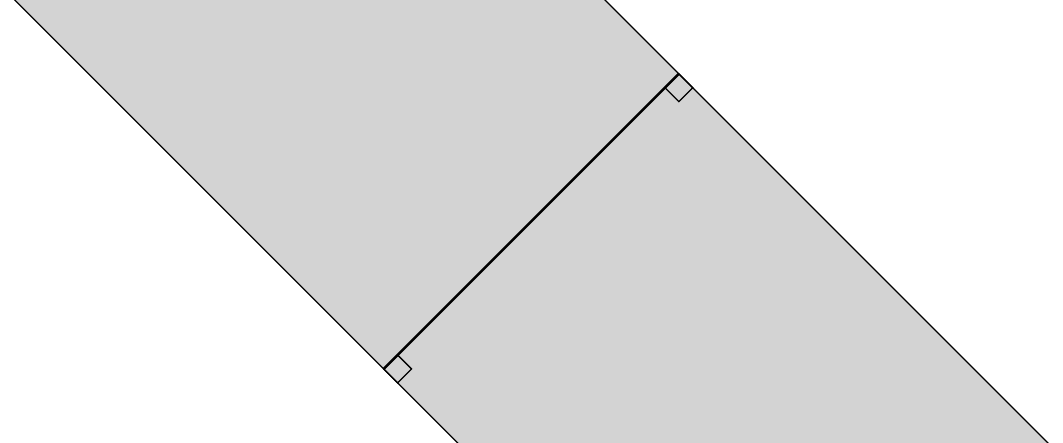}}%
    \put(0.41203701,0.26851863){\makebox(0,0)[t]{\lineheight{1.25}\smash{\begin{tabular}[t]{c}$\M$\end{tabular}}}}%
    \put(0.7777777,0.23148137){\makebox(0,0)[t]{\lineheight{1.25}\smash{\begin{tabular}[t]{c}$ \pi $\end{tabular}}}}%
    \put(0.2222223,0.17129632){\makebox(0,0)[t]{\lineheight{1.25}\smash{\begin{tabular}[t]{c}$-\pi~~$\end{tabular}}}}%
    \put(0,0){\includegraphics[width=\unitlength,page=2]{embedding-reals.pdf}}%
    \put(0.54629632,0.17129632){\makebox(0,0)[t]{\lineheight{1.25}\smash{\begin{tabular}[t]{c}$\op{Im} \blacktriangle$\end{tabular}}}}%
    \put(0.72288015,0.30026789){\makebox(0,0)[t]{\lineheight{1.25}\smash{\begin{tabular}[t]{c}$l_1$\end{tabular}}}}%
    \put(0.2855848,0.09867451){\makebox(0,0)[t]{\lineheight{1.25}\smash{\begin{tabular}[t]{c}$l_0$\end{tabular}}}}%
  \end{picture}%
\endgroup%

  \caption{
    The strip $\M$ and
    the image of the embedding
    $\blacktriangle \colon \overline{\R} \rightarrow \M$.
  }
  \label{fig:embeddingReals}
\end{figure}

The region of $\M$ indexing the Mayer--Vietoris pyramid in degree $0$
yields a fundamental domain $D$ with respect to the action of
$\langle T \rangle \cong \Z$ on $\M$,
which we specify now.
To this end,
suppose $l_0$ and $l_1$ intersect the $x$-axis in $-\pi$ and $\pi$,
respectively.
With this we embed the extended reals $\overline{\R} := [-\infty, \infty]$
into the strip $\M$
by precomposing the diagonal map $\Delta \colon {\R} \to {\R}^2,
t \mapsto (t,t)$
with the homeomorphism
$\arctan: \overline{\R} \to [-\pi/2,\pi/2]$,
yielding a map
\[{\blacktriangle = \Delta \circ \arctan \colon \overline{\R} \rightarrow \M,\,
t \mapsto (\arctan t, \arctan t)}\]
such that $\op{Im} \blacktriangle$ is a perpendicular
line segment through the origin joining $l_0$ and $l_1$,
see \cref{fig:embeddingReals}.
We specify the fundamental domain as
shown in \cref{fig:defFundamentalDomain} by
\begin{equation*}
  D :=
  (\downarrow \op{Im} \blacktriangle) \setminus
  T^{-1} (\downarrow \op{Im} \blacktriangle)
  ,
\end{equation*}
where $\downarrow \op{Im} \blacktriangle$ is the downset
of the image of $\blacktriangle$.
\begin{figure}[t]
  \centering
  %% Creator: Inkscape 1.0.2 (e86c870879, 2021-01-15, custom), www.inkscape.org
%% PDF/EPS/PS + LaTeX output extension by Johan Engelen, 2010
%% Accompanies image file 'DefFundamentalDomainCoho.pdf' (pdf, eps, ps)
%%
%% To include the image in your LaTeX document, write
%%   \input{<filename>.pdf_tex}
%%  instead of
%%   \includegraphics{<filename>.pdf}
%% To scale the image, write
%%   \def\svgwidth{<desired width>}
%%   \input{<filename>.pdf_tex}
%%  instead of
%%   \includegraphics[width=<desired width>]{<filename>.pdf}
%%
%% Images with a different path to the parent latex file can
%% be accessed with the `import' package (which may need to be
%% installed) using
%%   \usepackage{import}
%% in the preamble, and then including the image with
%%   \import{<path to file>}{<filename>.pdf_tex}
%% Alternatively, one can specify
%%   \graphicspath{{<path to file>/}}
%% 
%% For more information, please see info/svg-inkscape on CTAN:
%%   http://tug.ctan.org/tex-archive/info/svg-inkscape
%%
\begingroup%
  \makeatletter%
  \providecommand\color[2][]{%
    \errmessage{(Inkscape) Color is used for the text in Inkscape, but the package 'color.sty' is not loaded}%
    \renewcommand\color[2][]{}%
  }%
  \providecommand\transparent[1]{%
    \errmessage{(Inkscape) Transparency is used (non-zero) for the text in Inkscape, but the package 'transparent.sty' is not loaded}%
    \renewcommand\transparent[1]{}%
  }%
  \providecommand\rotatebox[2]{#2}%
  \newcommand*\fsize{\dimexpr\f@size pt\relax}%
  \newcommand*\lineheight[1]{\fontsize{\fsize}{#1\fsize}\selectfont}%
  \ifx\svgwidth\undefined%
    \setlength{\unitlength}{246.50001526bp}%
    \ifx\svgscale\undefined%
      \relax%
    \else%
      \setlength{\unitlength}{\unitlength * \real{\svgscale}}%
    \fi%
  \else%
    \setlength{\unitlength}{\svgwidth}%
  \fi%
  \global\let\svgwidth\undefined%
  \global\let\svgscale\undefined%
  \makeatother%
  \begin{picture}(1,0.51724135)%
    \lineheight{1}%
    \setlength\tabcolsep{0pt}%
    \put(0,0){\includegraphics[width=\unitlength,page=1]{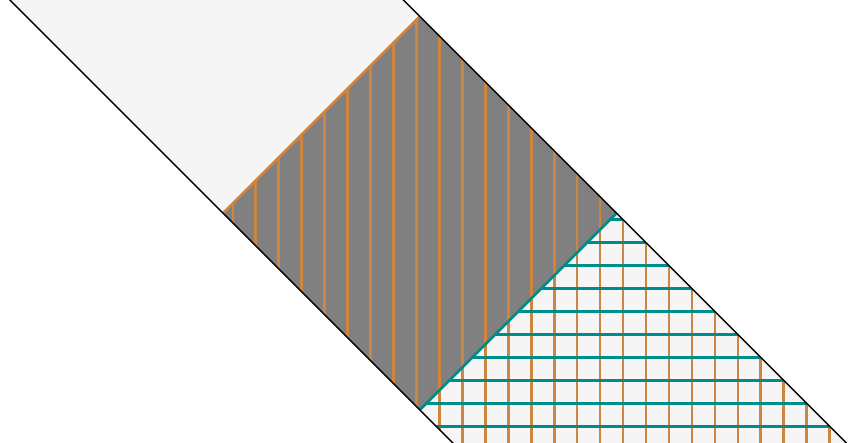}}%
    \put(0.66052876,0.3647586){\makebox(0,0)[t]{\lineheight{1.25}\smash{\begin{tabular}[t]{c}$l_1$\end{tabular}}}}%
    \put(0.16168965,0.32106905){\makebox(0,0)[t]{\lineheight{1.25}\smash{\begin{tabular}[t]{c}$l_0$\end{tabular}}}}%
  \end{picture}%
\endgroup%

  \caption{
    The fundamental domain
    $\textcolor{DimGrey}{D} :=
    \textcolor{Peru}{\downarrow \op{Im} \blacktriangle} \setminus
    \textcolor{DarkCyan}{T^{-1} (\downarrow \op{Im} \blacktriangle)}$.
  }
  \label{fig:defFundamentalDomain}
\end{figure}
\cref{fig:tessellation} shows the tessellation of $\M$
induced by $T$ and $D$.

\begin{figure}[t]
  \centering
  %% Creator: Inkscape 1.0 (4035a4fb49, 2020-05-01), www.inkscape.org
%% PDF/EPS/PS + LaTeX output extension by Johan Engelen, 2010
%% Accompanies image file 'Tessellation.pdf' (pdf, eps, ps)
%%
%% To include the image in your LaTeX document, write
%%   \input{<filename>.pdf_tex}
%%  instead of
%%   \includegraphics{<filename>.pdf}
%% To scale the image, write
%%   \def\svgwidth{<desired width>}
%%   \input{<filename>.pdf_tex}
%%  instead of
%%   \includegraphics[width=<desired width>]{<filename>.pdf}
%%
%% Images with a different path to the parent latex file can
%% be accessed with the `import' package (which may need to be
%% installed) using
%%   \usepackage{import}
%% in the preamble, and then including the image with
%%   \import{<path to file>}{<filename>.pdf_tex}
%% Alternatively, one can specify
%%   \graphicspath{{<path to file>/}}
%% 
%% For more information, please see info/svg-inkscape on CTAN:
%%   http://tug.ctan.org/tex-archive/info/svg-inkscape
%%
\begingroup%
  \makeatletter%
  \providecommand\color[2][]{%
    \errmessage{(Inkscape) Color is used for the text in Inkscape, but the package 'color.sty' is not loaded}%
    \renewcommand\color[2][]{}%
  }%
  \providecommand\transparent[1]{%
    \errmessage{(Inkscape) Transparency is used (non-zero) for the text in Inkscape, but the package 'transparent.sty' is not loaded}%
    \renewcommand\transparent[1]{}%
  }%
  \providecommand\rotatebox[2]{#2}%
  \newcommand*\fsize{\dimexpr\f@size pt\relax}%
  \newcommand*\lineheight[1]{\fontsize{\fsize}{#1\fsize}\selectfont}%
  \ifx\svgwidth\undefined%
    \setlength{\unitlength}{194.4375bp}%
    \ifx\svgscale\undefined%
      \relax%
    \else%
      \setlength{\unitlength}{\unitlength * \real{\svgscale}}%
    \fi%
  \else%
    \setlength{\unitlength}{\svgwidth}%
  \fi%
  \global\let\svgwidth\undefined%
  \global\let\svgscale\undefined%
  \makeatother%
  \begin{picture}(1,0.6557377)%
    \lineheight{1}%
    \setlength\tabcolsep{0pt}%
    \put(0,0){\includegraphics[width=\unitlength,page=1]{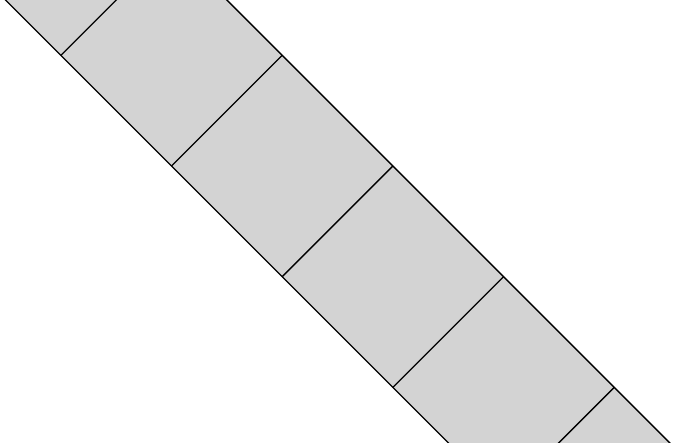}}%
    \put(0.74590164,0.08196721){\makebox(0,0)[t]{\lineheight{1.25}\smash{\begin{tabular}[t]{c}$T^{-2}(D)$\end{tabular}}}}%
    \put(0.58196721,0.24590164){\makebox(0,0)[t]{\lineheight{1.25}\smash{\begin{tabular}[t]{c}$T^{-1}(D)$\end{tabular}}}}%
    \put(0.25409836,0.57377049){\makebox(0,0)[t]{\lineheight{1.25}\smash{\begin{tabular}[t]{c}$T(D)$\end{tabular}}}}%
    \put(0.41803279,0.40983607){\makebox(0,0)[t]{\lineheight{1.25}\smash{\begin{tabular}[t]{c}$D$\end{tabular}}}}%
    \put(0.6789188,0.35222874){\makebox(0,0)[t]{\lineheight{1.25}\smash{\begin{tabular}[t]{c}$l_1$\end{tabular}}}}%
    \put(0.3084351,0.306319){\makebox(0,0)[t]{\lineheight{1.25}\smash{\begin{tabular}[t]{c}$l_0$\end{tabular}}}}%
  \end{picture}%
\endgroup%

  \caption{The tessellation of $\M$ induced by $T$ and $D$.}
  \label{fig:tessellation}
\end{figure}

Now each point in the Mayer--Vietoris pyramid
corresponds to a pair of subspaces,
which is a preimage of a pair of subspaces of $\R$.
To specify such a pair for each point in $D$
the following proposition characterizes a monotone map $\rho$
from $\M$ to the poset of pairs of open subspaces of $\R$,
which is locally constant on $\M \setminus D$;
a schematic image of $\rho$ is shown in \cref{fig:rho}.

\begin{prp}
  \label{prp:rho}
  Let $\mathcal{P}$ denote the set of pairs of open subspaces
  of $\R$.
  Then there is a unique monotone map
  \begin{equation*}
    \rho = (\rho_1, \rho_0) \colon \M \rightarrow \mathcal{P}
  \end{equation*}
  with the following four properties:
  \begin{enumerate}[(1)]
  \item
    For any $t \in \R$ we have
    $(\rho \circ \blacktriangle)(t) = (\R, \R \setminus \{t\})$.
  \item
    For any $u \in \partial \M$
    we have $\rho_1 (u) = \rho_0 (u)$.
  \item
    For any axis-aligned rectangle contained in $\uparrow D$
    the corresponding joins and meets are preserved by $\rho_1$.
  \item
    For any axis-aligned rectangle contained in $\downarrow D$
    the corresponding joins and meets are preserved by $\rho_0$.
  \end{enumerate}
\end{prp}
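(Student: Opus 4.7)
The plan is to use property~(1) to anchor $\rho$ on $\op{Im}\blacktriangle$, then to use properties~(3) and~(4) to propagate $\rho_1$ and $\rho_0$ individually off the diagonal, and finally to reconcile them via property~(2) on $\partial\M$. Uniqueness should follow from showing that at each $u \in \M$ the four conditions force specific values, after which existence is verified by exhibiting an explicit formula.

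For uniqueness, given $u = (a,b) \in \M$, I would consider the horizontal and vertical lines through $u$: these meet $\op{Im}\blacktriangle \cup \partial \M$ in points where $\rho$ is already determined (on $\op{Im}\blacktriangle$ by~(1), and on $\partial \M$ by~(2) combined with values already derived). Together with $u$ and its mirror image across $\op{Im}\blacktriangle$, these intersections form axis-aligned rectangles. For $u \in \uparrow D$, one chooses such a rectangle that lies in $\uparrow D$ and applies~(3) to express $\rho_1(u)$ as the join or meet (union or intersection) of the known corner values; for $u \in \downarrow D$, one does the same with~(4) to pin down $\rho_0(u)$. The second component at each $u$ is then determined by monotonicity combined with property~(2), following a chain of rectangles whose remaining corners lie in the already-determined region.

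For existence, I would write down the explicit candidate formula suggested by the interlevel-set interpretation: given $u = (a, b) \in \M$, set $\rho_1(u)$ to be a specific open interval in $\R$ and $\rho_0(u)$ to be that interval minus a specific closed subinterval, with endpoints expressed via $\tan a$ and $\tan b$ (and degenerating to $\rho_1(u) = \rho_0(u)$ when $u \in \partial \M$). Properties~(1) and~(2) then hold by inspection. The main obstacle is verifying~(3) and~(4): this reduces to a case-by-case check that for every axis-aligned rectangle inside $\uparrow D$ (resp.\ $\downarrow D$), the prescribed open sets at the four corners satisfy the required union/intersection identities. The most delicate configurations are the rectangles whose corners touch $\op{Im}\blacktriangle$ or $\partial \M$, since this is where the values of $\rho_1$ and $\rho_0$ must match up consistently; ensuring monotonicity across the transition from $\downarrow \op{Im}\blacktriangle$ to $\uparrow \op{Im}\blacktriangle$ (where the ``interval'' encoded by $u$ collapses) is a closely related subtlety.
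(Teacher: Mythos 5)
The paper records no proof of this proposition at all --- it only states the closed-form expression
$\rho(u) = \R \cap \blacktriangle^{-1}\left(\op{int}(\downarrow T(u)), \M \setminus \uparrow u\right)$
immediately afterwards --- so for existence your plan (write down the explicit formula with endpoints $\tan a$, $\tan b$ and verify (1)--(4) by a case analysis on rectangles) is exactly the route the paper takes implicitly, just with the formula packaged via $\blacktriangle^{-1}$ of downsets/upsets rather than via explicit interval endpoints; your identification of the rectangles touching $\op{Im}\blacktriangle$ and $\partial\M$ as the delicate cases is also on target. The uniqueness argument is your own, and it is right in spirit, but it has one concrete thin spot: your propagation step assumes that the horizontal and vertical lines through $u$ meet $\op{Im}\blacktriangle \cup \partial\M$ in points where $\rho$ is \emph{already} determined. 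On the central square of the strip (both coordinates in $[-\pi/2,\pi/2]$) this works, because both lines hit the segment $\op{Im}\blacktriangle$ and property (1) anchors the corner values. But on the two unbounded wings of $\M$, where one or both coordinates of $u$ lie outside $[-\pi/2,\pi/2]$, the relevant line misses $\op{Im}\blacktriangle$ entirely and hits only $\partial\M$; the boundary value there is not yet known, since it in turn can only be reached by further rectangles. So the argument must be organized as a genuine induction that alternates between the two components: use (4) and diagonal-cornered rectangles to pin $\rho_0$ on part of $\downarrow \op{Im}\blacktriangle$, transfer to $\rho_1$ on $\partial\M$ via (2), propagate $\rho_1$ with (3), and so on; and far out on the wings (and on the tiles $T^{n}(D)$ with $|n|\ge 1$) one additionally has to argue that monotonicity together with these constraints forces $\rho$ to stabilize to the constant values $(\R,\R)$ above and $(\emptyset,\emptyset)$ below. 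As written, "following a chain of rectangles whose remaining corners lie in the already-determined region" asserts rather than establishes that this chain terminates; spelling out the induction (e.g.\ by distance from the central square, or tile by tile) is what is needed to close the uniqueness claim.
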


\begin{figure}[t]
  \centering
  \import{strip-diagrams/_diagrams/}{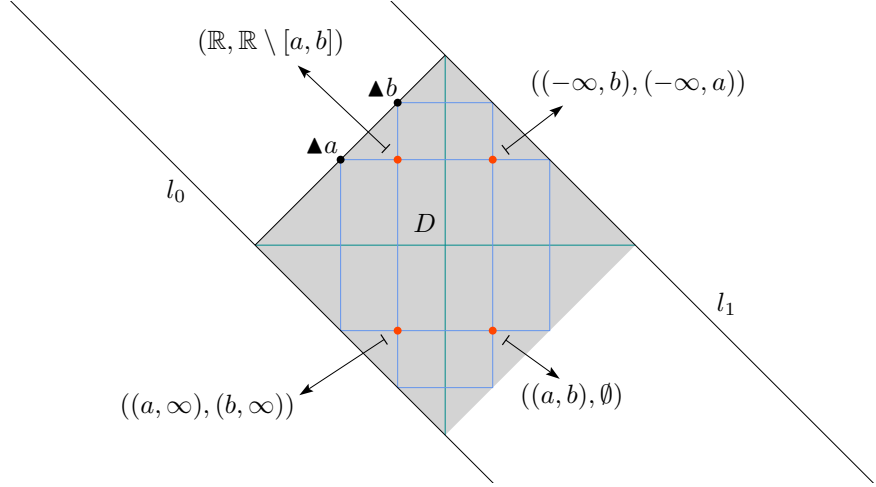}
  \caption{A schematic image of $\rho$.}
  \label{fig:rho}
\end{figure}

Moreover, we have the explicit formula
\begin{equation*}
  \rho(u) =
  \R \cap \blacktriangle^{-1} \left(
    \op{int} (\downarrow T(u)), \M \setminus \uparrow u
  \right)
  ,
\end{equation*}
where $\op{int} (\downarrow T(u))$ is the interior
of the downset of $T(u)$.

Now let $f \colon X \rightarrow \R$ be a continuous function
and let $\mathcal{H}^{\bullet}$ be a cohomology theory
with values in the category of graded vector spaces over a fixed field $K$,
which sends weak equivalences to isomorphisms.
Then we obtain the contravariant functor
\begin{equation*}
  F' \colon D \rightarrow \left(\mathrm{Vect}_K^{\Z}\right)^\circ,
  u \mapsto \mathcal{H}^{\bullet} (f^{-1}(\rho(u)))
  ,
\end{equation*}
where $\mathrm{Vect}_K^{\Z}$ is the category of $\Z$-graded vector spaces
over $K$
and the circle $\circ$ as an exponent is used to denote
the corresponding opposite category.
Now the degree-shift yields the endofunctor
\begin{equation*}
  \Sigma \colon
  \left(\mathrm{Vect}_K^{\Z}\right)^\circ \rightarrow
  \left(\mathrm{Vect}_K^{\Z}\right)^\circ,
  M^{\bullet} \mapsto M^{\bullet-1}
\end{equation*}
on graded vector spaces.
We intend to replace the contravariant functor $F'$ from $D$ to the category
of graded vector spaces with a contravariant functor from $\M$
to mere vector spaces $\mathrm{Vect}_K$
carrying the same (and more) information,
with precomposition by $T$ taking the place of $\Sigma$.
As an intermediate step,
we extend $F'$ to a functor
\begin{equation*}
  F \colon \M \rightarrow \left(\mathrm{Vect}_K^{\Z}\right)^\circ
  ,
\end{equation*}
which is $\Z$-equivariant or strictly stable in the sense that
\begin{equation}
  \label{eq:strictlyStable}
  F \circ T = \Sigma \circ F
  .
\end{equation}
Now as a map into the objects of $\mathrm{Vect}_K^{\Z}$,
such a functor $F$ carries no new information
in comparison to $F'$.
Moreover, by \eqref{eq:strictlyStable} most of the information
carried by $F$ is redundant and we may discard all redundant information
by post-composition with the evaluation at $0$:
\begin{equation*}
  \op{ev}^0 \colon \mathrm{Vect}_K^{\Z} \rightarrow \mathrm{Vect}_K, \,
  M^{\bullet} \mapsto M^0
  ;
\end{equation*}
see also \cref{lem:2adj}.

\begin{figure}[t]
  \centering
  %% Creator: Inkscape 1.0 (4035a4fb49, 2020-05-01), www.inkscape.org
%% PDF/EPS/PS + LaTeX output extension by Johan Engelen, 2010
%% Accompanies image file 'constr-boundary.pdf' (pdf, eps, ps)
%%
%% To include the image in your LaTeX document, write
%%   \input{<filename>.pdf_tex}
%%  instead of
%%   \includegraphics{<filename>.pdf}
%% To scale the image, write
%%   \def\svgwidth{<desired width>}
%%   \input{<filename>.pdf_tex}
%%  instead of
%%   \includegraphics[width=<desired width>]{<filename>.pdf}
%%
%% Images with a different path to the parent latex file can
%% be accessed with the `import' package (which may need to be
%% installed) using
%%   \usepackage{import}
%% in the preamble, and then including the image with
%%   \import{<path to file>}{<filename>.pdf_tex}
%% Alternatively, one can specify
%%   \graphicspath{{<path to file>/}}
%% 
%% For more information, please see info/svg-inkscape on CTAN:
%%   http://tug.ctan.org/tex-archive/info/svg-inkscape
%%
\begingroup%
  \makeatletter%
  \providecommand\color[2][]{%
    \errmessage{(Inkscape) Color is used for the text in Inkscape, but the package 'color.sty' is not loaded}%
    \renewcommand\color[2][]{}%
  }%
  \providecommand\transparent[1]{%
    \errmessage{(Inkscape) Transparency is used (non-zero) for the text in Inkscape, but the package 'transparent.sty' is not loaded}%
    \renewcommand\transparent[1]{}%
  }%
  \providecommand\rotatebox[2]{#2}%
  \newcommand*\fsize{\dimexpr\f@size pt\relax}%
  \newcommand*\lineheight[1]{\fontsize{\fsize}{#1\fsize}\selectfont}%
  \ifx\svgwidth\undefined%
    \setlength{\unitlength}{307.5bp}%
    \ifx\svgscale\undefined%
      \relax%
    \else%
      \setlength{\unitlength}{\unitlength * \real{\svgscale}}%
    \fi%
  \else%
    \setlength{\unitlength}{\svgwidth}%
  \fi%
  \global\let\svgwidth\undefined%
  \global\let\svgscale\undefined%
  \makeatother%
  \begin{picture}(1,0.48780488)%
    \lineheight{1}%
    \setlength\tabcolsep{0pt}%
    \put(0,0){\includegraphics[width=\unitlength,page=1]{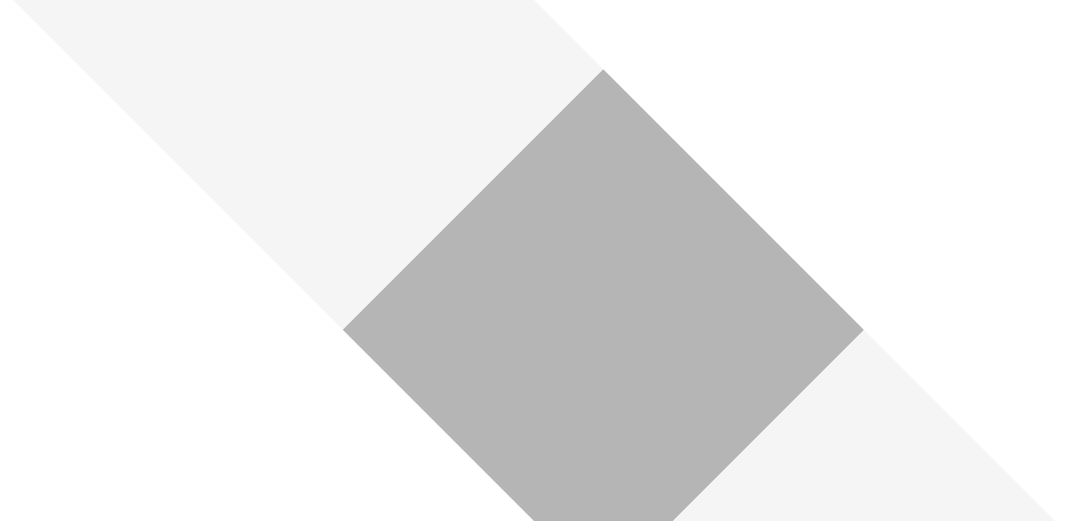}}%
    \put(0.36585366,0.34959341){\makebox(0,0)[t]{\lineheight{1.25}\smash{\begin{tabular}[t]{c}$\hat{u}$\end{tabular}}}}%
    \put(0.27439024,0.40243902){\makebox(0,0)[t]{\lineheight{1.25}\smash{\begin{tabular}[t]{c}$T(D)$\end{tabular}}}}%
    \put(0.51219512,0.2195122){\makebox(0,0)[t]{\lineheight{1.25}\smash{\begin{tabular}[t]{c}$w$\end{tabular}}}}%
    \put(0.67886171,0.24390244){\makebox(0,0)[t]{\lineheight{1.25}\smash{\begin{tabular}[t]{c}$v_2$\end{tabular}}}}%
    \put(0.49187,0.08943098){\makebox(0,0)[t]{\lineheight{1.25}\smash{\begin{tabular}[t]{c}$v_1$\end{tabular}}}}%
    \put(0.66666659,0.09349585){\makebox(0,0)[t]{\lineheight{1.25}\smash{\begin{tabular}[t]{c}$u$\end{tabular}}}}%
    \put(0,0){\includegraphics[width=\unitlength,page=2]{constr-boundary.pdf}}%
    \put(0.77409512,0.25273415){\makebox(0,0)[t]{\lineheight{1.25}\smash{\begin{tabular}[t]{c}$l_1$\end{tabular}}}}%
    \put(0.23334146,0.21787805){\makebox(0,0)[t]{\lineheight{1.25}\smash{\begin{tabular}[t]{c}$l_0$\end{tabular}}}}%
  \end{picture}%
\endgroup%

  \caption{
    The axis-aligned rectangle $u \preceq v_1, v_2 \preceq w \in D$
    determined by $(w, \hat{u}) \in R_D$.
  }
  \label{fig:constrBoundary}
\end{figure}

Now in order to obtain such a strictly stable functor $F$
from $F'$ we need to glue consecutive layers
using connecting homomorphisms.
To this end, let
\[R_D :=
  \{
  (w, \hat{u}) \in D \times T(D) \mid
  w \preceq \hat{u} \preceq T(w)
  \}
\]
as in \cref{dfn:R_D} in \cref{sec:stableFun}.
As shown in \cref{fig:constrBoundary}, any pair $(w, \hat{u}) \in R_D$
determines an axis-aligned rectangle $u \preceq v_1, v_2 \preceq w \in D$
with $T(u) = \hat{u}$.
Moreover,
as this rectangle is contained in $D$,
the corresponding join $w = v_1 \vee v_2$ and meet $u = v_1 \wedge v_2$
are preserved by $\rho$ by \cref{prp:rho}.(3-4).
Furthermore, since taking preimages is a homomorphism of boolean algebras,
$f^{-1}$ also preserves joins and meets,
which in this case are the componentwise unions and intersections.
This means that $f^{-1} (\rho(u))$ is the componentwise intersection
of $f^{-1} (\rho(v_1))$
and $f^{-1} (\rho(v_2))$,
while $f^{-1} (\rho(w))$ is their union.
With this we obtain the triad
$f^{-1} (\rho(w); \rho(v_1), \rho(v_2))$
of pairs of open subsets of $X$,
which is excisive in each component by excision.
Thus, we have the differential
\begin{equation*}
  \delta'_{(w, \hat{u})} \colon
  \left(\mathcal{H}^{\bullet-1} \circ f^{-1} \circ \rho\right)(u)
  \rightarrow
  \left(\mathcal{H}^{\bullet} \circ f^{-1} \circ \rho\right)(w)
\end{equation*}
of the corresponding Mayer--Vietoris sequence
as described in \cite[Section 17.1.4]{MR2456045}.
Now let
\begin{equation*}
  \partial'_{(w, \hat{u})} :=
  \big(\delta'_{(w, \hat{u})}\big)^{\circ} \colon
  \left(\mathcal{H}^{\bullet} \circ f^{-1} \circ \rho\right)(w) \rightarrow
  \left(\mathcal{H}^{\bullet-1} \circ f^{-1} \circ \rho\right)(u)
\end{equation*}
be the corresponding homomorphism
in the opposite category $\left(\mathrm{Vect}_K^{\Z}\right)^\circ$
for all $(w, \hat{u}) \in R_D$.
Moreover, let
$\op{pr}_1 \colon R_D \rightarrow D$
and $\op{pr}_2 \colon R_D \rightarrow T(D)$
be the projections to the first and the second component, respectively.
Then $\partial'$ is a natural transformation as in the diagram
\begin{equation*}
  \begin{tikzcd}
    R_D
    \arrow[rrr, "\op{pr}_1"]
    \arrow[ddd, "\op{pr}_2"']
    &[-18pt] & &[-23pt]
    D
    \arrow[ddd, "F'"]
    \\[-15pt]
    & &
    {}
    \arrow[ld, Rightarrow, "{\partial'}"]
    \\
    &
    {}
    \\[-21pt]
    T(D)
    \arrow[rrr, "\Sigma \circ F' \circ T^{-1}"']
    & & &
    \left(\mathrm{Vect}_K^{\Z}\right)^\circ
    .
  \end{tikzcd}
\end{equation*}
Thus,
the functor
$F' \colon D \rightarrow \left(\mathrm{Vect}_K^{\Z}\right)^\circ$
and $\partial'$ determine a unique strictly stable functor
$F \colon \M \rightarrow \left(\mathrm{Vect}_K^{\Z}\right)^\circ$
by \cref{prp:fundaExt}.
\sloppy
To obtain a functor of type
${\M^{\circ} \rightarrow \mathrm{Vect}_K}$ from $F$ we post-compose
${F^{\circ} \colon \M^{\circ} \rightarrow \mathrm{Vect}_K^{\Z}}$
with the evaluation at $0$
and we define
\begin{equation*}
  h(f) := \op{ev}^0 \circ \, F^{\circ}
  .
\end{equation*}

\begin{dfn}
  We say that $f \colon X \rightarrow \R$ is \emph{$\mathcal{H}^{\bullet}$-tame}
  if all open interlevel sets of $f$
  have finite-dimensional cohomology in each degree, i.e.
  \begin{equation*}
    \dim_K \mathcal{H}^n \big(f^{-1}(I)\big) < \infty
  \end{equation*}
  for any integer $n \in \Z$ and any open interval $I \subseteq \R$.
  Moreover, if $\mathcal{H}^{\bullet}$ is singular cohomology
  with coefficients in $K$,
  we say that $f$ is \emph{$K$-tame}.
\end{dfn}

If $\mathcal{H}^{\bullet}$ is singular cohomology with coefficients in $K$
and if $f \colon X \rightarrow \R$ is $K$-tame,
then we name $h(f)$
the \emph{relative interlevel set cohomology (RISC) of $f$
  with coefficients in $K$}.
Even though $h(f) \colon \M^{\circ} \rightarrow \VectK$
is well-defined for any continuous function
$f \colon X \rightarrow \R$,
we refer to $h(f)$ as the relative interlevel set cohomology
only if $f$ is $K$-tame.
This assumption is needed in order for $h(f)$ to be
\emph{sequentially continuous} as we will see
in \cref{dfn:cont,prp:cont} below.
In case $f \colon X \rightarrow \R$ is not $K$-tame,
then it may be more reasonable to consider the \enquote{reflection}
of $h(f)$ into the full subcategory of sequentially continuous functors.
For this reason, we refrain from referring to $h(f)$ as the RISC of $f$,
when $f$ is not $K$-tame.

We note that $h$ extends to a contravariant functor
from the category of spaces over the reals $\R$
to the category of contravariant functors on $\M$
in the following way.
For a commutative triangle
\begin{equation*}
  \begin{tikzcd}
    X
    \arrow[rr, bend left, "\varphi"]
    \arrow[dr, "f"']
    &
    &
    Y
    \arrow[dl, "g"]
    \\
    &
    \R
  \end{tikzcd}
\end{equation*}
of topological spaces,
the map $\varphi$ yields a continuous map of pairs
\begin{equation*}
  \left(f^{-1} \circ \rho\right)(u)
  \rightarrow
  \left(g^{-1} \circ \rho\right)(u)
  ,
\end{equation*}
which is natural in $u \in D$.
By the functoriality of $\mathcal{H}^{\bullet}$
and the naturality of the Mayer--Vietoris sequence
the collection of these maps induces a natural transformation
\begin{equation*}
  h(\varphi) \colon h(g) \rightarrow h(f)
  .
\end{equation*}

By construction any axis-aligned rectangle
$u \preceq v_1, v_2 \preceq w \in D$
as shown in \cref{fig:constrBoundary}
yields a long exact sequence
\begin{equation*}
  \begin{tikzcd}
    &
    \cdots
    \arrow[r]
    &
    h(f)(T(u))
    \arrow[dll, out=0, in=180, looseness=2, overlay]
    \\
    h(f)(w)
    \arrow[r]
    &
    h(f)(v_1) \oplus h(f)(v_2)
    \arrow[r, "(1 ~\, -1)"]
    &
    h(f)(u)
    \arrow[dll, out=0, in=180, looseness=2, overlay]
    \\
    h(f)(T^{-1}(w))
    \arrow[r]
    &
    \cdots
    .
  \end{tikzcd}
\end{equation*}
By \cref{prp:cohomological}.1 this is one way of characterizing
\emph{cohomological functors} on $\M$,
which we define in \cref{dfn:cohomological}.
We note that the characterization \cref{prp:homological}.4
has been stated by \cite{Carlsson:2009:ZPH:1542362.1542408}
and proven by \cite{Carlsson:2009:ZPH:1542362.1542408,MR3031814}
for any axis-aligned rectangle contained within a tile
of the tessellation shown in \cref{fig:tessellation}.

% Now suppose that $f \colon X \rightarrow \R$ is $\mathcal{H}^{\bullet}$-tame.

\begin{lem}
  \label{lem:pfd}
  The function $f \colon X \rightarrow \R$ is $\mathcal{H}^{\bullet}$-tame
  iff the functor
  $h(f) \colon \M^{\circ} \rightarrow \mathrm{Vect}_K$
  is pointwise finite-dimensional (pfd).
\end{lem}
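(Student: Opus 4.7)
The plan is to prove both implications by relating the values of $h(f)$ on $\M$ to cohomologies of open interlevel sets. One direction uses the long exact sequence of a pair to bound $\dim h(f)(u)$ from above; the other recognises each $\mathcal{H}^n(f^{-1}(I))$ directly as a value of $h(f)$ at a specific point.

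For the forward direction, assume $f$ is $\mathcal{H}^{\bullet}$-tame. I would first write any $u \in \M$ uniquely as $u = T^k(u_0)$ with $u_0 \in D$ and $k \in \Z$, using the fact that $D$ is a fundamental domain for the $T$-action. The strict stability \eqref{eq:strictlyStable} together with the construction of $F$ then gives
\[
  h(f)(u) = F(u_0)^{-k} = \mathcal{H}^{-k}\bigl(f^{-1}(U), f^{-1}(V)\bigr),
\]
where $(U, V) = \rho(u_0)$. I would then invoke \cref{prp:rho} (and the explicit formula following it) to see that $U$ is an open interval (possibly all of $\R$ or empty) while $V$ is the complement in $U$ of a closed interval, hence a disjoint union of at most two open intervals. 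Tameness of $f$ gives $\dim \mathcal{H}^n(f^{-1}(J)) < \infty$ for every $n$ and every open interval $J$, and additivity of cohomology over disjoint unions extends this to $V$. The long exact sequence of the pair $\bigl(f^{-1}(U), f^{-1}(V)\bigr)$ then sandwiches $h(f)(u)$ between two finite-dimensional spaces, forcing $\dim h(f)(u) < \infty$.

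For the backward direction, assume $h(f)$ is pfd. Given a non-empty open interval $I \subseteq \R$, the bijection between non-empty intervals in $\R$ and pairs $(I', C)$ with $I' \neq C$ sends $I$ to the pair $(I, \emptyset)$, so there is a (unique) $u_I \in D$ with $\rho(u_I) = (I, \emptyset)$. For each $n \in \Z$ I then compute
\[
  h(f)\bigl(T^{-n}(u_I)\bigr) = F(u_I)^n = \mathcal{H}^n\bigl(f^{-1}(I), \emptyset\bigr) = \mathcal{H}^n\bigl(f^{-1}(I)\bigr),
\]
which is finite-dimensional by the pfd assumption, yielding $\mathcal{H}^{\bullet}$-tameness.

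The main obstacle I expect is the geometric bookkeeping underlying both directions. In the forward direction one needs that for every $u_0 \in D$ the pair $\rho(u_0) = (U, V)$ genuinely has the simple form described (so the long exact sequence has finite-dimensional outer terms), and in the backward direction one needs that $(I, \emptyset)$ really lies in $\rho(D)$ for every non-empty open interval $I$. Both facts follow from \cref{prp:rho} and its explicit formula, but some attention is required at boundary points of $D$ where $U$ or $V$ can degenerate to $\emptyset$ or $\R$. Once these geometric verifications are in place, the remainder is a direct application of strict stability and the long exact sequence of a pair.
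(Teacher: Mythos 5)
Your proposal matches the paper's proof in both directions: the backward implication is precisely the paper's observation that each $\mathcal{H}^n(f^{-1}(I))$ appears as a value of $h(f)$, and the forward implication uses the same long exact sequence of the pair $(X_u, A_u)$ (absolute and relative components of $(f^{-1}\circ\rho\circ T^n)(u)$) to sandwich $h(f)(u)$ between finite-dimensional cohomologies of open interlevel sets. Your index bookkeeping (writing $u = T^k(u_0)$ and landing in degree $-k$) agrees with the paper's (taking $u \in T^{-n}(D)$ and landing in degree $n$) after the substitution $k = -n$.
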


\begin{proof}
  As ${\mathcal{H}^n \big(f^{-1}(I)\big)}$
  appears as a value of ${h(f) \colon \M^{\circ} \rightarrow \mathrm{Vect}_K}$
  for any open interval ${I \subseteq \R}$
  and any integer ${n \in \Z}$,
  the function $f$ is $\mathcal{H}^{\bullet}$-tame
  if $h(f)$ is pfd.
  Now suppose ${f \colon X \rightarrow \R}$ is $\mathcal{H}^{\bullet}$-tame,
  let ${u \in T^{-n}(D)}$ for some ${n \in \Z}$,
  let $X_u$ be the absolute component
  of ${\left(f^{-1} \circ \rho \circ T^n\right)(u)}$,
  and let $A_u$ be the relative component.
  Then we obtain the exact sequence
  \begin{equation*}
    \mathcal{H}^{n-1}(X_u) \rightarrow
    \mathcal{H}^{n-1}(A_u) \rightarrow
    h(f)(u) \rightarrow
    \mathcal{H}^{n}(X_u) \rightarrow
    \mathcal{H}^{n}(A_u)
    .
  \end{equation*}
  Now $X_u$ is an open interlevel set of $f$ and $A_u$
  is the disjoint union of at most two open interlevel sets.
  As $f$ is $\mathcal{H}^{\bullet}$-tame
  all four cohomology groups surrounding $h(f)(u)$
  in above exact sequence are finite-dimensional.
  As a result, $h(f)(u)$ is finite-dimensional as well.
\end{proof}

We end this section by showing that
$h(f) \colon \M^{\circ} \rightarrow \mathrm{Vect}_K$
satisfies the following form of continuity,
when $f \colon X \rightarrow \R$ is $\mathcal{H}^{\bullet}$-tame.
% Here $\mathrm{vect}_K$ denotes the category
% of finite-dimensional vector spaces over $K$.

\begin{dfn}
  \label{dfn:cont}
  We say that a contravariant functor
  $F \colon \M^{\circ} \rightarrow \mathrm{Vect}_K$
  is \emph{sequentially continuous},
  if for any increasing sequence
  $(u_k)_{k=1}^{\infty}$ in $\M$
  converging to $u$
  the natural map
  \begin{equation}
    \label{eq:seqLim}
    F(u) \rightarrow \varprojlim_{k} F(u_k)
  \end{equation}
  is an isomorphism.
  Dually, a covariant functor
  $F \colon \M \rightarrow \mathrm{Vect}_K$
  is \emph{sequentially continuous},
  if for any decreasing sequence
  $(u_k)_{k=1}^{\infty}$ in $\M$
  converging to $u$
  the natural map \eqref{eq:seqLim}
  is an isomorphism,
  see also \cref{remark:dual} below.
\end{dfn}

\begin{prp}
  \label{prp:cont}
  If $f \colon X \rightarrow \R$ is $\mathcal{H}^{\bullet}$-tame,
  then the functor
  $h(f) \colon \M^{\circ} \rightarrow \mathrm{vect}_K$
  is sequentially continuous.
\end{prp}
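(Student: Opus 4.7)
The plan is to trace an increasing sequence $u_k \nearrow u$ in $\M$ through every layer of the construction of $h(f)$ and reduce the claim to a Milnor-type exact sequence for the cohomology of an increasing union of open subsets. First, I would verify that $\rho$ is sequentially left-continuous using the explicit formula in \cref{prp:rho}: from $T(u_k) \nearrow T(u)$ and $\uparrow u = \bigcap_k \uparrow u_k$ in the lattice $\M$ one directly obtains $\bigcup_k \op{int}(\downarrow T(u_k)) = \op{int}(\downarrow T(u))$ and $\bigcup_k (\M \setminus \uparrow u_k) = \M \setminus \uparrow u$. Precomposition with $\blacktriangle$ and intersection with $\R$ preserve increasing unions, and since $f^{-1}$ is a Boolean algebra homomorphism, we obtain $f^{-1}(\rho_i(u)) = \bigcup_k f^{-1}(\rho_i(u_k))$ for $i = 0, 1$.

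Next, using the strict stability relation $F \circ T = \Sigma \circ F$ I reduce everything to a fixed cohomological degree. The task becomes the following: given increasing sequences $(V_k)$ and $(W_k)$ of open subsets of $X$ with $W_k \subseteq V_k$, unions $V = \bigcup_k V_k$ and $W = \bigcup_k W_k$, and a fixed integer $n$, show that the natural map
\begin{equation*}
  \mathcal{H}^n(V, W) \longrightarrow \varprojlim_k \mathcal{H}^n(V_k, W_k)
\end{equation*}
is an isomorphism. The standard tool is the Milnor short exact sequence
\begin{equation*}
  0 \longrightarrow \varprojlim^1_k \mathcal{H}^{n-1}(V_k) \longrightarrow \mathcal{H}^n(V) \longrightarrow \varprojlim_k \mathcal{H}^n(V_k) \longrightarrow 0,
\end{equation*}
which arises from the weak equivalence between $V$ and the mapping telescope of $(V_k)$; analogues hold for $W$ and for the pairs $(V_k, W_k)$. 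Once all $\varprojlim^1$ terms vanish, the five lemma applied to the long exact sequences of pairs, which commute with $\varprojlim$ over Mittag--Leffler towers, produces the desired isomorphism.

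The vanishing of $\varprojlim^1$ is exactly where the $\mathcal{H}^{\bullet}$-tameness assumption enters. By the argument in the proof of \cref{lem:pfd}, each $V_k$ is an open interlevel set of $f$ and each $W_k$ is a finite disjoint union of open interlevel sets, so all of their cohomology groups are finite-dimensional in every degree. A tower of finite-dimensional vector spaces automatically satisfies the Mittag--Leffler condition, since its decreasing system of images must eventually stabilize, and therefore $\varprojlim^1$ vanishes. The main obstacle will be justifying the Milnor exact sequence at the level of generality of an abstract cohomology theory that merely sends weak equivalences to isomorphisms; this should reduce to the telescope weak equivalence together with the standard $\varprojlim^1$-sequence for a tower of cofibrations, after which the remainder is formal homological algebra.
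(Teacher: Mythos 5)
Your proposal is correct and follows essentially the same route as the paper: split $(f^{-1}\circ\rho\circ T^n)(u_k)$ into an absolute component $X_k$ (an open interlevel set) and a relative component $A_k$ (a union of at most two), apply the Milnor $\varprojlim^1$ sequence to the absolute spaces, obtain Mittag--Leffler from the finite-dimensionality guaranteed by $\mathcal{H}^{\bullet}$-tameness, and close with the five lemma on the long exact sequence of the pair and its inverse limit. The only point you flag as an obstacle --- that the Milnor sequence needs to be justified for a generic cohomology theory sending weak equivalences to isomorphisms --- is exactly the content the paper outsources to its citation of \cite[Section 19.4]{MR1702278}.
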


\begin{proof}
  Let $(u_k)_{k=1}^{\infty}$ be an increasing sequence in $\M$
  converging to $u$.
  Without loss of generality we assume that $(u_k)_{k=1}^{\infty}$
  is contained in a single tile $T^{-n}(D)$
  of the tessellation
  induced by $T$ and $D$ as shown in \cref{fig:tessellation}.
  We write $X_k$ for the absolute component
  of $(f^{-1} \circ \rho \circ T^n)(u_k)$
  and $A_k$ for the relative component.
  With this we have the commutative diagram
  \begin{equation}
    \label{eq:seqContLatter}
    \begin{tikzcd}[column sep=2.5ex]
      \mathcal{H}^{n-1} (\bigcup_k X_k)
      \arrow[r]
      \arrow[d]
      &
      \mathcal{H}^{n-1} (\bigcup_k A_k)
      \arrow[r]
      \arrow[d]
      &
      h(f)(u)
      \arrow[r]
      \arrow[d]
      &
      \mathcal{H}^n (\bigcup_k X_k)
      \arrow[r]
      \arrow[d]
      &
      \mathcal{H}^n (\bigcup_k A_k)
      \arrow[d]
      \\
      \displaystyle \varprojlim_k \mathcal{H}^{n-1} (X_k)
      \arrow[r]
      &
      \displaystyle \varprojlim_k \mathcal{H}^{n-1} (A_k)
      \arrow[r]
      &
      \displaystyle \varprojlim_k h(f)(u_k)
      \arrow[r]
      &
      \displaystyle \varprojlim_k \mathcal{H}^n (X_k)
      \arrow[r]
      &
      \displaystyle \varprojlim_k \mathcal{H}^n (A_k)
      .
    \end{tikzcd}
  \end{equation}
  Now for each $k \in \N$ the subspace $X_k \subseteq X$
  is an open interlevel set of $f \colon X \rightarrow \R$.
  Similarly, $A_k$ is a disjoint union of at most two open interlevel sets.
  As $f \colon X \rightarrow \R$ is $\mathcal{H}^{\bullet}$-tame
  the inverse sequences
  \begin{align}
    \label{eq:seqXk2}
    & \left(
      \mathcal{H}^{n-2} (X_{k+1}) \rightarrow \mathcal{H}^{n-2} (X_{k})
      \right)_{k=1}^{\infty} ,
    \\
    \label{eq:seqAk2}
    & \left(
      \mathcal{H}^{n-2} (A_{k+1}) \rightarrow \mathcal{H}^{n-2} (A_{k})
      \right)_{k=1}^{\infty} ,
    \\
    \label{eq:seqXk1}
    & \left(
      \mathcal{H}^{n-1} (X_{k+1}) \rightarrow \mathcal{H}^{n-1} (X_{k})
      \right)_{k=1}^{\infty} ,
    \\
    \label{eq:seqAk1}
    & \left(
      \mathcal{H}^{n-1} (A_{k+1}) \rightarrow \mathcal{H}^{n-1} (A_{k})
      \right)_{k=1}^{\infty} ,
    \\
    \label{eq:seqXk}
    & \left(
      \mathcal{H}^{n} (X_{k+1}) \rightarrow \mathcal{H}^{n} (X_{k})
      \right)_{k=1}^{\infty} ,
    \\
    \label{eq:seqAk}
    \text{and} \quad
    & \left(
      \mathcal{H}^{n} (A_{k+1}) \rightarrow \mathcal{H}^{n} (A_{k})
      \right)_{k=1}^{\infty}
  \end{align}
  are pfd.
  As the inverse sequences
  \eqref{eq:seqXk1}, \eqref{eq:seqAk1}, \eqref{eq:seqXk}, and \eqref{eq:seqAk}
  are pfd
  and
  as inverse limits of finite-dimensional vector spaces are exact,
  both rows of \eqref{eq:seqContLatter} are exact.
  Moreover,
  as
  \eqref{eq:seqXk2}, \eqref{eq:seqAk2}, \eqref{eq:seqXk1}, and \eqref{eq:seqAk1}
  are pfd,
  they satisfy the Mittag-Leffler condition.
  As a result,
  the four vertical maps
  surrounding
  ${h(f)(u) \rightarrow \varprojlim_{k} h(f)(u_k)}$
  in \eqref{eq:seqContLatter}
  are isomorphisms by \mbox{\cite[Section 19.4]{MR1702278}}.
  With this it follows from the five lemma that
  ${h(f)(u) \rightarrow \varprojlim_{k} h(f)(u_k)}$
  is an isomorphism as well.
\end{proof}

\section{Decomposition}
\label{sec:decomp}

Having defined the relative interlevel set cohomology
as a contravariant functor
\({h(f) \colon \M^{\circ} \rightarrow \mathrm{Vect}_K}\),
we now formalize the notion of an \emph{extended persistence diagram},
originally due to \cite{MR2472288},
as an invariant of sequentially continuous cohomological functors
$F \colon \M^{\circ} \rightarrow \mathrm{vect}_K$.
Here $\mathrm{vect}_K$ denotes the category
of finite-dimensional vector spaces over $K$.
% vanishing on $\partial \M$.
The \emph{persistence diagram} $\op{Dgm} (F)$ is a multiset
$\mu \colon \M \rightarrow \N_0$, which counts,
for each point $u = (x, y) \in \M$, the maximal number $\mu(u)$
of linearly independent vectors in $F(u)$ born at $u$;
for the functor $h(f)$, these are cohomology classes.
Before we provide a more explicit definition,
we introduce some notation.
We note that a contravariant functor
$F \colon \M^{\circ} \rightarrow \mathrm{Vect}_K$
vanishing on $\partial \M$
can equivalently be thought of as a bifunctor
$F \colon \R \times \R^{\circ} \rightarrow \mathrm{Vect}_K$
supported on the interior $\op{int} \M \subset \R \times \R$,
which is covariant in its first argument
and contravariant in the second.
For axis-parallel internal maps of a functor
$F \colon \M^{\circ} \rightarrow \mathrm{Vect}_K$
we use similar notation as with bifunctors.
More specifically,
if we have $u, v \in \M$ with $u \preceq v$,
$u = (x, y)$, and $v = (s, t)$,
then we use notation as in the commutative diagram
\begin{equation*}
  \begin{tikzcd}[column sep=11ex, row sep=8ex]
    F(v) = F(s, t)
    \arrow[r, "{F(s \leq x, t)}"]
    \arrow[d, "{F(s, y \leq t)}"']
    \arrow[rd, "{F(u \preceq v)}" description]
    &
    F(x, t)
    \arrow[d, "{F(x, y \leq t)}"]
    \\
    F(s, y)
    \arrow[r, "{F(s \leq x, y)}"']
    &
    F(x, y) = F(u)
    .
  \end{tikzcd}
\end{equation*}
Now we define the persistence diagram as
\begin{equation}
  \label{eq:diagram}
  \op{Dgm} (F) : \op{int} \M \rightarrow \N_0,
  \,
  u \mapsto
  \dim_K F(u) -
  \dim_K \sum_{v \succ u} \op{Im} F(u \preceq v)
\end{equation}
for any sequentially continuous cohomological functor
$F \colon \M^{\circ} \rightarrow \mathrm{vect}_K$.
% vanishing on $\partial \M$.
In the last term, $v$ ranges over all $v \in \M$ with $v \succ u$.
Moreover, we note that
\begin{equation*}
  \sum_{v \succ u} \op{Im} F(u \preceq v)
  =
  \left(\bigcup_{s < x} \op{Im} F(s \leq x, y)\right) +
  \left(\bigcup_{t > y} \op{Im} F(x, y \leq t)\right),
\end{equation*}
for $u = (x, y)$.
% Now let $f \colon X \rightarrow \R$ be a $K$-tame continuous function.

\begin{dfn}[Extended Persistence Diagram]
  \label{dfn:diagramFunction}
  We assume $l_0$ and $l_1$ intersect the $x$-axis in $-\pi$
  and $\pi$ respectively
  and that $\mathcal{H}^{\bullet}$ is singular cohomology
  with coefficients in $K$.
  Moreover, let ${f \colon X \rightarrow \R}$ be a $K$-tame continuous function.
  The \emph{extended persistence diagram of $f$ (over $K$)} is
  ${\op{Dgm}(f) := \op{Dgm}(f; K) := \op{Dgm}(h(f)).}$
\end{dfn}

Originally the extended persistence diagram
was defined in a different way by
\cite{MR2472288};
see \cref{sec:connExt} for details
on the connection
between these two definitions.
Up to isomorphism of ambient sets,
the multiset defined by \cite{MR2472288}
and the multiset defined here
are the same.
We note that $\op{Dgm}(f)$ is supported in the downset
${\downarrow \op{Im} \blacktriangle \subseteq \M}$.
\cref{dfn:diagramFunction} is consistent
with \cite[Definition 2.2]{2021arXiv200701834B}
in the sense that both definitions yield the same multiset
for $X$ a finite simplicial complex and $f$ piecewise linear.

Next we show that sequentially continuous
pfd cohomological functors $\M^{\circ} \rightarrow \vectK $
decompose into the following type of indecomposables,
see also \cref{fig:contraBlock}.

\begin{dfn}[Contravariant Block]
  \label{dfn:contraBlock}
  For $v \in \M$ we define
  \begin{equation*}
    B_v \colon \M^{\circ} \rightarrow \mathrm{Vect}_K,
    u \mapsto
    \begin{cases}
      K & u \in (\downarrow v) \cap \op{int}\left(\uparrow T^{-1}(v)\right)
      \\
      \{0\} & \text{otherwise},
    \end{cases}
  \end{equation*}
  where $\op{int}\left(\uparrow T^{-1}(v)\right)$ is the interior of the upset
  of $T^{-1}(v)$ in $\M$.
  The internal maps are identities whenever both domain and codomain are $K$,
  otherwise they are zero.
\end{dfn}

\begin{figure}[t]
  \centering
  \begin{subfigure}[b]{0.49\textwidth}
    \centering
    %% Creator: Inkscape 1.0.2 (e86c870879, 2021-01-15, custom), www.inkscape.org
%% PDF/EPS/PS + LaTeX output extension by Johan Engelen, 2010
%% Accompanies image file 'contravariant-block-cropped.pdf' (pdf, eps, ps)
%%
%% To include the image in your LaTeX document, write
%%   \input{<filename>.pdf_tex}
%%  instead of
%%   \includegraphics{<filename>.pdf}
%% To scale the image, write
%%   \def\svgwidth{<desired width>}
%%   \input{<filename>.pdf_tex}
%%  instead of
%%   \includegraphics[width=<desired width>]{<filename>.pdf}
%%
%% Images with a different path to the parent latex file can
%% be accessed with the `import' package (which may need to be
%% installed) using
%%   \usepackage{import}
%% in the preamble, and then including the image with
%%   \import{<path to file>}{<filename>.pdf_tex}
%% Alternatively, one can specify
%%   \graphicspath{{<path to file>/}}
%% 
%% For more information, please see info/svg-inkscape on CTAN:
%%   http://tug.ctan.org/tex-archive/info/svg-inkscape
%%
\begingroup%
  \makeatletter%
  \providecommand\color[2][]{%
    \errmessage{(Inkscape) Color is used for the text in Inkscape, but the package 'color.sty' is not loaded}%
    \renewcommand\color[2][]{}%
  }%
  \providecommand\transparent[1]{%
    \errmessage{(Inkscape) Transparency is used (non-zero) for the text in Inkscape, but the package 'transparent.sty' is not loaded}%
    \renewcommand\transparent[1]{}%
  }%
  \providecommand\rotatebox[2]{#2}%
  \newcommand*\fsize{\dimexpr\f@size pt\relax}%
  \newcommand*\lineheight[1]{\fontsize{\fsize}{#1\fsize}\selectfont}%
  \ifx\svgwidth\undefined%
    \setlength{\unitlength}{170.7024765bp}%
    \ifx\svgscale\undefined%
      \relax%
    \else%
      \setlength{\unitlength}{\unitlength * \real{\svgscale}}%
    \fi%
  \else%
    \setlength{\unitlength}{\svgwidth}%
  \fi%
  \global\let\svgwidth\undefined%
  \global\let\svgscale\undefined%
  \makeatother%
  \begin{picture}(1,0.79084969)%
    \lineheight{1}%
    \setlength\tabcolsep{0pt}%
    \put(0,0){\includegraphics[width=\unitlength,page=1]{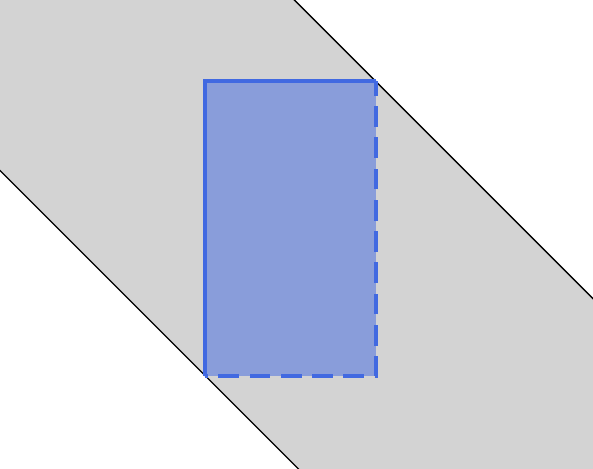}}%
    \put(0.18300657,0.53594785){\makebox(0,0)[t]{\lineheight{1.25}\smash{\begin{tabular}[t]{c}$\{0\}$\end{tabular}}}}%
    \put(0.83660122,0.22222217){\makebox(0,0)[t]{\lineheight{1.25}\smash{\begin{tabular}[t]{c}$\{0\}$\end{tabular}}}}%
    \put(0.49019588,0.38562094){\makebox(0,0)[t]{\lineheight{1.25}\smash{\begin{tabular}[t]{c}$K$\end{tabular}}}}%
    \put(0.32026161,0.67320245){\makebox(0,0)[t]{\lineheight{1.25}\smash{\begin{tabular}[t]{c}$v$\end{tabular}}}}%
    \put(0.67320245,0.10457537){\makebox(0,0)[t]{\lineheight{1.25}\smash{\begin{tabular}[t]{c}$T^{-1}(v)$\end{tabular}}}}%
    \put(0,0){\includegraphics[width=\unitlength,page=2]{contravariant-block-cropped.pdf}}%
    \put(0.83137898,0.50848999){\makebox(0,0)[t]{\lineheight{1.25}\smash{\begin{tabular}[t]{c}$l_1$\end{tabular}}}}%
    \put(0.19553407,0.24237434){\makebox(0,0)[t]{\lineheight{1.25}\smash{\begin{tabular}[t]{c}$l_0$\end{tabular}}}}%
  \end{picture}%
\endgroup%

    \caption{
      The indecomposable $B_v \colon \M^{\circ} \rightarrow \mathrm{Vect}_K$.
    }
    \label{fig:contraBlock}
  \end{subfigure}
  \begin{subfigure}[b]{0.49\textwidth}
    \centering
    %% Creator: Inkscape 1.0.2 (e86c870879, 2021-01-15, custom), www.inkscape.org
%% PDF/EPS/PS + LaTeX output extension by Johan Engelen, 2010
%% Accompanies image file 'block-cropped.pdf' (pdf, eps, ps)
%%
%% To include the image in your LaTeX document, write
%%   \input{<filename>.pdf_tex}
%%  instead of
%%   \includegraphics{<filename>.pdf}
%% To scale the image, write
%%   \def\svgwidth{<desired width>}
%%   \input{<filename>.pdf_tex}
%%  instead of
%%   \includegraphics[width=<desired width>]{<filename>.pdf}
%%
%% Images with a different path to the parent latex file can
%% be accessed with the `import' package (which may need to be
%% installed) using
%%   \usepackage{import}
%% in the preamble, and then including the image with
%%   \import{<path to file>}{<filename>.pdf_tex}
%% Alternatively, one can specify
%%   \graphicspath{{<path to file>/}}
%% 
%% For more information, please see info/svg-inkscape on CTAN:
%%   http://tug.ctan.org/tex-archive/info/svg-inkscape
%%
\begingroup%
  \makeatletter%
  \providecommand\color[2][]{%
    \errmessage{(Inkscape) Color is used for the text in Inkscape, but the package 'color.sty' is not loaded}%
    \renewcommand\color[2][]{}%
  }%
  \providecommand\transparent[1]{%
    \errmessage{(Inkscape) Transparency is used (non-zero) for the text in Inkscape, but the package 'transparent.sty' is not loaded}%
    \renewcommand\transparent[1]{}%
  }%
  \providecommand\rotatebox[2]{#2}%
  \newcommand*\fsize{\dimexpr\f@size pt\relax}%
  \newcommand*\lineheight[1]{\fontsize{\fsize}{#1\fsize}\selectfont}%
  \ifx\svgwidth\undefined%
    \setlength{\unitlength}{171bp}%
    \ifx\svgscale\undefined%
      \relax%
    \else%
      \setlength{\unitlength}{\unitlength * \real{\svgscale}}%
    \fi%
  \else%
    \setlength{\unitlength}{\svgwidth}%
  \fi%
  \global\let\svgwidth\undefined%
  \global\let\svgscale\undefined%
  \makeatother%
  \begin{picture}(1,0.78947368)%
    \lineheight{1}%
    \setlength\tabcolsep{0pt}%
    \put(0,0){\includegraphics[width=\unitlength,page=1]{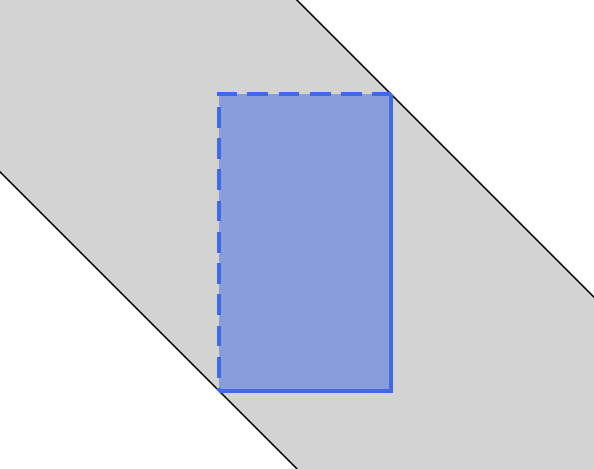}}%
    \put(0.20394737,0.51315789){\makebox(0,0)[t]{\lineheight{1.25}\smash{\begin{tabular}[t]{c}$\{0\}$\end{tabular}}}}%
    \put(0.86184211,0.18421053){\makebox(0,0)[t]{\lineheight{1.25}\smash{\begin{tabular}[t]{c}$\{0\}$\end{tabular}}}}%
    \put(0.51315789,0.36184211){\makebox(0,0)[t]{\lineheight{1.25}\smash{\begin{tabular}[t]{c}$K$\end{tabular}}}}%
    \put(0.32236842,0.66447368){\makebox(0,0)[t]{\lineheight{1.25}\smash{\begin{tabular}[t]{c}$T(v)$\end{tabular}}}}%
    \put(0.68421053,0.09210526){\makebox(0,0)[t]{\lineheight{1.25}\smash{\begin{tabular}[t]{c}$v$\end{tabular}}}}%
    \put(0,0){\includegraphics[width=\unitlength,page=2]{block-cropped.pdf}}%
    \put(0.85658553,0.48551974){\makebox(0,0)[t]{\lineheight{1.25}\smash{\begin{tabular}[t]{c}$l_1$\end{tabular}}}}%
    \put(0.21655746,0.21765307){\makebox(0,0)[t]{\lineheight{1.25}\smash{\begin{tabular}[t]{c}$l_0$\end{tabular}}}}%
  \end{picture}%
\endgroup%

    \caption{
      The indecomposable $B^v \colon \M \rightarrow \mathrm{Vect}_K$.
    }
    \label{fig:block}
  \end{subfigure}
  \caption{The contravariant and the covariant indecomposables.}
\end{figure}

\begin{lem}[Yoneda]
  \label{lem:yonedaContra}
  Let $G \colon \M^{\circ} \rightarrow \mathrm{Vect}_K$
  be a contravariant functor
  vanishing on $\partial \M$ and let ${v \in \op{int} \M}$.
  Then the evaluation at $1 \in K = B_v(v)$
  yields a linear isomorphism
  \begin{equation*}
    \op{Nat}(B_v, G) \cong G(v),
  \end{equation*}
  where $\op{Nat}(B_v, G)$ denotes the vector space
  of natural transformations from $B_v$ to $G$.
\end{lem}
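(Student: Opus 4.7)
The plan is a Yoneda-style argument, with the only nontrivial content being the interaction between the support of $B_v$ and the vanishing of $G$ on $\partial\M$. First, observe that $v \in \op{int}\M$ lies in the support $(\downarrow v) \cap \op{int}(\uparrow T^{-1}(v))$ of $B_v$, so $B_v(v) = K$ and the evaluation map $\Phi \colon \op{Nat}(B_v, G) \to G(v), \, \eta \mapsto \eta_v(1)$ is well-defined and linear. For injectivity, if $\eta_v(1) = 0$ then naturality at each $u \preceq v$ lying in the support (where $B_v(u \preceq v) = \mathrm{id}_K$) gives $\eta_u(1) = G(u \preceq v)(\eta_v(1)) = 0$; outside the support, $B_v$ itself vanishes, so $\eta_u = 0$ automatically.

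For surjectivity, given $\xi \in G(v)$ I will define a candidate $\eta^{\xi}$ by $\eta^{\xi}_u(1) := G(u \preceq v)(\xi)$ when $u$ lies in the support of $B_v$, and $\eta^{\xi}_u := 0$ otherwise. Naturality at $u \preceq w$ is immediate in two cases: if both $u, w$ lie in the support, functoriality of $G$ reduces it to the identity $G(u \preceq v) = G(u \preceq w) \circ G(w \preceq v)$; if $w$ lies outside the support, then $B_v(w) = 0$ and $\eta^{\xi}_w = 0$ make both composites in the naturality square vanish.

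What I expect to be the main obstacle is the remaining case: $u$ outside the support while $w$ is in it, where I must establish $G(u \preceq w)(\eta^{\xi}_w(1)) = G(u \preceq v)(\xi) = 0$. The key geometric claim to prove is that for $v = (s, t) \in \op{int}\M$ and every $u \preceq v$ in $\M$ with $u \notin \op{int}(\uparrow T^{-1}(v))$, there exists $z \in \partial\M$ with $u \preceq z \preceq v$. Since $T^{-1}(v) = (\pi - t, -\pi - s)$, the hypothesis on $u$ amounts to $x_u \geq \pi - t$ or $y_u \leq -\pi - s$. In the first case $z := (x_u, \pi - x_u) \in l_1$ witnesses the claim (a direct check using $u \in \M$ for $u \preceq z$, and $x_u \geq \pi - t > s$ for $z \preceq v$), and in the second case $z := (-\pi - y_u, y_u) \in l_0$ does, by the symmetric verification. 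Factoring $G(u \preceq v) = G(u \preceq z) \circ G(z \preceq v)$ through $G(z) = 0$ then yields the required vanishing, completing the naturality check and hence the proof.
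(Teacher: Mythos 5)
Your proof is correct, and the paper itself omits a proof of this lemma (it is stated as a Yoneda-type fact and immediately used). Your argument is the natural one: well-definedness and injectivity are the standard Yoneda bookkeeping, and the genuine content is in the surjectivity step, where you must check that the ``lower boundary'' of the support of $B_v$ (the set $(\downarrow v)\setminus \op{int}(\uparrow T^{-1}(v))$) decays into $\partial\M$ so that $G$ kills it. Your explicit computation of $T^{-1}(v) = (\pi - t, -\pi - s)$ is consistent with the incidence relations defining $T$, and the two witnesses $z = (x_u, \pi - x_u) \in l_1$ and $z = (-\pi - y_u, y_u) \in l_0$ are correctly verified against the $\R^\circ\times\R$ order (recall $(a_1,a_2)\preceq(b_1,b_2)$ means $a_1\geq b_1$ and $a_2\leq b_2$, which is what you use). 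One detail worth making explicit: you reduce ``$u\notin\op{int}(\uparrow T^{-1}(v))$'' to the coordinate inequalities $x_u\geq\pi-t$ or $y_u\leq-\pi-s$, which is the Euclidean characterization; for $u\in\op{int}\M$ this agrees with the $\M$-relative interior, and for $u\in\partial\M$ one can simply take $z=u$, so the reduction is harmless. The argument stands.
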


Now let $F \colon \M^{\circ} \rightarrow \mathrm{vect}_K$
be a sequentially continuous cohomological functor,
as defined in \cref{dfn:cont} and \cref{dfn:cohomological}.
For each $v \in \op{int} \M$ we choose a basis
for a complement of $\sum_{w \succ v} \op{Im} F(v \preceq w)$ in $F(v)$.
By the Yoneda \cref{lem:yonedaContra} this yields a natural transformation
\begin{equation*}
  \varphi \colon
  \bigoplus_{v \in \op{int} \M} (B_v)^{\oplus \mu(v)} \longrightarrow F
  ,
\end{equation*}
where $\mu := \op{Dgm}(F)$.

\begin{prp}
  \label{prp:decomp}
  The natural transformation
  \[\varphi \colon
  \bigoplus_{v \in \op{int} \M} (B_v)^{\oplus \mu(v)} \longrightarrow F\]
  is a natural isomorphism.  
\end{prp}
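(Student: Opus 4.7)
My plan is to show $\varphi$ is pointwise an isomorphism. Both sides vanish on $\partial \M$ (the domain by definition of the $B_v$, $F$ by standing assumption), so I fix $u \in \op{int}\M$. At such $u$, the domain reads $\bigoplus_{v \in [u, T(u))} K^{\mu(v)}$ with $[u, T(u)) := \{v : u \preceq v \prec T(u)\}$ the support condition for $B_v$ at $u$, and $\varphi_u$ sends the basis vector $e_{v, i}$ to $F(u \preceq v)(\xi_{v, i}) \in F(u)$, where $\{\xi_{v, i}\}_i$ is the chosen basis of the complement $C_v$ of $\sum_{w \succ v} \op{Im} F(v \preceq w)$ in $F(v)$. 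A preliminary observation is that $F(u \preceq T(u)) = 0$: applying the cohomological Mayer--Vietoris long exact sequence to the degenerate axis-aligned rectangle with all corners at $u$, the map $F(u) \to F(u) \oplus F(u)$, $x \mapsto (x, x)$, is injective, forcing the preceding boundary $F(T(u)) \to F(u)$ to vanish; consequently $F(u \preceq v) = 0$ for all $v \succeq T(u)$, so only $v \in [u, T(u))$ contribute.

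For surjectivity, the identity $F(v) = C_v + \sum_{w \succ v} \op{Im} F(v \preceq w)$, pushed down by $F(u \preceq v)$, gives
\[
  \op{Im} F(u \preceq v) = F(u \preceq v)(C_v) + \sum_{w \succ v} \op{Im} F(u \preceq w).
\]
Iterating from $v = u$, any $x \in F(u)$ is expressed as a finite sum of $C_v$-contributions plus a residue in $\op{Im} F(u \preceq v_k)$ along a chain $v_k \nearrow T(u)$ in $[u, T(u))$. The decreasing filtration $(\op{Im} F(u \preceq v_k))_k$ stabilizes by finite-dimensionality of $F(u)$, and its stable value equals $\op{Im}(\varprojlim_k F(v_k) \to F(u))$ by pfd Mittag--Leffler. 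Sequential continuity (\cref{prp:cont}) identifies $\varprojlim_k F(v_k) \cong F(T(u))$ with the induced map equal to $F(u \preceq T(u)) = 0$; thus the residue vanishes after finitely many iterations.

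For injectivity, suppose a finite relation $\sum_{(v, i) \in S} a_{v, i} F(u \preceq v)(\xi_{v, i}) = 0$ holds in $F(u)$ and pick $v^*$ maximal in $\op{pr}_1(S) := \{v : (v, i) \in S\}$ under $\preceq$. The plan is to show all $a_{v^*, i}$ vanish and then induct on $|S|$. Terms with $v \neq v^*$ are either strictly below or incomparable to $v^*$; for $v$ incomparable to $v^*$, the cohomological Mayer--Vietoris sequence for the rectangle with corners $v \wedge v^*, v, v^*, v \vee v^*$ (whose top-right $v \vee v^*$ lies strictly above $v^*$, hence outside $\op{pr}_1(S)$ by maximality) allows the $v$-contribution to be absorbed into the image of $F(v \vee v^*)$, landing in $\sum_{w \succ v^*} \op{Im} F(v^* \preceq w)$ after pushdown to $u$. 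Combined with the linear independence of the $\xi_{v^*, i}$ modulo that subspace, this isolates and forces the $v^*$-coefficients to vanish.

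The main obstacle is this injectivity step: the two-dimensional lattice $\M$ permits ``maximal'' elements of $\op{pr}_1(S)$ that are not unique and admits antichains of incomparable elements whose contributions must be disentangled via the Mayer--Vietoris diamonds encoded in the cohomological axiom. The ``strictly-below'' case is also delicate, since the factorization $F(u \preceq v^*) = F(u \preceq v) \circ F(v \preceq v^*)$ for $v \prec v^*$ goes in the wrong direction to absorb terms into the $v^*$-stratum; I expect a coherent use of the Yoneda identification (\cref{lem:yonedaContra}), rephrasing the relation as an identity of natural transformations $B_v \to F$ where the rigidity of the cohomological structure is more directly exploitable, to be essential for completing the argument.
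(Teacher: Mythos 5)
Your plan is structurally sound at a high level — prove pointwise injectivity and surjectivity at each $u$ — but the execution has genuine gaps that you partly acknowledge, and these are precisely the places where the paper invests all its technical effort.

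\textbf{Surjectivity.} The identity $\op{Im} F(u \preceq v) = F(u \preceq v)(C_v) + \sum_{w \succ v} \op{Im} F(u \preceq w)$ expands the residue into a sum over \emph{all} $w \succ v$, a two-parameter family; iterating does not leave a residue contained in $\op{Im} F(u \preceq v_k)$ for some single chain $v_k \nearrow T(u)$, so the sequential-continuity/Mittag--Leffler termination argument does not apply as stated. The paper confronts this by discretizing: because $F(u)$ is finite-dimensional, the rank functions $s \mapsto \op{rank} F(s \leq x, y)$ and $t \mapsto \op{rank} F(x, y \leq t)$ have finitely many discontinuities, giving a finite grid $\{u_{i,j}\}$; it then filters $F(u)$ by $F_\zeta = \sum_{\xi \preceq \zeta} \op{Im} F(u \preceq u_\xi)$ in the colexicographic order and shows the successive quotients are captured (\cref{lem:discImages}, \cref{lem:aux}, \cref{lem:diagramSupportGrid}). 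Relatedly, your expression $\bigoplus_{v \in [u, T(u))} K^{\mu(v)}$ implicitly requires $\mu$ to have finite support on $[u, T(u))$, which is not free — it is the content of \cref{lem:diagramSupportGrid}, which you never establish.

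\textbf{Injectivity.} The absorption step for incomparable $v$ pushes the relation down to $F(u)$, but the linear independence of $\{\xi_{v^*, i}\}$ modulo $\sum_{w \succ v^*} \op{Im} F(v^* \preceq w)$ lives in $F(v^*)$; under $F(u \preceq v^*)$ this can collapse, since $\ker F(u \preceq v^*)$ may meet $C_{v^*}$. Passing such independence statements from $F(v^*)$ to $F(u)$ is exactly what \cref{lem:isoCoho} — via \cref{prp:4Squares} and the middle-exactness encoded in the cohomological axiom — accomplishes, and it is not automatic. You explicitly flag both this and the strictly-below case as unresolved, and they are: the required bookkeeping of images under pushdown is the heart of the matter, and Yoneda alone will not resolve it.

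In short, where your plan says ``iterate and use sequential continuity'' and ``absorb into higher strata via Mayer--Vietoris'', the paper replaces both with a careful finite filtration over the rank-discontinuity grid, a five-lemma induction over the colexicographic order, and three lemmas (\cref{lem:isoCoho}, \cref{lem:diagramSupportGrid}, \cref{lem:isoF}) that certify the filtration steps are governed by the cohomological structure. These are not incidental details you may defer; they \emph{are} the proof.
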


From this proposition, which we prove in the text below,
we obtain the following theorem.

\begin{thm}
  \label{thm:decomp}
  Any sequentially continuous pfd cohomological functor
  $G \colon \M^{\circ} \rightarrow \vectK$
  decomposes as
  \begin{equation*}
    G \cong \bigoplus_{v \in \op{int} \M} (B_v)^{\oplus \nu(v)}
    ,
  \end{equation*}
  where $\nu := \op{Dgm}(G)$.
\end{thm}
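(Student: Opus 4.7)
The plan is to obtain Theorem~\ref{thm:decomp} as an immediate corollary of Proposition~\ref{prp:decomp}. Since $G$ is pointwise finite-dimensional, it factors through $\vectK \hookrightarrow \VectK$, so Proposition~\ref{prp:decomp} applies with $F$ replaced by $G$.

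Concretely, for each $v \in \op{int}\M$ I would first choose a basis of a linear complement of $\sum_{w \succ v} \op{Im} G(v \preceq w)$ inside $G(v)$. By the defining formula \eqref{eq:diagram} for $\op{Dgm}$, this basis has exactly $\nu(v) = \op{Dgm}(G)(v)$ elements. Under the Yoneda identification of Lemma~\ref{lem:yonedaContra}, each basis vector corresponds to a natural transformation $B_v \to G$, and collecting all of these over $v \in \op{int}\M$ yields exactly the natural transformation
\[
\varphi \colon \bigoplus_{v \in \op{int}\M} (B_v)^{\oplus \nu(v)} \longrightarrow G
\]
appearing in the statement of Proposition~\ref{prp:decomp}. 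That proposition then asserts that $\varphi$ is a natural isomorphism, which is precisely the decomposition claimed by the theorem.

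Thus, from the vantage point of Theorem~\ref{thm:decomp} alone the proof is essentially a bookkeeping exercise once Proposition~\ref{prp:decomp} is in hand; the genuine content is pushed entirely into that proposition. The main obstacle I would expect in proving Proposition~\ref{prp:decomp} is verifying pointwise bijectivity of $\varphi_u$ for each $u \in \M$. Surjectivity should follow by a transfinite or downward induction along the partial order: at $u$ one reaches the chosen complement of $\sum_{w \succ u} \op{Im} G(u \preceq w)$ directly, and the remaining piece of $G(u)$ is captured by the image of $G(u \preceq w)$ for $w \succ u$, where the inductive hypothesis applies. Injectivity should exploit the long exact sequences coming from the cohomological hypothesis applied to axis-aligned rectangles, which allow one to propagate a hypothetical kernel element against its own support; sequential continuity will be needed to close up the argument at limit points, in particular at the boundary $\partial \M$ on which both sides must vanish.
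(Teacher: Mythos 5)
Your reduction of \cref{thm:decomp} to \cref{prp:decomp} is exactly what the paper does: the theorem is stated immediately after the proposition with the remark that it follows from it, and the construction of $\varphi$ via bases for complements and the Yoneda \cref{lem:yonedaContra} is precisely as you describe. One small caveat is that your closing sketch of how \cref{prp:decomp} itself would be proved differs from the paper, which does not split into surjectivity and injectivity but runs a single finite induction over a colexicographically ordered filtration $F_\xi$ of $F(u)$, using the five lemma together with \cref{lem:isoCoho}, \cref{cor:isoH}, and \cref{lem:isoF}; sequential continuity is used in \cref{lem:discImages} to guarantee that the filtration has only finitely many jumps.
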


\begin{cor}
  Any sequentially continuous cohomological functor
  $G \colon \M^{\circ} \rightarrow \vectK$
  is projective
  in the full subcategory of contravariant functors
  $\M^{\circ} \rightarrow \VectK$
  vanishing on $\partial \M$.
\end{cor}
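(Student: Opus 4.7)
The plan is to reduce projectivity of $G$ to projectivity of each indecomposable $B_v$ via \cref{thm:decomp}, and then to derive the latter directly from the Yoneda \cref{lem:yonedaContra}.

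First I would invoke \cref{thm:decomp}: since $G$ is valued in $\vectK$ it is in particular pointwise finite-dimensional, so there is a natural isomorphism
\[
G \;\cong\; \bigoplus_{v \in \op{int} \M} B_v^{\oplus \nu(v)}, \qquad \nu := \op{Dgm}(G).
\]
Each $B_v$ vanishes on $\partial \M$ by construction, since its support is contained in $\op{int}(\uparrow T^{-1}(v)) \subseteq \op{int} \M$; hence the direct sum lives in the target subcategory, and in particular $G$ itself does. Inside the full subcategory of contravariant functors $\M^{\circ} \to \VectK$ that vanish on $\partial \M$, kernels, cokernels, images and direct sums are all computed pointwise, so it is abelian and its epimorphisms are exactly the pointwise surjections. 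A direct sum of projectives is projective in any such category, so it suffices to prove that each $B_v$ is projective.

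For that step I would fix $v \in \op{int} \M$, a pointwise surjection $\pi \colon F \twoheadrightarrow F'$ in the subcategory, and a natural transformation $\alpha \colon B_v \to F'$. By \cref{lem:yonedaContra} applied to $F'$, the transformation $\alpha$ is determined by $\alpha_v(1) \in F'(v)$. Choose any preimage $x \in F(v)$ of $\alpha_v(1)$ under $\pi_v$, and let $\tilde{\alpha} \colon B_v \to F$ be the unique natural transformation corresponding under \cref{lem:yonedaContra} to $x$. Then $\pi \circ \tilde{\alpha}$ corresponds under Yoneda to $\pi_v(x) = \alpha_v(1)$, so by the uniqueness part of \cref{lem:yonedaContra} it agrees with $\alpha$, providing the required lift.

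I do not expect any serious obstacle; the entire argument rests on two bookkeeping facts — that the subcategory is abelian with pointwise (co)limits, and that the decomposition of \cref{thm:decomp} takes place within it — both of which follow immediately from the pointwise definition of $B_v$ and of the vanishing condition on $\partial \M$.
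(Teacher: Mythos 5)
Your proposal follows exactly the paper's route: the published proof is the one-liner ``This follows from \cref{thm:decomp} and the Yoneda \cref{lem:yonedaContra},'' and you have correctly unfolded it into the two standard steps — that the subcategory of functors vanishing on $\partial\M$ is abelian with pointwise (co)kernels and coproducts, so a direct sum of projectives is projective, and that each $B_v$ is projective because \cref{lem:yonedaContra} makes $\op{Nat}(B_v, -) \cong (-)(v)$ exact. One small imprecision worth noting: the inclusion $\op{int}(\uparrow T^{-1}(v)) \subseteq \op{int}\M$ you assert is not literally true when the interior is taken relative to $\M$ (that relative interior can meet $l_0 \cup l_1$); what is true, and what you actually need, is that the intersection $(\downarrow v) \cap \op{int}(\uparrow T^{-1}(v))$, i.e.\ the support of $B_v$, misses $\partial\M$, which a short coordinate check confirms.
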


\begin{proof}
  This follows from \cref{thm:decomp} and the Yoneda \cref{lem:yonedaContra}.
\end{proof}

\begin{figure}[t]
  \centering
  %% Creator: Inkscape 1.1.1 (3bf5ae0d25, 2021-09-20, custom), www.inkscape.org
%% PDF/EPS/PS + LaTeX output extension by Johan Engelen, 2010
%% Accompanies image file 'sole-hood.pdf' (pdf, eps, ps)
%%
%% To include the image in your LaTeX document, write
%%   \input{<filename>.pdf_tex}
%%  instead of
%%   \includegraphics{<filename>.pdf}
%% To scale the image, write
%%   \def\svgwidth{<desired width>}
%%   \input{<filename>.pdf_tex}
%%  instead of
%%   \includegraphics[width=<desired width>]{<filename>.pdf}
%%
%% Images with a different path to the parent latex file can
%% be accessed with the `import' package (which may need to be
%% installed) using
%%   \usepackage{import}
%% in the preamble, and then including the image with
%%   \import{<path to file>}{<filename>.pdf_tex}
%% Alternatively, one can specify
%%   \graphicspath{{<path to file>/}}
%% 
%% For more information, please see info/svg-inkscape on CTAN:
%%   http://tug.ctan.org/tex-archive/info/svg-inkscape
%%
\begingroup%
  \makeatletter%
  \providecommand\color[2][]{%
    \errmessage{(Inkscape) Color is used for the text in Inkscape, but the package 'color.sty' is not loaded}%
    \renewcommand\color[2][]{}%
  }%
  \providecommand\transparent[1]{%
    \errmessage{(Inkscape) Transparency is used (non-zero) for the text in Inkscape, but the package 'transparent.sty' is not loaded}%
    \renewcommand\transparent[1]{}%
  }%
  \providecommand\rotatebox[2]{#2}%
  \newcommand*\fsize{\dimexpr\f@size pt\relax}%
  \newcommand*\lineheight[1]{\fontsize{\fsize}{#1\fsize}\selectfont}%
  \ifx\svgwidth\undefined%
    \setlength{\unitlength}{184.42042923bp}%
    \ifx\svgscale\undefined%
      \relax%
    \else%
      \setlength{\unitlength}{\unitlength * \real{\svgscale}}%
    \fi%
  \else%
    \setlength{\unitlength}{\svgwidth}%
  \fi%
  \global\let\svgwidth\undefined%
  \global\let\svgscale\undefined%
  \makeatother%
  \begin{picture}(1,0.67915469)%
    \lineheight{1}%
    \setlength\tabcolsep{0pt}%
    \put(0,0){\includegraphics[width=\unitlength,page=1]{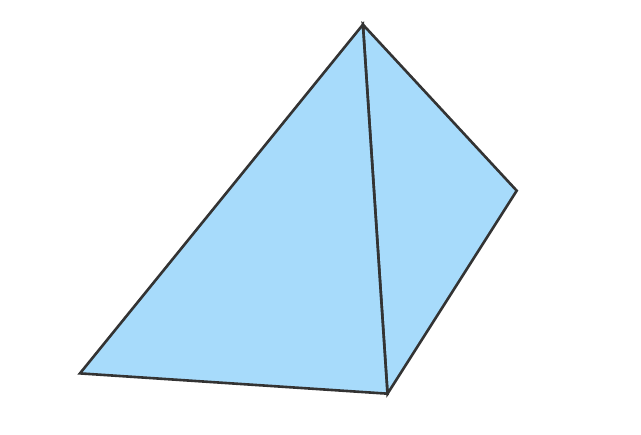}}%
    \put(0.96157527,0.05379434){\makebox(0,0)[t]{\lineheight{1.25}\smash{\begin{tabular}[t]{c}$a$\end{tabular}}}}%
    \put(0,0){\includegraphics[width=\unitlength,page=2]{sole-hood.pdf}}%
    \put(0.96157528,0.37079732){\makebox(0,0)[t]{\lineheight{1.25}\smash{\begin{tabular}[t]{c}$b$\end{tabular}}}}%
    \put(0,0){\includegraphics[width=\unitlength,page=3]{sole-hood.pdf}}%
    \put(0.96157528,0.63016324){\makebox(0,0)[t]{\lineheight{1.25}\smash{\begin{tabular}[t]{c}$c$\end{tabular}}}}%
    \put(0,0){\includegraphics[width=\unitlength,page=4]{sole-hood.pdf}}%
  \end{picture}%
\endgroup%

  %% Creator: Inkscape 1.1.1 (3bf5ae0d25, 2021-09-20, custom), www.inkscape.org
%% PDF/EPS/PS + LaTeX output extension by Johan Engelen, 2010
%% Accompanies image file 'sole-hood-diagram.pdf' (pdf, eps, ps)
%%
%% To include the image in your LaTeX document, write
%%   \input{<filename>.pdf_tex}
%%  instead of
%%   \includegraphics{<filename>.pdf}
%% To scale the image, write
%%   \def\svgwidth{<desired width>}
%%   \input{<filename>.pdf_tex}
%%  instead of
%%   \includegraphics[width=<desired width>]{<filename>.pdf}
%%
%% Images with a different path to the parent latex file can
%% be accessed with the `import' package (which may need to be
%% installed) using
%%   \usepackage{import}
%% in the preamble, and then including the image with
%%   \import{<path to file>}{<filename>.pdf_tex}
%% Alternatively, one can specify
%%   \graphicspath{{<path to file>/}}
%% 
%% For more information, please see info/svg-inkscape on CTAN:
%%   http://tug.ctan.org/tex-archive/info/svg-inkscape
%%
\begingroup%
  \makeatletter%
  \providecommand\color[2][]{%
    \errmessage{(Inkscape) Color is used for the text in Inkscape, but the package 'color.sty' is not loaded}%
    \renewcommand\color[2][]{}%
  }%
  \providecommand\transparent[1]{%
    \errmessage{(Inkscape) Transparency is used (non-zero) for the text in Inkscape, but the package 'transparent.sty' is not loaded}%
    \renewcommand\transparent[1]{}%
  }%
  \providecommand\rotatebox[2]{#2}%
  \newcommand*\fsize{\dimexpr\f@size pt\relax}%
  \newcommand*\lineheight[1]{\fontsize{\fsize}{#1\fsize}\selectfont}%
  \ifx\svgwidth\undefined%
    \setlength{\unitlength}{308.68904114bp}%
    \ifx\svgscale\undefined%
      \relax%
    \else%
      \setlength{\unitlength}{\unitlength * \real{\svgscale}}%
    \fi%
  \else%
    \setlength{\unitlength}{\svgwidth}%
  \fi%
  \global\let\svgwidth\undefined%
  \global\let\svgscale\undefined%
  \makeatother%
  \begin{picture}(1,0.65599996)%
    \lineheight{1}%
    \setlength\tabcolsep{0pt}%
    \put(0,0){\includegraphics[width=\unitlength,page=1]{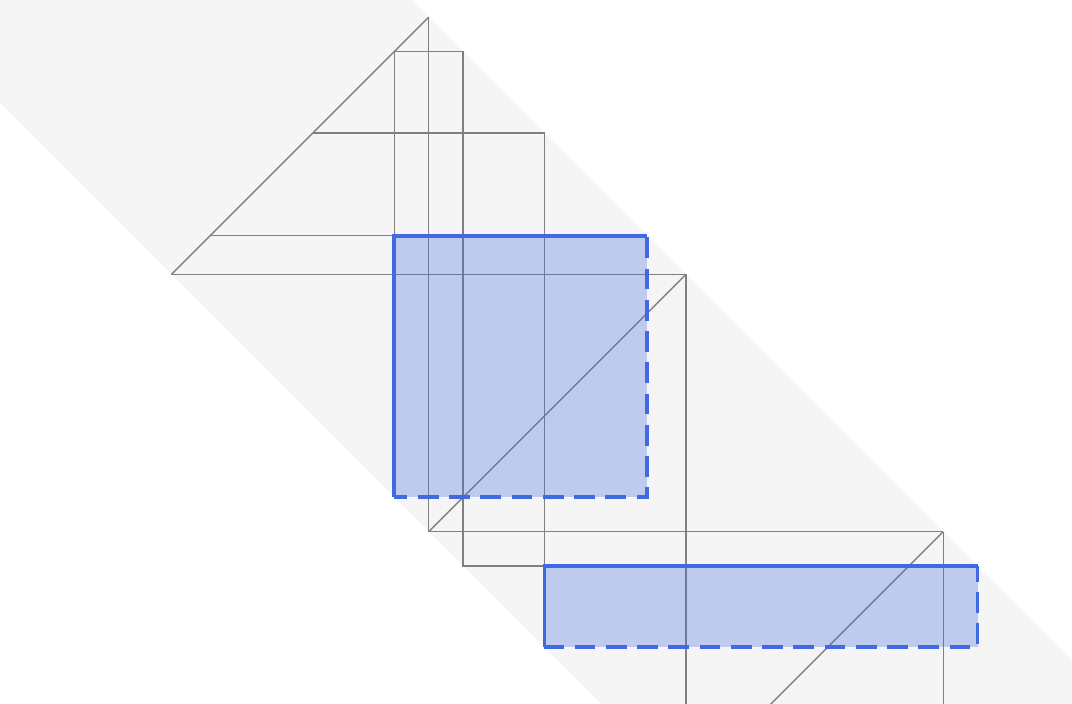}}%
    \put(0.34400006,0.616){\makebox(0,0)[t]{\lineheight{1.25}\smash{\begin{tabular}[t]{c}$\blacktriangle c$\end{tabular}}}}%
    \put(0.27199993,0.53999998){\makebox(0,0)[t]{\lineheight{1.25}\smash{\begin{tabular}[t]{c}$\blacktriangle b$\end{tabular}}}}%
    \put(0.17200003,0.44399997){\makebox(0,0)[t]{\lineheight{1.25}\smash{\begin{tabular}[t]{c}$\blacktriangle a$\end{tabular}}}}%
    \put(0,0){\includegraphics[width=\unitlength,page=2]{sole-hood-diagram.pdf}}%
    \put(0.80066594,0.27133398){\makebox(0,0)[t]{\lineheight{1.25}\smash{\begin{tabular}[t]{c}$l_1$\end{tabular}}}}%
    \put(0.27576441,0.24423559){\makebox(0,0)[t]{\lineheight{1.25}\smash{\begin{tabular}[t]{c}$l_0$\end{tabular}}}}%
  \end{picture}%
\endgroup%

  \caption{
    A geometric simplicial complex in $\R^3$ at the top
    and the two indecompsables of the RISC of its height function
    at the bottom.
  }
  \label{fig:soleHood}
\end{figure}

\begin{figure}[t]
  \centering
  %% Creator: Inkscape 1.0.2 (e86c870879, 2021-01-15, custom), www.inkscape.org
%% PDF/EPS/PS + LaTeX output extension by Johan Engelen, 2010
%% Accompanies image file 'images-filtration.pdf' (pdf, eps, ps)
%%
%% To include the image in your LaTeX document, write
%%   \input{<filename>.pdf_tex}
%%  instead of
%%   \includegraphics{<filename>.pdf}
%% To scale the image, write
%%   \def\svgwidth{<desired width>}
%%   \input{<filename>.pdf_tex}
%%  instead of
%%   \includegraphics[width=<desired width>]{<filename>.pdf}
%%
%% Images with a different path to the parent latex file can
%% be accessed with the `import' package (which may need to be
%% installed) using
%%   \usepackage{import}
%% in the preamble, and then including the image with
%%   \import{<path to file>}{<filename>.pdf_tex}
%% Alternatively, one can specify
%%   \graphicspath{{<path to file>/}}
%% 
%% For more information, please see info/svg-inkscape on CTAN:
%%   http://tug.ctan.org/tex-archive/info/svg-inkscape
%%
\begingroup%
  \makeatletter%
  \providecommand\color[2][]{%
    \errmessage{(Inkscape) Color is used for the text in Inkscape, but the package 'color.sty' is not loaded}%
    \renewcommand\color[2][]{}%
  }%
  \providecommand\transparent[1]{%
    \errmessage{(Inkscape) Transparency is used (non-zero) for the text in Inkscape, but the package 'transparent.sty' is not loaded}%
    \renewcommand\transparent[1]{}%
  }%
  \providecommand\rotatebox[2]{#2}%
  \newcommand*\fsize{\dimexpr\f@size pt\relax}%
  \newcommand*\lineheight[1]{\fontsize{\fsize}{#1\fsize}\selectfont}%
  \ifx\svgwidth\undefined%
    \setlength{\unitlength}{364.27896881bp}%
    \ifx\svgscale\undefined%
      \relax%
    \else%
      \setlength{\unitlength}{\unitlength * \real{\svgscale}}%
    \fi%
  \else%
    \setlength{\unitlength}{\svgwidth}%
  \fi%
  \global\let\svgwidth\undefined%
  \global\let\svgscale\undefined%
  \makeatother%
  \begin{picture}(1,0.46324387)%
    \lineheight{1}%
    \setlength\tabcolsep{0pt}%
    \put(0,0){\includegraphics[width=\unitlength,page=1]{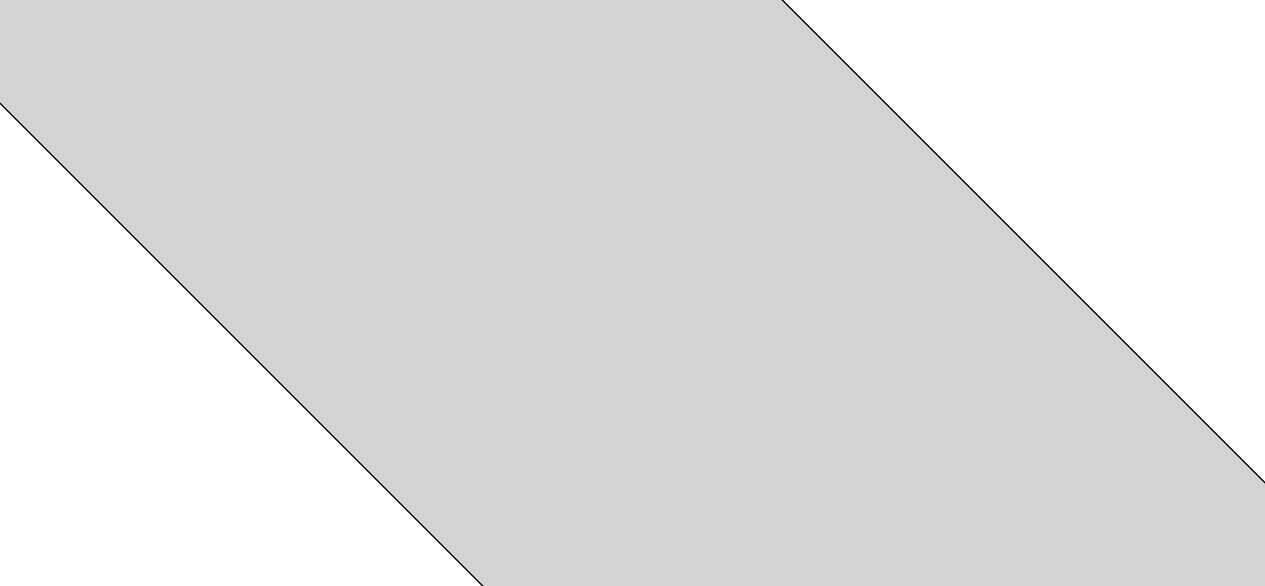}}%
    \put(0.31532702,0.36502155){\makebox(0,0)[t]{\lineheight{1.25}\smash{\begin{tabular}[t]{c}$u_{0,0}$\end{tabular}}}}%
    \put(0.6834031,0.10766014){\makebox(0,0)[t]{\lineheight{1.25}\smash{\begin{tabular}[t]{c}$u$\end{tabular}}}}%
    \put(0.7154919,0.0913241){\makebox(0,0)[lt]{\lineheight{1.25}\smash{\begin{tabular}[t]{l}$y$\end{tabular}}}}%
    \put(0.7154919,0.37960735){\makebox(0,0)[lt]{\lineheight{1.25}\smash{\begin{tabular}[t]{l}$y_0$\end{tabular}}}}%
    \put(0.6979889,0.06798691){\makebox(0,0)[t]{\lineheight{1.25}\smash{\begin{tabular}[t]{c}$x$\end{tabular}}}}%
    \put(0.28615562,0.06798691){\makebox(0,0)[t]{\lineheight{1.25}\smash{\begin{tabular}[t]{c}$x_0$\end{tabular}}}}%
    \put(0,0){\includegraphics[width=\unitlength,page=2]{images-filtration.pdf}}%
    \put(0.86312792,0.25122456){\makebox(0,0)[t]{\lineheight{1.25}\smash{\begin{tabular}[t]{c}$l_1$\end{tabular}}}}%
    \put(0.13802423,0.20269925){\makebox(0,0)[t]{\lineheight{1.25}\smash{\begin{tabular}[t]{c}$l_0$\end{tabular}}}}%
  \end{picture}%
\endgroup%

  \caption{The horizontal and vertical filtrations of $F(u)$
    by images.}
  \label{fig:imagesFiltration}
\end{figure}

In \cref{fig:soleHood} we show a geometric simplicial complex in $\R^3$
and the two indecomposables of the RISC of its height function.
Now in order to show that $\varphi$ is a natural isomorphism,
we have to show pointwise that
\[\varphi_u \colon
  \bigoplus_{v \in \op{int} \M} (B_v (u))^{\oplus \mu(v)} \longrightarrow F(u)\]
is an isomorphism
for all $u := (x, y) \in \op{int} \M$.
To this end, we fix some notation,
which we use in the proof of \cref{prp:decomp}
and auxiliary lemmas. 
As depicted in \cref{fig:imagesFiltration},
let $x_0$ be the $x$-coordinate of the intersection
of $l_0$ and the horizontal line through $u$,
let ${x_1 < x_2 < \dots < x_{k-1}}$
be the points of discontinuity of the function
\begin{equation}
  \label{eq:rank}
  (x_0, x) \rightarrow \N_0, \,
  s \mapsto \op{rank} F(s \leq x, y)
  ,
\end{equation}
and let $x_k := x$.
Similarly,
let $y_0$ be the intersection of $l_1$
and the vertical line through $u$,
let ${y_1 > y_2 > \dots > y_{l-1}}$
be the points of discontinuity of the function
\begin{equation*}
  (y, y_0) \rightarrow \N_0, \,
  t \mapsto \op{rank} F(x, y \leq t)
  ,
\end{equation*}
and let $y_l := y$.
Moreover, we set
$u_{(i,j)} := (x_i, y_j)$
for $i = 0, \dots, k$ and $j = 0, \dots, l$,
then we have
$u = u_{(k,l)}$ and $T (u) = u_{(0,0)}$.
With some abuse of notation, we may also drop the parentheses
and write $u_{i,j}$ in place of $u_{(i, j)}$.
Furthermore, let $\preceq$ be the
\href{
  https://en.wikipedia.org/wiki/Lexicographical_order#Colexicographic_order
}{colexicographic order}
on ${I := \{0, \dots, k\} \times \{0, \dots, l\}}$,
which is defined by
\begin{equation*}
  (i, j) \preceq (i', j')
  \quad
  : \Leftrightarrow
  \quad
  j < j' ~\vee~ (j = j' ~\wedge~ i < i')
  .
\end{equation*}
For any contravariant functor 
$G \colon \M^{\circ} \rightarrow \mathrm{Vect}_K$
vanishing on $\partial \M$
and $\zeta \in I$
we set
\begin{equation*}
  G_{\zeta} := \sum_{\xi \preceq \zeta} \op{Im} G(u \preceq u_{\xi})
\end{equation*}
to obtain the natural filtration
\begin{equation*}
  \bigcup_{\zeta \in I} G_{\zeta} = G(u)
  .
\end{equation*}
For a pair $\zeta := (i, j) \in I$ we will drop the parentheses
in the index and write
$G_{i,j} = G_{\zeta}$ in place of $G_{(i, j)}$.
With this notation we may write the filtration
$\bigcup_{\zeta \in I} G_{\zeta}$ as
\begin{align*}
  \{0\} & = G_{0, 0} = G_{1, 0} = G_{2, 0}
          = \dots = G_{k, 0}
  \\
        & = G_{0, 1} \subseteq G_{1, 1} \subseteq G_{2, 1}
          \subseteq \dots \subseteq G_{k, 1}
  \\
        & = G_{0, 2} \subseteq G_{1, 2} \subseteq G_{2, 2}
          \subseteq \dots \subseteq G_{k, 2}
  \\
        & \qquad \vdots
  \\
        & = G_{0, l} \subseteq G_{1, l} \subseteq G_{2, l}
          \subseteq \dots \subseteq G_{k, l} = G(u)
          ,
\end{align*}
see also \cref{fig:filtration}.
We may describe this filtration more concretely using the equations
\begin{align}
  \label{eq:filtr0}
  G_{i,0} &= 0 & \text{for}~ & i = 0, \dots, k, \\
  \label{eq:filtrWrap}
  G_{k,j-1} &= G_{0,j} & \text{for}~ & j = 1, \dots, l, ~\text{and} \\
  \label{eq:filtr}
  G_{i,j} &= \op{Im} G(u \preceq u_{i,j}) +
            \op{Im} G(u \preceq u_{k,j-1}) & \text{for}~ & j = 1, \dots, l
                                                           ~\text{and}~
                                                           i = 0, \dots, k.
\end{align}

\begin{figure}[t]
  \centering
  \import{strip-diagrams/_diagrams/}{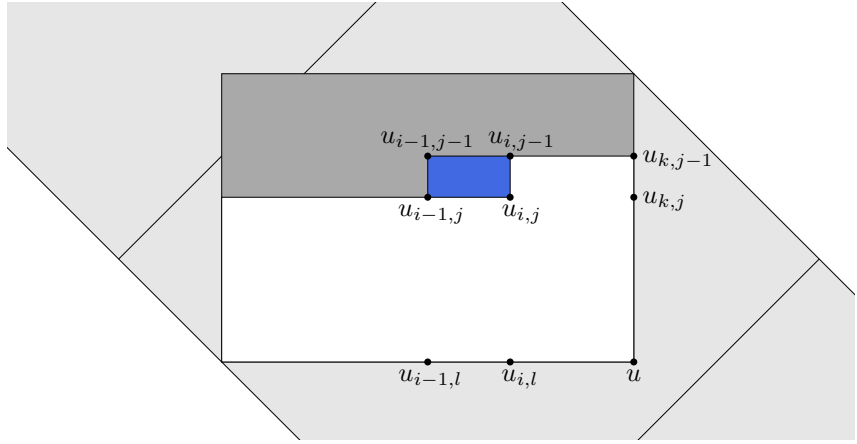}
  \caption{
    The filtration of $G(u)$
    in terms of the colexicographic order on $I$.
    The large axis-aligned rectangle contains all points
    such that the corresponding image in $G(u)$
    can be non-zero.
    The subspace $G_{i-1,j} \subseteq G(u)$
    is the sum of the images in $G(u)$
    corresponding to points in the region shaded in dark grey.
    If we add the images corresponding to points in the blue rectangle
    (or just the image corresponding to the lower right vertex $u_{i,j}$),
    then we obtain $G_{i,j}$ as the next step in the filtration.
  }
  \label{fig:filtration}
\end{figure}

\begin{proof}[Proof of \cref{prp:decomp}]
  Now let $H := \bigoplus_{v \in \op{int} \M} (B_v)^{\oplus \mu(v)}$.
  We show that
  \begin{equation*}
    \varphi_{\xi} \colon H_{\xi} \rightarrow F_{\xi}
  \end{equation*}
  is an isomorphism for all $\xi \in I$ by induction on $\xi$.
  By \eqref{eq:filtr0} the map $\varphi_{i, 0}$ is an isomorphism
  for all ${i = 0, \dots, k}$.
  Moreover, $\varphi_{0, j}$ is an isomorphism
  if $\varphi_{k,j-1}$ is an isomorphism
  for all ${j = 1, \dots, l}$ by \eqref{eq:filtrWrap}.
  Thus, in order to complete our proof by induction,
  it suffices to show that
  $\varphi_{i,j} \colon H_{i,j} \rightarrow F_{i,j}$
  is an isomorphism whenever
  $\varphi_{i-1,j} \colon H_{i-1,j} \rightarrow F_{i-1,j}$
  is an isomorphism
  for all $i = 1, \dots, k$ and $j = 1, \dots, l$.
  To this end, we consider the commutative diagram
  \begin{equation*}
    \begin{tikzcd}
      0
      \arrow[r]
      &
      H_{i-1,j}
      \arrow[r]
      \arrow[d, "\varphi_{i-1,j}"']
      &
      H_{i,j}
      \arrow[r]
      \arrow[d, "\varphi_{i,j}"]
      &
      H_{i,j} / H_{i-1,j}
      \arrow[r]
      \arrow[d]
      &
      0
      \\
      0
      \arrow[r]
      &
      F_{i-1,j}
      \arrow[r]
      &
      F_{i,j}
      \arrow[r]
      &
      F_{i,j} / F_{i-1,j}
      \arrow[r]
      &
      0    
    \end{tikzcd}
  \end{equation*}
  with exact rows.
  By the five lemma, it suffices to show that the vertical map
  on the right hand side is an isomorphism.
  To this end, we note that $H$ is cohomological,
  as it is a direct sum of cohomological functors.
  Thus, by \cref{lem:isoCoho} below,
  there is a commutative square
  \begin{equation*}
    \begin{tikzcd}[column sep=4em, row sep=5.5ex]
      \dfrac{
        H(u_{i,j})
      }{
        % \op{Im} H(x_{i-1} \leq x_i, y_j)
        % +
        % \op{Im} H(x_i, y_j \leq y_{j-1})
        \op{Im} H(u_{i,j} \preceq u_{i-1,j})
        +
        \op{Im} H(u_{i,j} \preceq u_{i,j-1})
      }
      \arrow[r, "\sim"]
      \arrow[d]
      &
      \dfrac{
        H_{i,j}
      }{
        H_{i-1,j}
      }
      \arrow[d]
      \\
      \dfrac{
        F(u_{i,j})
      }{
        % \op{Im} F(x_{i-1} \leq x_i, y_j)
        % +
        % \op{Im} F(x_i, y_j \leq y_{j-1})
        \op{Im} F(u_{i,j} \preceq u_{i-1,j})
        +
        \op{Im} F(u_{i,j} \preceq u_{i,j-1})
      }
      \arrow[r, "\sim"]
      &
      \dfrac{
        F_{i,j}
      }{
        F_{i-1,j}
      }
      ,
    \end{tikzcd}
  \end{equation*}
  where the two vertical maps are induced by $\varphi$.
  As the two horizontal maps are isomorphisms by \cref{lem:isoCoho},
  it remains to show that the vertical map on the left hand side
  is an isomorphism.
  To this end, we consider the commutative square
  \begin{equation*}
    \begin{tikzcd}
      \dfrac{
        H(u_{i,j})
      }{
        % \op{Im} H(x_{i-1} \leq x_i, y_j)
        % +
        % \op{Im} H(x_i, y_j \leq y_{j-1})
        \op{Im} H(u_{i,j} \preceq u_{i-1,j})
        +
        \op{Im} H(u_{i,j} \preceq u_{i,j-1})
      }
      \arrow[r]
      \arrow[d]
      &
      \dfrac{H(u_{i,j})} %\Big/
      {\sum\limits_{w \succ u_{i,j}} \op{Im} H(u_{i,j} \preceq w)}
      \arrow[d]
      \\
      \dfrac{
        F(u_{i,j})
      }{
        % \op{Im} F(x_{i-1} \leq x_i, y_j)
        % +
        % \op{Im} F(x_i, y_j \leq y_{j-1})
        \op{Im} F(u_{i,j} \preceq u_{i-1,j})
        +
        \op{Im} F(u_{i,j} \preceq u_{i,j-1})
      }
      \arrow[r]
      &
      \dfrac{F(u_{i,j})} %\Big/
      {\sum\limits_{w \succ u_{i,j}} \op{Im} F(u_{i,j} \preceq w)}
      ,
    \end{tikzcd}
  \end{equation*}
  where the vertical maps are induced by $\varphi$
  and the horizontal maps are induced by the internal maps
  of $H$ and $F$ respectively.
  We have to show that the vertical map on the left hand side
  is an isomorphism.
  The horizontal map at the top
  and the horizontal map at the bottom
  are isomorphisms by \cref{cor:isoH,lem:isoF},
  respectively.
  Thus,
  it suffices to show that the vertical map on the right hand side
  is an isomorphism.
  To this end,
  we consider the commutative diagram
  \begin{equation*}
    \begin{tikzcd}
      &
      K^{\mu(u_{i,j})}
      \arrow[d, "~\sim" {anchor=north, rotate=90}]
      \\
      &
      B_{u_{i,j}}^{\oplus \mu(u_{i,j})} (u_{i,j})
      \arrow[d, "\iota"]
      \\
      \displaystyle\sum\limits_{w \succ u_{i,j}} \op{Im} H(u_{i,j} \preceq w)
      \arrow[r, hook]
      \arrow[d]
      &
      H(u_{i,j})
      \arrow[r]
      \arrow[d, "\varphi_{u_{i,j}}"]
      &
      \dfrac{H(u_{i,j})} %\Big/
      {\sum\limits_{w \succ u_{i,j}} \op{Im} H(u_{i,j} \preceq w)}
      \arrow[d]
      \\
      \displaystyle\sum\limits_{w \succ u_{i,j}} \op{Im} F(u_{i,j} \preceq w)
      \arrow[r, hook]
      &
      F(u_{i,j})
      \arrow[r]
      &
      \dfrac{F(u_{i,j})} %\Big/
      {\sum\limits_{w \succ u_{i,j}} \op{Im} F(u_{i,j} \preceq w)}
      .
    \end{tikzcd}
  \end{equation*}
  Now the image of $\iota$ is a complement of
  $\sum\limits_{w \succ u_{i,j}} \op{Im} H(u_{i,j} \preceq w)$
  in $H(u_{i,j})$.
  Moreover, by the construction of $\varphi$,
  the composition of the three vertical maps in the center
  map the standard basis of $K^{\mu(u_{i,j})}$
  to a basis for a complement $C$ of
  $\sum\limits_{w \succ u_{i,j}} \op{Im} F(u_{i,j} \preceq w)$
  in $F(u_{i,j})$.
  As a result, the map $\varphi_{u_{i,j}}$
  maps $\op{Im}(\iota)$ isomorphically onto $C$,
  hence the vertical map on the right hand side is an isomorphism.
  As $u \in \op{int} \M$ was arbitrary,
  $\varphi$ is a natural isomorphism.
\end{proof}

\begin{lem}
  \label{lem:isoCoho}
  For any cohomological functor
  ${G \colon \M^{\circ} \rightarrow \mathrm{Vect}_K}$
  and any pair of indices ${(i, j) \in \{1, \dots, k\} \times \{1, \dots, l\}}$
  there is an isomorphism
  \begin{equation*}
    \frac{
      G(u_{i,j})
    }{
      \op{Im} G(u_{i,j} \preceq u_{i-1,j})
      +
      \op{Im} G(u_{i,j} \preceq u_{i,j-1})
    }
    \xlongrightarrow{\sim}
    \frac{
      G_{i,j}
    }{
      G_{i-1,j}
    }
  \end{equation*}
  natural in $G$.
\end{lem}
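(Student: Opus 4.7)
The natural candidate map is
\[
\Phi_G \colon \frac{G(u_{i,j})}{D_{i,j}} \longrightarrow \frac{G(u)}{G_{i-1,j}}, \qquad D_{i,j} := \op{Im} G(u_{i,j} \preceq u_{i-1,j}) + \op{Im} G(u_{i,j} \preceq u_{i,j-1}),
\]
induced by the internal map $G(u \preceq u_{i,j}) \colon G(u_{i,j}) \to G(u)$ composed with the quotient $G(u) \twoheadrightarrow G(u)/G_{i-1,j}$. For well-definedness I check that the two summands of $D_{i,j}$ map to zero. By functoriality, $G(u \preceq u_{i,j}) \circ G(u_{i,j} \preceq u_{i-1,j}) = G(u \preceq u_{i-1,j})$ has image in $G_{i-1,j}$. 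For the other summand, the chain $u \preceq u_{k,j-1} \preceq u_{i,j-1}$ factorizes $G(u \preceq u_{i,j-1})$ through $G(u \preceq u_{k,j-1})$, so the image lies in $\op{Im} G(u \preceq u_{k,j-1}) \subseteq G_{i-1,j}$. Naturality in $G$ is immediate. Surjectivity onto $G_{i,j}/G_{i-1,j}$ follows from $G_{i,j} = \op{Im} G(u \preceq u_{i,j}) + G_{i-1,j}$, which combines \eqref{eq:filtr} with the fact that all images corresponding to points $\preceq (k,j-1)$ colexicographically factor through $u_{k,j-1}$.

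The crux is injectivity. The cohomological property of $G$, applied to the axis-aligned rectangle $u_{i,j} \preceq u_{i-1,j}, u_{i,j-1} \preceq u_{i-1,j-1}$, yields a Mayer--Vietoris long exact sequence whose exactness at $G(u_{i,j})$ reads
\[
\op{Im}\bigl(G(u_{i-1,j}) \oplus G(u_{i,j-1}) \xrightarrow{\psi} G(u_{i,j})\bigr) = \ker\bigl(\delta \colon G(u_{i,j}) \to G(T^{-1}(u_{i-1,j-1}))\bigr),
\]
with $\psi(a,b) = G(u_{i,j} \preceq u_{i-1,j})(a) - G(u_{i,j} \preceq u_{i,j-1})(b)$, so the left-hand side is exactly $D_{i,j}$. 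Using the coordinate description $T^{-1}(x,y) = (\pi - y, -\pi - x)$ together with the fact that $u_{k, j-1}$ and $u_{i-1, l}$ lie in $\M$, one verifies $T^{-1}(u_{i-1,j-1}) \preceq u \preceq u_{i,j}$. By functoriality this yields the factorization $\delta = G(T^{-1}(u_{i-1,j-1}) \preceq u) \circ G(u \preceq u_{i,j})$. To conclude $\ker \Phi_G \subseteq \ker \delta = D_{i,j}$, it then suffices to show that the auxiliary map $g := G(T^{-1}(u_{i-1,j-1}) \preceq u)$ annihilates $G_{i-1,j}$; by functoriality this reduces to verifying vanishing of the two internal maps $G(T^{-1}(u_{i-1,j-1}) \preceq u_{i-1,j})$ and $G(T^{-1}(u_{i-1,j-1}) \preceq u_{k,j-1})$.

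The main obstacle will be these two vanishing statements. I expect to derive them by applying the cohomological property to axis-aligned rectangles with $T^{-1}(u_{i-1,j-1})$ as the bottom vertex and $u_{i-1,j}$ respectively $u_{k,j-1}$ as the top vertex, so that the internal map in question appears as a composition through consecutive terms of the associated Mayer--Vietoris exact sequence. The technical work lies in choosing the two middle corners of each rectangle (some of which may end up on $\partial \M$) and unwinding the exactness statement; the geometric structure of the tessellation of $\M$ by fundamental domains and their $T$-translates guides this choice, and together with naturality of the connecting maps closes the diagram chase.
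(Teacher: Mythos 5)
Your proof is correct but takes a genuinely different route from the paper. The paper's proof assembles a $3 \times 3$ grid of middle exact squares from $G(u_{i-1,j-1})$ down to $G(u)$ and invokes \cref{prp:4Squares}, which is a lattice-theoretic argument (two applications of \cref{lem:twoMiddleExactSquares}, the modular law, and the isomorphism theorems); the quotient $G_{i,j}/G_{i-1,j}$ is reached by rewriting via \eqref{eq:filtr}. You instead work directly with the long exact sequence \eqref{eq:mvs} for the rectangle $u_{i,j} \preceq u_{i-1,j}, u_{i,j-1} \preceq u_{i-1,j-1}$, identify $D_{i,j}$ as the kernel of the connecting map $\delta = G(T^{-1}(u_{i-1,j-1}) \preceq u_{i,j})$, factor $\delta$ through $G(u)$, and reduce injectivity to the two vanishing claims $G(T^{-1}(u_{i-1,j-1}) \preceq u_{i-1,j}) = 0$ and $G(T^{-1}(u_{i-1,j-1}) \preceq u_{k,j-1}) = 0$. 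These are real statements to prove (they do not follow from \cref{lem:zero}, since $u_{i-1,j} \preceq u_{i-1,j-1}$ holds), but your sketch is on target: for each, the axis-aligned rectangle with opposite corners $T^{-1}(u_{i-1,j-1})$ and $u_{i-1,j}$ (respectively $u_{k,j-1}$) lies inside the tile $[T^{-1}(u_{i-1,j-1}), u_{i-1,j-1}]$ and has one of its two remaining corners on $\partial\M$ (namely $(x_{i-1}, -\pi-x_{i-1}) \in l_0$, resp.\ $(\pi - y_{j-1}, y_{j-1}) \in l_1$), so the sequence \eqref{eq:mvs} for that rectangle reduces to three consecutive internal maps of $G$, and vanishing of the composite is just the complex property $d^2 = 0$. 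One small point worth making explicit, since the paper never labels the arrow $F(u) \to F(T^{-1}(w))$ in \eqref{eq:mvs}: that connecting map really is the internal map $F(T^{-1}(w) \preceq u)$ — this is visible from the proof of \cref{prp:cohomological} where \eqref{eq:restrF} arises as a restriction of $F$ to a linear subposet and \eqref{eq:mvs} is obtained from two such by Barratt--Whitehead, with the splice also an internal map by functoriality — and your factorization $\delta = G(T^{-1}(u_{i-1,j-1}) \preceq u) \circ G(u \preceq u_{i,j})$ depends on it. What the paper's route buys is uniformity: \cref{prp:4Squares} is reused in \cref{lem:aux} and \cref{prp:qtame}, so the machinery pays for itself. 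What your route buys is a more elementary and self-contained argument for this one lemma, relying only on exactness of the Mayer--Vietoris sequence rather than the subspace-lattice manipulations in \cref{sec:middleExact}.
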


\begin{proof}
  We consider the commutative diagram
  \begin{equation*}
    \begin{tikzcd}
      G(u_{i-1, j-1})
      \arrow[r]
      \arrow[d]
      &
      G(u_{i, j-1})
      \arrow[r]
      \arrow[d]
      &
      G(u_{k, j-1}) = G(x, y_{j-1})
      \arrow[d]
      \\
      G(u_{i-1, j})
      \arrow[r]
      \arrow[d]
      &
      G(u_{i, j})
      \arrow[r]
      \arrow[d]
      &
      G(u_{k, j}) = G(x, y_j)
      \arrow[d]
      \\
      G(u_{i-1,l})
      \arrow[r]
      &
      G(u_{i,l})
      \arrow[r]
      &
      G(u) = G(x, y)
      .
    \end{tikzcd}
  \end{equation*}
  By \cref{prp:cohomological}.4
  all squares in this diagram are middle exact,
  see also \cref{fig:filtration}.
  Thus, by \cref{prp:4Squares} the map
  $G(u \preceq u_{i,j})$ induces an isomorphism
  \begin{equation*}
    \frac{
      G(u_{i,j})
    }{
      \op{Im} G(u_{i,j} \preceq u_{i-1,j})
      +
      \op{Im} G(u_{i,j} \preceq u_{i,j-1})
    }
    \xrightarrow{\sim}
    \frac{
      \op{Im} G(u \preceq u_{i,j})
      +
      \op{Im} G(u \preceq u_{k,j-1})
    }{
      \op{Im} G(u \preceq u_{i-1,j})
      +
      \op{Im} G(u \preceq u_{k,j-1})
    }
    .
  \end{equation*}
  Moreover, by \eqref{eq:filtr}
  the codomain of this isomorphism is
  $G_{i,j} / G_{i-1,j}$ and thus we may write this isomorphism also as
  \begin{equation*}
    \frac{
      G(u_{i,j})
    }{
      \op{Im} G(u_{i,j} \preceq u_{i-1,j})
      +
      \op{Im} G(u_{i,j} \preceq u_{i,j-1})
    }
    \xlongrightarrow{\sim}
    \frac{
      G_{i,j}
    }{
      G_{i-1,j}
    }
    .
    \qedhere
  \end{equation*}
\end{proof}

Before we prove \cref{cor:isoH,lem:isoF}
we need to establish three auxiliary results.
To this end, we note that the inclusion
\begin{equation*}
  \{x_i \mid i = 0, \dots, k\}
  \hookrightarrow
  [x_0, x]
\end{equation*}
has the upper adjoint
\begin{equation*}
  r_1 \colon
  [x_0, x] \rightarrow \{x_i \mid i = 0, \dots, k\}, \,
  s \mapsto \max \{x_i \mid x_i \leq s\}
  .
\end{equation*}
Similarly
\begin{equation*}
  r_2 \colon
  [y, y_0] \rightarrow \{y_j \mid j = 0, \dots, l\}, \,
  t \mapsto \min \{y_j \mid t \leq y_j\}
\end{equation*}
is the lower adjoint of the inclusion
\begin{equation*}
  \{y_j \mid j = 0, \dots, l\}
  \hookrightarrow
  [y, y_0]
  .
\end{equation*}

\begin{lem}
  \label{lem:discImages}
  We have
  \begin{align*}
    \op{Im} F(r_1(s) \leq x, y)
    & = \op{Im} F(s \leq x, y)
    &
    & \text{for all $s \in [x_0, x]$ as well as}
    \\
    \label{eq:yReflect}
    \op{Im} F(x, y \leq r_2(t))
    & = \op{Im} F(x, y \leq t)
    &
    & \text{for all $t \in [y, y_0]$.}
  \end{align*}
\end{lem}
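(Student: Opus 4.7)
The plan is to prove the first equation; the second for $r_2$ follows symmetrically by approaching $y_{j-1}$ via a strictly increasing sequence $t_n \nearrow y_{j-1}$ and invoking sequential continuity at $(x, y_{j-1})$. If $s = x_i$ for some $i$, the statement is trivial since $r_1(s) = s$; otherwise $s \in (x_i, x_{i+1})$ for some $i$ (with the convention $x_k := x$), whence $r_1(s) = x_i$. The inclusion $\op{Im} F(x_i \leq x, y) \subseteq \op{Im} F(s \leq x, y)$ is immediate from the functorial factorization $F(x_i \leq x, y) = F(s \leq x, y) \circ F(x_i \leq s, y)$.

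For the reverse inclusion, the crucial observation is that, by the definition of $x_1, \dots, x_{k-1}$ as the only discontinuities of the rank function $s \mapsto \op{rank} F(s \leq x, y)$, this monotone integer-valued function is constant on $(x_i, x_{i+1})$. Consequently, for any $s' \in (x_i, s)$ the inclusion $\op{Im} F(s' \leq x, y) \subseteq \op{Im} F(s \leq x, y)$ is an equality by dimension count. I then fix a strictly decreasing sequence $s_n \searrow x_i$ with $s_n \in (x_i, s)$, so that $(s_n, y) \nearrow (x_i, y)$ is an increasing sequence in $\M$ converging to $(x_i, y)$, and sequential continuity of $F$ gives $F(x_i, y) \cong \varprojlim_n F(s_n, y)$. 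For any $v \in \op{Im} F(s \leq x, y) = \op{Im} F(s_n \leq x, y)$ I form the non-empty affine fibres $V_n := F(s_n \leq x, y)^{-1}(v)$; functoriality ensures the transition maps $F(s_{n+1}, y) \to F(s_n, y)$ restrict to $V_{n+1} \to V_n$. Once I have a compatible sequence $(w_n) \in \varprojlim_n V_n$, the sequential continuity isomorphism produces $w \in F(x_i, y)$ with $F(x_i \leq x, y)(w) = v$, establishing $v \in \op{Im} F(x_i \leq x, y)$.

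The main obstacle is thus showing $\varprojlim_n V_n \neq \emptyset$, and this is where pointwise finite-dimensionality of $F$ is indispensable. I would argue that each $V_n$ is a finite-dimensional affine subspace, so the decreasing chain $(\op{Im}(V_m \to V_n))_{m \geq n}$ of non-empty affine subspaces of $V_n$ must stabilize, since a strictly decreasing chain of affine subspaces in a finite-dimensional vector space has strictly decreasing direction subspaces and hence finite length. The stabilized values $I_n \subseteq V_n$ then form an inverse subsystem with surjective transition maps $I_{n+1} \twoheadrightarrow I_n$, and the standard Mittag--Leffler criterion for surjective inverse systems of non-empty sets yields $\varprojlim_n I_n \neq \emptyset$, completing the argument.
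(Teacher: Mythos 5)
Your proof is correct and takes essentially the same route as the paper's: reduce via constancy of the rank function on $(x_i, x_{i+1})$, use sequential continuity to identify $F(x_i, y) \cong \varprojlim_n F(s_n, y)$, and invoke pointwise finite-dimensionality to lift a class from $\bigcap_n \op{Im} F(s_n \leq x, y)$ to the limit. The only difference is that you spell out the lifting explicitly via the affine-fiber Mittag--Leffler argument, whereas the paper asserts directly that the image of $\varprojlim_{s>s_0} F(s,y) \to F(u)$ coincides with $\bigcap_{s>s_0} \op{Im} F(s \leq x, y)$ (implicitly appealing to exactness of inverse limits of pfd systems, the same fact your argument proves).
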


\begin{proof}
  We prove the first equation,
  the second can be shown in an analogous manner.
  To this end, we consider the filtration
  \begin{equation*}
    \bigcup_{s \leq x} \op{Im} F(s \leq x, y) = F(x, y) = F(u)
    .
  \end{equation*}
  For $s_0 \in [x_0, x)$ the canonical map
  \begin{equation}
    \label{eq:xLim}
    F(s_0, y) \longrightarrow \varprojlim_{s > s_0} F(s, y)
  \end{equation}
  is an isomorphism by the sequential continuity of $F$.
  As a result, the image of the canonical map
  \begin{equation*}
    \varprojlim_{s > s_0} F(s, y) \longrightarrow F(x, y) = F(u)
  \end{equation*}
  and the image $\op{Im} F(s_0 \leq x, y)$ are the same.
  Moreover, the image of \eqref{eq:xLim} and the intersection
  \begin{equation*}
    \bigcap_{s > s_0} \op{Im} F(s \leq x, y)
  \end{equation*}
  are identical, hence
  \begin{equation*}
    \label{eq:xCap}
    \op{Im} F(s_0 \leq x, y) =
    \bigcap_{s > s_0} \op{Im} F(s \leq x, y)
    .
  \end{equation*}
  As a result of this equation,
  the function \eqref{eq:rank} is
  \href{
    https://en.wikipedia.org/wiki/Semi-continuity#Upper_semicontinuity_at_a_point
  }{upper semi-continuous},
  i.e. the superlevel sets of \eqref{eq:rank} are closed.
  Moreover, as $x_1, \dots, x_{k-1}$ are by definition
  the points of discontinuity of \eqref{eq:rank}, we have
  \begin{equation*}
    \op{Im} F(x_i \leq x, y) = \op{Im} F(s \leq x, y)
  \end{equation*}
  for all ${i = 0, \dots, k-1}$ and ${s \in [x_i, x_{i+1})}$.
  Using the upper adjoint
  ${r_1 \colon [x_0, x] \rightarrow \{x_i \mid i = 0, \dots, k\}}$
  we can state this last equation
  without explicit quantification over $\{0, \dots, k-1\}$ as
  \begin{equation*}
    \op{Im} F(r_1(s) \leq x, y) = \op{Im} F(s \leq x, y)
    \quad
    \text{for all $s \in [x_0, x]$.}
    \qedhere
  \end{equation*}
\end{proof}

Now suppose we have
\begin{align*}
  v := (v_1, v_2) \in [u, T(u)] & = [x_0, x] \times [y, y_0]
  \\
  \text{and} \qquad \qquad
  (s, t) \in [v, T(u)] & = [x_0, v_1] \times [v_2, y_0]
                         .
\end{align*}
We consider the commutative square
\begin{equation*}
  \begin{tikzcd}[row sep=7ex, column sep=11ex]
    F(s, t)
    \arrow[r]
    \arrow[d]
    &
    F(v_1, t)
    \arrow[d, "{F(v_1, v_2 \leq t)}"]
    \\
    F(s, v_2)
    \arrow[r, "{F(s \leq v_1, v_2)}"']
    &
    F(v)
  \end{tikzcd}
\end{equation*}
and we define
\begin{equation*}
  F_v (s, t) :=
  \op{Im} F(s \leq v_1, v_2) + \op{Im} F(v_1, v_2 \leq t)
  .
\end{equation*}
Moreover, let
\begin{equation*}
  r := r_1 \times r_2 \colon
  % [x_0, x] \times [y, y_0] =
  [u, T(u)] \rightarrow
  \{u_{\xi} \mid \xi \in I\}
  ,
\end{equation*}
i.e. $r$ is the lower adjoint to the inclusion
$\{u_{\xi} \mid \xi \in I\} \hookrightarrow [u, T(u)]$.
Then we have
\begin{equation*}
  (F_v \circ r)(s, t) \subseteq F_v (s, t)
  .
\end{equation*}
Furthermore,
in the special case that $v = u$ we have equality:
\begin{equation*}
  (F_u \circ r)(s, t) = F_u (s, t)
  ;
\end{equation*}
as a result of \cref{lem:discImages}.
By the following lemma this is true even when $v \neq u$.

\begin{lem}
  \label{lem:aux}
  We have $(F_v \circ r)(s, t) = F_v (s, t)$.
\end{lem}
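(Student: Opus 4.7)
The inclusion $(F_v \circ r)(s,t) \subseteq F_v(s,t)$ being already recorded, I aim to establish the reverse inclusion. As noted in the text, the case $v = u$ is immediate from \cref{lem:discImages}, so the substance of the proof is to promote that discretization from target $u$ to a general target $v \in [u, T(u)]$.

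By the symmetry between the two coordinates of $\M$ under the glide reflection $T$, I split the refinement $(s,t) \rightsquigarrow r(s,t) = (r_1(s), r_2(t))$ into two one-variable steps. It suffices to prove
\begin{equation*}
\op{Im} F(s \leq v_1, v_2) \;\subseteq\; \op{Im} F(r_1(s) \leq v_1, v_2) \;+\; \op{Im} F(v_1, v_2 \leq t),
\end{equation*}
since the analogous $t$-refinement $\op{Im} F(v_1, v_2 \leq t) \subseteq \op{Im} F(r_1(s) \leq v_1, v_2) + \op{Im} F(v_1, v_2 \leq r_2(t))$ follows by interchanging the roles of the $x$- and $y$-axes, and composing the two yields $F_v(s,t) \subseteq F_v(r(s,t))$. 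My strategy for the displayed inclusion is a diagram chase that reduces everything to \cref{lem:discImages} at target $u$. Given $\xi = F(s \leq v_1, v_2)(\eta) \in F(v)$, I trace its image $\bar\xi := F(v \preceq u)(\xi) \in F(u)$. Factoring $F(v \preceq u)$ through $F(s, y)$, the class $\bar\xi$ lies in $\op{Im} F(s \leq x, y)$, which by \cref{lem:discImages} coincides with $\op{Im} F(r_1(s) \leq x, y)$. I would then apply middle-exactness of the cohomological functor $F$ to the Mayer--Vietoris rectangle in $\M$ with corners $(r_1(s), v_2), (v_1, v_2), (r_1(s), y), (v_1, y)$, using the preimage in $F(r_1(s), y)$ provided by \cref{lem:discImages}, to lift $\bar\xi$ back to an element $\xi' \in \op{Im} F(r_1(s) \leq v_1, v_2)$ whose image in $F(v_1, y)$ matches that of $\xi$.

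The main obstacle will be controlling the residue $\xi - \xi'$. By construction it lies in $\ker[F(v) \to F(v_1, y)] = \ker F(v_1, y \leq v_2)$, but this kernel is generally larger than $\op{Im} F(v_1, v_2 \leq t)$. To absorb it, I anticipate invoking a second middle-exactness argument for a rectangle whose join involves $(v_1, t)$ (or $(v_1, y_0)$), together with the sequential continuity and pfd hypotheses to ensure that the relevant inverse limits compute correctly and that the residue indeed sits within $\op{Im} F(v_1, v_2 \leq t)$. With this accounting complete, the displayed inclusion holds and, combined with the symmetric $t$-refinement, proves the lemma.
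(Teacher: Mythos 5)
Your initial reduction to two one-variable refinements mirrors the paper's reduction to \eqref{eq:auxX} and \eqref{eq:auxY} and is correct. The real content is in the one-variable step, and there your element-wise chase has two genuine gaps that your sketch does not close. First, the lift $\eta'' \in F(r_1(s), y)$ furnished by \cref{lem:discImages} only matches $\bar\xi$ after pushing forward all the way to $F(u)$; there is no reason its image in $F(v_1, y)$ agrees with that of $\xi$, which is what you would need in order to apply middle-exactness to the rectangle with corners $(r_1(s), v_2)$, $(v_1, v_2)$, $(r_1(s), y)$, $(v_1, y)$. Second, as you yourself note, even with such a lift the residue $\xi - \xi'$ only lives in $\ker F(v_1, y \leq v_2)$, which is in general strictly larger than $\op{Im} F(v_1, v_2 \leq t)$; your proposed remedy --- a second middle-exactness argument plus sequential continuity and pfd --- is not concrete enough to see why the chase terminates rather than producing a new residue of the same kind.

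The paper closes exactly this gap by a different tactic that bypasses element chasing altogether. It forms the induced quotient map $\pi_v \colon F(v)/F_v(s, r_2(t)) \to F(v)/F_v(s, t)$, observes that the analogous $\pi_u$ is the identity by \cref{lem:discImages}, and then compares $\pi_v$ to $\pi_u$ using the middle-exact grid \eqref{eq:stSixSq} and \cref{prp:4Squares}: both horizontal arrows of the resulting comparison square are isomorphisms, which forces $\pi_v$ to be one as well. This quotient comparison is the missing idea --- it carries out your intended reduction from target $v$ to target $u$ in a single step, with no residue bookkeeping. I would suggest studying \cref{prp:4Squares} together with the three-by-four grid \eqref{eq:stSixSq}; they are the precise technical device that makes the reduction go through where the element chase stalls.
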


\begin{remark}
  In general it may very well happen that
  \begin{align*}
    \op{Im} F(r_1(s) \leq v_1, v_2)
    & \neq \op{Im} F(s \leq v_1, v_2)
    \\
    \text{or} \quad
    \op{Im} F(v_1, v_2 \leq r_2(t))
    & \neq \op{Im} F(v_1, v_2 \leq t)
      .
  \end{align*}
  Thus, it is crucial
  to consider the two summands of $F_v (s, t)$ in conjunction.
\end{remark}

\begin{proof}
  It suffices to show that
  \begin{align}
    \label{eq:auxX}
    F_v (r_1(s), t) & = F_v (s, t)
    \\
    \text{and} \quad
    \label{eq:auxY}
    F_v (s, r_2(t)) & = F_v (s, t)
    ,
  \end{align}
  independent of $s$ and $t$,
  since this implies that
  \begin{equation*}
    (F_v \circ r)(s, t)
    =
    F_v (r_1(s), r_2(t))
    =
    F_v (s, r_2(t))
    =
    F_v (s, t)
    .
  \end{equation*}
  We show \eqref{eq:auxY},
  our proof of \eqref{eq:auxX} is similar.
  Now
  \begin{equation*}
    F_v (s, r_2(t)) =
    \op{Im} F(s \leq v_1, v_2) + \op{Im} F(v_1, v_2 \leq r_2 (t))
    ,
  \end{equation*}
  which is a subspace of
  \begin{equation*}
    \op{Im} F(s \leq v_1, v_2) + \op{Im} F(v_1, v_2 \leq t) =
    F_v (s, t) \subseteq F(v)
    .
  \end{equation*}
  Moreover, this inclusion
  \begin{equation*}
    F_v (s, r_2(t)) \hookrightarrow
    F_v (s, t)
  \end{equation*}
  induces a canonical map
  \begin{equation*}
    \pi_v \colon
    \frac{F(v)}{F_v (s, r_2(t))}
    \longrightarrow
    \frac{F(v)}{F_v (s, t)}
    .
  \end{equation*}
  Now in order to prove \eqref{eq:auxY},
  it suffices to show that $\pi_v$ is an isomorphism.
  In the special case that $v = u$,
  we already have
  $F_u (s, r_2(t)) = F_u (s, t)$
  by the second equation from \cref{lem:discImages},
  hence
  \begin{equation*}
    \pi_u = \op{id} \colon
    \frac{F(u)}{F_u (s, r_2(t))}
    \xlongrightarrow{=}
    \frac{F(u)}{F_u (s, t)}
    .
  \end{equation*}
  Our approach is to reduce the general case for $\pi_v$
  to this special case of $\pi_u = \op{id}$.
  To this end,
  we consider the commutative diagram
  \begin{equation}
    \label{eq:stSixSq}
    \begin{tikzcd}[row sep=5ex]
      F(s, r_2(t))
      \arrow[r]
      \arrow[d]
      &
      F(v_1, r_2(t))
      \arrow[r]
      \arrow[d]
      &
      F(x, r_2(t))
      \arrow[d]
      \\
      F(s, t)
      \arrow[r]
      \arrow[d]
      &
      F(v_1, t)
      \arrow[r]
      \arrow[d]
      &
      F(x, t)
      \arrow[d]
      \\
      F(s, v_2)
      \arrow[r]
      \arrow[d]
      &
      F(v)
      \arrow[r]
      \arrow[d]
      \arrow[rd, "{F(u \preceq v)}" description]
      &
      F(x, v_2)
      \arrow[d]
      \\
      F(s, y)
      \arrow[r]
      &
      F(v_1, y)
      \arrow[r]
      &
      F(u)
      .
    \end{tikzcd}
  \end{equation}
  By \cref{prp:cohomological}.4
  all axis-aligned squares and rectangles in this diagram are middle exact.
  Now $F(u \preceq v)$ maps
  $\op{Im} F(v_1, v_2 \leq r_2 (t))$
  to a subspace of
  $\op{Im} F(x, y \leq r_2 (t))$,
  hence
  $F_v (s, r_2(t))$
  is mapped to a subspace of
  $F_u (s, r_2(t))$.
  Similarly $F(u \preceq v)$ maps
  $F_v (s, t)$
  to a subspace of
  $F_u (s, t)$.
  As a result we obtain the commutative diagram
  \begin{equation*}
    \begin{tikzcd}[column sep=9ex, row sep=6ex]
      F_v (s, r_2(t))
      \arrow[r, "F(u \preceq v)"]
      \arrow[d, hook]
      &
      F_u (s, r_2(t))
      \arrow[d, equal]
      \\
      F_v (s, t)
      \arrow[r, "F(u \preceq v)"']
      \arrow[d, hook]
      &
      F_u (s, t)
      \arrow[dd, hook]
      \\
      F(v)
      \arrow[rd, "F(u \preceq v)"']
      \\
      &
      F(u)
    \end{tikzcd}
  \end{equation*}
  from which we obtain the induced commutative square
  \begin{equation*}
    \begin{tikzcd}[column sep=6ex, row sep=6ex]
      \dfrac{F(v)}{F_v (s, r_2(t))}
      \arrow[r]
      \arrow[d, "\pi_v"']
      &
      \dfrac{F(u)}{F_u (s, r_2(t))}
      \arrow[d, "\pi_u = \op{id}", equal]
      \\
      \dfrac{F(v)}{F_v (s, t)}
      \arrow[r]
      &
      \dfrac{F(u)}{F_u (s, t)}
      .
    \end{tikzcd}
  \end{equation*}
  As all axis-aligned squares and rectangles of \eqref{eq:stSixSq}
  are middle exact,
  the two horizontal maps of this square are isomorphisms
  by \cref{prp:4Squares},
  hence $\pi_v$ is an isomorphism as well.
\end{proof}

\begin{lem}
  \label{lem:diagramSupportGrid}
  The restriction of
  $\op{Dgm}(F)$
  to
  ${(\uparrow u) \cap \op{int} (\downarrow T(u))}$
  is supported on the grid
  $\{u_{\xi} \mid \xi \in I\}$.
\end{lem}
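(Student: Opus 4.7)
The plan is to show that $\op{Dgm}(F)(v) = 0$ for every $v = (v_1, v_2) \in (\uparrow u) \cap \op{int}(\downarrow T(u))$ that does not lie on the grid $\{u_{\xi} \mid \xi \in I\}$. Such a $v$ satisfies $v_1 \notin \{x_0, \dots, x_k\}$ or $v_2 \notin \{y_0, \dots, y_l\}$, and by symmetry I would treat only the first case. Then $v_1 \in (x_i, x_{i+1})$ for some $i \in \{0, \dots, k-1\}$, and in particular $r_1(v_1) = x_i < v_1$.

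The strategy is to exhibit two strict upper bounds $w := (x_i, v_2) \succ v$ and $w' := (v_1, y_0) \succ v$ (strictness of $w$ holds because $x_i < v_1$, and strictness of $w'$ because $y_0 > v_2$ by $v \in \op{int}(\downarrow T(u))$) together with the identity
\[F(v) = \op{Im} F(v \preceq w) + \op{Im} F(v \preceq w').\]
By the definition of the persistence diagram in \eqref{eq:diagram}, this identity immediately forces $\op{Dgm}(F)(v) = 0$, since the right-hand side is contained in $\sum_{w'' \succ v} \op{Im} F(v \preceq w'')$.

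The identity would be obtained as a direct application of \cref{lem:aux} at the pair $(s, t) := (v_1, y_0) \in [v, T(u)]$. Tautologically, $F_v(v_1, y_0) = F(v)$, because the summand $\op{Im} F(v_1 \leq v_1, v_2)$ appearing in the definition of $F_v(v_1, y_0)$ is already all of $F(v)$ (the corresponding internal map is the identity). On the other hand, \cref{lem:aux} yields
\[F(v) = F_v(v_1, y_0) = F_v(r_1(v_1), r_2(y_0)) = F_v(x_i, y_0) = \op{Im} F(x_i \leq v_1, v_2) + \op{Im} F(v_1, v_2 \leq y_0),\]
which unfolds to exactly $\op{Im} F(v \preceq w) + \op{Im} F(v \preceq w')$. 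I do not anticipate any serious obstacle: once \cref{lem:aux} is available, the argument reduces to the observation that the boundary choice $s = v_1$ in that lemma trivializes the first summand of $F_v(s, t)$, after which the grid-discontinuity structure encoded in $r_1$ and $r_2$ does the rest of the work.
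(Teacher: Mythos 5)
Your proof is correct and follows essentially the same strategy as the paper's: applying Lemma~\ref{lem:aux} at a boundary corner of the rectangle $[v, T(u)]$ to identify $F(v)$ with a sum of images from strictly larger points, using the non-grid assumption to make one of $r_1(v_1) < v_1$ or $r_2(v_2) > v_2$ strict. The only superficial difference is that the paper treats the case $v_2 < r_2(v_2)$ via the corner $(s,t) = (x_0, v_2)$, while you treat the symmetric case $r_1(v_1) < v_1$ via the corner $(s,t) = (v_1, y_0)$; both handle the other case ``by symmetry.''
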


\begin{proof}
  Let
  ${v := (v_1, v_2) \in (\uparrow u) \cap
    \op{int} (\downarrow T(u)) \setminus \{u_{\xi} \mid \xi \in I\}}$.
  We have to show that $\op{Dgm}(F)(v) = 0$.
  As $v \notin \{u_{\xi} \mid \xi \in I\}$
  we have $v \neq r(v)$,
  which implies $v \prec r(v)$.
  Thus, we have $v_1 > r_1(v_1)$ or $v_2 < r_2(v_2)$.
  Without loss of generality we assume that $v_2 < r_2(v_2)$.
  Now let $j = 0, \dots, l-1$ be such that $y_j = r_2(v_2) > v_2$.
  Considering the commutative diagram
  \begin{equation*}
    \begin{tikzcd}
      F(x_0, y_j)
      \arrow[r]
      \arrow[d]
      &
      F(v_1, y_j)
      \arrow[d]
      \\
      F(x_0, v_2)
      \arrow[r]
      &
      F(v)
    \end{tikzcd}
  \end{equation*}
  we see that
  \begin{align*}
    (F_v \circ r) (x_0, v_2)
    & =
      F_v (x_0, y_j)
    \\
    & =
      \op{Im} F(x_0 \leq v_1, v_2) + \op{Im} F(v_1, v_2 \leq y_j)
    \\
    & \subseteq
      \sum_{w \succ v} \op{Im} F(v \preceq w)
      .
  \end{align*}
  Moreover, \cref{lem:aux} implies that
  \begin{align*}
    (F_v \circ r) (x_0, v_2)
    & =
      F_v (x_0, v_2)
    \\
    & =
      \op{Im} F(x_0 \leq v_1, v_2) + \op{Im} F(v_1, v_2 \leq v_2)
    \\
    & =
      \op{Im} F(x_0 \leq v_1, v_2) + F(v)
    \\
    & =
      F(v)
      .
  \end{align*}
  The previous two chains of equations (and an inclusion)
  taken together we obtain
  \begin{equation*}
    F(v) \subseteq
    \sum_{w \succ v} \op{Im} F(v \preceq w) \subseteq
    F(v)
    ,
  \end{equation*}
  hence
  \begin{align*}
    \op{Dgm}(F)(v)
    & =
      \dim_K F(v) - \dim_K \sum_{w \succ v} \op{Im} F(v \preceq w)
    \\
    & =
      \dim_K F(v) - \dim_K F(v)
      =
      0
      .
      \qedhere
  \end{align*}
\end{proof}

\begin{cor}
  \label{cor:isoH}
  For $H = \bigoplus_{v \in \op{int} \M} (B^v)^{\oplus \mu(v)}$,
  the canonical map
  \begin{equation}
    \label{eq:isoH}
    \frac{
      H(u_{i,j})
    }{
      % \op{Im} H(x_{i-1} \leq x_i, y_j)
      % +
      % \op{Im} H(x_i, y_j \leq y_{j-1})
      \op{Im} H(u_{i,j} \preceq u_{i-1,j})
      +
      \op{Im} H(u_{i,j} \preceq u_{i,j-1})
    }
    \longrightarrow
    \frac{H(u_{i,j})} %\Big/
    {\sum\limits_{w \succ u_{i,j}} \op{Im} H(u_{i,j} \preceq w)}
  \end{equation}
  is an isomorphism.
\end{cor}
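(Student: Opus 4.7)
The approach is to decompose $H$ into its block summands and verify the required equality blockwise, using \cref{lem:diagramSupportGrid} to rule out the potential obstructions. The map \eqref{eq:isoH} is the canonical surjection between two quotients of $H(u_{i,j})$ by nested subspaces, so showing that it is an isomorphism amounts to establishing the reverse containment
\[
\sum_{w \succ u_{i,j}} \op{Im} H(u_{i,j} \preceq w)
\;\subseteq\;
\op{Im} H(u_{i,j} \preceq u_{i-1,j}) + \op{Im} H(u_{i,j} \preceq u_{i,j-1}).
\]
Since $H = \bigoplus_{v \in \op{int}\M} B_v^{\oplus \mu(v)}$ and images of linear maps are compatible with direct sums, this reduces to proving the analogous containment for each block $B_v$ with $\mu(v) > 0$.

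Fix such a $v$. Every image appearing is a subspace of $B_v(u_{i,j}) \in \{0, K\}$, and the internal maps of $B_v$ are either zero or identities on $K$. If $u_{i,j}$ is not in the support of $B_v$ the claim is trivial, so suppose it is. Since $v$ itself lies in its own support (as $v \in \op{int}\M$), the right-hand side equals $K$ iff $v \neq u_{i,j}$. Using the explicit description of the support as the rectangle with closed corner at $v$ and open corner at $T^{-1}(v)$, one checks that, given $u_{i,j}$ in the support, the point $u_{i-1,j}$ lies in the support iff $v_1 \leq x_{i-1}$, and $u_{i,j-1}$ lies in the support iff $v_2 \geq y_{j-1}$; hence the left-hand side equals $K$ iff at least one of these inequalities holds.

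The crux is to prove that if $u_{i,j}$ lies in the support of $B_v$ and $v \neq u_{i,j}$, then $v_1 \leq x_{i-1}$ or $v_2 \geq y_{j-1}$. If neither held, $v$ would lie in the half-open rectangle $(x_{i-1}, x_i] \times [y_j, y_{j-1})$ with $v \neq u_{i,j}$. A coordinate check (using $1 \leq i \leq k$ and $1 \leq j \leq l$) shows that this rectangle is contained in $(\uparrow u) \cap \op{int}(\downarrow T(u))$ and that $u_{i,j}$ is its unique grid point $u_\xi$. Then \cref{lem:diagramSupportGrid} forces $\mu(v) = 0$, contradicting $\mu(v) > 0$.

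I expect the main obstacle to be the geometric bookkeeping in the crucial step, namely translating the description of the block $B_v$ into coordinate inequalities and verifying that the rectangle of potentially problematic $v$'s is confined to the region where \cref{lem:diagramSupportGrid} applies; once this is set up, the remaining arguments are a short case analysis and an invocation of the Lemma.
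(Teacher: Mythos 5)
Your proof is correct and follows essentially the same route as the paper's: both reduce to summands $B_v$ and invoke \cref{lem:diagramSupportGrid} to conclude that any summand alive but not born at $u_{i,j}$ must be alive at $u_{i-1,j}$ or $u_{i,j-1}$. You make the coordinate bookkeeping and the half-open rectangle argument explicit, where the paper argues more pictorially via ``the blue rectangle in \cref{fig:filtration},'' but the underlying mathematics is the same.
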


\begin{proof}
  We consider the restriction
  of $\op{Dgm}(F)$ to the blue rectangle
  in \cref{fig:filtration}.
  By \cref{lem:diagramSupportGrid},
  this restriction can be non-zero
  only at the vertices
  $u_{i-1,j-1}$, $u_{i,j-1}$, $u_{i-1,j}$, or $u_{i,j}$.
  Thus,
  any indecomposable summand of $H$,
  which is not born at $u_{i,j}$ and yet alive at $u_{i,j}$,
  is alive at $u_{i-1,j}$ or $u_{i,j-1}$,
  hence
  \begin{equation*}
    \op{Im} H(u_{i,j} \preceq u_{i-1,j})
    +
    \op{Im} H(u_{i,j} \preceq u_{i,j-1})
    =
    \sum_{w \succ u_{i,j}} \op{Im} H(u_{i,j} \preceq w)
    .
  \end{equation*}
  As a result, the canonical map \eqref{eq:isoH} is an identity.
\end{proof}

\begin{lem}
  \label{lem:isoF}
  The canonical map
  \begin{equation}
    \label{eq:isoF}
    \frac{
      F(u_{i,j})
    }{
      % \op{Im} F(x_{i-1} \leq x_i, y_j)
      % +
      % \op{Im} F(x_i, y_j \leq y_{j-1})
      \op{Im} F(u_{i,j} \preceq u_{i-1,j})
      +
      \op{Im} F(u_{i,j} \preceq u_{i,j-1})
    }
    \longrightarrow
    \frac{F(u_{i,j})} %\Big/
    {\sum\limits_{w \succ u_{i,j}} \op{Im} F(u_{i,j} \preceq w)}
  \end{equation}
  is an isomorphism.
\end{lem}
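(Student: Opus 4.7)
The plan is to prove the isomorphism in \eqref{eq:isoF} by establishing the equality of the two denominators:
\[
\sum_{w \succ u_{i,j}} \op{Im} F(u_{i,j} \preceq w) \;=\; \op{Im} F(u_{i,j} \preceq u_{i-1, j}) + \op{Im} F(u_{i,j} \preceq u_{i, j-1}).
\]
The map \eqref{eq:isoF} is induced by the inclusion of the smaller subspace on the right into the bigger sum on the left, so it is automatically surjective and becomes an isomorphism precisely when these two subspaces coincide. The inclusion $\supseteq$ is immediate since $u_{i-1, j}$ and $u_{i, j-1}$ both lie strictly above $u_{i,j}$ in $\M$. Writing $v := u_{i,j}$ and using the notation from \cref{lem:aux}, the right hand side is exactly $F_v(u_{i-1, j-1})$, so it suffices to show pointwise that $\op{Im} F(v \preceq w) \subseteq F_v(u_{i-1, j-1})$ for every $w \succ v$.

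First I would reduce to $w \in [v, T(u)]$: for general $w \succ v$ in $\M$, the meet-join $w' := v \vee (w \wedge T(u))$ lies in $[v, T(u)]$, still satisfies $v \prec w' \preceq w$, and factors the map $F(w) \to F(v)$ through $F(w')$, so $\op{Im} F(v \preceq w) \subseteq \op{Im} F(v \preceq w')$. Writing $w = (s, t)$, I would split into three cases. If $s < v_1$ and $t > v_2$, then $\op{Im} F(v \preceq w) \subseteq F_v(s, t)$, and \cref{lem:aux} identifies this with $F_v(r_1(s), r_2(t)) = F_v(u_{i', j'})$ for some $i' \leq i - 1$ and $j' \leq j - 1$. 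The degenerate cases $t = v_2$ (with $s < v_1$) and $s = v_1$ (with $t > v_2$) are the main obstacle: here the sum defining $F_v(s, t)$ collapses to $F(v)$, so \cref{lem:aux} applied at $(s, t)$ itself is vacuous. The remedy is to apply \cref{lem:aux} at a perturbed auxiliary point: $(s, t^*)$ with $t^* \in (y_j, y_{j-1}]$ in the first case, and $(s^*, t)$ with $s^* \in [x_{i-1}, x_i)$ in the second. Since $\op{Im} F(v \preceq w)$ is a single summand of $F_v$ at the auxiliary point (the summand involving the non-degenerate coordinate), we still obtain $\op{Im} F(v \preceq w) \subseteq F_v(u_{i', j-1})$ (respectively $F_v(u_{i-1, j'})$) with $i' \leq i - 1$ (respectively $j' \leq j - 1$).

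In all three cases we arrive at $\op{Im} F(v \preceq w) \subseteq F_v(u_{i', j'})$ with $i' \leq i - 1$ and $j' \leq j - 1$. The final step invokes monotonicity of $F_v$ on the grid: the summand $\op{Im} F(x_\iota \leq v_1, v_2)$ grows as $x_\iota$ approaches $v_1$ from below, and $\op{Im} F(v_1, v_2 \leq y_\eta)$ grows as $y_\eta$ approaches $v_2$ from above, so $F_v(u_{i', j'}) \subseteq F_v(u_{i-1, j-1})$. Summing over $w$ then yields the desired inclusion of the denominators, completing the proof.
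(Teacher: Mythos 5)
Your argument is correct and follows the same strategy as the paper's: both reduce to showing that the two denominators coincide, observe the easy inclusion, and then deduce the hard inclusion $\sum_{w \succ v} \op{Im} F(v \preceq w) \subseteq F_v(u_{i-1,j-1})$ from \cref{lem:aux} together with the monotonicity of $F_v$ on the grid. The paper does this tersely, setting $R := [v, u_{i-1,j-1}] \setminus \{v\}$ and asserting $F_v(w) = (F_v \circ r)(w) = F_v(u_{i-1,j-1})$ for every $w \in R$. Read literally this fails for $w$ on the two edges of $R$ where one coordinate of $w$ agrees with the corresponding coordinate of $v$: there $r(w) \in \{u_{i-1,j},\, u_{i,j-1}\}$ and $F_v(r(w)) = F(v)$ rather than $F_v(u_{i-1,j-1})$. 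These are exactly the degenerate cases you identified, and your perturbation device (evaluating $F_v$ at a nearby interior point, where \cref{lem:aux} is non-vacuous and $\op{Im} F(v \preceq w)$ still sits inside one of the two summands of $F_v$) supplies precisely the step the paper's write-up omits. Your preliminary reduction to $w \in [v, T(u)]$ via $w' := v \vee (w \wedge T(u))$ also works; one should note that $v \prec w'$ holds because $i, j \geq 1$, though it would be marginally simpler to use that $F(v \preceq w)$ vanishes whenever $w \npreceq T(v)$. Overall the two proofs are the same in essence, but yours handles the boundary cases with more care.
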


\begin{proof}
  Let ${v := u_{i,j}}$ and let ${R := [v, u_{i-1,j-1}] \setminus \{v\}}$,
  i.e. $R$ is the blue rectangle in \cref{fig:filtration}
  except for the vertex $v = u_{i,j}$.
  Then we have the inclusion
  \[{\op{Im} F(v \preceq w) \subseteq F_v (w)}\]
  for any
  ${w \in R}$,
  and thus
  \begin{equation*}
    \sum_{w \succ u_{i,j}} \op{Im} F(u_{i,j} \preceq w)
    =
    \sum_{w \in R} \op{Im} F(v \preceq w)
    =
    \sum_{w \in R} F_v (w)
    .
  \end{equation*}
  Moreover, by \cref{lem:aux}
  \begin{align*}
    F_v (w)
    & =
      (F_v \circ r)(w) = F_v (u_{i-1, j-1})
    \\
    & =
      \op{Im} F(v \preceq u_{i-1,j})
      +
      \op{Im} F(v \preceq u_{i,j-1})
    \\
    & =
      \op{Im} F(u_{i,j} \preceq u_{i-1,j})
      +
      \op{Im} F(u_{i,j} \preceq u_{i,j-1})
  \end{align*}
  for any $w \in R$
  and as a result the denominators of the domain and the codomain
  of \eqref{eq:isoF} are identical,
  hence \eqref{eq:isoF} is the identity.
\end{proof}

\subsection{Decomposition of $q$-Tame Cohomological Functors}
\label{sec:qtame}

Now we generalize \cref{thm:decomp}
from pfd cohomological functors
to $q$-tame \mbox{\cite[Section 1.1]{MR3524869}}
cohomological functors.

\begin{dfn}
  We say that a functor
  $F \colon \M^{\circ} \rightarrow \mathrm{Vect}_K$
  is \emph{$q$-tame} if $F(u \preceq v)$ has finite rank
  for all $u \prec v \in \M$.
\end{dfn}

\begin{prp}
  \label{prp:qtame}
  Let $F \colon \M^{\circ} \rightarrow \mathrm{Vect}_K$
  be a cohomological functor
  which is $q$-tame.
  Then $F$ is pfd.  
\end{prp}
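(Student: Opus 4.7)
I would prove pointwise finite-dimensionality by applying the Mayer--Vietoris long exact sequence from the cohomological hypothesis to a small axis-aligned rectangle around the point $u$ of interest, and then bounding the dimensions from $q$-tameness of the internal maps that appear. The key observation that makes this work is that, by the cohomological axiom, \emph{every} arrow in the long exact sequence is an internal map $F(\cdot \preceq \cdot)$ of the contravariant functor $F$ --- in particular, the connecting homomorphism crossing a tile boundary from $F(u)$ into $F(T^{-1}(w))$ is the internal map $F(T^{-1}(w) \preceq u)$, which therefore has finite rank whenever $T^{-1}(w) \prec u$ strictly.

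For $u \in \op{int} \M$, I would pick $w \succ u$ strictly in both coordinates and close enough to $u$ that $w \preceq T(u)$, and let $v_1, v_2$ be the two lateral vertices of the rectangle $u \preceq v_1, v_2 \preceq w$. The cohomological characterization (1) of \cref{prp:cohomological} supplies an exact sequence
\begin{equation*}
  F(w) \longrightarrow F(v_1) \oplus F(v_2) \xlongrightarrow{\beta} F(u) \xlongrightarrow{\gamma} F(T^{-1}(w))
\end{equation*}
with $\beta = \bigl(F(u \preceq v_1),\, -F(u \preceq v_2)\bigr)$ and $\gamma = F(T^{-1}(w) \preceq u)$. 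Since $u \prec v_1$, $u \prec v_2$, and $T^{-1}(w) \prec u$ are strict, $q$-tameness forces each of these three maps to have finite rank. Exactness at $F(u)$ then yields
\begin{equation*}
  \dim_K F(u)
  =
  \dim_K \op{Im}\beta + \dim_K \op{Im}\gamma
  \leq
  \op{rank} F(u \preceq v_1) + \op{rank} F(u \preceq v_2) + \op{rank} F(T^{-1}(w) \preceq u)
  < \infty.
\end{equation*}

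For $u \in \partial \M$, the above rectangle cannot be used because the lateral vertex $(x_w, y_u)$ leaves the strip. Instead I would take $u$ as a \emph{lateral} vertex: for $u \in l_0$, pick $v_2 \in \op{int} \M$ incomparable to $u$ and close to it (for instance $v_2$ obtained from $u$ by a small shift upward and rightward in Cartesian coordinates), so that $u_{\mathrm{bot}} := u \wedge v_2$ and $w := u \vee v_2$ both lie in $\op{int} \M$. The interior case already establishes $\dim_K F(u_{\mathrm{bot}}), \dim_K F(w) < \infty$, and the cohomological exact sequence
\begin{equation*}
  F(w) \longrightarrow F(u) \oplus F(v_2) \longrightarrow F(u_{\mathrm{bot}})
\end{equation*}
forces
\begin{equation*}
  \dim_K F(u) + \dim_K F(v_2) \leq \dim_K F(w) + \dim_K F(u_{\mathrm{bot}}) < \infty,
\end{equation*}
so in particular $F(u)$ is finite-dimensional; the case $u \in l_1$ is symmetric.

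The main obstacle is the identification of the connecting homomorphism $\gamma$ in the cohomological long exact sequence with an internal map of $F$. Without that identification, $q$-tameness (which by definition concerns only internal maps) would not directly control $\op{Im}\gamma$, and the argument would stall. This identification is, however, built into the very notion of a cohomological functor on $\M^\circ$: the cohomological structure is set up precisely so that connecting homomorphisms across tile boundaries are already encoded as internal maps of $F$, exactly as in the construction of the RISC in \cref{sec:risc}. A minor auxiliary verification is that $w$ can be chosen close enough to $u$ so that $w \preceq T(u)$, which guarantees $T^{-1}(w) \preceq u$ and hence that the internal map $\gamma$ is defined and subject to the $q$-tameness bound.
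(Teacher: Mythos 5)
Your proof is correct, and it is a genuinely different and more elementary route than the paper's. The paper proves the result by arranging a degenerate $3 \times 3$ grid of $F$-values around $u$, invoking middle exactness (\cref{prp:cohomological}.4) on every square, and then feeding the resulting diagram into the four-squares lemma \cref{prp:4Squares} to get an isomorphism between a quotient of $F(u)$ and a quotient built entirely out of images of strict internal maps; $q$-tameness then finishes. You instead read off $\dim F(u) = \dim \op{Im}\beta + \dim \op{Im}\gamma$ directly from exactness of the long sequence at $F(u)$, which sidesteps the middle-exact-squares machinery. Two small points. First, a cohomological functor vanishes on $\partial \M$ by \cref{dfn:cohomological}, so your separate boundary case is unnecessary (though harmless). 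Second, your argument really lives or dies on the identification $\gamma = F(T^{-1}(w) \preceq u)$, which you assert but do not verify. It is true, but the shortest way to make it airtight is to use \eqref{eq:restrF} from \cref{dfn:cohomological} rather than the Mayer--Vietoris form \eqref{eq:mvs}: for an axis-aligned rectangle with one corner on $l_1$, with bottom vertex $u$, lateral vertex $v$, and top vertex $w$, the sequence \eqref{eq:restrF} is literally the restriction of $F$ to a linear subposet, so all its arrows, including the one from $F(u)$ to $F(T^{-1}(w))$, are internal maps of $F$, and then $\dim F(u) \leq \op{rank} F(u \preceq v) + \op{rank} F(T^{-1}(w) \preceq u) < \infty$. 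For the general rectangle in \eqref{eq:mvs} the same conclusion holds, but it requires tracing through the Barratt--Whitehead construction to see that the connecting map factors as $F(T^{-1}(w) \preceq p) \circ F(p \preceq u) = F(T^{-1}(w) \preceq u)$; appealing to "the very notion of a cohomological functor" papers over exactly this step. Relatedly, you cite \cref{prp:cohomological}.(1), which constrains the rectangle to a fixed fundamental domain; you want either (3) or the definition (2) directly.
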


\begin{proof}
  Let $(x, y) \in \op{int} \M$.
  We show that $F(x, y)$ is finite-dimensional.
  To this end,
  let $\delta > 0$ be such that
  $(x-\delta, y), (x+\delta, y+\delta) \in \M$.
  Now let
  \begin{align*}
      x_0 & := x-\delta, & x_1 & := x, & x_2 & := x+\delta,
    \\
          & & y_1 & := y, \quad \text{and} & y_2 & := y+\delta
                                 .
  \end{align*}
  We consider the commutative diagram
  \begin{equation*}
    \begin{tikzcd}
      F(x_0, y_2)
      \arrow[r]
      \arrow[d]
      &
      F(x_1, y_2)
      \arrow[r]
      \arrow[d]
      &
      F(x_2, y_2)
      \arrow[d]
      \\
      F(x_0, y_1)
      \arrow[r]
      \arrow[d, equal]
      &
      F(x_1, y_1)
      \arrow[r]
      \arrow[d, equal]
      &
      F(x_2, y_1)
      \arrow[d, equal]
      \\
      F(x_0, y_1)
      \arrow[r]
      &
      F(x_1, y_1)
      \arrow[r]
      &
      F(x_2, y_1)
      .
    \end{tikzcd}
  \end{equation*}
  By \cref{prp:cohomological}.4
  all squares in this diagram are middle exact,
  hence
  \begin{equation*}
    \frac{
      F(x_1,y_1)
    }{
      \op{Im} F(x_0 \leq x_1, y_1) + \op{Im} F(x_1, y_1 \leq y_2)
    }
    \cong
    \frac{
      \op{Im} F(x_1 \leq x_2, y_1) + \op{Im} F(x_2, y_1 \leq y_2)
    }{
      \op{Im} F(x_0 \leq x_2, y_1) +
      \op{Im} F(x_2, y_1 \leq y_2)
    }
  \end{equation*}
  by \cref{prp:4Squares}.
  As $F$ is $q$-tame, the numerator on the right hand side
  and both denominators are finite-dimensional.
  Thus, the numerator $F(x_1, y_1) = F(x, y)$ on the left
  has to be finite-dimensional as well.
\end{proof}

This proposition has the following two corollaries.

\begin{cor}
  Let $F \colon \M^{\circ} \rightarrow \mathrm{Vect}_K$
  be a $q$-tame sequentially continuous cohomological functor.
  Then $F$ is pfd and
  \begin{equation*}
    F
    \cong
    \bigoplus_{v \in \op{int} \M} (B_v)^{\oplus \mu(v)}
    ,
  \end{equation*}
  where $\mu := \op{Dgm}(F)$.
\end{cor}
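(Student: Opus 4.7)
The plan is to observe that this corollary is essentially the concatenation of two already-established results, applied in sequence. First, I would invoke \cref{prp:qtame} to upgrade the $q$-tameness hypothesis on $F$ to pointwise finite-dimensionality. This is the step that uses the cohomological structure in an essential way: it is not generally true that a $q$-tame functor is pfd, but middle exactness of axis-aligned squares (from \cref{prp:cohomological}.4) lets us bound $\dim_K F(x,y)$ by finite-dimensional quotients of images of internal maps at nearby points, which are finite-dimensional by the $q$-tame hypothesis.

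Once $F$ is known to be pfd, the hypotheses of \cref{thm:decomp} are satisfied: $F$ is a sequentially continuous pfd cohomological functor on $\M^{\circ}$. Applying that theorem directly yields the desired decomposition
\[F \;\cong\; \bigoplus_{v \in \op{int} \M} (B_v)^{\oplus \mu(v)},\]
with multiplicities $\mu(v) = \op{Dgm}(F)(v)$ as defined in \eqref{eq:diagram}. No further argument is required, since \cref{dfn:diagramFunction}-style invariants are defined pointwise from $F$ itself and do not depend on any decomposition.

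Thus the proof is essentially a two-line citation chain: \cref{prp:qtame} to get pfd, then \cref{thm:decomp} to get the block decomposition. There is no real obstacle; the only thing worth remarking is that the pfd conclusion of \cref{prp:qtame} is what makes the extended persistence diagram $\mu = \op{Dgm}(F)$ well-defined in the sense of \eqref{eq:diagram} (which requires $F$ to land in $\vectK$), so the statement of the corollary is coherent.
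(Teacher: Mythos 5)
Your proposal is correct and matches the paper's intent exactly: the corollary is stated there without proof precisely because it is the immediate concatenation of \cref{prp:qtame} (to upgrade $q$-tameness to pfd) with \cref{thm:decomp} (to obtain the block decomposition). Nothing further is needed.
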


\begin{cor}
  \label{cor:qtameKtame}
  Any continuous function $f \colon X \rightarrow \R$
  is $K$-tame iff $h(f) \colon \M^{\circ} \rightarrow \VectK$ is $q$-tame,
  in which case it decomposes as
  \begin{equation*}
    h(f)
    \cong
    \bigoplus_{v \in \op{int} \M} (B_v)^{\oplus \mu(v)}
    ,
  \end{equation*}
  where $\mu := \op{Dgm}(f)$.
\end{cor}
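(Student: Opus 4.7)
The plan is to assemble this corollary directly from the results already available, since both pieces of the equivalence reduce to previously established statements once one observes that $h(f)$ is a cohomological functor for every continuous $f \colon X \rightarrow \R$ (this follows by the Mayer--Vietoris construction used to define $h(f)$, and does not use $K$-tameness).

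First I would handle the easy direction. If $f$ is $K$-tame, then Lemma \ref{lem:pfd} gives that $h(f)$ is pfd, and every pfd functor is automatically $q$-tame, because each internal map factors through a finite-dimensional space and therefore has finite rank.

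For the reverse implication I would argue as follows. Assume $h(f) \colon \M^{\circ} \rightarrow \VectK$ is $q$-tame. Since $h(f)$ is cohomological (as noted above), Proposition \ref{prp:qtame} applies and shows that $h(f)$ is pfd. Lemma \ref{lem:pfd} then yields $K$-tameness of $f$. The main (indeed only) subtlety here is to confirm that Proposition \ref{prp:qtame} is applicable without prior knowledge of $K$-tameness, i.e.\ that $h(f)$ satisfies the middle-exactness of axis-aligned squares used in the proof of Proposition \ref{prp:qtame}; this is built into the definition of $h(f)$ via the long exact Mayer--Vietoris sequences and so holds for arbitrary continuous $f$.

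For the decomposition, once either (equivalently, both) of the tameness conditions holds, Proposition \ref{prp:cont} provides the sequential continuity of $h(f)$, and the cohomological property is again automatic. Theorem \ref{thm:decomp} (or the immediately preceding corollary of Proposition \ref{prp:qtame}) then gives
\begin{equation*}
  h(f) \cong \bigoplus_{v \in \op{int} \M} (B_v)^{\oplus \mu(v)},
\end{equation*}
with $\mu = \op{Dgm}(h(f)) = \op{Dgm}(f)$ by Definition \ref{dfn:diagramFunction}. No step presents a real obstacle; the only thing worth double-checking is that the cohomological property of $h(f)$ is proved in the paper without assuming tameness, so that it can legitimately be fed into Proposition \ref{prp:qtame} in the $q$-tame $\Rightarrow$ $K$-tame direction.
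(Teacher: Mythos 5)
Your proof is correct and takes essentially the same route as the paper: both directions reduce to Lemma \ref{lem:pfd} together with Proposition \ref{prp:qtame} (with the trivial observation that pfd implies $q$-tame), and the decomposition follows from Theorem \ref{thm:decomp} via Proposition \ref{prp:cont}. Your explicit check that $h(f)$ is cohomological for arbitrary continuous $f$ — so that Proposition \ref{prp:qtame} can be applied before $K$-tameness is known — is a genuine and correct subtlety that the paper leaves implicit.
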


\begin{proof}
  If $h(f) \colon \M^{\circ} \rightarrow \VectK$ is $q$-tame,
  then it is pfd by \cref{prp:qtame}.
  As the cohomology groups of all open interlevel sets
  appear as values of $h(f)$,
  this in turn implies that $f \colon X \rightarrow \R$
  is $K$-tame.
  The other implications are provided by
  \cref{lem:pfd,thm:decomp}.
\end{proof}

\subsection{Connections to Level Set and Extended Persistence}
\label{sec:connVar}

We now use \cref{thm:decomp} to connect RISC
to two other variants of persistence,
namely level set persistence \cite{Carlsson:2009:ZPH:1542362.1542408}
and extended persistence \cite{MR2472288}.
A posteriori this also implies
that our \cref{dfn:diagramFunction} of the extended persistence diagram
is consistent with the original definition by \cite{MR2472288}.
We make use of the connection to extended persistence in our discussion
of \cref{exm:hood}.
Other than this,
we describe these connections for the reader who is already familiar
with either of these two notions.
To this end,
we assume $l_0$ and $l_1$ intersect the $x$-axis in $-\pi$
and $\pi$ respectively.

\subsubsection{The Level Set Barcode}
\label{sec:connLevelSet}

Here we connect the extended persistence diagram,
as defined
in \cref{dfn:diagramFunction},
to the \emph{level set barcode},
as introduced by \cite{Carlsson:2009:ZPH:1542362.1542408}
as \emph{levelset zigzag persistence};
see also \cite{MR3031814,MR3924175}.
The extended persistence diagram is a multiset of points in $\M$
while the level set barcode is a multiset of pairs
of an integer degree and an interval in $\R$.
To connect the two notions we describe a bijection
between $\op{int} \M$ and $\Z \times \mathcal{I}$,
where $\mathcal{I} \subset 2^{\R}$ is the set of non-empty intervals in $\R$.
To this end, let $u \in \op{int} \M$ and let $n \in \Z$
be the unique integer with $T^n(u) \in D$.
We define
\begin{equation*}
  \nu(u) := n
  \quad \text{and} \quad
  I(u) :=
  \left(\rho_1 \circ T^n\right)(u) \setminus
  \left(\rho_0 \circ T^n\right)(u)
\end{equation*}
and we think of $\nu(u)$ as the degree associated to $u$
and of $I(u)$ as the associated interval.
With this we obtain the bijection
\begin{equation*}
  \beta \colon
  \op{int} \M \rightarrow \Z \times \mathcal{I}, ~
  u \mapsto
  \left(
    \nu(u),
    I(u)
  \right)
  .
\end{equation*}
Now let
$f \colon X \rightarrow \R$ be a piecewise linear function
with $X$ a finite simplicial complex
and let
$\mu := \op{Dgm}(f) \colon \op{int} \M \rightarrow \N_0$
be the associated extended persistence diagram.
We argue that
\[\mu \circ \beta^{-1} \colon \Z \times \mathcal{I} \rightarrow \N_0\]
is the level set barcode of $f$ in the following way.
We choose a \emph{representation}
$p \colon S \rightarrow \op{int} \M$
of the multiset $\mu = \op{Dgm}(f)$.
This means that $p \colon S \rightarrow \op{int} \M$
is some map of sets with
\begin{equation*}
  \mu(v) = \# p^{-1} (v)
  \quad
  \text{for all $v \in \op{int} \M$.}
\end{equation*}
We now describe how
$\beta \circ p \colon S \rightarrow \Z \times \mathcal{I}$
is a representation of the levelset barcode of
$f \colon X \rightarrow \R$.
For each $v \in \op{int} \M$ we choose a basis
$\{\omega_s\}_{s \in p^{-1}(v)}$
for a complement of
\[\sum_{w \succ v} \op{Im} h(f)(v \preceq w)\]
in
$h(f)(v)$.
Moreover, for $n \in \Z$ and $t \in \R$ we set
\begin{equation*}
  S_{n,t} :=
  \left\{s \in S \mid
    n = (\nu \circ p)(s) ~~ \text{and} ~~
    t \in (I \circ p)(s)\right\}
  .
\end{equation*}
As ${t \in \left(\rho_1 \circ T^n \circ p\right)(s)}$
for any ${s \in S_{n,t}}$,
the corresponding cohomology class
${\omega_s \in \left(\mathcal{H}^n \circ \rho \circ T^n \circ p\right)(s)}$
has a pullback
\[\omega_s |_t \in \mathcal{H}^{n} (f^{-1}(t)).\]
Now suppose we can show that 
$\{\omega_s |_t\}_{s \in S_{n,t}}$
is a basis of $\mathcal{H}^{n} (f^{-1}(t))$
for any $t \in \R$,
then this suggests that $\beta \circ p$
is a representation of the levelset barcode
for the following reason.
For each $s \in S$ we obtain a degree $n := (\nu \circ p)(s)$
as well as an interval $(I \circ p)(s)$.
Moreover, for each $t \in (I \circ p)(s)$ we have a basis
element $\omega_s |_t \in \mathcal{H}^{n} (f^{-1}(t))$
in the $n$-th cohomology of the fiber of $t$.
Thus, the entire family $\{\omega_s\}_{s \in S}$
induces a simultaneous decomposition of the cohomology spaces
of all fibers of $f$ in such a way that any two basis elements
associated to the same $s \in S$ arise as pullbacks of the same
cohomology class $\omega_s$.
While this is not the original definition of the level set barcode from
\cite{Carlsson:2009:ZPH:1542362.1542408},
it comes very close and we omit the remaining details.
We conclude with the following proposition
providing the missing ingredient to the above argument.

\begin{prp}
  Let $t \in \R$.
  Then the family
  $\{\omega_s |_t\}_{s \in S_{n,t}}$
  is a basis of $\mathcal{H}^{n} (f^{-1}(t))$.
\end{prp}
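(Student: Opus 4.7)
The plan is to apply \cref{thm:decomp}, write $h(f) \cong \bigoplus_{v \in \op{int} \M} B_v^{\oplus \mu(v)}$, and translate the basis claim across the bijection $\beta$. By the Yoneda \cref{lem:yonedaContra}, each $\omega_s$ is the distinguished generator of the summand $B_{p(s)}$ embedded in $h(f)$. The pullback $\omega_s|_t$ is the image of $\omega_s$ under the map induced by the inclusion of pairs
\[
(f^{-1}(t), \emptyset) \hookrightarrow \big(f^{-1}(\rho_1(T^n p(s))),\, f^{-1}(\rho_0(T^n p(s)))\big),
\]
which is natural in $p(s)$ with respect to the internal maps of $h(f)$ at points $v$ with $t \in I(v)$.

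The first step is a combinatorial matching: $s \in S_{n,t}$ if and only if the embedded point $u_t := T^{-n}(\blacktriangle(t))$ lies in the support of $B_{p(s)}$. Unpacking the support condition $u_t \preceq p(s) \prec T u_t$, together with the defining property of $T$, the explicit formula for $\rho$, and property (1) of \cref{prp:rho}, translates this into $(\nu \circ p)(s) = n$ together with $t \in (I \circ p)(s)$.

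The second, crucial step uses piecewise linearity. For fixed $t$, pick $\varepsilon > 0$ so small that $f^{-1}(t - \varepsilon, t + \varepsilon)$ deformation retracts onto $f^{-1}(t)$, and choose $w \in \M$ very close to $u_t$ such that the pair $\rho(T^n w)$ is confined, up to components stripped off by excision, to this small interval. This realizes $h(f)(w) \cong \mathcal{H}^n(f^{-1}(t))$ via the pullback map. Under the block decomposition $h(f)(w) \cong \bigoplus_v B_v(w)^{\oplus \mu(v)}$, the surviving summands are precisely those blocks whose support contains $w$; by the first step, choosing $w$ sufficiently close to $u_t$ restricts these to the blocks indexed by $S_{n,t}$. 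Consequently $\{\omega_s|_t\}_{s \in S_{n,t}}$ is the image of a basis under an isomorphism, hence itself a basis of $\mathcal{H}^n(f^{-1}(t))$.

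I expect the main obstacle to be the identification $h(f)(w) \cong \mathcal{H}^n(f^{-1}(t))$ in the second step: one must simultaneously arrange (i) that the chosen $w$ admits a neighborhood retraction of $f^{-1}(\rho_1(T^n w))$ onto $f^{-1}(t)$, and (ii) that the blocks contributing at $w$ are exactly those indexed by $S_{n,t}$. The piecewise-linear hypothesis is essential here for the existence of good neighborhoods; the pair $(X, X \setminus f^{-1}(t))$ associated directly to $u_t$ does not in general compute $\mathcal{H}^n(f^{-1}(t))$, so the interior point $w$ must be selected with some care.
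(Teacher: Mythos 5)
Your proposal runs aground already at the first step, and the error propagates to the second. The claimed combinatorial matching---that $s \in S_{n,t}$ iff $u_t := T^{-n}(\blacktriangle(t))$ lies in the support of $B_{p(s)}$---is false. Unwinding your condition $u_t \preceq p(s) \prec T u_t$ and applying $T^n$, you get $\blacktriangle(t) \preceq T^n p(s)$ and $T^n p(s) \in \op{int}(\downarrow T\blacktriangle(t))$, i.e.\ $T^n p(s) \in (\uparrow\blacktriangle(t)) \cap \op{int}(\downarrow T\blacktriangle(t))$. But as the paper shows, $S_{n,t} = (T^n \circ p)^{-1}(U_t)$ with $U_t = (\downarrow\blacktriangle(t)) \cap \op{int}(\uparrow T^{-1}\blacktriangle(t))$, which sits on the opposite side of $\blacktriangle(t)$; the two half-open rectangles meet only in the single corner $\blacktriangle(t)$. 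Worse, since $\op{Dgm}(f)$ is supported in $\downarrow\op{Im}\blacktriangle$ and the intersection $(\uparrow\blacktriangle(t)) \cap (\downarrow\op{Im}\blacktriangle)$ is just $\{\blacktriangle(t)\}$, your step 1 would force $T^n p(s) = \blacktriangle(t)$ for every $s \in S_{n,t}$, implying $\dim \mathcal{H}^n(f^{-1}(t)) \leq \mu(\blacktriangle(t))$; this already fails when $f^{-1}(t)$ has several components.

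The second step has the structurally identical problem, which you flag as the ``main obstacle'' without identifying it: for any fixed $w \in \M$, the set of $v$ with $w \in \op{supp}(B_v)$ is the half-open rectangle $(\uparrow w) \cap \op{int}(\downarrow Tw)$, closed at $w$ and open at $T(w)$, whereas $U_t$ is closed at $\blacktriangle(t)$ and open at $T^{-1}\blacktriangle(t)$. The closed and open faces sit on opposite corners, so no choice of $w$---even with PL finiteness giving a positive gap to the nearest critical value---can make the two sets of contributing blocks agree. There is also an internal tension: you want $w$ near $u_t$ so the supports line up, but you want $\rho(T^n w)$ to have a tiny absolute component with empty relative component (so $h(f)$ at $T^{-n}w$ computes $\mathcal{H}^n(f^{-1}(t))$), which requires $T^n w$ near $T^{-1}\blacktriangle(t)$, on the far side of $D$.

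The paper avoids both problems by taking a direct limit rather than evaluating at a single point: it shows $\mathcal{H}^n(f^{-1}(t)) \cong \varinjlim_{u \in U_t}(h(f) \circ T^{-n})(u)$ using a cofinal chain of small open intervals (where PL finiteness and homotopy invariance enter), and then uses the elementary computation $\varinjlim_{u \in U_t} B_v(u) \cong K$ if $v \in U_t$ and $0$ otherwise. It is precisely this colimit over all of $U_t$ that sees the correct half-open boundary; this is the missing ingredient in your proposal.
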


\begin{figure}[t]
  \centering
  \import{strip-diagrams/_diagrams/}{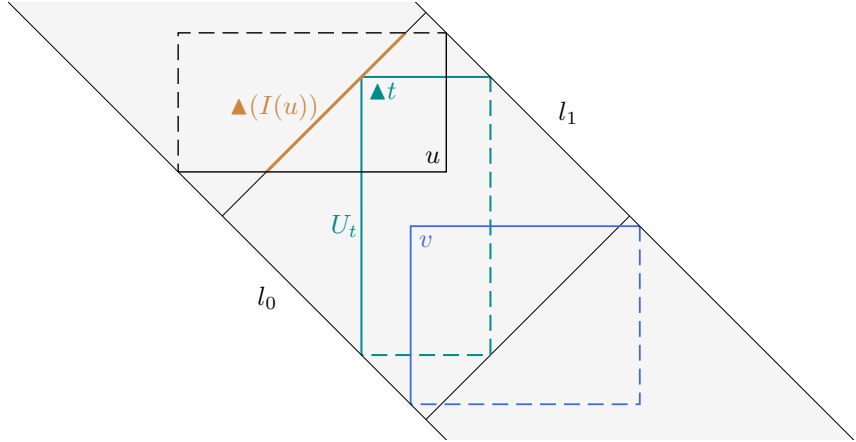}
  \caption{
    The axis-aligned rectangle $U_t$
    and the support of the indecomposable $B_v$.
  }
  \label{fig:connLevelSet}
\end{figure}

\begin{proof}
  Without loss of generality we assume
  that $f$ is a simplicial map to some
  finite geometric simplicial complex with $\R$ as its ambient space
  and vertex set $V \subset \R$.
  Now let
  \[U_t :=
    \{u \in D \mid t \in I(u)\} =
    \left(\downarrow \blacktriangle t\right) \cap
    \op{int} \left(\uparrow \left(T^{-1} \circ \blacktriangle\right)(t)\right),
  \]
  see also \cref{fig:connLevelSet}.
  Then we have
  \begin{equation}
    \label{eq:shiftRestrRepr}
    S_{n,t} = \left(T^n \circ p\right)^{-1} (U_t)
    .
  \end{equation}
  Moreover,
  let $\varepsilon > 0$ be such that
  $(t - \varepsilon, t + \varepsilon) \cap V \setminus \{t\} = \emptyset$.
  Then the homotopy invariance of $\mathcal{H}^n$ implies
  that the induced map
  \begin{equation*}
    \mathcal{H}^n (f^{-1}(t - \delta, t + \delta)) \rightarrow
    \mathcal{H}^{n} (f^{-1}(t))
  \end{equation*}
  is an isomorphism
  for any $0 < \delta \leq \varepsilon$.
  Now
  \[\{t\} = \bigcap_{u \in U_t} \rho_1 (u)\]
  and furthermore,
  $\{(t - \delta, t + \delta) \mid \delta > 0\}$
  is a final subset of
  $\{\rho_1(u) \mid u \in U_t\}$.
  Thus, we have
  \begin{equation}
    \label{eq:dirLimStab}
    \begin{split}
      \mathcal{H}^{n} (f^{-1}(t))
      & \cong
      \varinjlim_{u \in U_t}
      \left(\mathcal{H}^{n} \circ f^{-1} \circ \rho_1\right)(u)
      \\
      & \cong
      \varinjlim_{u \in U_t}
      \left(\mathcal{H}^{n} \circ f^{-1} \circ \rho\right)(u)
      \\
      & =
      \varinjlim_{u \in U_t} \left(h(f) \circ T^{-n}\right)(u)
      .
    \end{split}
  \end{equation}
  Here the second isomorphism follows from the fact
  that $U_t$ has a final subset on which $\rho_0$ is empty,
  hence $(\rho_1, \emptyset)$ and $\rho$ agree on this subset.
  By the Yoneda \cref{lem:yonedaContra}
  there is a unique natural transformation
  \begin{equation*}
    \varphi_s \colon
    B_{(T^n \circ p)(s)} \rightarrow
    h(f) \circ T^{-n}
  \end{equation*}
  sending $1 \in K = B_{(T^n \circ p)(s)} ((T^n \circ p)(s))$
  to $\omega_s \in \left(h(f) \circ T^{-n} \circ p\right)(s)$
  for any $s \in S$.
  By \cref{prp:decomp} the family
  $\{\varphi_s\}_{s \in S}$
  yields a natural isomorphism
  \begin{equation*}
    \varphi \colon
    \bigoplus_{s \in S} B_{(T^n \circ p)(s)} \rightarrow
    h(f) \circ T^{-n}
    .
  \end{equation*}
  Now for $v \in \op{int} \M$ we have
  \begin{equation*}
    \varinjlim_{u \in U_t} B_v (u) \cong
    \begin{cases}
      K & v \in U_t
      \\
      \{0\} & v \notin U_t
      ,
    \end{cases}
  \end{equation*}
  see also \cref{fig:connLevelSet}.
  In conjunction with
  \eqref{eq:shiftRestrRepr} and \eqref{eq:dirLimStab}
  we obtain
  that
  $\{\omega_s |_t\}_{s \in S_{n,t}}$
  is a basis for $\mathcal{H}^{n} (f^{-1}(t))$.
\end{proof}

\subsubsection{Extended Persistence}
\label{sec:connExt}

Here we describe how the extended persistence diagram
as defined by \cite{MR2472288}
corresponds to our \cref{dfn:diagramFunction}.
We note that the connection between the levelset barcode
and extended persistence is well-understood
\cite{Carlsson:2009:ZPH:1542362.1542408,MR3031814}.
Here we provide a correspondence between RISC
and extended persistence without requiring any finiteness assumptions
other than the RISC being pfd.

\begin{figure}[t]
  \centering
  \begin{subfigure}[t]{0.51\textwidth}
    \centering
    %% Creator: Inkscape 1.1 (c4e8f9ed74, 2021-05-24), www.inkscape.org
%% PDF/EPS/PS + LaTeX output extension by Johan Engelen, 2010
%% Accompanies image file 'relation-to-extended-persistent-coho.pdf' (pdf, eps, ps)
%%
%% To include the image in your LaTeX document, write
%%   \input{<filename>.pdf_tex}
%%  instead of
%%   \includegraphics{<filename>.pdf}
%% To scale the image, write
%%   \def\svgwidth{<desired width>}
%%   \input{<filename>.pdf_tex}
%%  instead of
%%   \includegraphics[width=<desired width>]{<filename>.pdf}
%%
%% Images with a different path to the parent latex file can
%% be accessed with the `import' package (which may need to be
%% installed) using
%%   \usepackage{import}
%% in the preamble, and then including the image with
%%   \import{<path to file>}{<filename>.pdf_tex}
%% Alternatively, one can specify
%%   \graphicspath{{<path to file>/}}
%% 
%% For more information, please see info/svg-inkscape on CTAN:
%%   http://tug.ctan.org/tex-archive/info/svg-inkscape
%%
\begingroup%
  \makeatletter%
  \providecommand\color[2][]{%
    \errmessage{(Inkscape) Color is used for the text in Inkscape, but the package 'color.sty' is not loaded}%
    \renewcommand\color[2][]{}%
  }%
  \providecommand\transparent[1]{%
    \errmessage{(Inkscape) Transparency is used (non-zero) for the text in Inkscape, but the package 'transparent.sty' is not loaded}%
    \renewcommand\transparent[1]{}%
  }%
  \providecommand\rotatebox[2]{#2}%
  \newcommand*\fsize{\dimexpr\f@size pt\relax}%
  \newcommand*\lineheight[1]{\fontsize{\fsize}{#1\fsize}\selectfont}%
  \ifx\svgwidth\undefined%
    \setlength{\unitlength}{192.72681427bp}%
    \ifx\svgscale\undefined%
      \relax%
    \else%
      \setlength{\unitlength}{\unitlength * \real{\svgscale}}%
    \fi%
  \else%
    \setlength{\unitlength}{\svgwidth}%
  \fi%
  \global\let\svgwidth\undefined%
  \global\let\svgscale\undefined%
  \makeatother%
  \begin{picture}(1,0.94174752)%
    \lineheight{1}%
    \setlength\tabcolsep{0pt}%
    \put(0,0){\includegraphics[width=\unitlength,page=1]{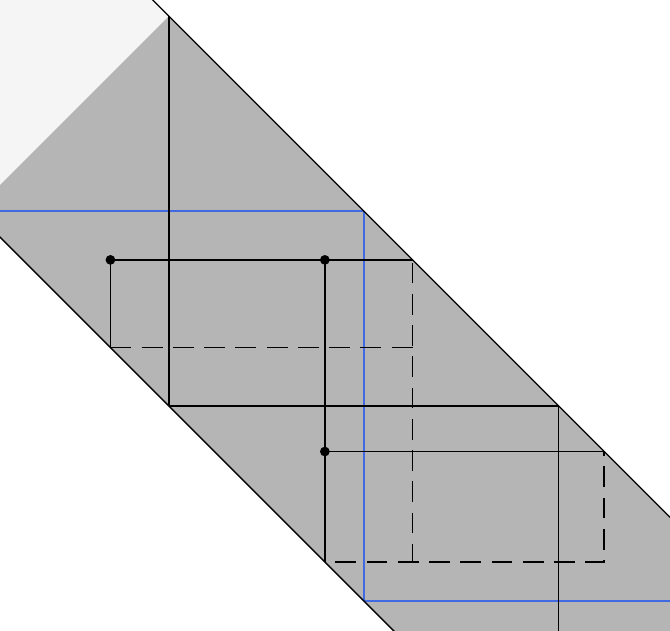}}%
    \put(0.77098729,0.44745927){\makebox(0,0)[t]{\lineheight{1.25}\smash{\begin{tabular}[t]{c}$l_1$\end{tabular}}}}%
    \put(0.34273228,0.18396688){\makebox(0,0)[t]{\lineheight{1.25}\smash{\begin{tabular}[t]{c}$l_0$\end{tabular}}}}%
  \end{picture}%
\endgroup%

  \end{subfigure}
  \begin{subfigure}[t]{0.46\textwidth}
    \centering
    %% Creator: Inkscape 1.1 (c4e8f9ed74, 2021-05-24), www.inkscape.org
%% PDF/EPS/PS + LaTeX output extension by Johan Engelen, 2010
%% Accompanies image file 'subdiagrams-coho.pdf' (pdf, eps, ps)
%%
%% To include the image in your LaTeX document, write
%%   \input{<filename>.pdf_tex}
%%  instead of
%%   \includegraphics{<filename>.pdf}
%% To scale the image, write
%%   \def\svgwidth{<desired width>}
%%   \input{<filename>.pdf_tex}
%%  instead of
%%   \includegraphics[width=<desired width>]{<filename>.pdf}
%%
%% Images with a different path to the parent latex file can
%% be accessed with the `import' package (which may need to be
%% installed) using
%%   \usepackage{import}
%% in the preamble, and then including the image with
%%   \import{<path to file>}{<filename>.pdf_tex}
%% Alternatively, one can specify
%%   \graphicspath{{<path to file>/}}
%% 
%% For more information, please see info/svg-inkscape on CTAN:
%%   http://tug.ctan.org/tex-archive/info/svg-inkscape
%%
\begingroup%
  \makeatletter%
  \providecommand\color[2][]{%
    \errmessage{(Inkscape) Color is used for the text in Inkscape, but the package 'color.sty' is not loaded}%
    \renewcommand\color[2][]{}%
  }%
  \providecommand\transparent[1]{%
    \errmessage{(Inkscape) Transparency is used (non-zero) for the text in Inkscape, but the package 'transparent.sty' is not loaded}%
    \renewcommand\transparent[1]{}%
  }%
  \providecommand\rotatebox[2]{#2}%
  \newcommand*\fsize{\dimexpr\f@size pt\relax}%
  \newcommand*\lineheight[1]{\fontsize{\fsize}{#1\fsize}\selectfont}%
  \ifx\svgwidth\undefined%
    \setlength{\unitlength}{174.01544952bp}%
    \ifx\svgscale\undefined%
      \relax%
    \else%
      \setlength{\unitlength}{\unitlength * \real{\svgscale}}%
    \fi%
  \else%
    \setlength{\unitlength}{\svgwidth}%
  \fi%
  \global\let\svgwidth\undefined%
  \global\let\svgscale\undefined%
  \makeatother%
  \begin{picture}(1,1.04301084)%
    \lineheight{1}%
    \setlength\tabcolsep{0pt}%
    \put(0,0){\includegraphics[width=\unitlength,page=1]{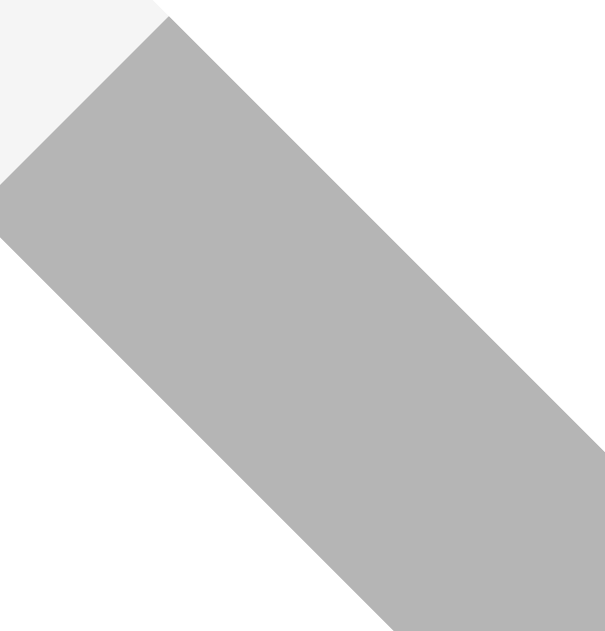}}%
    \put(0.7096775,0.42473125){\makebox(0,0)[t]{\lineheight{1.25}\smash{\begin{tabular}[t]{c}{\large$\mathrm{Rel}_2$}\end{tabular}}}}%
    \put(0.38709667,0.74731209){\makebox(0,0)[t]{\lineheight{1.25}\smash{\begin{tabular}[t]{c}{\large$\mathrm{Ord}_0$}\end{tabular}}}}%
    \put(0.18279584,0.59139792){\makebox(0,0)[t]{\lineheight{1.25}\smash{\begin{tabular}[t]{c}{\large$\mathrm{Rel}_1$}\end{tabular}}}}%
    \put(0.50537625,0.26881708){\makebox(0,0)[t]{\lineheight{1.25}\smash{\begin{tabular}[t]{c}{\large$\mathrm{Ord}_1$}\end{tabular}}}}%
    \put(0.76344083,0.19354833){\makebox(0,0)[t]{\lineheight{1.25}\smash{\begin{tabular}[t]{c}{\large$\mathrm{Ext}_2$}\end{tabular}}}}%
    \put(0.44086042,0.51612917){\makebox(0,0)[t]{\lineheight{1.25}\smash{\begin{tabular}[t]{c}{\large$\mathrm{Ext}_1$}\end{tabular}}}}%
    \put(0.14516124,0.86559168){\makebox(0,0)[t]{\lineheight{1.25}\smash{\begin{tabular}[t]{c}{\large$\mathrm{Ext}_0$}\end{tabular}}}}%
    \put(0,0){\includegraphics[width=\unitlength,page=2]{subdiagrams-coho.pdf}}%
  \end{picture}%
\endgroup%

  \end{subfigure}
  \caption{
    In the graphic on the left we see the subposet of $\M$
    corresponding to extended persistent cohomology
    shaded in blue
    as well as three vertices contained in the domains corresponding to
    $1$-dimensional relative, extended,
    and ordinary persistent cohomology.
    On the right we see the regions
    in the strip \(\M\) corresponding to the
    ordinary, relative, and extended subdiagrams \cite{MR2472288}.
    In both figures the support \(\downarrow \op{Im} \blacktriangle\)
    of RISC and of extended persistence diagrams
    is shaded in grey.    
  }
  \label{fig:extended}
\end{figure}

Now let $f \colon X \rightarrow \R$ be a continuous function
such that its RISC $h(f) \colon \M^{\circ} \rightarrow \mathrm{Vect}_K$ is pfd.
We consider the left hand side of \cref{fig:extended}
and the restriction of $h(f)$ to the subposet of $\M$,
which is shaded in blue in this figure.
Here each point on the horizontal blue line segment
to the upper left is assigned the cohomology space in degree $0$
of an open sublevel set of $f$, of $X$,
or of a pair with $X$ as the first component and
an open superlevel set as the second component.
Up to isomorphism of posets,
this is the extended persistent cohomology
of $f \colon X \rightarrow \R$ in degree $0$.
(Strictly speaking, in the original definition,
which is for piecewise linear functions,
closed sublevel sets and closed superlevel sets are used.
When considering continuous functions with fewer restrictions,
it is not uncommon to consider preimages of open subsets
in place of closed sets.)
Similarly, any point on the vertical blue line segment in the center
is assigned the cohomology space of some pair of preimages in degree $1$
and any point on the horizontal blue line at the lower right
is assigned the cohomology of some pair in degree $2$.
By \cref{thm:decomp} the RISC $h(f)$ decomposes
into contravariant blocks as in \cref{dfn:contraBlock}.
Now the support of each such contravariant block
intersects exactly one of these blue line segments.
We focus on the vertical blue line segment in the center
of the graphic on the left in \cref{fig:extended},
which carries the extended persistent cohomology
of $f \colon X \rightarrow \R$ in degree $1$.
Any choice of decomposition
of $h(f) \colon \M^{\circ} \rightarrow \vectK$
yields a decomposition of its restriction
to this line segment and thus of persistent cohomology in degree $1$.
Moreover, the support of the contravariant block
assigned to any of the black dots
in the graphic on the left of \cref{fig:extended}
intersects this vertical line segment.

First assume that
the black dot on the lower right appears in
$\op{Dgm}(f)$.
Then the restriction of the associated contravariant block
to the vertical blue line segment
is a direct summand of the persistent cohomology
of $f \colon X \rightarrow \R$ in degree $1$
and the intersection of its support with the blue line segment
is the life span of the corresponding feature
in the sense that the point of intersection of the upper edge
marks the birth of a cohomology class that dies
as soon as it is pulled back to the open sublevel set corresponding
to the point of intersection of the lower edge.
Moreover, this life span is encoded by the position of this black dot.
Now this particular black dot on the lower right
of the left graphic in \cref{fig:extended}
is contained in the triangular region labeled as $\mathrm{Ord}_1$
in the graphic on the right hand side of \cref{fig:extended}.
Furthermore, any vertex of the extended persistence diagram $\op{Dgm}(f)$
contained in the triangular region labeled $\mathrm{Ord}_1$
describes a feature of $f \colon X \rightarrow \R$,
which is born at some open sublevel set
and also dies at some open sublevel set.
Thus, up to reparametrization,
the ordinary persistence diagram
of $f \colon X \rightarrow \R$ in degree $1$
is the restriction of $\op{Dgm}(f) \colon \M \rightarrow \N_0$
to the region labeled $\mathrm{Ord}_1$.

Now suppose that the black dot on the upper left of
the left hand image in \cref{fig:extended}
appears in $\op{Dgm}(f)$.
Then the intersection of the support of the associated
contravariant block with the vertical blue line segment
describes the life span of a feature which is born
at the cohomology of $X$ relative to some open superlevel set in degree $1$
and also dies at some relative cohomology space.
Moreover, this is true for any vertex of $\op{Dgm}(f)$
contained in the triangular region labeled $\mathrm{Rel}_1$
in the graphic on the right in \cref{fig:extended}.
Thus, up to reparametrization,
the relative subdiagram of $f \colon X \rightarrow \R$ in degree $1$
is the restriction of $\op{Dgm}(f) \colon \M \rightarrow \N_0$
to the region labeled $\mathrm{Rel}_1$.

Finally, the black dot to the upper right
in the left hand image of \cref{fig:extended}
(if in $\op{Dgm}(f)$),
or any other vertex of $\op{Dgm}(f)$
in the square region labeled $\mathrm{Ext}_1$
in the graphic on the right in \cref{fig:extended}, describes a feature,
which is born
at the cohomology of $X$ relative to some open superlevel set
and dies at the cohomology of some sublevel set of $f \colon X \rightarrow \R$.
Thus,
the extended subdiagram of $f \colon X \rightarrow \R$ in degree $1$
is the restriction of $\op{Dgm}(f) \colon \M \rightarrow \N_0$
to the square region labeled $\mathrm{Ext}_1$.

As we have analogous correspondences for each line segment
of the subposet of $\M$ shaded in blue
in the left image of \cref{fig:extended},
we obtain a partition of the lower right part of the strip $\M$ into regions,
corresponding to ordinary, relative, and extended subdiagrams
of $\op{Dgm}(f)$ analogous to \cite{MR2472288}.

\subsection{Decomposition of Homological Functors}

We note that $\M$ is self-dual as a lattice.
Thus, there is an obvious dual version of \cref{thm:decomp}.
Now we state this result for the sake of completeness.

\begin{remark}
  \label{remark:dual}
  The reflection at the diagonal
  \[\M \rightarrow \M,\, (x, y) \mapsto (y, x)\]
  is a self-duality of the lattice $\M$
  in the sense that it is order-reversing
  and interchanges joins and meets.
  In particular, any covariant functor on $\M$
  can be made into a contravariant functor and vice versa
  by precomposition with this reflection.
\end{remark}

The following is dual to \cref{dfn:contraBlock}
in the sense of this remark,
see also \cref{fig:block}.

\begin{dfn}[Block]
  For $v \in \op{int} \M$ we define
  \begin{equation*}
    B^v \colon \M \rightarrow \mathrm{Vect}_K,\,
    w \mapsto
    \begin{cases}
      K & w \in (\uparrow v) \cap \op{int} (\downarrow T(v))
      \\
      \{0\} & \text{otherwise}
      ,
    \end{cases}
  \end{equation*}
  where $\op{int} (\downarrow T(v))$ is the interior
  of the downset of $T(v)$ in $\M$.
  The internal maps are identities whenever both domain and codomain are $K$,
  otherwise they are zero.
\end{dfn}

% Similarly we have the dual of \cref{lem:yonedaContra}.

% \begin{lem}[Yoneda]
%   \label{lem:yoneda}
%   Let $G \colon \M \rightarrow \mathrm{Vect}_K$ be a functor
%   vanishing on $\partial \M$ and let ${v \in \op{int} \M}$.
%   Then the evaluation at $1 \in K = B^v(v)$
%   yields a linear isomorphism
%   \begin{equation*}
%     \op{Nat}(B^v, G) \cong G(v),
%   \end{equation*}
%   where $\op{Nat}(B^v, G)$ denotes the vector space
%   of natural transformations from $B^v$ to $G$.
% \end{lem}

% Now let $F \colon \M \rightarrow \mathrm{vect}_K$
% be a sequentially continuous \emph{homological functor},
% as defined in \cref{dfn:cont} and \cref{dfn:homological}.
% For each $v \in \op{int} \M$ we choose a basis
% for a complement of $\sum_{u \prec v} \op{Im} F(u \preceq v)$ in $F(v)$.
% By \cref{lem:yoneda} this yields a natural transformation
% \begin{equation*}
%   \varphi \colon
%   \bigoplus_{v \in \op{int} \M} (B^v)^{\oplus \mu(v)} \rightarrow F
%   ,
% \end{equation*}
% where $\mu := \op{Dgm}(F)$ is defined dually to \eqref{eq:diagram} as
% \begin{equation}
%   \op{Dgm} (F) : \op{int} \M \rightarrow \N_0,
%   \,
%   v \mapsto
%   \dim_K F(v) -
%   \dim_K \sum_{u \prec v} \op{Im} F(u \preceq v)
%   .
% \end{equation}
With this we may state the dual of \cref{thm:decomp}
in the sense of \cref{remark:dual}.

% \begin{thm}
%   \label{thm:decompDual}
%   The natural transformation
%   \[\varphi \colon
%   \bigoplus_{v \in \op{int} \M} (B^v)^{\oplus \mu(v)} \rightarrow F\]
%   is a natural isomorphism.
% \end{thm}

\begin{thm}
  \label{thm:decompDual}
  Any sequentially continuous homological functor
  $F \colon \M \rightarrow \vectK$
  decomposes as
  \begin{equation*}
    F \cong \bigoplus_{v \in \op{int} \M} (B^v)^{\oplus \nu(v)}
    ,
  \end{equation*}
  where $\nu := \op{Dgm}(F)$ is defined dually to \eqref{eq:diagram} as
  \begin{equation*}
    \op{Dgm} (F) : \op{int} \M \rightarrow \N_0,
    \,
    v \mapsto
    \dim_K F(v) -
    \dim_K \sum_{u \prec v} \op{Im} F(u \preceq v)
    .
  \end{equation*}
\end{thm}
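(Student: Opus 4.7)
The plan is to deduce this dual statement from \cref{thm:decomp} by exploiting the self-duality of $\M$ described in \cref{remark:dual}. Let $\sigma \colon \M \to \M,\,(x,y) \mapsto (y,x)$ be the reflection across the diagonal. Given a sequentially continuous homological functor $F \colon \M \to \vectK$, I would set $\tilde{F} := F \circ \sigma$ and regard it as a contravariant functor $\M^{\circ} \to \vectK$. The goal is to apply \cref{thm:decomp} to $\tilde{F}$ and transport the resulting decomposition back to $F$.

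First I would verify the hypotheses of \cref{thm:decomp} for $\tilde{F}$. Pointwise finite-dimensionality is immediate since $\tilde{F}(u) = F(\sigma(u))$. Sequential continuity transfers because $\sigma$ is a homeomorphism that reverses order, so increasing sequences converging to $u$ in $\M$ correspond to decreasing sequences converging to $\sigma(u)$ and vice versa, matching the contravariant and covariant versions of \cref{dfn:cont}. The cohomological axioms for $\tilde{F}$ are obtained from the homological axioms for $F$ by precomposition with $\sigma$: axis-aligned rectangles are sent to axis-aligned rectangles, the boundary $\partial \M$ is preserved, and the crucial identity $\sigma T \sigma^{-1} = T^{-1}$ holds because $T$ is a glide reflection along the bisector $y = -x$ of $l_0$ and $l_1$ while $\sigma$ reflects across the perpendicular axis $y = x$, thereby reversing the direction of the glide.

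Applying \cref{thm:decomp} yields $\tilde{F} \cong \bigoplus_{v \in \op{int} \M} (B_v)^{\oplus \mu(v)}$ with $\mu := \op{Dgm}(\tilde{F})$. Next I would check the pointwise isomorphism $B_v \circ \sigma \cong B^{\sigma(v)}$ by comparing supports: the support of $B_v \circ \sigma$ equals $\sigma\bigl((\downarrow v) \cap \op{int}(\uparrow T^{-1}(v))\bigr)$, and using $\sigma(\downarrow v) = \uparrow \sigma(v)$ together with $\sigma \circ T^{-1} = T \circ \sigma$ one sees this equals $(\uparrow \sigma(v)) \cap \op{int}(\downarrow T(\sigma(v)))$, which is precisely the support of $B^{\sigma(v)}$. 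Since both are constant equal to $K$ on their support with identity transition maps, the isomorphism is canonical. Precomposing the decomposition of $\tilde F$ with $\sigma$ and reindexing $v' := \sigma(v)$ gives $F \cong \bigoplus_{v' \in \op{int} \M} (B^{v'})^{\oplus \mu(\sigma(v'))}$.

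Finally, I would match multiplicities by showing $\mu(\sigma(v)) = \nu(v)$. Since $u \prec v$ iff $\sigma(u) \succ \sigma(v)$, and the internal map $F(u \preceq v) \colon F(u) \to F(v)$ coincides, under the identifications $F(u) = \tilde F(\sigma(u))$ and $F(v) = \tilde F(\sigma(v))$, with the contravariant map $\tilde F(\sigma(v) \preceq \sigma(u)) \colon \tilde F(\sigma(u)) \to \tilde F(\sigma(v))$, the two sums defining $\nu(v)$ and $\mu(\sigma(v))$ agree term by term and hence so do the codimensions. The main obstacle I anticipate is the bookkeeping around the conjugation relation $\sigma T \sigma^{-1} = T^{-1}$ and verifying that the homological and cohomological axioms (from \cref{dfn:cohomological} and its dual) are indeed exchanged by precomposition with $\sigma$; once this is established, every subsequent step is either a direct translation along $\sigma$ or a reindexing.
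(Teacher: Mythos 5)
Your proposal is correct and follows exactly the route the paper itself (implicitly) takes: Theorem \ref{thm:decompDual} is stated as the dual of Theorem \ref{thm:decomp} in the sense of Remark \ref{remark:dual}, and you have simply filled in the details of the duality — the transfer of pfd-ness and sequential continuity, the conjugation $\sigma T \sigma^{-1} = T^{-1}$, the identification $B_v \circ \sigma \cong B^{\sigma(v)}$, and the matching of multiplicities $\mu(\sigma(v)) = \nu(v)$ — all of which check out.
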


\section{Interleavings}
\label{sec:interleavings}

Let $X$ be a non-empty topological space.
For two functions $f, g \colon X \rightarrow \R$
we define \[\mathbf{d}(f, g) := (\inf (g-f), \sup (g-f)),\]
then $\mathbf{d}$ can be thought of as a type of distance assigning two values
as it satisfies a form of triangle inequality:
If $\mathbf{d}(f_1, f_2), \mathbf{d}(f_2, f_3) \in \R^{\circ} \times \R$,
then also $\mathbf{d}(f_1, f_3) \in \R^{\circ} \times \R$ and
\begin{equation}
  \label{eq:triaIneq}
  \mathbf{d}(f_1, f_3) \preceq \mathbf{d}(f_1, f_2) + \mathbf{d}(f_2, f_3).
\end{equation}
Now let $(x, y) = \mathbf{d}(f, g) \in \R^2$ for two functions
$f, g \colon X \rightarrow \R$,
then $\frac{1}{2}(y-x)$ is the minimal infinity distance between $f$
and any shift of $g$.
Moreover, $-\frac{1}{2}(x+y)$ is the shift,
for which this minimum is attained.
This is the information we obtain from $\mathbf{d}$ about $f$ and $g$
and \eqref{eq:triaIneq} captures in a single inequality
how shifts and the infinity distance interact for three functions.
In this section we describe how this type of distance $\mathbf{d}$
and the triangle inequality \eqref{eq:triaIneq} resurface
in a different form
when considering relative interlevel set cohomology.

\begin{figure}[t]
  \centering
  %% Creator: Inkscape 1.1.1 (3bf5ae0d25, 2021-09-20, custom), www.inkscape.org
%% PDF/EPS/PS + LaTeX output extension by Johan Engelen, 2010
%% Accompanies image file 'action.pdf' (pdf, eps, ps)
%%
%% To include the image in your LaTeX document, write
%%   \input{<filename>.pdf_tex}
%%  instead of
%%   \includegraphics{<filename>.pdf}
%% To scale the image, write
%%   \def\svgwidth{<desired width>}
%%   \input{<filename>.pdf_tex}
%%  instead of
%%   \includegraphics[width=<desired width>]{<filename>.pdf}
%%
%% Images with a different path to the parent latex file can
%% be accessed with the `import' package (which may need to be
%% installed) using
%%   \usepackage{import}
%% in the preamble, and then including the image with
%%   \import{<path to file>}{<filename>.pdf_tex}
%% Alternatively, one can specify
%%   \graphicspath{{<path to file>/}}
%% 
%% For more information, please see info/svg-inkscape on CTAN:
%%   http://tug.ctan.org/tex-archive/info/svg-inkscape
%%
\begingroup%
  \makeatletter%
  \providecommand\color[2][]{%
    \errmessage{(Inkscape) Color is used for the text in Inkscape, but the package 'color.sty' is not loaded}%
    \renewcommand\color[2][]{}%
  }%
  \providecommand\transparent[1]{%
    \errmessage{(Inkscape) Transparency is used (non-zero) for the text in Inkscape, but the package 'transparent.sty' is not loaded}%
    \renewcommand\transparent[1]{}%
  }%
  \providecommand\rotatebox[2]{#2}%
  \newcommand*\fsize{\dimexpr\f@size pt\relax}%
  \newcommand*\lineheight[1]{\fontsize{\fsize}{#1\fsize}\selectfont}%
  \ifx\svgwidth\undefined%
    \setlength{\unitlength}{365.24180603bp}%
    \ifx\svgscale\undefined%
      \relax%
    \else%
      \setlength{\unitlength}{\unitlength * \real{\svgscale}}%
    \fi%
  \else%
    \setlength{\unitlength}{\svgwidth}%
  \fi%
  \global\let\svgwidth\undefined%
  \global\let\svgscale\undefined%
  \makeatother%
  \begin{picture}(1,0.55442722)%
    \lineheight{1}%
    \setlength\tabcolsep{0pt}%
    \put(0,0){\includegraphics[width=\unitlength,page=1]{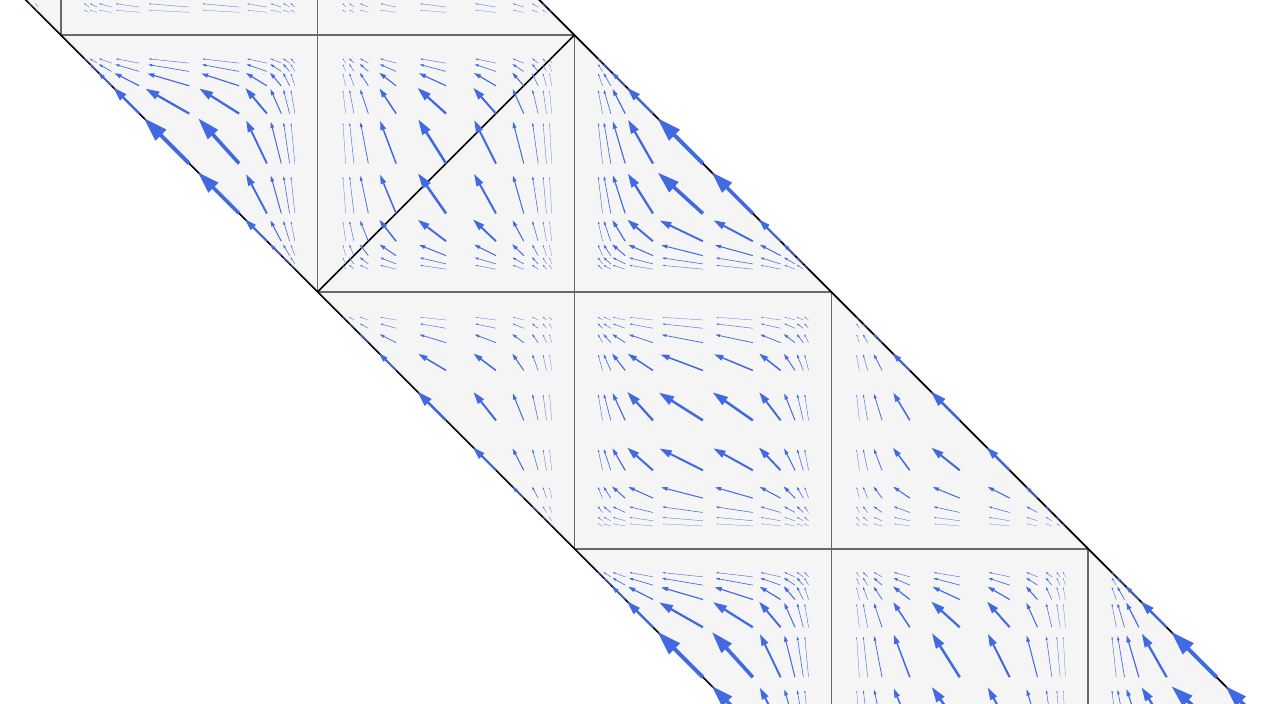}}%
    \put(0.32981965,0.22295758){\makebox(0,0)[rt]{\lineheight{1.25}\smash{\begin{tabular}[t]{r}\(l_0\)\end{tabular}}}}%
    \put(0.66921212,0.32422412){\makebox(0,0)[lt]{\lineheight{1.25}\smash{\begin{tabular}[t]{l}\(l_1\)\end{tabular}}}}%
  \end{picture}%
\endgroup%

  \caption{
    The lattice automorphism $\alpha_a \colon \M \rightarrow \M$
    for $a = (-0.315, 0.525)$.}
  \label{fig:alpha}
\end{figure}

As in several instances before,
suppose that $l_0$ and $l_1$ intersect the $x$-axis in $-\pi$ and $\pi$,
respectively.
Moreover, let $\op{Aut}(\M)$ be the automorphism group of $\M$
in the category of lattices.
Then there is a unique group homomorphism
\begin{equation*}
  \alpha \colon \R^2 \rightarrow \op{Aut}(\M)
  \quad \text{with} \quad
  \op{ev}_0 \circ \, \alpha = \arctan \times \arctan,
\end{equation*}
where
$\op{ev}_0 \colon \op{Aut}(\M) \rightarrow \M, \, \varphi \mapsto \varphi(0)$
is the evaluation at the origin,
see also \cref{fig:alpha}.
We provide an explicit description of $\alpha$ in \cref{remark:alpha} below.
We also note that $\R^{\circ} \times \R$ is an abelian
\href{https://en.wikipedia.org/wiki/Group_object}{group object}
in the category of lattices and the
\href{https://en.wikipedia.org/wiki/Currying}{uncurrying}
\begin{equation*}
  \alpha^b \colon (\R^{\circ} \times \R) \times \M \rightarrow \M
\end{equation*}
of $\alpha$ exhibits $\M$ as a
\href{https://ncatlab.org/nlab/show/module+over+a+monoid}{module}
over $\R^{\circ} \times \R$.
In particular,
$\alpha \colon \R^{\circ} \times \R \rightarrow \op{Aut}(\M)$ is monotone.
For better readability we will often write the argument of $\alpha$
as an index
by writing $\alpha_a$ in place of $\alpha(a)$
for $a \in \R^{\circ} \times \R$.

\begin{remark}[An Explicit Description of $\alpha$]
  \label{remark:alpha}
  Let $a := (a_1, a_2) \in \R^{\circ} \times \R$.
  We construct
  $\alpha_{a} \colon \M \rightarrow \M$
  as follows.
  First we have the inverse radial projection or
  $S^1$-valued inverse tangent function
  \begin{equation*}
    \phi \colon
    \R \rightarrow S^1, ~
    y \mapsto \frac{1}{\sqrt{1+y^2}} (1, y)
    =
    \exp(i \arctan(y))
    .
  \end{equation*}
  Then we define
  \begin{equation*}
    g_{a} \colon S^1 \rightarrow S^1,
    (x, y) \mapsto
    \begin{cases}
      (0, y),
      &
      x = 0
      \\
      \phi
      \left(
        \frac{y}{x} + a_2
      \right),
      &
      x > 0
      \\
      - \phi
      \left(
        \frac{y}{x} - a_1
      \right),
      &
      x < 0
      .
    \end{cases}
  \end{equation*}
  As $g_{a} (i) = i$,
  where $i = (0, 1)$ is the imaginary unit,
  there is a unique continuous map
  $\tilde{g}_{a} \colon \R \rightarrow \R$
  with
  \begin{equation*}
    \tilde{g}_{a} \left(\frac{\pi}{2}\right) = \frac{\pi}{2}
    \quad \text{and} \quad
    \left(g_{a} \circ \exp\right)(it) =
    \exp \left(i \tilde{g}_{a}(t)\right)
    \quad
    \text{for all $t \in \R$.}
  \end{equation*}
  In other words,
  $\tilde{g}_{a} \colon \R \rightarrow \R$
  is the unique continuous
  --
  or equivalently $2\pi\Z$-equivariant
  --
  map
  such that the diagram
  \begin{equation*}
    \begin{tikzcd}
      &[-5ex]
      \frac{\pi}{2}
      \arrow[r, dashed, maps to]
      &[+2ex]
      \frac{\pi}{2}
      &[-5ex]
      \\[-4ex]
      t
      \arrow[d, maps to]
      &
      \R
      \arrow[r, dashed, "\tilde{g}_a"]
      \arrow[d]
      &
      \R
      \arrow[d]
      &
      t
      \arrow[d, maps to]
      \\[+1.5ex]
      e^{it}
      &
      S^1
      \arrow[r, "g_a"']
      &
      S^1
      &
      e^{it}
    \end{tikzcd}
  \end{equation*}
  commutes.
  With $\sigma \colon \R \rightarrow \R, \ t \mapsto \pi - t$
  being the reflection at $\pi / 2$ we have
  \begin{equation*}
    \alpha_{a} =
    \left(\sigma \circ \tilde{g}_{a} \circ \sigma\right) \times
    \tilde{g}_{a}
    .
  \end{equation*}
\end{remark}

Now suppose we have
$a := \mathbf{d}(f, g) \in \R^{\circ} \times \R$.
In the following we describe how this induces a natural transformation
\begin{equation*}
  \vec{h}(f, g) \colon h(g) \circ \alpha_a \rightarrow h(f)
  .
\end{equation*}
To this end,
let $F \colon \M \rightarrow \big(\mathrm{Vect}_K^{\Z}\big)^\circ$
be defined as in the construction of $h(f)$ in \cref{sec:risc}.
Or, in other words,
we define $F$ to be the transform of $(h(f))^{\circ}$
under the $2$-adjunction from \cref{lem:2adj}.
Completely analogously we have a functor
$G \colon \M \rightarrow \big(\mathrm{Vect}_K^{\Z}\big)^\circ$
as we would use it in the construction of $h(g)$.
We use \cref{lem:natExt} to construct a natural transformation
${\varphi \colon F \rightarrow G \circ \alpha_a}$
and then we obtain
$\vec{h}(f, g) \colon h(g) \circ \alpha_a \rightarrow h(f)$
as
$\vec{h}(f, g) := \op{ev}^0 \circ \, \varphi^{\circ}$
by whiskering with the evaluation at $0$
analogously to the construction of $h(f)$ from $F$.
Now suppose $a = (x, y)$ and $(r, s) \subseteq \R$ is an open interval,
then we have
\begin{equation*}
  f^{-1} (r, s) \subseteq g^{-1} (r+x, s+y)
\end{equation*}
and thus
\begin{equation*}
  (f^{-1} \circ \rho)(u) \subseteq (g^{-1} \circ \rho \circ \alpha_a)(u)
\end{equation*}
for all $u \in \M$.
As in \cite[Section 3]{MR3413628}, this induces a linear map
\begin{equation*}
  \mathcal{H}^{\bullet} (f^{-1} \circ \rho)(u)
  \leftarrow
  \mathcal{H}^{\bullet} (g^{-1} \circ \rho \circ \alpha_a)(u)
  ,
\end{equation*}
which is natural in $u \in \M$.
Now we have
\begin{align*}
  F(u) &= \left(\mathcal{H}^{\bullet} \circ f^{-1} \circ \rho\right)(u)
  & & \text{for all $u \in D$ and}
  \\
  (G \circ \alpha_a)(u)
       &= \left(
         \mathcal{H}^{\bullet} \circ g^{-1} \circ \rho \circ \alpha_a
         \right)(u)
  & & \text{for all $u \in E := \alpha_{(-a)}(D)$,}
\end{align*}
see also \cref{fig:DandE}.
\begin{figure}[t]
  \centering
  %% Creator: Inkscape 1.0.2 (e86c870879, 2021-01-15, custom), www.inkscape.org
%% PDF/EPS/PS + LaTeX output extension by Johan Engelen, 2010
%% Accompanies image file 'D-and-E.pdf' (pdf, eps, ps)
%%
%% To include the image in your LaTeX document, write
%%   \input{<filename>.pdf_tex}
%%  instead of
%%   \includegraphics{<filename>.pdf}
%% To scale the image, write
%%   \def\svgwidth{<desired width>}
%%   \input{<filename>.pdf_tex}
%%  instead of
%%   \includegraphics[width=<desired width>]{<filename>.pdf}
%%
%% Images with a different path to the parent latex file can
%% be accessed with the `import' package (which may need to be
%% installed) using
%%   \usepackage{import}
%% in the preamble, and then including the image with
%%   \import{<path to file>}{<filename>.pdf_tex}
%% Alternatively, one can specify
%%   \graphicspath{{<path to file>/}}
%% 
%% For more information, please see info/svg-inkscape on CTAN:
%%   http://tug.ctan.org/tex-archive/info/svg-inkscape
%%
\begingroup%
  \makeatletter%
  \providecommand\color[2][]{%
    \errmessage{(Inkscape) Color is used for the text in Inkscape, but the package 'color.sty' is not loaded}%
    \renewcommand\color[2][]{}%
  }%
  \providecommand\transparent[1]{%
    \errmessage{(Inkscape) Transparency is used (non-zero) for the text in Inkscape, but the package 'transparent.sty' is not loaded}%
    \renewcommand\transparent[1]{}%
  }%
  \providecommand\rotatebox[2]{#2}%
  \newcommand*\fsize{\dimexpr\f@size pt\relax}%
  \newcommand*\lineheight[1]{\fontsize{\fsize}{#1\fsize}\selectfont}%
  \ifx\svgwidth\undefined%
    \setlength{\unitlength}{308.37929535bp}%
    \ifx\svgscale\undefined%
      \relax%
    \else%
      \setlength{\unitlength}{\unitlength * \real{\svgscale}}%
    \fi%
  \else%
    \setlength{\unitlength}{\svgwidth}%
  \fi%
  \global\let\svgwidth\undefined%
  \global\let\svgscale\undefined%
  \makeatother%
  \begin{picture}(1,0.53505538)%
    \lineheight{1}%
    \setlength\tabcolsep{0pt}%
    \put(0,0){\includegraphics[width=\unitlength,page=1]{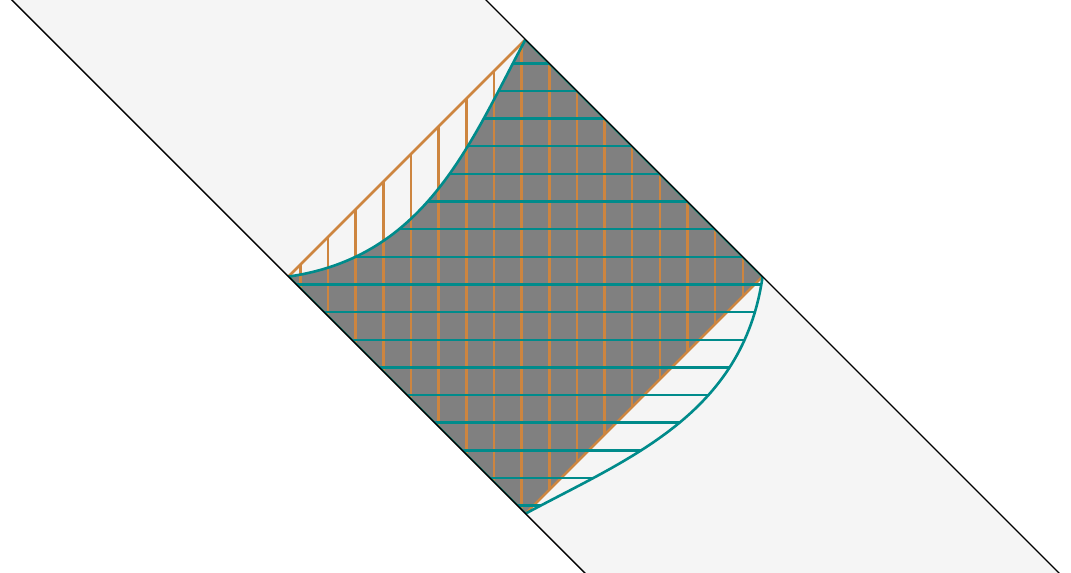}}%
    \put(0.62477006,0.40253627){\makebox(0,0)[t]{\lineheight{1.25}\smash{\begin{tabular}[t]{c}$l_1$\end{tabular}}}}%
    \put(0.32888346,0.16927158){\makebox(0,0)[t]{\lineheight{1.25}\smash{\begin{tabular}[t]{c}$l_0$\end{tabular}}}}%
  \end{picture}%
\endgroup%

  \caption{
    The fundamental domain $\textcolor{Peru}{D}$,
    its shift $\textcolor{DarkCyan}{E}$ by $\alpha(-a)$,
    and their intersection shaded in grey.
  }
  \label{fig:DandE}
\end{figure}
Thus, we may restrict $F$ and $G \circ \alpha_a$
to the intersection $D \cap E$ to obtain the natural transformation
\[\eta \colon
  F |_{D \cap E} \rightarrow (G \circ \alpha_a) |_{D \cap E}
  .
\]
As shown in \cref{fig:DandE},
the intersection $D \cap E$ is no fundamental domain in general,
so this does not describe a natural transformation from $F$
to $G \circ \alpha_a$.
In order to extend $\eta$ to a natural transformation
${\varphi \colon F \rightarrow G \circ \alpha_a}$
we use \cref{lem:natExt}.
So we need to supply the missing ingredient,
which is a natural transformation
\[\nu \colon
  F |_{D \cap T(E)} \rightarrow (G \circ \alpha_a) |_{D \cap T(E)}.
\]
To this end, we consider the monotone map
\begin{equation*}
  \xi \colon
  u \mapsto
  (
  (g^{-1} \circ \rho_1 \circ \alpha_a)(u),
  (f^{-1} \circ \rho_0                )(u)
  )
\end{equation*}
from $\M$ to the set of pairs of open subspaces of $X$.
In some sense $\xi$ interpolates between
$f^{-1} \circ \rho$ and $g^{-1} \circ \rho \circ \alpha_a$,
since we have the chain of inclusions
\begin{equation*}
  \label{eq:xiSandwich}
  f^{-1} \circ \rho \subseteq
  \xi \subseteq
  g^{-1} \circ \rho \circ \alpha_a
\end{equation*}
pointwise in $\M$
and moreover, $\xi$ agrees with $f^{-1} \circ \rho$
when restricted to the region
shaded in red in \cref{fig:xi}.
\begin{figure}[t]
  \centering
  %% Creator: Inkscape 1.0.2 (e86c870879, 2021-01-15, custom), www.inkscape.org
%% PDF/EPS/PS + LaTeX output extension by Johan Engelen, 2010
%% Accompanies image file 'regions-stability.pdf' (pdf, eps, ps)
%%
%% To include the image in your LaTeX document, write
%%   \input{<filename>.pdf_tex}
%%  instead of
%%   \includegraphics{<filename>.pdf}
%% To scale the image, write
%%   \def\svgwidth{<desired width>}
%%   \input{<filename>.pdf_tex}
%%  instead of
%%   \includegraphics[width=<desired width>]{<filename>.pdf}
%%
%% Images with a different path to the parent latex file can
%% be accessed with the `import' package (which may need to be
%% installed) using
%%   \usepackage{import}
%% in the preamble, and then including the image with
%%   \import{<path to file>}{<filename>.pdf_tex}
%% Alternatively, one can specify
%%   \graphicspath{{<path to file>/}}
%% 
%% For more information, please see info/svg-inkscape on CTAN:
%%   http://tug.ctan.org/tex-archive/info/svg-inkscape
%%
\begingroup%
  \makeatletter%
  \providecommand\color[2][]{%
    \errmessage{(Inkscape) Color is used for the text in Inkscape, but the package 'color.sty' is not loaded}%
    \renewcommand\color[2][]{}%
  }%
  \providecommand\transparent[1]{%
    \errmessage{(Inkscape) Transparency is used (non-zero) for the text in Inkscape, but the package 'transparent.sty' is not loaded}%
    \renewcommand\transparent[1]{}%
  }%
  \providecommand\rotatebox[2]{#2}%
  \newcommand*\fsize{\dimexpr\f@size pt\relax}%
  \newcommand*\lineheight[1]{\fontsize{\fsize}{#1\fsize}\selectfont}%
  \ifx\svgwidth\undefined%
    \setlength{\unitlength}{308.37929535bp}%
    \ifx\svgscale\undefined%
      \relax%
    \else%
      \setlength{\unitlength}{\unitlength * \real{\svgscale}}%
    \fi%
  \else%
    \setlength{\unitlength}{\svgwidth}%
  \fi%
  \global\let\svgwidth\undefined%
  \global\let\svgscale\undefined%
  \makeatother%
  \begin{picture}(1,0.53505538)%
    \lineheight{1}%
    \setlength\tabcolsep{0pt}%
    \put(0,0){\includegraphics[width=\unitlength,page=1]{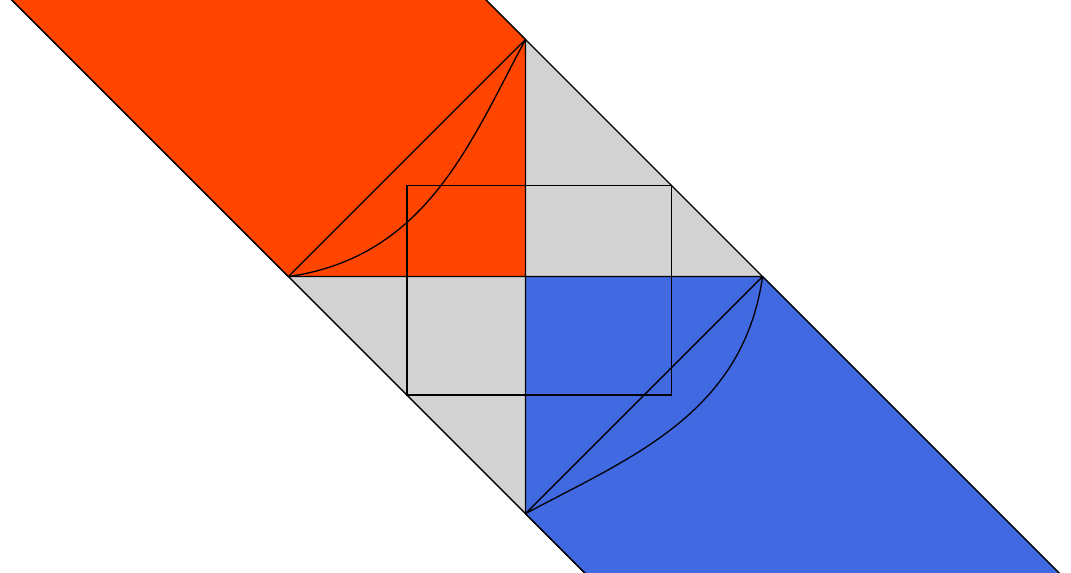}}%
    \put(0.69003684,0.15387463){\makebox(0,0)[t]{\lineheight{1.25}\smash{\begin{tabular}[t]{c}$T^{-1}(v)$\end{tabular}}}}%
    \put(0.3948339,0.36826573){\makebox(0,0)[t]{\lineheight{1.25}\smash{\begin{tabular}[t]{c}$v$\end{tabular}}}}%
    \put(0,0){\includegraphics[width=\unitlength,page=2]{regions-stability.pdf}}%
    \put(0.62477006,0.40253627){\makebox(0,0)[t]{\lineheight{1.25}\smash{\begin{tabular}[t]{c}$l_1$\end{tabular}}}}%
    \put(0.32888346,0.16927158){\makebox(0,0)[t]{\lineheight{1.25}\smash{\begin{tabular}[t]{c}$l_0$\end{tabular}}}}%
  \end{picture}%
\endgroup%

  \caption{
    The maps $\xi$ and $f^{-1} \circ \rho$ coincide on the red region,
    whereas $\xi$ and $g^{-1} \circ \rho \circ \alpha_a$
    agree on the blue region.
  }
  \label{fig:xi}
\end{figure}
In particular we have
\begin{equation}
  \label{eq:xiRed}
  \left(
    \mathcal{H}^{\bullet} \circ
    \xi
  \right)(u)
  =
  \left(
    \mathcal{H}^{\bullet} \circ
    f^{-1} \circ
    \rho
  \right)(u)
  = F(u)
\end{equation}
for any point $u$ contained in the red region.
Furthermore, $\xi$ and $g^{-1} \circ \rho \circ \alpha_a$ agree,
when restricted to the region shaded in blue.
Thus, if $T^{-1}(w)$ is contained in the blue region for some $w \in \M$,
then
\begin{equation}
  \label{eq:xiBlue}
  \begin{split}
    \left(
      \mathcal{H}^{\bullet-1} \circ
      \xi \circ
      T^{-1}
    \right)(w)
    &=
    \left(
      \mathcal{H}^{\bullet-1} \circ
      g^{-1} \circ
      \rho \circ
      \alpha_a \circ
      T^{-1}
    \right)(w)
    \\
    &=
    (\Sigma \circ G \circ T^{-1} \circ \alpha_a)(w)
    \\
    &=
    (G \circ \alpha_a)(w)
    .
  \end{split}
\end{equation}
Here the second equality follows from the fact
that $T$ is a
\href{
  https://en.wikipedia.org/wiki/Center_(group_theory)
}{central}
automorphism
and the third from $G$ being strictly stable.
By \cref{prp:rho}.(3-4) the map $\xi$ preserves the joins and meets
of any axis-aligned rectangle contained in $D \cup E$.
Thus, any axis-aligned rectangle in $D \cup E$ gives rise
to some relative Mayer--Vietoris sequence of subspaces of $X$.
In particular, if $v \in D \cap T(E)$, then the axis-aligned rectangle
shown in \cref{fig:xi} gives rise to a Mayer--Vietoris sequence
with the differential
\begin{equation*}
  \delta \colon
  \left(
    \mathcal{H}^{\bullet-1} \circ
    \xi \circ
    T^{-1}
  \right)(v)
  \rightarrow
  \left(
    \mathcal{H}^{\bullet} \circ
    \xi
  \right)(v)
  .
\end{equation*}
By combining this with the equations \eqref{eq:xiRed} and \eqref{eq:xiBlue}
and passing to the opposite category we define
\begin{equation*}
  \nu_v := \delta^\circ \colon F(v) \rightarrow (G \circ \alpha_a)(v)
  .
\end{equation*}
The naturality of
$\nu \colon F |_{D \cap T(E)} \rightarrow (G \circ \alpha_a) |_{D \cap T(E)}$
follows from the naturality of the Mayer--Vietoris sequence.

\begin{figure}[t]
  \centering
  %% Creator: Inkscape 1.0.2 (e86c870879, 2021-01-15, custom), www.inkscape.org
%% PDF/EPS/PS + LaTeX output extension by Johan Engelen, 2010
%% Accompanies image file 'regions-stability-prop1.pdf' (pdf, eps, ps)
%%
%% To include the image in your LaTeX document, write
%%   \input{<filename>.pdf_tex}
%%  instead of
%%   \includegraphics{<filename>.pdf}
%% To scale the image, write
%%   \def\svgwidth{<desired width>}
%%   \input{<filename>.pdf_tex}
%%  instead of
%%   \includegraphics[width=<desired width>]{<filename>.pdf}
%%
%% Images with a different path to the parent latex file can
%% be accessed with the `import' package (which may need to be
%% installed) using
%%   \usepackage{import}
%% in the preamble, and then including the image with
%%   \import{<path to file>}{<filename>.pdf_tex}
%% Alternatively, one can specify
%%   \graphicspath{{<path to file>/}}
%% 
%% For more information, please see info/svg-inkscape on CTAN:
%%   http://tug.ctan.org/tex-archive/info/svg-inkscape
%%
\begingroup%
  \makeatletter%
  \providecommand\color[2][]{%
    \errmessage{(Inkscape) Color is used for the text in Inkscape, but the package 'color.sty' is not loaded}%
    \renewcommand\color[2][]{}%
  }%
  \providecommand\transparent[1]{%
    \errmessage{(Inkscape) Transparency is used (non-zero) for the text in Inkscape, but the package 'transparent.sty' is not loaded}%
    \renewcommand\transparent[1]{}%
  }%
  \providecommand\rotatebox[2]{#2}%
  \newcommand*\fsize{\dimexpr\f@size pt\relax}%
  \newcommand*\lineheight[1]{\fontsize{\fsize}{#1\fsize}\selectfont}%
  \ifx\svgwidth\undefined%
    \setlength{\unitlength}{326.16277313bp}%
    \ifx\svgscale\undefined%
      \relax%
    \else%
      \setlength{\unitlength}{\unitlength * \real{\svgscale}}%
    \fi%
  \else%
    \setlength{\unitlength}{\svgwidth}%
  \fi%
  \global\let\svgwidth\undefined%
  \global\let\svgscale\undefined%
  \makeatother%
  \begin{picture}(1,0.50588238)%
    \lineheight{1}%
    \setlength\tabcolsep{0pt}%
    \put(0,0){\includegraphics[width=\unitlength,page=1]{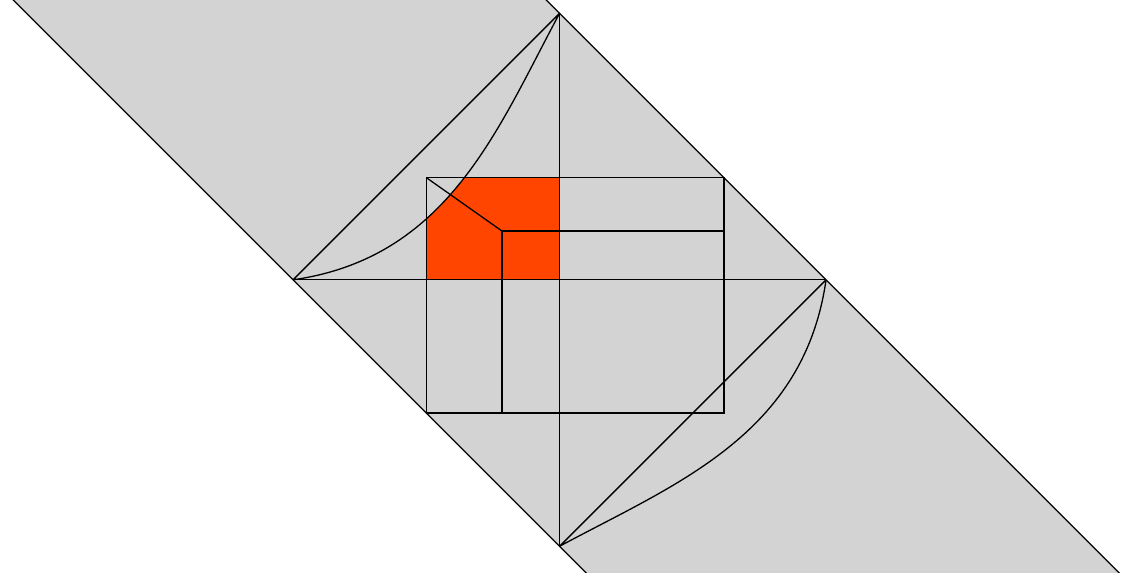}}%
    \put(0.45882359,0.27843145){\makebox(0,0)[t]{\lineheight{1.25}\smash{\begin{tabular}[t]{c}$u$\end{tabular}}}}%
    \put(0.70588229,0.1282352){\makebox(0,0)[t]{\lineheight{1.25}\smash{\begin{tabular}[t]{c}$T^{-1}(v)$\end{tabular}}}}%
    \put(0.3921568,0.35607842){\makebox(0,0)[t]{\lineheight{1.25}\smash{\begin{tabular}[t]{c}$v$\end{tabular}}}}%
    \put(0,0){\includegraphics[width=\unitlength,page=2]{regions-stability-prop1.pdf}}%
    \put(0.63652053,0.39249919){\makebox(0,0)[t]{\lineheight{1.25}\smash{\begin{tabular}[t]{c}$l_1$\end{tabular}}}}%
    \put(0.31097648,0.15569013){\makebox(0,0)[t]{\lineheight{1.25}\smash{\begin{tabular}[t]{c}$l_0$\end{tabular}}}}%
  \end{picture}%
\endgroup%

  \caption{
    The points $u$ and $v$ with corresponding axis-aligned rectangles
    in $D \cup E$.
  }
  \label{fig:glueingProp1}
\end{figure}

Next we show that the natural transformations $\mu$ and $\nu$
satisfy property (1) from \cref{lem:natExt}.
Here it suffices to show that the solid square in
\begin{equation}
  \label{eq:stabProp1}
  \begin{tikzcd}[column sep=large, row sep=large]
    F(v)
    \arrow[r, "\nu_v = \delta^{\circ}"]
    &
    (G \circ \alpha_a)(v)
    \\
    F(u)
    \arrow[u, "F(u \preceq v)"]
    \arrow[r, "\eta_u"']
    \arrow[ru, "(\delta')^{\circ}" description, dashed]
    &
    (G \circ \alpha_a)(u)
    \arrow[u, "(G \circ \alpha_a)(u \preceq v)"']
  \end{tikzcd}
\end{equation}
commutes for all $u$ contained in the region shaded in red
in \cref{fig:glueingProp1}.
For such a point $u$, we construct a linear map
$(\delta')^{\circ}$ as indicated with the dashed arrow in
\eqref{eq:stabProp1} % coming from a Mayer--Vietoris sequence
and then we show that both triangles commute.
To this end,
we consider the axis-aligned rectangle in \cref{fig:glueingProp1}
with $u$ a vertex.
As $\xi$ preserves the corresponding join $u$
and the meet $T^{-1}(v)$, this
gives rise to a Mayer--Vietoris sequence with differential
\begin{equation*}
  \delta' \colon
  \left(
    \mathcal{H}^{\bullet-1} \circ
    \xi \circ
    T^{-1}
  \right)(v) \rightarrow
  \left(
    \mathcal{H}^{\bullet} \circ
    \xi
  \right)(u)
  .
\end{equation*}
By equations \eqref{eq:xiRed} and \eqref{eq:xiBlue}
the domain and codomain of $(\delta')^{\circ}$ match up
with the dashed arrow in \eqref{eq:stabProp1}.
Now the upper triangle commutes by the naturality
of the Mayer--Vietoris sequence.
Moreover, we have pointwise
$\xi \subseteq g^{-1} \circ \rho \circ \alpha_a$
and thus, using the naturality of the Mayer--Vietoris sequence once again,
we obtain
the commutativity of the lower triangle in \eqref{eq:stabProp1}.

\begin{figure}[t]
  \centering
  %% Creator: Inkscape 1.0.2 (e86c870879, 2021-01-15, custom), www.inkscape.org
%% PDF/EPS/PS + LaTeX output extension by Johan Engelen, 2010
%% Accompanies image file 'regions-stability-prop2.pdf' (pdf, eps, ps)
%%
%% To include the image in your LaTeX document, write
%%   \input{<filename>.pdf_tex}
%%  instead of
%%   \includegraphics{<filename>.pdf}
%% To scale the image, write
%%   \def\svgwidth{<desired width>}
%%   \input{<filename>.pdf_tex}
%%  instead of
%%   \includegraphics[width=<desired width>]{<filename>.pdf}
%%
%% Images with a different path to the parent latex file can
%% be accessed with the `import' package (which may need to be
%% installed) using
%%   \usepackage{import}
%% in the preamble, and then including the image with
%%   \import{<path to file>}{<filename>.pdf_tex}
%% Alternatively, one can specify
%%   \graphicspath{{<path to file>/}}
%% 
%% For more information, please see info/svg-inkscape on CTAN:
%%   http://tug.ctan.org/tex-archive/info/svg-inkscape
%%
\begingroup%
  \makeatletter%
  \providecommand\color[2][]{%
    \errmessage{(Inkscape) Color is used for the text in Inkscape, but the package 'color.sty' is not loaded}%
    \renewcommand\color[2][]{}%
  }%
  \providecommand\transparent[1]{%
    \errmessage{(Inkscape) Transparency is used (non-zero) for the text in Inkscape, but the package 'transparent.sty' is not loaded}%
    \renewcommand\transparent[1]{}%
  }%
  \providecommand\rotatebox[2]{#2}%
  \newcommand*\fsize{\dimexpr\f@size pt\relax}%
  \newcommand*\lineheight[1]{\fontsize{\fsize}{#1\fsize}\selectfont}%
  \ifx\svgwidth\undefined%
    \setlength{\unitlength}{326.16277313bp}%
    \ifx\svgscale\undefined%
      \relax%
    \else%
      \setlength{\unitlength}{\unitlength * \real{\svgscale}}%
    \fi%
  \else%
    \setlength{\unitlength}{\svgwidth}%
  \fi%
  \global\let\svgwidth\undefined%
  \global\let\svgscale\undefined%
  \makeatother%
  \begin{picture}(1,0.50588238)%
    \lineheight{1}%
    \setlength\tabcolsep{0pt}%
    \put(0,0){\includegraphics[width=\unitlength,page=1]{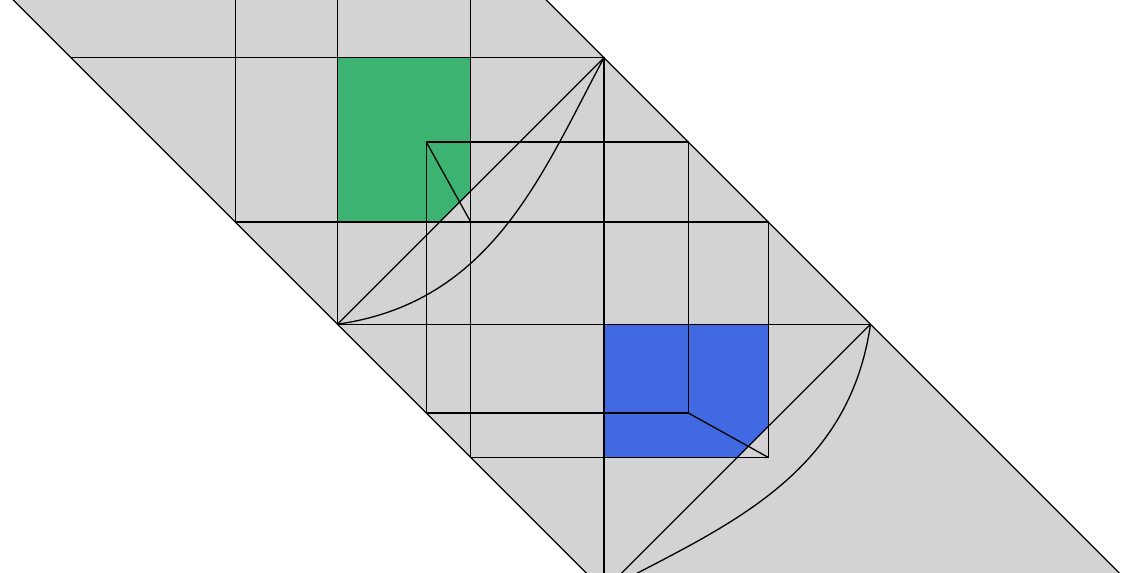}}%
    \put(0.602353,0.15294112){\makebox(0,0)[t]{\lineheight{1.25}\smash{\begin{tabular}[t]{c}$T^{-1}(w)$\end{tabular}}}}%
    \put(0.36078428,0.38745095){\makebox(0,0)[t]{\lineheight{1.25}\smash{\begin{tabular}[t]{c}$w$\end{tabular}}}}%
    \put(0.74509806,0.08901966){\makebox(0,0)[t]{\lineheight{1.25}\smash{\begin{tabular}[t]{c}$T^{-1}(v)$\end{tabular}}}}%
    \put(0.43137257,0.31686265){\makebox(0,0)[t]{\lineheight{1.25}\smash{\begin{tabular}[t]{c}$v$\end{tabular}}}}%
    \put(0,0){\includegraphics[width=\unitlength,page=2]{regions-stability-prop2.pdf}}%
    \put(0.67573607,0.35328365){\makebox(0,0)[t]{\lineheight{1.25}\smash{\begin{tabular}[t]{c}$l_1$\end{tabular}}}}%
    \put(0.31691646,0.14975015){\makebox(0,0)[t]{\lineheight{1.25}\smash{\begin{tabular}[t]{c}$l_0$\end{tabular}}}}%
  \end{picture}%
\endgroup%

  \caption{
    The points $v$ and $w$ with corresponding axis-aligned rectangles
    in $D \cup E$.
  }
  \label{fig:glueingProp2}
\end{figure}

Next we show that $\mu$ and $\nu$
satisfy property (2) from \cref{lem:natExt}.
Here it suffices to show that the solid square in
\begin{equation}
  \label{eq:stabProp2}
  \begin{tikzcd}[column sep=5.5em, row sep=large]
    F(w)
    \arrow[r, "(\Sigma \circ \eta \circ T^{-1})_w"]
    &
    (G \circ \alpha_a)(w)
    \\
    F(v)
    \arrow[u, "F(v \preceq w)"]
    \arrow[r, "\nu_v = \delta^{\circ}"']
    \arrow[ru, "(\delta'')^{\circ}" description, dashed]
    &
    (G \circ \alpha_a)(v)
    \arrow[u, "(G \circ \alpha_a)(v \preceq w)"']
  \end{tikzcd}
\end{equation}
commutes for all $w$ contained in the region shaded in green
in \cref{fig:glueingProp2}.
As with property (1) we provide the dashed arrow
and then we show that both triangles commute.
To this end, we consider the axis-aligned rectangle
shown in \cref{fig:glueingProp2}
with $v$ and $T^{-1}(w)$ vertices.
As $\xi$ preserver the join $v$ and the meet $T^{-1}(w)$, this
gives rise to a Mayer--Vietoris sequence with differential
\begin{equation*}
  \delta'' \colon
  \left(
    \mathcal{H}^{\bullet-1} \circ
    \xi \circ
    T^{-1}
  \right)(w) \rightarrow
  \left(
    \mathcal{H}^{\bullet} \circ
    \xi
  \right)(v)
  .
\end{equation*}
By equations \eqref{eq:xiRed} and \eqref{eq:xiBlue}
the domain and the codomain of $(\delta'')^{\circ}$
match up with the dashed arrow in \eqref{eq:stabProp2}.
The lower triangle commutes
by the naturality of the Mayer--Vietoris sequence.
Moreover, we have pointwise
$f^{-1} \circ \rho \subseteq \xi$
and thus, using the naturality of the Mayer--Vietoris sequence once again,
we obtain
the commutativity of the upper triangle in \eqref{eq:stabProp2}.

With the assumptions of \cref{lem:natExt} satisfied
we may extend $\eta$ and $\nu$ to a unique strictly stable
natural transformation
$\varphi \colon F \rightarrow G \circ \alpha_a$.
We define
$\vec{h}(f, g) \colon h(g) \circ \alpha_a \rightarrow h(f)$
as
$\vec{h}(f, g) := \op{ev}^0 \circ \varphi^\circ$
by whiskering with the evaluation at $0$.

With the next proposition we show that $\vec{h}$ and $h$
preserves triangles in some sense.
To this end, let
$f_1, f_2, f_3 \colon X \rightarrow \R$
be functions with
$\mathbf{d}(f_1, f_2), \mathbf{d}(f_2, f_3) \in \R^{\circ} \times \R$.
Moreover, let
\begin{equation*}
  a := \mathbf{d}(f_1, f_2)
  ,
  ~~
  b := \mathbf{d}(f_2, f_3)
  , ~ \text{and}
  ~~
  c := \mathbf{d}(f_1, f_3)
  ,
\end{equation*}
then we have $c \preceq a+b$ by the triangle inequality \eqref{eq:triaIneq}.

\begin{prp}[Compatibility with Composition]
  \label{prp:compatCompos}
  The following square of functors and natural transformations
  \begin{equation}
    \label{eq:compatCompos}
    \begin{tikzcd}[column sep=6em, row sep=large]
      h(f_3) \circ \alpha_{a+b}
      \arrow[r, "{\vec{h}(f_2, f_3) \circ \alpha_a}"]
      \arrow[d, "{h(f_3) \circ \alpha_{c \preceq a+b}}"']
      &
      h(f_2) \circ \alpha_a
      \arrow[d,"{\vec{h}(f_1, f_2)}"]
      \\
      h(f_3) \circ \alpha_c
      \arrow[r, "{\vec{h}(f_1, f_3)}"']
      &
      h(f_1)
    \end{tikzcd}
  \end{equation}
  commutes.
\end{prp}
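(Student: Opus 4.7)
The plan is to reduce the commutativity of \eqref{eq:compatCompos} to pointwise checks on a fundamental domain, at the level of the strictly stable lifts underlying the $\vec{h}(f_i, f_j)$. Write $F_i \colon \M \rightarrow (\mathrm{Vect}_K^{\Z})^{\circ}$ for the strictly stable functor built from $f_i$ as in \cref{sec:risc} and $\varphi_{ij} \colon F_i \rightarrow F_j \circ \alpha_{\mathbf{d}(f_i, f_j)}$ for the strictly stable natural transformation constructed above, so that $\vec{h}(f_i, f_j) = \op{ev}^0 \circ \varphi_{ij}^{\circ}$. Since $\op{ev}^0 \circ (-)^{\circ}$ preserves composition, it suffices to show the equality of strictly stable natural transformations
\[(\varphi_{23} \circ \alpha_a) \circ \varphi_{12} = (F_3 \circ \alpha_{c \preceq a+b}) \circ \varphi_{13} \colon F_1 \rightarrow F_3 \circ \alpha_{a+b}.\]
Both sides are strictly stable, as composites and whiskerings of strictly stable data, so by the uniqueness clause of \cref{lem:natExt} (the very tool used to build the $\varphi_{ij}$'s) it is enough to verify this equality on the ``red'' subregion $D \cap \alpha_{-(a+b)}(D)$ and the ``blue'' subregion $D \cap T(\alpha_{-(a+b)}(D))$ of \cref{fig:xi}, taken with respect to the $(f_1, f_3)$-interpolation.

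On the red subregion, every map in sight is inclusion-induced on cohomology. The triangle inequality $c \preceq a+b$ together with the definition of $\alpha$ and $\mathbf{d}$ yields the pointwise inclusions
\[f_1^{-1}(\rho(u)) \subseteq f_2^{-1}(\rho(\alpha_a(u))) \subseteq f_3^{-1}(\rho(\alpha_{a+b}(u)))\]
and also $f_1^{-1}(\rho(u)) \subseteq f_3^{-1}(\rho(\alpha_c(u))) \subseteq f_3^{-1}(\rho(\alpha_{a+b}(u)))$, so both composites coincide with the single pullback induced by $f_1^{-1}(\rho(u)) \subseteq f_3^{-1}(\rho(\alpha_{a+b}(u)))$ by functoriality of $\mathcal{H}^{\bullet}$.

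On the blue subregion, $\varphi_{13}$ at a point $v$ is the opposite of a Mayer--Vietoris connecting homomorphism $\delta_{13}$ associated with the axis-aligned rectangle determined by $v$ and the interpolating pair $\xi_{13}$ for $(f_1, f_3)$. The first composite at $v$ decomposes, depending on the location of $v$ relative to the analogous subregions for $\xi_{12}$ and $\xi_{23} \circ \alpha_a$, as either a connecting homomorphism composed with an inclusion-induced map, or the composition of two connecting homomorphisms. In either case the pointwise chain $f_1^{-1} \circ \rho \subseteq \xi_{12} \subseteq \xi_{23} \circ \alpha_a \subseteq f_3^{-1} \circ \rho \circ \alpha_{a+b}$, together with the coordinatewise containment of each $\xi_{ij}$ in $\xi_{13}$, exhibits a morphism of the excisive triads giving rise to the various $\delta$'s, and naturality of the Mayer--Vietoris sequence under such a morphism delivers the equality.

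The principal obstacle will be the innermost sub-case where both factors of the first composite are connecting homomorphisms, and the identification of $\delta_{12}$ followed by $\delta_{23} \circ \alpha_a$ with the single $\delta_{13}$ must be argued directly. The cleanest route is to introduce an interpolating triad dominating $\xi_{12}$, $\xi_{23} \circ \alpha_a$, and $\xi_{13}$ simultaneously and to use naturality twice to compare both composites with its connecting homomorphism. The only non-routine part is the geometric bookkeeping required to arrange the axis-aligned rectangles in $\M$ consistently under the actions of $\alpha_a$, $\alpha_b$, $\alpha_{a+b}$, and $\alpha_c$; once these rectangles are in place, the verification reduces to standard naturality of the Mayer--Vietoris sequence for maps of triads.
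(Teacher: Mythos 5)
Your overall strategy --- lift to the strictly stable level, reduce to a pointwise check on a fundamental domain, handle the \enquote{inclusion-induced} region by functoriality of $\mathcal{H}^{\bullet}$ and the rest by naturality of the Mayer--Vietoris sequence --- matches the paper's, and your treatment of the region $D_1 := D \cap \alpha_{-(a+b)}(D)$ is correct. The gap is in the complementary region. You anticipate a sub-case in which \emph{both} factors of the composite $(\varphi_{23}\circ\alpha_a)\circ\varphi_{12}$ are Mayer--Vietoris connecting homomorphisms, call it \enquote{the principal obstacle}, and offer only the sketch \enquote{introduce a dominating triad and use naturality twice}. That sub-case does not occur, and the proposed argument for it could not succeed: naturality of the Mayer--Vietoris sequence relates the connecting homomorphisms of two triads along a map of triads; it never identifies a composite of two connecting homomorphisms (which shifts cohomological degree by two) with a single one (which shifts it by one). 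The very incoherence of your \enquote{hard case} on degree grounds is the signal that the case analysis is organized incorrectly.

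The missing idea is the finer partition $D\setminus\alpha_{-(a+b)}(D) = D_2\cup D_3$ with $D_2 := \alpha_{-a}(D)\setminus\alpha_{-(a+b)}(D)$ and $D_3 := D\setminus\alpha_{-a}(D)$, which guarantees that at each point exactly one of the two factors is a differential while the other is induced by an inclusion of pairs. The paper first shows, via the interpolation $\xi_0 := \left((f_3^{-1}\circ\rho_1\circ\alpha_{a+b}), (f_1^{-1}\circ\rho_0)\right)$ --- note the shift $\alpha_{a+b}$ rather than $\alpha_c$ --- that the lower-left composite equals the single differential $\delta$ of $\xi_0$, reducing everything to the triangle \eqref{eq:compatComposTria}; then on $D_3$ it uses the pointwise inclusion $\xi_1\subseteq\xi_0$ and on $D_2$ the inclusion $\xi_0\subseteq\xi_2\circ\alpha_a$, each time with a single application of Mayer--Vietoris naturality in which the two differentials sit on parallel sides of a square whose remaining side is an equality of cohomology groups. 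To repair your proof you must supply this partition (or otherwise argue that the differential loci of $\varphi_{12}$ and of $\varphi_{23}\circ\alpha_a$ are disjoint) and then identify, at each point, which factor is the differential and which is inclusion-induced via the identifications \eqref{eq:xiRed} and \eqref{eq:xiBlue}; once that is done, your remaining steps go through.
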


\begin{proof}
  Let ${h^{\#}(f_1) \colon \M^{\circ} \rightarrow \VectK^{\Z}}$
  be the transform of ${h(f_1) \colon \M^{\circ} \rightarrow \VectK}$
  under the $2$-adjunction from \cref{lem:2adj}.
  We define $h^{\#}(f_2)$, $h^{\#}(f_3)$,
  $\vec{h}^{\#}(f_1, f_2)$, $\vec{h}^{\#}(f_2, f_3)$,
  and $\vec{h}^{\#}(f_1, f_3)$ analogously.
  Then the commutativity of \eqref{eq:compatCompos} is equivalent
  to the commutativity of
  \begin{equation*}
    \begin{tikzcd}[column sep=6em, row sep=large]
      h^{\#}(f_3) \circ \alpha_{a+b}
      \arrow[r, "{\vec{h}^{\#}(f_2, f_3) \circ \alpha_a}"]
      \arrow[d, "{h^{\#}(f_3) \circ \alpha_{c \preceq a+b}}"']
      &
      h^{\#}(f_2) \circ \alpha_a
      \arrow[d,"{\vec{h}^{\#}(f_1, f_2)}"]
      \\
      h^{\#}(f_3) \circ \alpha_c
      \arrow[r, "{\vec{h}^{\#}(f_1, f_3)}"']
      &
      h^{\#}(f_1)
    \end{tikzcd}    
  \end{equation*}
  by \cref{lem:2adj}.
  As all functors and natural transformations in this square
  are strictly stable,
  it suffices to check the commutativity for any point $v \in D$.
  To this end,
  we partition $D$ into the three regions
  \begin{align*}
    D_1 & := D \cap \alpha_{(-a-b)} (D),
    \\
    D_2 & := \alpha_{(-a)} (D) \setminus \alpha_{(-a-b)} (D),
          ~~ \text{and}
    \\
    D_3 & := D \setminus \alpha_{(-a)} (D)
          .
  \end{align*}
  For $v \in D_1$ the commutativity follows
  from the functoriality of the cohomology theory $\mathcal{H}^{\bullet}$.
  For $v \in D_2 \cup D_3$
  we consider the monotone map
  \begin{equation*}
    \xi_0 \colon
    u \mapsto
    ((f_3^{-1} \circ \rho_1 \circ \alpha_{a+b})(u), (f_1^{-1} \circ \rho_0)(u))
    ,
  \end{equation*}
  where $\alpha_{a+b} := \alpha(a+b)$.
  Then the axis-aligned rectangle shown in \cref{fig:xi}
  gives rise to a Mayer--Vietoris sequence with differential
  \begin{equation*}
    \delta \colon
    \left(
      \mathcal{H}^{\bullet-1} \circ
      \xi_0 \circ
      T^{-1}
    \right)(v)
    \rightarrow
    \left(
      \mathcal{H}^{\bullet} \circ
      \xi_0
    \right)(v)
    .
  \end{equation*}
  By equations \eqref{eq:xiRed} and \eqref{eq:xiBlue}
  we have
  \begin{equation}
    \label{eq:xi0Compat}
    \left(
      \mathcal{H}^{\bullet} \circ
      \xi_0
    \right)(v) =
    h^{\#}(f_1)(v)
    \quad \text{and} \quad
    \left(
      \mathcal{H}^{\bullet-1} \circ
      \xi_0 \circ
      T^{-1}
    \right)(v) =
    \left(h^{\#}(f_3) \circ \alpha_{a+b}\right)(v)
    .
  \end{equation}
  Moreover, by the naturality of the Mayer--Vietoris sequence,
  the triangle
  \begin{equation*}
    \begin{tikzcd}[column sep=6em, row sep=large]
      \left(h^{\#}(f_3) \circ \alpha_{a+b}\right)(v)
      \arrow[d, "{\left(h^{\#}(f_3) \circ \alpha_{c \preceq a+b}\right)_v}"']
      \arrow[dr, "\delta"]
      \\
      \left(h^{\#}(f_3) \circ \alpha_c\right)(v)
      \arrow[r, "{\vec{h}^{\#}(f_1, f_3)_v}"']
      &
      h^{\#}(f_1)(v)
    \end{tikzcd}    
  \end{equation*}
  commutes.
  It remains to show that the triangle
  \begin{equation}
    \label{eq:compatComposTria}
    \begin{tikzcd}[column sep=7em, row sep=large]
      \left(h^{\#}(f_3) \circ \alpha_{a+b}\right)(v)
      \arrow[r, "{\left(\vec{h}^{\#}(f_2, f_3) \circ \alpha_a\right)_v}"]
      \arrow[rd, "\delta"']
      &
      \left(h^{\#}(f_2) \circ \alpha_a\right)(v)
      \arrow[d,"{\vec{h}^{\#}(f_1, f_2)_v}"]
      \\
      &
      h^{\#}(f_1)(v)
    \end{tikzcd}    
  \end{equation}
  commutes.
  For $v \in D_3$ we consider the monotone map
  \begin{equation*}
    \xi_1 \colon
    u \mapsto
    (
    (f_2^{-1} \circ \rho_1 \circ \alpha_a)(u),
    (f_1^{-1} \circ \rho_0               )(u)
    )
    ,
  \end{equation*}
  which we used for the construction of
  $\vec{h}^{\#} (f_1, f_2)$.
  By equations \eqref{eq:xiRed} and \eqref{eq:xiBlue} we have
  \begin{equation*}
    \left(
      \mathcal{H}^{\bullet} \circ
      \xi_1
    \right)(v)
    =
    h^{\#}(f_1)(v)
    \quad \text{and} \quad
    \left(
      \mathcal{H}^{\bullet-1} \circ
      \xi_1 \circ
      T^{-1}
    \right)(v)
    =
    \left(h^{\#}(f_2) \circ \alpha_a\right)(v)
    .
  \end{equation*}
  In conjunction with \eqref{eq:xi0Compat}
  this allows us to rewrite \eqref{eq:compatComposTria} as
  \begin{equation*}
    \begin{tikzcd}[column sep=7em, row sep=large]
      \left(
        \mathcal{H}^{\bullet-1} \circ
        \xi_0 \circ
        T^{-1}
      \right)(v)
      \arrow[r, "{\left(\vec{h}^{\#}(f_2, f_3) \circ \alpha_a\right)_v}"]
      \arrow[d, "\delta"']
      &
      \left(
        \mathcal{H}^{\bullet-1} \circ
        \xi_1 \circ
        T^{-1}
      \right)(v)
      \arrow[d,"{\vec{h}^{\#}(f_1, f_2)_v}"]
      \\
      \left(
        \mathcal{H}^{\bullet} \circ
        \xi_0
      \right)(v)
      \arrow[r, equal]
      &
      \left(
        \mathcal{H}^{\bullet} \circ
        \xi_1
      \right)(v)
      .
    \end{tikzcd}
  \end{equation*}
  Moreover, we have pointwise
  $\xi_1 \subseteq \xi_0$,
  and thus the commutativity of this square follows
  from the naturality of the Mayer--Vietoris sequence.
  For $v \in D_2$ we consider the monotone map
  \begin{equation*}
    \xi_2 \colon
    u \mapsto
    (
    (f_3^{-1} \circ \rho_1 \circ \alpha_b)(u),
    (f_2^{-1} \circ \rho_0               )(u)
    )
    ,
  \end{equation*}
  which we used for the construction of
  $\vec{h}^{\#} (f_2, f_3)$.
  By equations \eqref{eq:xiRed} and \eqref{eq:xiBlue} we have
  \begin{align*}
    \left(\mathcal{H}^{\bullet} \circ \xi_2 \circ \alpha_a\right)(v)
    & =
      \left(h^{\#}(f_2) \circ \alpha_a\right)(v)
    \\
    \text{and}
    \quad
    \left(
    \mathcal{H}^{\bullet-1} \circ
    \xi_2 \circ
    \alpha_a
    \circ
    T^{-1}
    \right)(v)
    =
    \left(
      \mathcal{H}^{\bullet-1} \circ
      \xi_2 \circ
      T^{-1} \circ
      \alpha_a
    \right)(v)
    & =
      \left(h^{\#}(f_3) \circ \alpha_{a+b}\right)(v)
      .
  \end{align*}
  Here the first equality on the second line
  follows from the fact that $T$ is a
  \href{
    https://en.wikipedia.org/wiki/Center_(group_theory)
  }{central}
  automorphism.
  In conjunction with \eqref{eq:xi0Compat}
  this allows us to rewrite \eqref{eq:compatComposTria} as
  \begin{equation*}
    \begin{tikzcd}[column sep=7em, row sep=large]
      \left(
        \mathcal{H}^{\bullet-1} \circ
        \xi_2 \circ
        \alpha_a
        \circ
        T^{-1}
      \right)(v)
      \arrow[d, equal]
      \arrow[r, "{\left(\vec{h}^{\#}(f_2, f_3) \circ \alpha_a\right)_v}"]
      &
      \left(\mathcal{H}^{\bullet} \circ \xi_2 \circ \alpha_a\right)(v)
      \arrow[d,"{\vec{h}^{\#}(f_1, f_2)_v}"]
      \\
      \left(
        \mathcal{H}^{\bullet-1} \circ
        \xi_0 \circ
        T^{-1}
      \right)(v)
      \arrow[r, "\delta"']
      &
      \left(
        \mathcal{H}^{\bullet} \circ
        \xi_0
      \right)(v)
      .
    \end{tikzcd}    
  \end{equation*}
  Moreover, we have pointwise
  $\xi_0 \subseteq \xi_2 \circ \alpha_a$,
  and thus the commutativity of this square
  follows from the naturality of the Mayer--Vietoris sequence.
\end{proof}

Next we show how this proposition implies
\emph{stability} for RISC
in the sense of \cite{MR3413628}.
The group homomorphism
\[\Omega \colon
  \R \rightarrow \op{Aut}(\M),\,
  \delta \mapsto \Omega_\delta := \alpha_{(-\delta, \delta)},
\]
describes a \emph{(super)linear family} on $\M$
in the sense of \cite[Section 2.5]{MR3413628}.

\begin{dfn}[$\delta$-Interleaving, \cite{MR3413628}]
  \label{dfn:interleaving}
  Let $\delta \geq 0$ and let $F, G \colon \M^{\circ} \rightarrow \VectK$
  be contravariant functors on $\M$.
  Then a \emph{$\delta$-interleaving} of $F$ and $G$
  is a pair of natural transformations
  \[\varphi \colon G \circ \Omega_\delta \rightarrow F
    \quad \text{and} \quad
    \psi \colon F \circ \Omega_\delta \rightarrow G
  \]
  such that both triangles in the diagram
  \begin{equation}
    \label{eq:interleaving}
    \begin{tikzcd}[row sep=large, column sep=10ex]
      G \circ \Omega_{2 \delta}
      \arrow[dd, bend right=55,
      "G \circ \Omega_{0 \preceq 2 \delta}"']
      &
      F \circ \Omega_{2 \delta}
      \arrow[dl, "\psi \circ \Omega_\delta"' near end]
      \arrow[dd, bend left=55,
      "F \circ \Omega_{0 \preceq 2\delta}"]
      \\
      G \circ \Omega_\delta
      &
      F \circ \Omega_\delta
      \arrow[from=ul, crossing over, "\varphi \circ \Omega_\delta" near end]
      \arrow[dl, "\psi"' near end]
      \\
      G
      &
      F
      \arrow[from=ul, crossing over, "\varphi" near end]
    \end{tikzcd}    
  \end{equation}
  commute.
  Moreover, we say that $F$ and $G$ are \emph{$\delta$-interleaved}
  if there is a $\delta$-interleaving.
\end{dfn}

\begin{thm}[Stability]
  \label{thm:stability}
  Let $\delta \geq 0$ and let
  $f, g \colon X \rightarrow \R$ be continuous functions
  with $|f(x) - g(x)| \leq \delta$ for all $x \in X$.
  Then $h(f)$ and $h(g)$ are $\delta$-interleaved.
  More specifically, the interleaving natural transformations
  can be given as the compositions
  \begin{align*}
    \varphi & \colon
              h(g) \circ \Omega_{\delta}
              \xrightarrow{h(g) \circ \alpha_{a \preceq (-\delta, \delta)}}
              h(g) \circ \alpha_a
              \xrightarrow{\vec{h}(f, g)}
              h(f)
    \\
    \text{and} \quad
    \psi & \colon
           h(f) \circ \Omega_{\delta}
           \xrightarrow{h(f) \circ \alpha_{a' \preceq (-\delta, \delta)}}
           h(f) \circ \alpha_{a'}
           \xrightarrow{\vec{h}(g, f)}
           h(g)
           ,
  \end{align*}
  where $(x, y) := a := \mathbf{d}(f, g)$
  and $a' := (-y, -x) = \mathbf{d}(g, f)$.
\end{thm}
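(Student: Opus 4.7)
The plan is to verify directly that the specified $\varphi$ and $\psi$ satisfy the two triangle identities of \cref{dfn:interleaving}, by reducing them to \cref{prp:compatCompos}.

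First I observe that $|f(x) - g(x)| \leq \delta$ forces $-\delta \leq \inf(g-f) \leq \sup(g-f) \leq \delta$, so both $a = \mathbf{d}(f, g)$ and $a' = \mathbf{d}(g, f)$ satisfy $a, a' \preceq (-\delta, \delta)$ in $\R^{\circ} \times \R$. Since $\alpha$ is monotone, this gives morphisms of functors $\alpha_a \preceq \Omega_\delta$ and $\alpha_{a'} \preceq \Omega_\delta$, so that both $\varphi$ and $\psi$ are well-defined natural transformations. I will also use that $\vec{h}(f, f) = \op{id}_{h(f)}$, which is immediate from the construction of $\vec{h}$: when the two functions coincide, $a = (0,0)$, $\alpha_a = \op{id}_{\M}$, the interpolating map $\xi$ reduces to $f^{-1} \circ \rho$, and the Mayer--Vietoris differentials used to build $\varphi$ coincide with those already appearing in the definition of $h(f)$.

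Next I apply \cref{prp:compatCompos} to the triple $(f_1, f_2, f_3) = (f, g, f)$, whose three pairwise distances are $a$, $a'$, and $(0, 0)$. Combined with $\vec{h}(f, f) = \op{id}$, this yields the identity
\[
  \vec{h}(f, g) \circ \bigl(\vec{h}(g, f) \circ \alpha_a\bigr)
  \;=\;
  h(f) \circ \alpha_{0 \preceq a + a'}
  \colon h(f) \circ \alpha_{a+a'} \longrightarrow h(f),
\]
where the outer $\circ$ on the left side denotes vertical composition of natural transformations. To verify the triangle $\varphi \circ (\psi \circ \Omega_\delta) = h(f) \circ \Omega_{0 \preceq 2\delta}$, I expand the left-hand side using the definitions of $\varphi$ and $\psi$ and the interchange law for whiskering. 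The key step is to invoke the naturality of $\vec{h}(g, f) \colon h(f) \circ \alpha_{a'} \Rightarrow h(g)$ at the morphism $\alpha_a(u) \preceq \Omega_\delta(u)$, which allows the factor $h(g) \circ \alpha_{a \preceq (-\delta, \delta)}$ to be commuted past $\vec{h}(g, f) \circ \Omega_\delta$. After this rearrangement and collapsing two successive $\alpha_{\cdot \preceq \cdot}$ factors by functoriality of $h(f)$, the left-hand side takes the form
\[
  \bigl(\vec{h}(f, g) \circ (\vec{h}(g, f) \circ \alpha_a)\bigr)
  \circ
  \bigl(h(f) \circ \alpha_{a + a' \preceq 2(-\delta, \delta)}\bigr),
\]
which the displayed identity simplifies to $h(f) \circ \alpha_{0 \preceq 2(-\delta, \delta)} = h(f) \circ \Omega_{0 \preceq 2\delta}$, as desired. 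The remaining triangle $\psi \circ (\varphi \circ \Omega_\delta) = h(g) \circ \Omega_{0 \preceq 2\delta}$ follows from the symmetric argument, applying \cref{prp:compatCompos} instead to $(g, f, g)$.

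The main obstacle I anticipate is the bookkeeping of vertical versus horizontal composition: I need to correctly identify which whiskering corresponds to which component at each point $u \in \M$, and track how the pointwise inequality $a + a' \preceq (-2\delta, 2\delta)$ assembles into the required natural transformation on $\M$. Once the naturality square of $\vec{h}(g, f)$ is correctly deployed, the remaining equalities follow from \cref{prp:compatCompos}, functoriality of $h(f)$, and the group-homomorphism property of $\alpha$.
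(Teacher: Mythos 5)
Your proposal is correct and takes essentially the same route as the paper: the paper's proof is exactly the commutative diagram you describe, with \cref{prp:compatCompos} applied to the triple $(f, g, f)$ supplying the key square $\vec{h}(f,g) \circ (\vec{h}(g,f) \circ \alpha_a) = h(f) \circ \alpha_{0 \preceq a+a'}$, and the remaining squares following from functoriality of $h(f)$ and the interchange law, just as you outline. The only point you make explicit that the paper leaves tacit is $\vec{h}(f,f) = \op{id}_{h(f)}$, which you justify correctly from the construction of $\vec{h}$.
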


\begin{proof}
  By \cref{prp:compatCompos} we have the commutative diagram
  \begin{equation*}
    \begin{tikzcd}[column sep=9em, row sep=6em]
      h(f) \circ \Omega_{2 \delta}
      \arrow[r, equal]
      \arrow[d,
      "{h(f) \circ \alpha_{(-\delta, \delta) + a' \preceq (-2\delta, 2\delta)}}"]
      \arrow[dd, bend right=55, "\psi \circ \Omega_{\delta}"']
      &
      h(f) \circ \Omega_{2 \delta}
      \arrow[d, "{h(f) \circ \alpha_{a+a' \preceq (-2\delta, 2\delta)}}" description]
      \arrow[dr, "h(f) \circ \Omega_{0 \preceq 2\delta}"]
      \\
      h(f) \circ \alpha_{(-\delta, \delta) + a'}
      \arrow[r, "{h(f) \circ \alpha_{a+a' \preceq (-\delta, \delta) + a'}}"]
      \arrow[d, "{\vec{h}(g, f) \circ \Omega_{\delta}}"]
      &
      h(f) \circ \alpha_{a + a'}
      \arrow[r, "h(f) \circ \alpha_{0 \preceq a + a'}"]
      \arrow[d, "{\vec{h}(g, f) \circ \alpha_a}" description]
      &
      h(f)
      \arrow[d, equal]
      \\
      h(g) \circ \Omega_{\delta}
      \arrow[r, "{h(g) \circ \alpha_{a \preceq (-\delta, \delta)}}"']
      \arrow[rr, bend right=25, "\varphi"']
      &
      h(g) \circ \alpha_a
      \arrow[r, "{\vec{h}(f, g)}"']
      &
      h(f)
      ,      
    \end{tikzcd}
  \end{equation*}
  which yields the commutativity
  of the triangle on the right hand side in \eqref{eq:interleaving}.
  By symmetry, the proof that the left triangle commutes
  is completely analogous.
\end{proof}

\begin{figure}[t]
  \centering
  \import{strip-diagrams/_diagrams/}{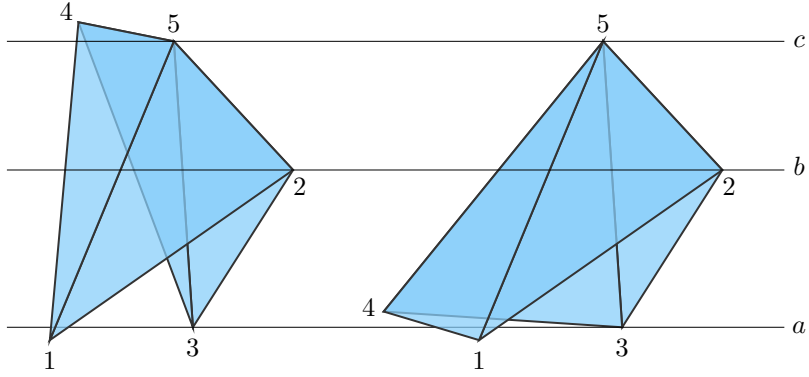}
  \caption{The function $f$ depicted as a height function
    on the left hand side
    and the height function $g'$ on the right,
    see \cref{exm:hood}.
  }
  \label{fig:hood}
\end{figure}

\begin{figure}[t]
  \centering
  \import{strip-diagrams/_diagrams/}{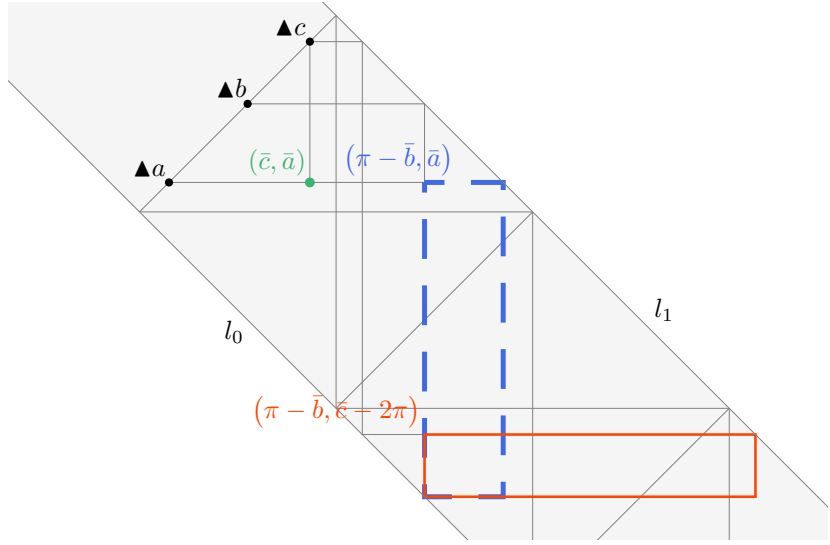}
  \caption{
    The indecomposables
    $B_{\left(\pi-\bar{b}, \bar{a}\right)}$
    and
    $B_{\left(\pi-\bar{b}, \bar{c}-2\pi\right)}$.
  }
  \label{fig:hoodDiagram}
\end{figure}

\begin{exm}
  \label{exm:hood}
  Let $C_4$ be the cyclic graph on four vertices
  $\{1, 2, 3, 4\}$.
  Moreover,
  let $5 * C_4$ be the abstract simplicial cone over $C_4$
  with $5$ as the tip of the cone
  and let $X := |5 * C_4|$ be the geometric realization of $5 * C_4$.
  Furthermore,
  let $a < b < c$ and let
  $f, g' \colon X \rightarrow \R$
  be the unique simplexwise linear functions with
  \begin{align*}
    f(1) = f(3) & = a = g'(1) = g'(3) = g'(4) \\
    f(2) & = b = g'(2), \\
    \text{and} \quad
    f(4) = f(5) & = c = g'(5),
  \end{align*}
  which are also depicted as height functions in
  \cref{fig:hood}.
  Now let $\delta := \frac{1}{2} (c - a)$
  and let $g := \delta + g'$.
  Then we have $\|f - g\|_{\infty} = \delta$
  and thus there is a $\delta$-interleaving with
  $\varphi := \vec{h}(f, g)$ as one of the two
  interleaving natural transformations
  by \cref{thm:stability}.
  Moreover,
  we have $h(g) \circ \alpha_{(\delta, \delta)} = h(g')$
  and thus
  \[h(g) \circ \Omega_{\delta} =
    h(g) \circ \alpha_{(\delta, \delta)} \circ \alpha_{(-2\delta, 0)} =
    h(g') \circ \alpha_{(-2\delta, 0)}
  \]
  and $\varphi = \vec{h}(f, g) = \vec{h}(f, g')$.
  Now let
  \begin{align*}
    \bar{a} & := \arctan a, \\
    \bar{b} & := \arctan b, \\
    \bar{c} & := \arctan c, \\
    \text{and} \quad
    \tilde{c} & := \arctan (2c - a).
  \end{align*}
  Then we have
  \begin{align*}
    \op{Dgm}(f)
    =
    \mathbf{1}_{
    \left\{(\bar{c}, \bar{a}), \left(\pi-\bar{b}, \bar{a}\right)\right\}
    }
    \quad \text{and} \quad
    \op{Dgm}(g')
    =
    \mathbf{1}_{
    \left\{(\bar{c}, \bar{a}), \left(\pi-\bar{b}, \bar{c}-2\pi\right)\right\}
    }
    ,
  \end{align*}
  where
  ${\mathbf{1}_{
      \left\{(\bar{c}, \bar{a}), \left(\pi-\bar{b}, \bar{a}\right)\right\}
    } \colon \op{int} \M \rightarrow \N_0}$
  is the indicator function of the subset
  ${\left\{(\bar{c}, \bar{a}), \left(\pi-\bar{b}, \bar{a}\right)\right\} \subset
    \op{int} \M}$;
  see also \cref{fig:hoodDiagram}.
  As
  \begin{equation*}
    \alpha_{(-2 \delta, 0)} (\tilde{c}, \bar{a}) = (\bar{c}, \bar{a})
    \quad \text{and} \quad
    \alpha_{(-2 \delta, 0)} \left(\pi-\bar{b}, \bar{c}-2\pi\right) =
    \left(\pi-\bar{b}, \bar{c}-2\pi\right)
    ,
  \end{equation*}
  we have
  $h(g') \circ \alpha_{(-2\delta, 0)} \cong
  B_{(\tilde{c}, \bar{a})} \oplus
  B_{\left(\pi-\bar{b}, \bar{c}-2\pi\right)}$
  and thus there is a commutative diagram
  \begin{equation*}
    \begin{tikzcd}[row sep=4em]
      B_{(\tilde{c}, \bar{a})} \oplus
      B_{\left(\pi-\bar{b}, \bar{c}-2\pi\right)}
      \arrow[r, "\sim"]
      \arrow[
      d,
      "B_{(\tilde{c}, \bar{a}) \preceq (\bar{c}, \bar{a})} \oplus
      B_{\left(\pi-\bar{b}, \bar{c}-2\pi\right) \preceq \left(\pi-\bar{b}, \bar{a}\right)}"
      description]
      &
      h(g') \circ \alpha_{(-2\delta, 0)}
      \arrow[r, equal]
      \arrow[d, "{\vec{h}(f, g')}"]
      &
      h(g) \circ \Omega_{\delta}
      \arrow[d, "\varphi"]
      \\
      B_{(\bar{c}, \bar{a})} \oplus
      B_{\left(\pi-\bar{b}, \bar{a}\right)}
      \arrow[r, "\sim"]
      &
      h(f)
      \arrow[r, equal]
      &
      h(f)
      .
    \end{tikzcd}
  \end{equation*}
  We consider the natural transformation
  \begin{equation*}
    \eta := 
    B_{\left(\pi-\bar{b}, \bar{c}-2\pi\right) \preceq \left(\pi-\bar{b}, \bar{a}\right)}
    \colon
    B_{\left(\pi-\bar{b}, \bar{c}-2\pi\right)}
    \rightarrow
    B_{\left(\pi-\bar{b}, \bar{a}\right)}
    .
  \end{equation*}
  As is shown in \cref{fig:hoodDiagram} the supports of
  $B_{(\pi-\bar{b}, \bar{a})}$ and
  $B_{(\pi-\bar{b}, \bar{c}-2\pi)}$
  have a non-empty intersection.
  For any $u$ in this intersection,
  $\eta_u$
  maps
  $B_{(\pi-\bar{b}, \bar{c}-2\pi)} (u) = K$
  identically onto
  $B_{(\pi-\bar{b}, \bar{a})} (u) = K$.
  Moreover, \cref{fig:hoodDiagram} also shows that the intersection
  of supports is disjoint from the regions of $\M$
  covered by extended persistence
  or the south face of the Mayer--Vietoris pyramid,
  see also \cref{sec:connExt}.
  In particular, the corresponding interleaving homomorphisms
  of extended persistence
  \cite[Section 6]{MR3201246}
  and of Mayer--Vietoris systems
  \cite[Section 2.3]{2019arXiv190709759B}
  do not detect the non-trivial homomorphism
  between the corresponding summands.
  Nevertheless, derived level sets persistence
  and derived extended (or even derived sublevel sets) persistence
  will recognize the corresponding homomorphism as well.
  This is consistent with
  \cite[Remark 4.9]{2019arXiv190709759B},
  where the corresponding homomorphism of derived level sets persistence
  is provided as a counter-example to the canonical functor
  to Mayer--Vietoris systems being faithful.
\end{exm}

Next we describe how $\vec{h}$ is compatible with precomposition.
To this end,
let $f, g \colon Y \rightarrow \R$ be continuous functions,
let $\varphi \colon X \rightarrow Y$ be a continuous map,
let $a := \mathbf{d}(f, g)$,
and let $b := \mathbf{d}(f \circ \varphi, g \circ \varphi)$,
then we have $b \preceq a$.

\begin{prp}[Compatibility with Precomposition]
  \label{prp:compatPrecomp}
  We have the commutative diagram
  \begin{equation*}
    \begin{tikzcd}[row sep=3em, column sep=5em]
      h(g) \circ \alpha_a
      \arrow[r, "h(\varphi) \circ \alpha_a"]
      \arrow[dd, "{\vec{h}(f, g)}"']
      &
      h(g \circ \varphi) \circ \alpha_a
      \arrow[d, "h(g \circ \varphi) \circ \alpha_{b \preceq a}"]
      \\
      &
      h(g \circ \varphi) \circ \alpha_b
      \arrow[d, "{\vec{h}(f \circ \varphi, g \circ \varphi)}"]
      \\
      h(f)
      \arrow[r, "h(\varphi)"']
      &
      h(f \circ \varphi)
      .
    \end{tikzcd}
  \end{equation*}
\end{prp}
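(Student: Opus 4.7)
The plan is to adapt the strategy of the proof of \cref{prp:compatCompos}. First, I would pass to the $\Z$-graded versions $h^{\#}$ of the four functors and $\vec{h}^{\#}$ of the relevant natural transformations via the $2$-adjunction of \cref{lem:2adj}, reducing the commutativity of the stated square to an equivalent square of strictly stable functors. Since all ingredients are strictly stable, it suffices to verify pointwise commutativity for $v \in D$.

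Next, I would introduce an auxiliary interpolating map
\[
\tilde{\xi} \colon \M \rightarrow \mathcal{P},\quad
u \mapsto \big(((g \circ \varphi)^{-1} \circ \rho_1 \circ \alpha_a)(u),\, ((f \circ \varphi)^{-1} \circ \rho_0)(u)\big) = \varphi^{-1}(\xi(u)),
\]
where $\xi$ denotes the interpolating map used in \cref{sec:interleavings} to construct $\vec{h}^{\#}(f, g)$. Running the construction of $\vec{h}^{\#}(f, g)$ with $\tilde{\xi}$ in place of $\xi$, but for the pair $(f \circ \varphi, g \circ \varphi)$, yields an auxiliary strictly stable natural transformation $\tilde{\vartheta} \colon h^{\#}(g \circ \varphi) \circ \alpha_a \rightarrow h^{\#}(f \circ \varphi)$.

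Then I would factor the stated square through $\tilde{\vartheta}$ into two subsquares. The first asserts $h^{\#}(\varphi) \circ \vec{h}^{\#}(f, g) = \tilde{\vartheta} \circ (h^{\#}(\varphi) \circ \alpha_a)$; pointwise on $D$ this follows from functoriality of $\mathcal{H}^{\bullet}$ on the region where both natural transformations are induced by inclusion of pairs, and from the naturality of the Mayer--Vietoris sequence under the map of excisive triads induced by $\varphi$ on the remaining region, the key input being $\tilde{\xi} = \varphi^{-1} \circ \xi$. The second asserts $\tilde{\vartheta} = \vec{h}^{\#}(f \circ \varphi, g \circ \varphi) \circ (h^{\#}(g \circ \varphi) \circ \alpha_{b \preceq a})$; both sides are built from interpolating maps from $(f \circ \varphi)^{-1} \circ \rho$ to $(g \circ \varphi)^{-1} \circ \rho \circ \alpha_a$, and since $b \preceq a$ implies the pointwise inclusion of pairs $\xi^{\varphi}(u) \subseteq \tilde{\xi}(u)$ (where $\xi^{\varphi}$ denotes the interpolating map of $\vec{h}^{\#}(f \circ \varphi, g \circ \varphi)$, using $\alpha_b$), naturality of the Mayer--Vietoris sequence yields the equality.

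The main obstacle will be the bookkeeping required to partition $D$ into regions analogous to $D_1, D_2, D_3$ in the proof of \cref{prp:compatCompos}, where $\vec{h}^{\#}(f, g)$, $\tilde{\vartheta}$, and $\vec{h}^{\#}(f \circ \varphi, g \circ \varphi)$ employ direct inclusion of pairs on some regions and the Mayer--Vietoris connecting homomorphism on others. Once the partition is in place, each case reduces to a routine application of functoriality of $\mathcal{H}^{\bullet}$, or to naturality of the Mayer--Vietoris sequence under the map of excisive triads that $\varphi$ induces by virtue of $\varphi^{-1}$ preserving unions and intersections.
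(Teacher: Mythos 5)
Your proof is correct and follows the same underlying strategy as the paper's (very terse) proof, which reads in full: the horizontal arrows are induced by maps of pairs, the vertical arrows are in some places induced by inclusions and in others given by Mayer--Vietoris differentials, and in the first case commutativity follows from functoriality of $\mathcal{H}^{\bullet}$ while in the second it follows from naturality of the Mayer--Vietoris sequence. What you add is the explicit mechanism that makes this one-liner rigorous: passing to the strictly stable lifts via \cref{lem:2adj}, constructing the auxiliary natural transformation $\tilde{\vartheta}$ from the interpolating map $\tilde{\xi} = \varphi^{-1}\circ\xi$, and splitting the square along the diagonal $\tilde{\vartheta}$ into a square controlled by the relation $\tilde{\xi} = \varphi^{-1}\circ\xi$ (naturality of MV under the map of excisive triads induced by $\varphi$) and a triangle controlled by the pointwise inclusion $\xi^{\varphi} \subseteq \tilde{\xi}$ forced by $b \preceq a$. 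This is precisely the structure that is implicit in the paper's argument, so I would not call it a genuinely different route, but your elaboration is the more honest version of the proof: it identifies the concrete intermediate object the paper elides and shows why the two appeals to functoriality/naturality are well-posed. The only caveat, which you flag yourself, is that the region bookkeeping is slightly heavier than in \cref{prp:compatCompos}, since the internal map $h^{\#}(g\circ\varphi)\circ\alpha_{b\preceq a}$ can itself cross tile boundaries; but the same dichotomy (inclusion-induced vs.\ MV-differential, resolved by functoriality vs.\ naturality) handles those cases as well.
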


\begin{proof}
  Here the horizontal natural transformations
  are induced by inclusions.
  The vertical transformations are induced by inclusions
  for some points, and they are given as differentials
  of Mayer--Vietoris sequences for other points.
  In the former case the commutativity follows
  from the functoriality of $\mathcal{H}^{\bullet}$.
  In the latter case the commutativity follows
  from the naturality of the Mayer--Vietoris sequence.
\end{proof}

Finally we provide a form of homotopy invariance
for $h$ and $\vec{h}$.
To this end, we consider a commutative square
\begin{equation*}
  \begin{tikzcd}[column sep=large, row sep=large]
    X \times [0, 1]
    \arrow[r, "\Gamma"]
    \arrow[d, "\pi_1"']
    &
    Y
    \arrow[d, "g"]
    \\
    X
    \arrow[r, "f"']
    &
    \R
  \end{tikzcd}
\end{equation*}
of spaces
as well as the continuous maps
$\varphi := \Gamma(-, 0)$ and $\psi := \Gamma(-, 1)$
from $X$ to $Y$.
Then we may think of $\varphi$ and $\psi$
as homomorphisms in the category of spaces over the reals $\R$
with domain $f$ and codomain $g$.
Moreover, we may think of $\Gamma$ as a \emph{homotopy}
from $\varphi$ to $\psi$.
The following lemma states
that \emph{homotopic} homomorphisms of spaces over the reals
are identified under $h$.

\begin{lem}
  \label{lem:ordHomotopyInv}
  We have
  $h(\varphi) = h(\psi) \colon h(f) \rightarrow h(g)$.
\end{lem}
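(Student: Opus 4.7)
The plan is to reduce the equality $h(\varphi) = h(\psi)$ to the classical principle that a cohomology theory inverting weak equivalences necessarily identifies homotopic maps, applied componentwise to pairs of spaces. First I would observe that by the construction in \cref{sec:risc} and the uniqueness in \cref{prp:fundaExt}, each of $h(\varphi)$ and $h(\psi)$ comes from a strictly stable natural transformation of the underlying graded-vector-space functors, which is determined by its restriction to the fundamental domain $D$ together with the Mayer--Vietoris connecting transformations. Since the connecting homomorphisms are natural in the pair, it therefore suffices to show that $h(\varphi)_u = h(\psi)_u$ for every $u \in D$.

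For $u \in D$, the map $h(\varphi)_u$ is induced by applying $\mathcal{H}^{\bullet}$ to the pair-map $\varphi \colon f^{-1}(\rho(u)) \to g^{-1}(\rho(u))$, and analogously for $\psi$. The commutativity of the given square forces $g \circ \Gamma = f \circ \pi_1$, so that for any open $A \subseteq \R$ we have $\Gamma^{-1}(g^{-1}(A)) = f^{-1}(A) \times [0,1]$. Writing $(P_1, P_0) := f^{-1}(\rho(u))$, the homotopy $\Gamma$ therefore restricts to a map of pairs $\bar{\Gamma} \colon (P_1 \times [0,1], P_0 \times [0,1]) \to g^{-1}(\rho(u))$. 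Let $i_0, i_1 \colon X \hookrightarrow X \times [0,1]$ be the inclusions at the two endpoints; then $\varphi = \bar{\Gamma} \circ i_0$ and $\psi = \bar{\Gamma} \circ i_1$ as pair-maps, and the claim reduces to the equality $\mathcal{H}^{\bullet}(i_0) = \mathcal{H}^{\bullet}(i_1)$ on the pair $(P_1 \times [0,1], P_0 \times [0,1])$.

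For this last step, both $i_0$ and $i_1$ are homotopy equivalences of pairs with common homotopy inverse $\pi_1$, hence weak equivalences. Since $\mathcal{H}^{\bullet}$ inverts weak equivalences by hypothesis, both $\mathcal{H}^{\bullet}(i_0)$ and $\mathcal{H}^{\bullet}(i_1)$ are isomorphisms, and from $\pi_1 \circ i_0 = \pi_1 \circ i_1 = \mathrm{id}$ together with the uniqueness of the inverse of $\mathcal{H}^{\bullet}(\pi_1)$ we obtain $\mathcal{H}^{\bullet}(i_0) = \mathcal{H}^{\bullet}(i_1) = \mathcal{H}^{\bullet}(\pi_1)^{-1}$, whence $\mathcal{H}^{\bullet}(\varphi) = \mathcal{H}^{\bullet}(\psi)$. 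The step I expect to require the most care is the first one, namely arguing cleanly that agreement on $D$ suffices; one could alternatively avoid invoking \cref{prp:fundaExt} by noting that every $u \in \M$ lies in some tile $T^{-n}(D)$, on which $h(\varphi)_u$ and $h(\psi)_u$ are induced by $\varphi$ and $\psi$ on a suitably shifted pair, so that the same $i_0, i_1$ argument applies uniformly, with compatibility with the gluing connecting homomorphisms being automatic from the naturality of the Mayer--Vietoris sequence.
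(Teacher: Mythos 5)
Your proof is correct, and since the paper states this lemma without supplying a proof, your argument fills the gap with what appears to be the intended reasoning. The two key moves — first reducing to checking equality on (any tile of) the fundamental domain $D$ using strict stability, and second observing that $g\circ\Gamma = f\circ\pi_1$ makes $\Gamma$ restrict to a pair map $\bar\Gamma\colon \bigl(P_1\times[0,1],\,P_0\times[0,1]\bigr)\to g^{-1}(\rho(u))$ with $\varphi = \bar\Gamma\circ i_0$ and $\psi = \bar\Gamma\circ i_1$ — are exactly what is needed. The standard argument that $\mathcal{H}^{\bullet}(i_0) = \mathcal{H}^{\bullet}(\pi_1)^{-1} = \mathcal{H}^{\bullet}(i_1)$ from the hypothesis that $\mathcal{H}^{\bullet}$ inverts weak equivalences (together with $\pi_1\circ i_0 = \pi_1\circ i_1 = \mathrm{id}$) is routine and correct, as is the check that $\pi_1$ and $i_0$, $i_1$ are mutually homotopy inverse as maps of pairs. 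One small remark: the direction in the statement of the lemma as printed in the paper is a typo — by the contravariance spelled out in Section 2, $h(\varphi)$ and $h(\psi)$ both map $h(g)\to h(f)$, not $h(f)\to h(g)$ — and your write-up implicitly works with the correct direction. Your concern about whether agreement on $D$ suffices is well founded but not a genuine difficulty: since $h(\varphi)$ and $h(\psi)$ are both induced from strictly stable natural transformations between the strictly stable graded-vector-space functors, commutation with $T$ propagates the agreement from $D$ to every tile $T^{-n}(D)$, so no separate compatibility check with the Mayer--Vietoris connecting maps is needed for the \emph{comparison} (only for the \emph{construction}).
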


As usual, we say that two homomorphisms of spaces over the reals
are \emph{homotopy inverses} of one another
if they are composable both ways and both compositions
are homotopic to the corresponding identities.
Moreover, we say that a homomorphism is
a \emph{homotopy equivalence},
if it has a homotopy inverse.

\begin{cor}
  \label{cor:equivsToIsos}
  The functor $h$ maps homotopy equivalences
  to natural isomorphisms.
\end{cor}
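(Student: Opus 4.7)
The plan is to deduce this directly from \cref{lem:ordHomotopyInv} together with the (contravariant) functoriality of $h$, so no new geometric input will be needed; all the content is already packaged in the homotopy invariance lemma.

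Concretely, suppose $\varphi \colon X \to Y$ is a homotopy equivalence of spaces over $\R$, with $f \colon X \to \R$ and $g \colon Y \to \R$ as structure maps, and let $\psi \colon Y \to X$ be a homotopy inverse. By definition of homotopy inverse in the category of spaces over $\R$, there are homotopies
\begin{equation*}
  \Gamma_1 \colon X \times [0,1] \to X, \qquad \Gamma_2 \colon Y \times [0,1] \to Y
\end{equation*}
over $\R$ (that is, $f \circ \Gamma_1 = f \circ \pi_1$ and $g \circ \Gamma_2 = g \circ \pi_1$) such that $\Gamma_1(-,0) = \psi \circ \varphi$, $\Gamma_1(-,1) = \mathrm{id}_X$, and analogously for $\Gamma_2$. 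This is precisely the situation of \cref{lem:ordHomotopyInv}, applied once with $(X,Y,f,g,\Gamma) = (X,X,f,f,\Gamma_1)$ and once with $(X,Y,f,g,\Gamma) = (Y,Y,g,g,\Gamma_2)$.

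Applying $h$ to the compositions and using the functoriality established in \cref{sec:risc}, we obtain
\begin{equation*}
  h(\varphi) \circ h(\psi) = h(\psi \circ \varphi) = h(\mathrm{id}_X) = \mathrm{id}_{h(f)},
\end{equation*}
where the middle equality uses \cref{lem:ordHomotopyInv} applied to $\Gamma_1$, and symmetrically
\begin{equation*}
  h(\psi) \circ h(\varphi) = h(\varphi \circ \psi) = h(\mathrm{id}_Y) = \mathrm{id}_{h(g)}
\end{equation*}
from $\Gamma_2$. Thus $h(\varphi)$ and $h(\psi)$ are mutually inverse natural transformations, so $h(\varphi)$ is a natural isomorphism.

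There is really no obstacle here: the whole content sits in \cref{lem:ordHomotopyInv}, and the corollary is a formal consequence of the standard fact that any functor sending a congruence (homotopy) to equality must send equivalences to isomorphisms. The only thing to keep straight is the variance: $h$ is contravariant, so one must be careful that the compositions unpack as above, but this is purely bookkeeping.
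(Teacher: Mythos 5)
Your proof is correct and is the standard formal argument the paper intends: the corollary is stated without proof, and your derivation---applying \cref{lem:ordHomotopyInv} to the two homotopies over $\R$ witnessing $\psi \circ \varphi \simeq \mathrm{id}_X$ and $\varphi \circ \psi \simeq \mathrm{id}_Y$, then invoking contravariant functoriality to get $h(\varphi) \circ h(\psi) = \mathrm{id}_{h(f)}$ and $h(\psi) \circ h(\varphi) = \mathrm{id}_{h(g)}$---is exactly the expected reasoning, with the variance handled correctly.
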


Now from this corollary to \cref{lem:ordHomotopyInv}
we may in turn deduce a generalization of \cref{lem:ordHomotopyInv}.
Suppose we have functions
$f \colon X \rightarrow \R$ and $g \colon Y \rightarrow \R$
as well as a map
${\Gamma \colon X \times [0, 1] \rightarrow Y}$
with $\mathbf{d}(f \circ \pi_1, g \circ \Gamma) \in \R^{\circ} \times \R$.
Morally the following proposition says that $h$ and $\vec{h}$ also
\enquote{identify approximately homotopic maps}.

\begin{prp}[Quantitative Homotopy Invariance]
  \label{prp:quantHomotopyInv}
  For $\varphi := \Gamma(-, 0)$ and $\psi := \Gamma(-, 1)$
  as well as
  \begin{align*}
    a & := \mathbf{d}(f, g \circ \varphi)
        ,
    \\
    b & := \mathbf{d}(f, g \circ \psi)
        ,
    \\
    \text{and}
    \quad
    c & := \mathbf{d}(f \circ \pi_1, g \circ \Gamma)
        ,
  \end{align*}
  we have the commutative square
  \begin{equation}
    \label{eq:homotopicProxy}
    \begin{tikzcd}[column sep=5em, row sep=large]
      h(g)
      \arrow[rr, "h(\psi) \circ \alpha_c"]
      \arrow[dd, "h(\varphi) \circ \alpha_c"']
      & &
      h(g \circ \psi) \circ \alpha_c
      \arrow[d, "h(g \circ \psi) \circ \alpha_{b \preceq c}"]
      \\
      & &
      h(g \circ \psi) \circ \alpha_b
      \arrow[d, "{\vec{h}(f, g \circ \psi)}"]
      \\
      h(g \circ \varphi) \circ \alpha_c
      \arrow[r, "h(g \circ \varphi) \circ \alpha_{a \preceq c}"']
      &
      h(g \circ \varphi) \circ \alpha_a
      \arrow[r, "{\vec{h}(f, g \circ \varphi)}"']
      &
      h(f)
      .
    \end{tikzcd}
  \end{equation}
\end{prp}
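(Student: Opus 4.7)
The plan is to route both composites in \eqref{eq:homotopicProxy} through the cylinder $X \times [0,1]$ equipped with the pulled-back function $F := f \circ \pi_1 \colon X \times [0,1] \to \R$, using the endpoint inclusions $\iota_0, \iota_1 \colon X \to X \times [0,1]$ defined by $\iota_s(x) := (x, s)$. Since $F \circ \iota_s = f$ for $s \in \{0,1\}$, each $\iota_s$ is a morphism of spaces over $\R$. Moreover, the identity map of $X \times [0,1]$ serves as a homotopy from $\iota_0$ to $\iota_1$ over $\R$ in the sense required by \cref{lem:ordHomotopyInv}: take the square of that lemma with $Y$ replaced by $X \times [0,1]$, $g$ replaced by $F$, and $\Gamma$ replaced by $\op{id}_{X \times [0,1]}$, which does commute since $F \circ \op{id} = f \circ \pi_1$. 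Hence \cref{lem:ordHomotopyInv} yields the key identity $h(\iota_0) = h(\iota_1) \colon h(F) \to h(f)$.

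Next, I would apply \cref{prp:compatPrecomp} twice, viewing $X \times [0,1]$ as a space carrying both functions $F$ and $g \circ \Gamma$, so that $\mathbf{d}(F, g \circ \Gamma) = c$. With $\iota_0$ in the role of the precomposition map, we have $F \circ \iota_0 = f$ and $g \circ \Gamma \circ \iota_0 = g \circ \varphi$, so $\mathbf{d}(f, g \circ \varphi) = a$; the proposition then yields both $a \preceq c$ and the equality
\[
h(\iota_0) \circ \vec{h}(F, g \circ \Gamma)
= \vec{h}(f, g \circ \varphi) \circ \bigl(h(g \circ \varphi) \circ \alpha_{a \preceq c}\bigr) \circ \bigl(h(\iota_0) \circ \alpha_c\bigr).
\]
The analogous application to $\iota_1$, using $g \circ \Gamma \circ \iota_1 = g \circ \psi$ and $\mathbf{d}(f, g \circ \psi) = b$, gives $b \preceq c$ together with the corresponding equation in which $\iota_1$, $\psi$, $b$ replace $\iota_0$, $\varphi$, $a$.

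To finish, I would combine these two equations using the key identity $h(\iota_0) = h(\iota_1)$: the left-hand sides coincide, so the right-hand sides do as well. Whiskering both right-hand sides on the right by $h(\Gamma) \circ \alpha_c \colon h(g) \circ \alpha_c \to h(g \circ \Gamma) \circ \alpha_c$ (the horizontal composition of the natural transformation $h(\Gamma)$ arising from the morphism of spaces over $\R$ associated with $(X \times [0,1], g \circ \Gamma) \xrightarrow{\Gamma} (Y, g)$ with the endofunctor $\alpha_c$), and using the functoriality identity
\[
\bigl(h(\iota_s) \circ \alpha_c\bigr) \circ \bigl(h(\Gamma) \circ \alpha_c\bigr)
= h(\Gamma \circ \iota_s) \circ \alpha_c,
\]
which specializes to $h(\varphi) \circ \alpha_c$ for $s = 0$ and $h(\psi) \circ \alpha_c$ for $s = 1$, yields exactly the commutativity of \eqref{eq:homotopicProxy} (with the top-left object read as $h(g) \circ \alpha_c$, as demanded by the typing of the labels on its emanating arrows).

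I expect the main obstacle to be purely organizational: every ingredient is already available in the paper, and the only genuinely new input is the observation that the two endpoint inclusions $\iota_0, \iota_1$ are homotopic over $\R$ via the identity of $X \times [0,1]$, which lets \cref{lem:ordHomotopyInv} do the heavy lifting. The remaining work amounts to carefully pasting together the two instances of \cref{prp:compatPrecomp} with the whiskering by $h(\Gamma) \circ \alpha_c$ so that the various whiskerings line up on the nose.
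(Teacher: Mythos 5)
Your proof is correct and takes essentially the same route as the paper's: both pass through the cylinder $(X \times [0,1], f \circ \pi_1)$ and apply \cref{prp:compatPrecomp} with $Y := X \times [0,1]$, the two functions $f \circ \pi_1$ and $g \circ \Gamma$, and the endpoint inclusions in the role of the precomposing map. The only substantive difference is how the two branches are reconciled: the paper invokes \cref{cor:equivsToIsos} to conclude that $h(\pi_1)$ and $h(i_0)$ are isomorphisms and then argues that the left edge of a trapezium gives a single well-defined natural transformation $\eta$, implicitly using that $h(i_0)$ and $h(i_1)$ are both the inverse of $h(\pi_1)$; you instead obtain the key identity $h(\iota_0) = h(\iota_1)$ directly from \cref{lem:ordHomotopyInv} applied to the commutative square with $Y := X \times [0,1]$, $g := f \circ \pi_1$, and $\Gamma := \op{id}_{X \times [0,1]}$. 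Your formulation is slightly more economical and makes the role of \cref{lem:ordHomotopyInv} explicit, whereas the paper buries the same fact inside the observation that $\pi_1$ has a unique two-sided inverse under $h$. You are also right that the top-left object of \eqref{eq:homotopicProxy} should read $h(g) \circ \alpha_c$ rather than $h(g)$, as the paper's own trapezium confirms.
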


\begin{proof}
  We consider the trapezium
  \begin{equation*}
    \begin{tikzcd}[row sep=3em, column sep=5em]
      h(g) \circ \alpha_c
      \arrow[d, "h(\Gamma)"']
      \arrow[rd, "h(\varphi) \circ \alpha_c"]
      \\
      h(g \circ \Gamma) \circ \alpha_c
      \arrow[r, "h(i_0) \circ \alpha_c"']
      \arrow[dd, "{\vec{h}(f \circ \pi_1, g \circ \Gamma)}"']
      &
      h(g \circ \varphi) \circ \alpha_c
      \arrow[d, "h(g \circ \varphi) \circ \alpha_{a \preceq c}"]
      \\
      &
      h(g \circ \varphi) \circ \alpha_a
      \arrow[d, "{\vec{h}(f, g \circ \varphi)}"]
      \\
      h(f \circ \pi_1)
      \arrow[r, "h(i_0)"', "\sim"]
      &
      h(f)
      \arrow[ld, equal]
      \\
      h(f)
      \arrow[u, "h(\pi_1)"', "\sim" {anchor=south, rotate=90}]
    \end{tikzcd}
  \end{equation*}
  of functors and natural transformations.
  Here both triangles commute by the functoriality of $h$.
  The square in the center commutes by \cref{prp:compatPrecomp}.
  Moreover,
  $\pi_1$ and $i_0$ are homotopy inverses of one another,
  hence both are homotopy equivalences.
  By \cref{cor:equivsToIsos}
  % the previous corollary
  the natural transformations $h(\pi_1)$ and $h(i_0)$
  are natural isomorphisms.
  Thus,
  the three natural transformations along the left edge
  of this trapezium
  provide a natural transformation
  \begin{equation*}
    \eta \colon h(g) \circ \alpha_c \rightarrow h(f)
  \end{equation*}
  from
  $h(g) \circ \alpha_c$ to $h(f)$,
  which is equal to the composition of the natural transformations
  along the other edges of the trapezium.
  These natural transformations along the other edges
  are the same as the transformations
  on the left and at the bottom of \eqref{eq:homotopicProxy}.
  Completely analogously we see that $\eta$ is identical
  to the composition of the other two sides of the square
  \eqref{eq:homotopicProxy}.
\end{proof}

\bibliographystyle{alpha}
\bibliography{bib/deheuvels-1955.bib,bib/lawvere-1973.bib,bib/bauer-botnan-fluhr-2021-2.bib,bib/carlsson-2009.bib,bib/bendich-edelsbrunner-morozov-patel-2013.bib,bib/berkouk-ginot-oudot-2019.bib,bib/cohen-steiner-edelsbrunner-harer-2009.bib,bib/tomDieck-2008.bib,bib/may-1999.bib,bib/chazal-deSilva-glisse-oudot-2016.bib,bib/carlsson-deSilva-kalisnik-morozov-2019.bib,bib/barratt-whitehead-1956.bib,bib/kashiwara-schapira-1990.bib,bib/botnan-crawley-boevey-2020.bib,bib/crawley-boevey-2015.bib,bib/bubenik-deSilva-scott-2015.bib,bib/botnan-lebovici-oudot-2020.bib,bib/bubenik-scott-2014.bib,bib/cohen-steiner-2007.bib,bib/curry-2014.bib,bib/kashiwara-2018.bib,bib/ghrist-2008.bib,bib/cochoy-oudot-2020.bib,bib/bubenik-deSilva-scott-2017.bib,bib/scoccola-2020.bib}

\appendix

\section{Stable Functors on the Strip $\M$}
\label{sec:stableFun}

\begin{dfn}
  A \emph{strictly stable category} is a pair
  of a category $\mathcal{C}$
  and an automorphism of categories
  $\Sigma \colon \mathcal{C} \rightarrow \mathcal{C}$.
\end{dfn}

\begin{exm}
  The poset $\M \subset \R^{\circ} \times \R$, seen as a thin category,
  is a strictly stable category when endowed with the automorphism
  $T \colon \M \rightarrow \M$.
\end{exm}

Now let $\mathcal{C}_1$ and $\mathcal{C}_2$
be strictly stable categories endowed with automorphisms
$\Sigma_1$ and $\Sigma_2$, respectively.

\begin{dfn}
  A functor $F \colon \mathcal{C}_1 \rightarrow \mathcal{C}_2$
  is \emph{strictly stable} if
  $F \circ \Sigma_1 = \Sigma_2 \circ F$.
  \emph{Strictly stable} natural transformations
  are defined analogously.
\end{dfn}

With these definitions strictly stable categories
form a strict $2$-category.
Moreover, by dropping the associated automorphism,
we obtain a strict \emph{forgetful} $2$-functor
from strictly stable categories to categories.
We will now construct a right adjoint to this forgetful functor.
To this end, let $\mathcal{D}$ be an ordinary category,
then we may associate a strictly stable category $\mathcal{D}^{\Z}$
to $\mathcal{D}$.

\begin{dfn}
  The objects of $\mathcal{D}^{\Z}$ are maps
  $M_{\bullet} \colon n \mapsto M_n$
  from $\Z$ to the class of objects in $\mathcal{D}$.
  Homomorphisms, composition, and identities
  are defined pointwise in $\mathcal{D}^{\Z}$.
  As the associated automorphism we choose
  \begin{equation*}
    \Sigma \colon \mathcal{D}^{\Z} \rightarrow \mathcal{D}^{\Z}, ~~
    M_{\bullet} \mapsto M_{\bullet-1}
    .
  \end{equation*}
\end{dfn}

This construction yields a strict $2$-functor
from categories to strictly stable categories.

\begin{exm}
  The category of $\Z$-graded vector spaces $\mathrm{Vect}_K^{\Z}$ over $K$
  is the strictly stable category associated to $\mathrm{Vect}_K$.
\end{exm}

\begin{lem}
  \label{lem:2adj}
  The forgetful functor and $(-)^{\Z}$
  form a strict $2$-adjunction with
  the evaluation at $0$ as the counit
  \begin{equation*}
    \op{ev}_0 \colon \mathcal{D}^{\Z} \rightarrow \mathcal{D}, ~~
    M_{\bullet} \mapsto M_{0}
    .
  \end{equation*}
\end{lem}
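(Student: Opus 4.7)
The plan is to establish the strict $2$-adjunction by exhibiting an explicit natural isomorphism of hom-categories
\[
\mathrm{Fun}^{st}(\mathcal{C}, \mathcal{D}^{\Z}) \;\cong\; \mathrm{Fun}(U\mathcal{C}, \mathcal{D}),
\]
where $U$ is the forgetful $2$-functor and $\mathrm{Fun}^{st}$ denotes the category of strictly stable functors and strictly stable natural transformations. Since $\op{ev}_0$ is to serve as the counit, the left-to-right map on objects must send a strictly stable $F \colon \mathcal{C} \rightarrow \mathcal{D}^{\Z}$ to $F^{\flat} := \op{ev}_0 \circ F$. For the inverse, given an ordinary functor $G \colon U\mathcal{C} \rightarrow \mathcal{D}$, I would define $G^{\#} \colon \mathcal{C} \rightarrow \mathcal{D}^{\Z}$ by $G^{\#}(X)_n := G(\Sigma^{-n} X)$ on objects and $G^{\#}(f)_n := G(\Sigma^{-n} f)$ on morphisms.

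Next I would verify the strict stability of $G^{\#}$ and that the two operations are mutually inverse. Strict stability is immediate from the definition: $G^{\#}(\Sigma X)_n = G(\Sigma^{1-n} X) = G^{\#}(X)_{n-1} = (\Sigma \circ G^{\#})(X)_n$. For the round-trip identities, $(G^{\#})^{\flat}(X) = G^{\#}(X)_0 = G(\Sigma^0 X) = G(X)$ is direct, while $(F^{\flat})^{\#}(X)_n = F(\Sigma^{-n} X)_0$ equals $(\Sigma^{-n} F(X))_0 = F(X)_n$ upon iterating $F \circ \Sigma = \Sigma \circ F$ and recalling that $\Sigma$ on $\mathcal{D}^{\Z}$ shifts as $M_{\bullet} \mapsto M_{\bullet-1}$.

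To upgrade from a $1$-adjunction to a strict $2$-adjunction I would carry out the analogous check at the level of $2$-cells. A strictly stable natural transformation $\alpha \colon F_1 \Rightarrow F_2$ satisfies $\alpha_{X,n} = \alpha_{\Sigma^{-n} X, 0}$, hence is determined by its degree-zero components; conversely, any $\beta \colon G_1 \Rightarrow G_2$ extends to a strictly stable $\beta^{\#}$ via $(\beta^{\#})_{X,n} := \beta_{\Sigma^{-n} X}$. Equivalently, one may describe the unit as
\[
\eta_{\mathcal{C}} \colon \mathcal{C} \rightarrow (U\mathcal{C})^{\Z},\quad X \mapsto (n \mapsto \Sigma^{-n} X),
\]
and verify the triangle identities directly: $\op{ev}_0 \circ U(\eta_{\mathcal{C}}) = \mathrm{id}_{U\mathcal{C}}$ reduces to $\Sigma^0 X = X$, while $(\op{ev}_0)^{\Z} \circ \eta_{\mathcal{D}^{\Z}} = \mathrm{id}_{\mathcal{D}^{\Z}}$ unwinds to $(M_{\bullet+n})_0 = M_n$. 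No step here is substantial; the only subtlety lies in keeping the sign of $-n$ and the shift convention $\Sigma \colon M_\bullet \mapsto M_{\bullet-1}$ consistent throughout, so that strict stability translates cleanly back and forth between the two sides of the adjunction.
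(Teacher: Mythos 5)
Your proof is correct; the paper states \cref{lem:2adj} without proof, treating it as a routine verification, and your argument supplies exactly the standard details: the transform $G \mapsto G^{\#}$ with $G^{\#}(X)_n = G(\Sigma^{-n}X)$, the check that it is inverse to postcomposition with $\op{ev}_0$, the corresponding correspondence of $2$-cells, and the triangle identities for the unit $X \mapsto (n \mapsto \Sigma^{-n}X)$. Your sign conventions are consistent with $\Sigma \colon M_\bullet \mapsto M_{\bullet-1}$ throughout (in particular $\Sigma^{-n}(M)_0 = M_n$, which is the step where an error would most easily creep in), so there is nothing to add beyond the routine observation that the explicit formulas make the hom-category isomorphism $2$-natural in $\mathcal{C}$ and $\mathcal{D}$.
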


Depending on the context,
we may also write $M^{\bullet}$ in place of $M_{\bullet}$ and
$\op{ev}^0$
in place of $\op{ev}_0$.
Now let $\mathcal{A}$ be an additive (or pointed) category,
let $C \subset \M$ be a convex subposet,
and let $F \colon C \rightarrow \mathcal{A}$
be a functor vanishing on $C \cap \partial \M$.

\begin{lem}
  \label{lem:zero}
  Let $u, v \in C$ with
  $u \preceq v \npreceq T(u)$.
  Then
  $F(u \preceq v) = 0$.
\end{lem}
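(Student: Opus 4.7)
The plan is to exploit the geometry of $\M$ as a strip between the parallel lines $l_0$ and $l_1$ of slope $-1$, and the explicit description of $T$ as the glide reflection with respect to the bisector. I would first record the coordinate-level characterization: writing $u = (x_u, y_u)$ and $v = (x_v, y_v)$, the recipe for $T$ given in the excerpt (the incidences involving horizontal and vertical lines through $u$ and $T(u)$) yields $T(u) = (-c - y_u,\, c - x_u)$, where $l_0$ and $l_1$ are the lines $x+y = \pm c$. Hence the condition $v \preceq T(u)$ translates into the two inequalities $x_v \geq -c - y_u$ and $y_v \leq c - x_u$, which in turn say that the two \emph{other} corners $(x_u, y_v)$ and $(x_v, y_u)$ of the axis-aligned rectangle with opposite corners $u$ and $v$ lie in $\M$ (i.e.\ sit between $l_0$ and $l_1$).

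Given $u \preceq v$ with $v \not\preceq T(u)$, at least one of these inequalities must fail. I would split into the two corresponding cases.

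In the case $y_v > c - x_u$, the vertical segment from $u$ upward meets $l_1$ before reaching height $y_v$; set $w := (x_u, \, c - x_u) \in l_1 \subset \partial \M$. I would then verify, using only $u \in \M$ (which forces $x_u + y_u \leq c$) and the defining inequality of this case, that $u \preceq w \preceq v$. By convexity of $C$, the intermediate point $w$ lies in $C$, and since $w \in C \cap \partial \M$, the hypothesis on $F$ gives $F(w) = 0$. Functoriality then yields
\begin{equation*}
  F(u \preceq v) = F(u \preceq w) \circ F(w \preceq v) = 0
\end{equation*}
(composed in the opposite category, or the other way around for a covariant $F$ — either way the map factors through the zero object). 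The case $x_v < -c - y_u$ is handled symmetrically, by taking $w' := (-c - y_u,\, y_u) \in l_0 \subset \partial \M$ and checking $u \preceq w' \preceq v$.

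The only possible obstacle is being careful with the reversed order on the first factor $\R^{\circ}$ when verifying the inequalities; once the translation between the $\preceq$-order and the Euclidean coordinates is made explicit, the geometric content — an interval $[u,v]$ in $\M$ with $v \not\preceq T(u)$ must pierce the boundary of the strip — is exactly what makes the factoring argument work.
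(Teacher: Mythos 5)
Your proof is correct. The paper in fact states \cref{lem:zero} without proof, so there is no written argument to compare against; your argument supplies exactly the intended geometric reasoning. The key computation — that with $l_0 : x+y=-c$ and $l_1 : x+y=c$ the incidence conditions force $T(u) = (-c-y_u,\, c-x_u)$, so that $v \preceq T(u)$ (with the reversed order in the first coordinate) is precisely the statement that both auxiliary corners $(x_u,y_v)$ and $(x_v,y_u)$ of the rectangle spanned by $u$ and $v$ stay inside the strip — checks out, and your two cases correctly identify a point $w$ of $l_1$ or $l_0$ lying in the interval $[u,v]$, which lands in $C \cap \partial\M$ by convexity of $C$. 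Since $F$ vanishes there, $F(u\preceq v)$ factors through a zero object. One minor caveat: the composition $F(u\preceq v)=F(w\preceq v)\circ F(u\preceq w)$ should be written in the order appropriate for the covariant functor $F\colon C\to\mathcal A$ as stated in the appendix (you wrote the contravariant order first), but as you yourself observe this is immaterial since the map factors through $F(w)=0$ either way.
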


Now let $\Sigma$ be an automorphism of $\mathcal{A}$
and let $F \colon \M \rightarrow \mathcal{A}$
be a strictly stable functor vanishing on $\partial \M$,
let $D$ be a convex subposet of $\M$ that is a fundamental domain
with respect to the action of $\langle T \rangle$,
and let $F' := F |_D$.

\begin{dfn}
  \label{dfn:R_D}
  We set
  $R_D :=
  \set[\big]{
  (v, w) \in D \times T(D) \given
   v \preceq w \preceq T(v)
  }
  .$
\end{dfn}

If we view $R_D$ as a subposet of $D \times T(D)$ with the product order,
we obtain the two functors
$F' \circ \op{pr}_1 = F \circ \op{pr}_1$ and
$\Sigma \circ F' \circ T^{-1} \circ \op{pr}_2 = F \circ \op{pr}_2$,
where $\op{pr}_1 \colon R_D \rightarrow D$
and $\op{pr}_2 \colon R_D \rightarrow T(D)$
are the projections to the first and the second component, respectively.
The following definition provides a natural transformation $\partial (F, D)$
as in the diagram
\begin{equation*}
  \begin{tikzcd}
    R_D
    \arrow[rrr, "\op{pr}_1"]
    \arrow[ddd, "\op{pr}_2"']
    &[-23pt] & &[-18pt]
    D
    \arrow[ddd, "F"]
    \\[-17pt]
    & &
    {}
    \arrow[ld, Rightarrow, "{\partial (F, D)}"]
    \\
    &
    {}
    \\[-20pt]
    T(D)
    \arrow[rrr, "F"']
    & & &
    \mathcal{A}
    .
  \end{tikzcd}
\end{equation*}
\begin{dfn}
  We set
  $~
  \partial (F, D)
  \colon
  F \circ \op{pr}_1 \Rightarrow F \circ \op{pr}_2,
  (v, w) \mapsto F(v \preceq w)
  .$
\end{dfn}

\sloppy
In the following statement we will use
${w \preceq T(v) \in T^{n+1}(D)}$
as a shorthand for
${w, T(v) \in T^{n+1}(D)}$, $n \in \Z$,
and $w \preceq T(v)$.

\begin{lem}
  \label{lem:reduction}
  Suppose $\partial' := \partial (F, D)$,
  then
  \begin{equation}
    \label{eq:constr_ext}
    F(v \preceq w) =
    \begin{cases}
      (\Sigma^n \circ F' \circ T^{-n})(v \preceq w)
      & v, w \in T^n(D)
      \\
      (\Sigma^n \circ \partial')_{
        \left(T^{-n} (v), T^{-n} (w)\right)
      }
      &
      w \preceq T(v) \in T^{n+1}(D)
      \\
      0
      & \text{otherwise}
    \end{cases}
  \end{equation}
  for all $v \preceq w \in \M$.
\end{lem}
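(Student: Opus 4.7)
The plan is a direct case-by-case verification of~\eqref{eq:constr_ext}, using strict stability $F \circ T^n = \Sigma^n \circ F$ (the iterate of the defining equation), the definitions $F' = F|_D$ and $\partial'_{(v', w')} = F(v' \preceq w')$, and \cref{lem:zero} for the vanishing case. Since $\{T^n(D)\}_{n \in \Z}$ partitions $\M$, each $v \in \M$ sits in a unique tile $T^n(D)$, so the three branches of the statement are mutually exclusive.

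For the first case, in which $v, w \in T^n(D)$, strict stability together with $F|_D = F'$ immediately yields
\begin{equation*}
  F(v \preceq w) = \Sigma^n F(T^{-n} v \preceq T^{-n} w) = (\Sigma^n \circ F' \circ T^{-n})(v \preceq w).
\end{equation*}
For the second case, in which $w \preceq T(v) \in T^{n+1}(D)$, the hypotheses translate into $(T^{-n} v, T^{-n} w) \in R_D$: indeed $T^{-n} v \in D$, $T^{-n} w \in T(D)$, and $T^{-n} v \preceq T^{-n} w \preceq T(T^{-n} v)$. The defining formula $\partial'_{(T^{-n} v, T^{-n} w)} = F(T^{-n} v \preceq T^{-n} w)$ combined with another application of strict stability then gives $F(v \preceq w) = (\Sigma^n \circ \partial')_{(T^{-n} v, T^{-n} w)}$.

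For the \enquote{otherwise} case, the target is $F(v \preceq w) = 0$, which I plan to deduce from \cref{lem:zero} (with $C = \M$, using that $F$ vanishes on $\partial \M$). This reduces to checking that $w \npreceq T(v)$ whenever we are outside the first two cases. Writing $v \in T^n(D)$ and $w \in T^m(D)$ in their unique tiles, the remaining options are $m = n+1$ with $w \npreceq T(v)$ (where the hypothesis of \cref{lem:zero} is already in place) and $m \notin \{n, n+1\}$. In the latter subcase I appeal to the staircase geometry of the tessellation in \cref{fig:tessellation}: for $v \in D$ the rectangle $[v, T(v)]$ is contained in $D \cup T(D)$, as is visible from the explicit formula $D = (\downarrow \op{Im} \blacktriangle) \setminus T^{-1}(\downarrow \op{Im} \blacktriangle)$, and translating by $T^n$ promotes this to $[v, T(v)] \subseteq T^n(D) \cup T^{n+1}(D)$ in general, so $w$ lying in any other tile forces $w \npreceq T(v)$.

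The only genuinely non-formal step is precisely this geometric containment $[v, T(v)] \subseteq D \cup T(D)$ for $v \in D$; everything else reduces to routine manipulation of strict stability and the defining equations of $F'$ and $\partial'$.
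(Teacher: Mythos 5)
Your proof is correct, and it is essentially the argument the paper intends: \cref{lem:reduction} is stated without proof, the first two cases being immediate from strict stability together with the definitions of $F'$ and $\partial(F,D)$, and the third from \cref{lem:zero}. Your key geometric step, the containment $[v, T(v)] \subseteq D \cup T(D)$ for $v \in D$, is exactly the fact the paper itself invokes (as convexity of $D \cup T(D)$) in the proof of \cref{lem:rectCompose}, so your treatment matches the paper's implicit reasoning.
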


This lemma shows that $F$ is determined by
its restriction $F |_D$ and
the natural transformation $\partial(F, D)$.

Now let $F' \colon D \rightarrow \mathcal{A}$ be an arbitrary functor
vanishing on $D \cap \partial \M$,
let $\partial'$ be a natural transformation as in the diagram
\begin{equation*}
  \begin{tikzcd}
    R_D
    \arrow[rrr, "\op{pr}_1"]
    \arrow[ddd, "\op{pr}_2"']
    &[-23pt] & &[-18pt]
    D
    \arrow[ddd, "F'"]
    \\[-17pt]
    & &
    {}
    \arrow[ld, Rightarrow, "{\partial'}"]
    \\
    &
    {}
    \\[-20pt]
    T(D)
    \arrow[rrr, "\Sigma \circ F' \circ T^{-1}"']
    & & &
    \mathcal{A}
    ,
  \end{tikzcd}
\end{equation*}
and let $F \colon \M \rightarrow \mathcal{A}$
be defined by equation \eqref{eq:constr_ext}.
We aim to show that $F$ is a functor.

\begin{lem}
  \label{lem:zero_constr}
  Let
  $v \preceq w \npreceq T(v)$,
  then
  $F(v \preceq w) = 0$.  
\end{lem}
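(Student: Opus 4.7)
The plan is a direct case analysis on the piecewise definition of $F$ in equation \eqref{eq:constr_ext}. The hypothesis $w \npreceq T(v)$ rules out the second case of that formula. Hence either $v, w \in T^n(D)$ for some $n \in \Z$ (case one), in which case $F(v \preceq w) = \Sigma^n \bigl( F'(T^{-n}(v) \preceq T^{-n}(w)) \bigr)$, or $F(v \preceq w) = 0$ by default (case three). In the latter situation there is nothing to prove.

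It remains to handle case one. I would set $v' := T^{-n}(v)$ and $w' := T^{-n}(w)$; both lie in $D$, and $v' \preceq w'$ because $T^{-n}$ is monotone. The key small point is that also $w' \npreceq T(v')$: since $T$ is a central automorphism of $\M$, it commutes with $T^n$, so if one had $T^{-n}(w) \preceq T(T^{-n}(v))$ then applying the monotone map $T^n$ would give $w \preceq T(v)$, contradicting the hypothesis.

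With this in hand, \cref{lem:zero} applied to the functor $F' \colon D \to \mathcal{A}$ on the convex subposet $D \subset \M$ (noting that $F'$ vanishes on $D \cap \partial \M$ by assumption) yields $F'(v' \preceq w') = 0$. Since $\Sigma$ is an automorphism of the pointed category $\mathcal{A}$, it preserves zero morphisms, so $F(v \preceq w) = \Sigma^n(F'(v' \preceq w')) = \Sigma^n(0) = 0$. The argument is essentially bookkeeping; the only substantive ingredient is the already established \cref{lem:zero}, so no serious obstacle is anticipated.
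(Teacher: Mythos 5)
Your proof is correct and takes essentially the same approach as the paper: a case split on the defining formula \eqref{eq:constr_ext}, using the hypothesis $w \npreceq T(v)$ to exclude the second branch, applying \cref{lem:zero} to $F'$ on $D$ in the first branch, and noting the third branch is trivially zero. You spell out the bookkeeping (transport to $D$ via $T^{-n}$ and preservation of zero morphisms by $\Sigma$) that the paper compresses into a one-line reference.
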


\begin{proof}
  If $v, w \in T^n (D)$ for some $n$,
  the statement follows from \cref{lem:zero}
  and the defining equation \eqref{eq:constr_ext}.
  In any other case the result follows directly from
  the construction \eqref{eq:constr_ext}.
\end{proof}

\begin{lem}
  \label{lem:rectCompose}
  Let
  $u \preceq v \preceq w \preceq T(u)$,
  then
  \[F(u \preceq w) =
    F(v \preceq w)
    \circ
    F(u \preceq v)
    .\]
\end{lem}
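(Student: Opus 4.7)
The plan is to exploit the translational structure built into \eqref{eq:constr_ext} and then run a short case analysis. The formula \eqref{eq:constr_ext} is visibly compatible with the $\langle T\rangle$-action in the sense that replacing $(u,v,w)$ by $(T^n u, T^n v, T^n w)$ multiplies both $F(v\preceq w)\circ F(u\preceq v)$ and $F(u\preceq w)$ by $\Sigma^n$. Choosing $n$ with $u \in T^n(D)$ and translating, we may therefore assume $u \in D$, so that $T(u) \in T(D)$ and the chain $u \preceq v \preceq w \preceq T(u)$ forces $v,w$ to lie in the axis-aligned rectangle $[u, T(u)] \subseteq D \cup T(D)$. Since elements of $T(D)$ lie above (in $\preceq$) the portion of this rectangle sitting in $D$, the relation $v \in T(D)$ forces $w \in T(D)$; hence exactly three sub-cases remain.

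If $v, w \in D$, all three maps come from $F'$ via the first clause of \eqref{eq:constr_ext}, and the required identity is functoriality of $F'$. If $v \in D$ and $w \in T(D)$, then $w \preceq T(u) \preceq T(v)$ by monotonicity of $T$, so $(u, w)$ and $(v,w)$ both lie in $R_D$ and $(u,w) \preceq (v, w)$ is a morphism in $R_D$; unfolding \eqref{eq:constr_ext} gives $F(u \preceq v) = F'(u \preceq v)$, $F(v \preceq w) = \partial'_{(v,w)}$, $F(u \preceq w) = \partial'_{(u,w)}$, and the naturality square of $\partial'$ for this morphism (whose second component is the identity on $w$) delivers $\partial'_{(u,w)} = \partial'_{(v,w)} \circ F'(u \preceq v)$. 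Finally, if $v, w \in T(D)$, then $v \preceq w \preceq T(u)$ puts $(u,v)$ and $(u,w)$ in $R_D$ with $(u,v) \preceq (u,w)$ a morphism in $R_D$; here $F(u \preceq v) = \partial'_{(u,v)}$, $F(v \preceq w) = \Sigma \circ F'(T^{-1}(v) \preceq T^{-1}(w))$, $F(u \preceq w) = \partial'_{(u,w)}$, and the naturality square of $\partial'$ for this morphism (whose first component is the identity on $u$) yields the result.

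I do not expect a real obstacle: each of the three verifications is either functoriality of $F'$ or a single naturality square of $\partial'$, and the only bookkeeping is the check that the relevant pairs genuinely lie in $R_D$, which follows immediately from $v, w \preceq T(u)$ together with monotonicity of $T$. The one piece requiring a little care is ensuring that each case invokes exactly the correct clause of \eqref{eq:constr_ext}, so that the morphism in $R_D$ selected for the naturality square has the right component being an identity on the side opposite to where $F'$ acts non-trivially.
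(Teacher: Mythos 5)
Your proof is correct and takes essentially the same approach as the paper's: reduce to $u \in D$ by strict stability, observe $v, w \in D \cup T(D)$ by convexity, and handle the three surviving cases by functoriality of $F'$, naturality of $\partial'$ in its first argument, and naturality of $\partial'$ in its second argument, respectively. Your remark that the case $v \in T(D)$, $w \in D$ is impossible is the contrapositive of the paper's appeal to convexity of $D$ when $w \in D$, and the $R_D$-membership checks you spell out are exactly what makes the two naturality squares available.
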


\begin{proof}
  Without loss of generality we assume
  $u \in D$.
  Since $D \cup T(D)$ is convex we have
  $v, w \in D \cup T(D)$.
  If $w \in D$ we are done, since $D$ is convex and $F'$ is a functor.
  Suppose $w \in T(D)$ and $v \in D$, then
  \begin{equation*}
    \begin{split}
      F(u \preceq w)
      & =
      \partial'_{(u, w)}
      \\
      & =
      \partial'_{(v, w)}
      \circ
      F' (u \preceq v)
      \\
      & =
      F (v \preceq w)
      \circ
      F (u \preceq v)
    \end{split}
  \end{equation*}
  by the naturality of $\partial'$ in the first argument.
  Similarly if $v, w \in T(D)$, then
  \begin{equation*}
    \begin{split}
      F(u \preceq w)
      & =
      \partial'_{(u, w)}
      \\
      & =
      (\Sigma \circ F' \circ T^{-1}) (v \preceq w)
      \circ
      \partial'_{(u, v)}
      \\
      & =
      F (v \preceq w)
      \circ
      F (u \preceq v)
    \end{split}
  \end{equation*}
  follows from the naturality of $\partial'$ in its second argument.
\end{proof}

\begin{lem}
  \label{lem:extFunctor}
  The data for $F$ yields a functor.
\end{lem}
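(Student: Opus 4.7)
The plan is to verify the two functor axioms for the map $F \colon \M \to \mathcal{A}$ defined by \eqref{eq:constr_ext}. Preservation of identities is immediate from the defining equation: for any $u \in \M$ there is a unique integer $n$ with $u \in T^n(D)$, and the first case of \eqref{eq:constr_ext} gives
\[
F(u \preceq u) = (\Sigma^n \circ F' \circ T^{-n})(u \preceq u) = \op{id}_{F(u)},
\]
since $F'$ and $\Sigma^n$ are themselves functors.

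For preservation of composition I fix a chain $u \preceq v \preceq w$ in $\M$ and distinguish cases based on whether $w \preceq T(u)$. If $w \preceq T(u)$, the three points lie within the single $T$-rectangle $[u, T(u)]$, and \cref{lem:rectCompose} yields the desired identity $F(u \preceq w) = F(v \preceq w) \circ F(u \preceq v)$ directly. If instead $w \not\preceq T(u)$, then $F(u \preceq w) = 0$ by \cref{lem:zero_constr}, and it suffices to show that the composition $F(v \preceq w) \circ F(u \preceq v)$ also vanishes. Two easy sub-cases handle this: if $v \not\preceq T(u)$ then $F(u \preceq v) = 0$, and if $w \not\preceq T(v)$ then $F(v \preceq w) = 0$, both by \cref{lem:zero_constr}.

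The principal obstacle lies in the remaining sub-case where $v \preceq T(u)$ and $w \preceq T(v)$ hold simultaneously, yet $w \not\preceq T(u)$. Here both factors are given non-zero expressions by \eqref{eq:constr_ext}, and the vanishing of their composition must come from the naturality of $\partial'$ rather than from any single factor. I expect to insert an intermediate element such as $T(u)$ or $T^{-1}(w)$---whichever turns out to be comparable with the chain in the lattice $\M$---and then apply \cref{lem:rectCompose} to each resulting sub-rectangle so as to reduce the composition to a naturality square of $\partial'$ on $R_D$, one of whose edges is then forced to be zero either by the tile structure or by the hypothesis that $F'$ vanishes on $D \cap \partial \M$. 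Carrying out this last argument cleanly, while keeping track of how the shift $\Sigma$ enters when the inserted point crosses a tile boundary, is the main technical difficulty; once this sub-case is dispatched, the composition axiom holds for arbitrary triples $u \preceq v \preceq w$ and $F$ is a functor.
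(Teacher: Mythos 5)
Your easy cases are right and match the paper: identities are immediate, composition for $w \preceq T(u)$ is \cref{lem:rectCompose}, and when $v \npreceq T(u)$ or $w \npreceq T(v)$ one factor already vanishes by \cref{lem:zero_constr}. But the sub-case you flag as the "principal obstacle" --- $v \preceq T(u)$, $w \preceq T(v)$, yet $w \npreceq T(u)$ --- is genuinely incomplete, and the specific repair you sketch won't go through. You propose inserting $T(u)$ or $T^{-1}(w)$ into the chain, but in this sub-case $T(u)$ and $w$ can be incomparable in $\M$ (take $w$ with both coordinates on the "wrong" side of $T(u)$ while still lying in $[v, T(v)]$); then $T(u) \notin [v, w]$ and, equivalently, $T^{-1}(w) \notin [u, v]$, so neither of your candidate intermediate points exists in the relevant interval.

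The paper's argument chooses a different intermediate point: since $v \preceq T(u)$ but $w \npreceq T(u)$, the interval $[v, w]$ must cross the boundary $\partial\!\left(\downarrow T(u)\right)$, so one can pick $v' \in [v, w] \cap \partial\!\left(\downarrow T(u)\right)$. Two applications of \cref{lem:rectCompose} reduce $F(v \preceq w) \circ F(u \preceq v)$ to $F(v' \preceq w) \circ F(u \preceq v')$, and one then shows $F(u \preceq v') = 0$. The mechanism for that vanishing is not a naturality square of $\partial'$ but rather the observation that the rectangle $[u, T(u)]$ has its two off-diagonal corners on $\partial \M$; any $v'$ on the sides of this rectangle adjacent to $T(u)$ forces $u \preceq v'$ to factor through a corner in $\partial \M$, and since $F|_{[u,T(u)]}$ is already a functor by \cref{lem:rectCompose} and $F|_{\partial\M} = 0$, the factor through that corner kills the morphism. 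So your high-level structure is correct, but the choice of intermediate point and the vanishing mechanism both need this boundary argument rather than the $T$-shift insertion you outline.
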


\begin{proof}
  For all $u \preceq v \preceq w \in \M$
  we have to show the equation
  \[F(u \preceq w) =
    F(v \preceq w)
    \circ
    F(u \preceq v)
    .\]
  If $w \preceq T(u)$, then we are done
  by \cref{lem:rectCompose}.
  Otherwise \cref{lem:zero_constr} implies that
  $F(u \preceq w) = 0$ and thus we have to show
  \[0 =
    F(v \preceq w)
    \circ
    F(u \preceq v)
    .\]
  
  In case $v \npreceq T(u)$ or
  $w \npreceq T(v)$, \cref{lem:zero_constr}
  applies to the right hand side of this equation as well.
  
  Now suppose $v \preceq T(u)$ and
  $w \preceq T(v)$.
  Since $\partial \left(\downarrow T(u)\right)$
  divides $\M$ into two connected components
  there is some point
  $v' \in [v \preceq w]
  \cap \partial \left(\downarrow T(u)\right)$.
  Two applications of \cref{lem:rectCompose} yield
  \begin{equation*}
    \begin{split}
      F(v \preceq w)
      \circ
      F(u \preceq v)
      & =
      F(v' \preceq w)
      \circ
      F(v \preceq v')
      \circ
      F(u \preceq v)
      \\
      % F(v \preceq w)
      % \circ
      % F(u' \preceq v)
      % \circ
      % F(u \preceq u')
      % \\
      & =
      F(v' \preceq w)
      \circ
      F(u \preceq v')
      % F(u' \preceq w)
      % \circ
      % F(u \preceq u')
      .
    \end{split}
  \end{equation*}
  We are done if we can show that
  $F(u \preceq v') = 0$.
  
  Now $F |_{[u, T(u)]}$
  is a functor by \cref{lem:rectCompose}.
  Moreover, $u \preceq v'$
  factors through a point in $\partial \M$ by our choice of $v'$.
  And since $F |_{\partial \M} = 0$ we obtain
  $F(u \preceq v') = 0$
  and thus the desired result.
\end{proof}

\goodbreak

\cref{lem:extFunctor} and \cref{lem:reduction} in conjunction
imply the following.

\begin{prp}
  \label{prp:fundaExt}
  For any functor $F' \colon D \rightarrow \mathcal{A}$
  vanishing on $D \cap \partial \M$
  together with a natural transformation
  \begin{equation*}
    \begin{tikzcd}
      R_D
      \arrow[rrr, "\op{pr}_1"]
      \arrow[ddd, "\op{pr}_2"']
      &[-23pt] & &[-18pt]
      D
      \arrow[ddd, "F'"]
      \\[-17pt]
      & &
      {}
      \arrow[ld, Rightarrow, "{\partial'}"]
      \\
      &
      {}
      \\[-20pt]
      T(D)
      \arrow[rrr, "\Sigma \circ F' \circ T^{-1}"']
      & & &
      \mathcal{A}
      ,
    \end{tikzcd}
  \end{equation*}
  there is a unique strictly stable functor
  $F \colon \M \rightarrow \mathcal{A}$
  with
  \[F |_D = F', \quad
    F |_{\partial \M} = 0, \quad \text{and} \quad
    \partial (F, D) = \partial' .\]
  Moreover, this construction is natural
  in ${F' \colon D \rightarrow \mathcal{A}}$.
\end{prp}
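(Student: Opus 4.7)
The plan is to assemble this as a direct consequence of \cref{lem:extFunctor} and \cref{lem:reduction}. For existence, I define $F \colon \M \to \mathcal{A}$ on objects and homomorphisms by the case distinction \eqref{eq:constr_ext}; \cref{lem:extFunctor} then guarantees functoriality. Strict stability $F \circ T = \Sigma \circ F$ is immediate upon shifting the exponent $n$ by one in each of the three cases of \eqref{eq:constr_ext}. The equalities $F|_D = F'$ and $\partial(F, D) = \partial'$ are, respectively, the $n = 0$ instances of the first and second cases. For the vanishing $F|_{\partial \M} = 0$: any $u \in \partial \M$ lies in some $T^n(D) \cap \partial \M = T^n(D \cap \partial \M)$, so the first case yields $F(u) = \Sigma^n F'(T^{-n}(u)) = 0$ because $F'$ vanishes on $D \cap \partial \M$; and any homomorphism with zero source or target is zero in $\mathcal{A}$.

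For uniqueness, \cref{lem:reduction} states precisely that any strictly stable $F$ vanishing on $\partial \M$ with $F|_D = F'$ and $\partial(F, D) = \partial'$ must obey \eqref{eq:constr_ext}, so it agrees with the constructed functor pointwise on both objects and homomorphisms.

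For naturality, I declare a morphism of pairs $(F'_1, \partial'_1) \to (F'_2, \partial'_2)$ to be a natural transformation $\eta \colon F'_1 \Rightarrow F'_2$ whose whiskerings make the evident square with $\partial'_1$ and $\partial'_2$ commute. I define $\tilde{\eta} \colon F_1 \Rightarrow F_2$ pointwise by $\tilde{\eta}_v := \Sigma^n \eta_{T^{-n}(v)}$, where $n$ is the unique integer with $v \in T^n(D)$. Strict stability of $\tilde{\eta}$ is immediate. Naturality on homomorphisms within a single tile $T^n(D)$ reduces to naturality of $\eta$; naturality across adjacent tiles, corresponding to the second case of \eqref{eq:constr_ext}, is exactly the compatibility square for $\eta$ shifted by $\Sigma^n$; and homomorphisms falling into the third, vanishing case impose no condition. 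The only mild obstacle is keeping track of the case distinction, but each case is handled by data already at hand; no new substantive argument beyond \cref{lem:extFunctor,lem:reduction} and the naturality of $\partial'$ is required.
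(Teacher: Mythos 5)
Your proof is correct and follows the same route as the paper, which simply asserts that \cref{lem:extFunctor} (existence/functoriality via \eqref{eq:constr_ext}) and \cref{lem:reduction} (uniqueness) together imply the result. You have usefully spelled out the remaining bookkeeping — strict stability from shifting the tile index, vanishing on $\partial\M$ from the case $n$-shifted onto $D\cap\partial\M$, and naturality via the pointwise $\tilde{\eta}_v = \Sigma^n\eta_{T^{-n}(v)}$ — all of which the paper leaves implicit.
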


We end this appendix by discussing how one may extend partially
defined natural transformations.

\begin{lem}
  \label{lem:natExt}
  Let ${F, G \colon \M \rightarrow \mathcal{A}}$
  be strictly stable functors
  vanishing on $\partial \M$
  and let ${D, E \subset \M}$ be convex fundamental domains
  with ${D \subset E \cup T(E)}$
  and both ${\op{int} \M \setminus (D \cap E)}$
  and ${\op{int} \M \setminus (D \cap T(E))}$ disconnected.
  Moreover, let ${\eta \colon F |_{D \cap E} \rightarrow G |_{D \cap E}}$
  and ${\nu \colon F |_{D \cap T(E)} \rightarrow G |_{D \cap T(E)}}$
  be natural transformations with the following two properties:
  \begin{enumerate}[(1)]
  \item
    For any $u \in D \cap E$ and $v \in D \cap T(E)$ with
    $u \preceq v \preceq T(u)$ the diagram
    \begin{equation*}
      \begin{tikzcd}[column sep=large, row sep=large]
        F(v)
        \arrow[r, "\nu_v"]
        &
        G(v)
        \\
        F(u)
        \arrow[u, "F(u \preceq v)"]
        \arrow[r, "\eta_u"']
        &
        G(u)
        \arrow[u, "G(u \preceq v)"']
      \end{tikzcd}
    \end{equation*}
    commutes.
  \item
    For any $v \in D \cap T(E)$ and $w \in T(D \cap E)$ with
    $v \preceq w \preceq T(v)$ the diagram
    \begin{equation*}
      \begin{tikzcd}[column sep=5.5em, row sep=large]
        F(w)
        \arrow[r, "(\Sigma \circ \eta \circ T^{-1})_w"]
        &
        G(w)
        \\
        F(v)
        \arrow[u, "F(v \preceq w)"]
        \arrow[r, "\nu_v"']
        &
        G(v)
        \arrow[u, "G(v \preceq w)"']
      \end{tikzcd}
    \end{equation*}
    commutes.
  \end{enumerate}
  Then the natural transformations $\eta$ and $\nu$ extend uniquely to
  a single strictly stable natural transformation from $F$ to $G$.
\end{lem}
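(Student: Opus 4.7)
My plan is to define $\varphi$ first on the fundamental domain $D$ using $\eta$ and $\nu$, extend to all of $\M$ by strict stability, and then verify naturality. The only genuinely nontrivial consistency checks lie in the geometric interaction of the two bands $D \cap E$ and $D \cap T(E)$ with the order on $\M$. Uniqueness will then be automatic: once the values on $D$ are fixed, strict stability forces $\varphi_u = \Sigma^n(\varphi_{T^{-n}(u)})$ for every $u \in T^n(D)$, and the values on $D$ are themselves pinned down by the requirement that $\varphi$ extend $\eta$ and $\nu$.

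On $D$, I set $\varphi_u := \eta_u$ for $u \in D \cap E$ and $\varphi_u := \nu_u$ for $u \in D \cap T(E)$. The hypothesis $D \subset E \cup T(E)$ guarantees this covers all of $D$, and on the overlap $D \cap E \cap T(E)$ the two prescriptions agree by specialising property (1) to $v := u$: the conditions $u \preceq v \preceq T(u)$ are then trivially satisfied and the compatibility square degenerates to $\eta_u = \nu_u$. For naturality on a general morphism $u \preceq v$ in $\M$, if $v \npreceq T(u)$ both $F(u \preceq v)$ and $G(u \preceq v)$ vanish by \cref{lem:zero} (applied to each of $F$ and $G$, which vanish on $\partial \M$), so the square is trivial. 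Otherwise $v \preceq T(u)$, and after translating by a power of $T$ one may assume $u \in D$. If $v \in D$, the four sub-cases according to whether $u$ and $v$ lie in $D \cap E$ or in $D \cap T(E)$ are handled respectively by the naturality of $\eta$, the naturality of $\nu$, property (1), and — in the ``reverse'' mixed case — by a factorisation argument through the overlap $D \cap E \cap T(E)$ which reduces it to the earlier three. If instead $v \in T(D)$, writing $v = T(v')$, property (2) applies directly when $u \in D \cap T(E)$ and $v' \in D \cap E$, and otherwise one again factors $u \preceq v$ through intermediate points in $D \cap T(E)$ and $T(D \cap E)$ to reduce to a combination of property (2) and naturality within $D$.

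The main obstacle is precisely this geometric case analysis: verifying that properties (1) and (2), together with $D \subset E \cup T(E)$ and the disconnectedness of $\op{int} \M \setminus (D \cap E)$ and of $\op{int} \M \setminus (D \cap T(E))$, truly exhaust every naturality square that must commute. The disconnectedness assumptions are the crucial input, since they force $D \cap E$ and $D \cap T(E)$ to be ``thick bands'' traversing the strip $\M$; this thickness is what guarantees that whenever a pair $u \preceq v \preceq T(u)$ falls outside the exact regions covered by property (1) or (2), one can interpolate a point in the overlap band to factor the morphism into pieces to which the hypotheses do apply. Making this factorisation step rigorous, rather than its more elementary algebraic consequences, is the portion of the proof that demands the most care.
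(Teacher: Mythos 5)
Your overall blueprint — define $\varphi$ on the fundamental domain $D$ via $\eta$ and $\nu$, extend to all of $\M$ by strict stability, note that naturality for $u \preceq v \npreceq T(u)$ is automatic by \cref{lem:zero}, and verify the remaining squares by cases — is the right approach, and uniqueness is indeed forced as you say. But there is a concrete error in the case analysis. Since $E$ is a fundamental domain for the free $\langle T\rangle$-action, $E$ and $T(E)$ are disjoint, so the ``overlap'' $D \cap E \cap T(E)$ that you invoke twice is \emph{empty}. Your consistency check on $D$ is therefore vacuous (which is harmless, since $D\cap E$ and $D\cap T(E)$ in fact partition $D$), but your proposed ``factorisation through the overlap $D \cap E \cap T(E)$'' for the reverse mixed case $u \in D\cap T(E)$, $v \in D\cap E$ has no point to factor through and cannot work. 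The correct observation is that this case does not occur: if $u \preceq v$ with $u\in T(E)$ and $v\in E$, then $T^{-1}(u) \in E$ and $T^{-1}(u)\preceq u\preceq v$ has both endpoints in $E$, so convexity of $E$ forces $u\in E$, contradicting $E \cap T(E)=\emptyset$. The same argument applied to $T^{-1}(v)\preceq u$ rules out the case $u\in D\cap E$, $v\in T(D\cap T(E))$.

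Beyond this, the genuinely nontrivial content — which you flag yourself as the part ``that demands the most care'' and leave unargued — is the factorisation step for the two surviving ``skip'' cases $u\in D\cap E$, $v\in T(D\cap E)$ and $u\in D\cap T(E)$, $v\in T(D\cap T(E))$. There you must actually produce a $w$ in the intervening band ($D\cap T(E)$ or $T(D\cap E)$, respectively) with $u\preceq w\preceq v$, so that the naturality square for $u\preceq v$ decomposes into two squares each covered by a hypothesis (property~(1) followed by property~(2), resp.\ property~(2) followed by the $T$-translate of property~(1), the latter valid because $\eta$ is natural and $F,G$ are strictly stable). This is exactly where the disconnectedness assumptions enter: since $D\cap T(E)$ (resp.\ $T(D\cap E)$) separates $\op{int}\M$ and $u$, $v$ lie on opposite sides, the interval $\{w : u\preceq w\preceq v\}$, being connected, must meet the band; one must also verify $v\preceq T(w)$, which follows from $v\preceq T(u)\preceq T(w)$. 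Without carrying out this separation argument, the proposal remains an outline rather than a proof.
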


\section{Middle Exact Squares}
\label{sec:middleExact}

\begin{dfn}
  \label{dfn:middleExact}
  We say that a commutative square
  \begin{equation*}
    \begin{tikzcd}
      A
      \arrow[r, "f_{AB}"]
      \arrow[d, "f_{AC}"']
      &
      B
      \arrow[d, "f_{BD}"]
      \\
      C
      \arrow[r, "f_{CD}"']
      &
      D
    \end{tikzcd}
  \end{equation*}
  of vector spaces (or modules)
  is \emph{middle exact}
  if the sequence
  \begin{equation*}
    A
    \xrightarrow{\begin{pmatrix} f_{AB} \\ f_{AC} \end{pmatrix}}
    B \oplus C
    \xrightarrow{\begin{pmatrix} f_{BD} & -f_{CD} \end{pmatrix}}
    D
  \end{equation*}
  is exact (at the middle term $B \oplus C$).  
\end{dfn}

Now suppose we have two adjacent middle exact squares
\begin{equation*}
  \begin{tikzcd}
    A
    \arrow[r]
    \arrow[d]
    &
    B
    \arrow[r]
    \arrow[d]
    &
    C
    \arrow[d]
    \\
    D
    \arrow[r]
    &
    E
    \arrow[r]
    &
    F
    ,
  \end{tikzcd}
\end{equation*}
with the maps denoted by $f_{AB}$, $f_{AC} = f_{BC} \circ f_{AB}$,
and so forth.

\begin{lem}
  \label{lem:twoMiddleExactSquares}
  We have
  \(f_{EF}^{-1} \left(\op{Im} f_{DF} + \op{Im} f_{BF}\right)
  =
  \op{Im} f_{DE} + \op{Im} f_{BE}
  .\)
\end{lem}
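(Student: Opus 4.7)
The plan is to prove the two inclusions of this equality of subspaces of $E$ separately, using commutativity of the diagram together with the middle exactness of the right square.

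The inclusion $\supseteq$ is essentially formal. If $e = f_{DE}(d) + f_{BE}(b)$ for some $d \in D$ and $b \in B$, then by the commutativity of the two squares
\[
f_{EF}(e) = f_{EF}(f_{DE}(d)) + f_{EF}(f_{BE}(b)) = f_{DF}(d) + f_{BF}(b),
\]
so $e \in f_{EF}^{-1}(\op{Im} f_{DF} + \op{Im} f_{BF})$.

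For the inclusion $\subseteq$, the key intermediate step is the auxiliary claim
\[
\ker f_{EF} \subseteq \op{Im} f_{BE}.
\]
This is where I use the middle exactness of the right square: if $e \in \ker f_{EF}$, then the pair $(0, e) \in C \oplus E$ satisfies $f_{CF}(0) - f_{EF}(e) = 0$, so by \cref{dfn:middleExact} applied to the right square there is some $b \in B$ with $f_{BC}(b) = 0$ and $f_{BE}(b) = e$, proving $e \in \op{Im} f_{BE}$.

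With this claim in hand the inclusion $\subseteq$ follows: given $e$ with $f_{EF}(e) = f_{DF}(d) + f_{BF}(b)$ for some $d \in D$ and $b \in B$, commutativity rewrites the right-hand side as $f_{EF}(f_{DE}(d) + f_{BE}(b))$, so
\[
e - f_{DE}(d) - f_{BE}(b) \in \ker f_{EF} \subseteq \op{Im} f_{BE},
\]
which exhibits $e$ as an element of $\op{Im} f_{DE} + \op{Im} f_{BE}$. There is no real obstacle here; the only nontrivial ingredient is the auxiliary claim above, and middle exactness of the left square is in fact not needed for this particular lemma.
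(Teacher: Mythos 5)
Your proof is correct and takes essentially the same route as the paper: the $\supseteq$ direction is a formal commutativity check, and the $\subseteq$ direction reduces to middle exactness of the right square applied to the element $(0, e - f_{DE}(d) - f_{BE}(b)) \in C \oplus E$. The only cosmetic difference is that you extract the observation $\ker f_{EF} \subseteq \op{Im} f_{BE}$ as a named auxiliary claim, whereas the paper does the same diagram chase inline; your remark that the left square's middle exactness is unused also matches the paper's argument.
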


\begin{proof}
  It is clear that the right hand side is a subspace
  of the left hand side.
  The other inclusion follows from the following diagram chase.
  Suppose we have
  \begin{equation*}
    u \in
    f_{EF}^{-1} \left(\op{Im} f_{DF} + \op{Im} f_{BF}\right)
    ,
  \end{equation*}
  then there are vectors
  $v \in B$ and $w \in D$
  with
  \begin{equation*}
    f_{EF} (u) = f_{DF} (w) + f_{BF} (v)
    .
  \end{equation*}
  Moreover, we have that
  \begin{equation*}
    f_{EF} (u - f_{DE} (w) - f_{BE} (v)) = 0,
  \end{equation*}
  hence the term
  \[\begin{pmatrix}
    0
    \\
    u - f_{DE} (w) - f_{BE} (v)
  \end{pmatrix} \in C \oplus E\]
  is in the kernel of
  \(\begin{pmatrix}
    f_{CF} & -f_{EF}
  \end{pmatrix}.\)
  By the exactness of the sequence
  \begin{equation*}
    B
    \xrightarrow{\begin{pmatrix} f_{BC} \\ f_{BE} \end{pmatrix}}
    C \oplus E
    \xrightarrow{\begin{pmatrix} f_{CF} & -f_{EF} \end{pmatrix}}
    F
  \end{equation*}
  there is a $v' \in B$ with
  \[
    f_{BE} (v') =
    u - f_{DE} (w) - f_{BE} (v)
    ,
  \]
  which is equivalent to
  $f_{DE} (w) + f_{BE} (v + v') = u.$
\end{proof}

Now we consider a diagram of four adjacent middle exact squares
\begin{equation}
  \label{eq:middleExactFourSquares}
  \begin{tikzcd}
    A
    \arrow[r]
    \arrow[d]
    &
    B
    \arrow[r]
    \arrow[d]
    &
    C
    \arrow[d]
    \\
    D
    \arrow[r]
    \arrow[d]
    &
    E
    \arrow[r]
    \arrow[d]
    &
    F
    \arrow[d]
    \\
    G
    \arrow[r]
    &
    H
    \arrow[r]
    &
    I
    .
  \end{tikzcd}
\end{equation}
We note that middle exact squares \enquote{compose}
to middle exact squares.
If we compose the two squares in the first row
of \eqref{eq:middleExactFourSquares}
as well as the two squares in the second row
and then transpose
the diagram,
we obtain this diagram
\begin{equation}
  \label{eq:middleExactTranspose}
  \begin{tikzcd}
    A
    \arrow[r]
    \arrow[d]
    &
    D
    \arrow[r]
    \arrow[d]
    &
    G
    \arrow[d]
    \\
    C
    \arrow[r]
    &
    F
    \arrow[r]
    &
    I
  \end{tikzcd}
\end{equation}
of two adjacent middle exact squares,
which we will use at the end of the proof of the following proposition.

\begin{prp}
  \label{prp:4Squares}
  The map $f_{EI}$ induces a natural isomorphism
  \begin{equation*}
    \frac{E}{
      \op{Im} f_{DE} + \op{Im} f_{BE}
    }
    \cong
    \frac{
      \op{Im} f_{EI} + \op{Im} f_{CI}
    }{
      \op{Im} f_{DI} + \op{Im} f_{CI}
    }
    .
  \end{equation*}
\end{prp}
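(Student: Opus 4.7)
The plan is to verify that the map $\bar{f}$ induced by $f_{EI}$ between the indicated quotients is well-defined, surjective, and injective, with naturality being automatic from the canonical description of $\bar{f}$. Well-definedness amounts to the inclusion $f_{EI}(\op{Im} f_{DE} + \op{Im} f_{BE}) \subseteq \op{Im} f_{DI} + \op{Im} f_{CI}$, which follows from the factorisations $f_{DI} = f_{EI} \circ f_{DE}$ and $f_{BI} = f_{CI} \circ f_{BC} = f_{EI} \circ f_{BE}$. Surjectivity is then immediate: every class in the target has a representative of the form $f_{EI}(e) + f_{CI}(c)$, and the second summand is killed by the quotient. The heart of the argument is therefore injectivity: given $e \in E$ with $f_{EI}(e) = f_{DI}(d) + f_{CI}(c)$ for some $d \in D$ and $c \in C$, I must exhibit $d' \in D$ and $b \in B$ with $e = f_{DE}(d') + f_{BE}(b)$.

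For this I propose a three-step diagram chase. Set $x := f_{EF}(e) - f_{DF}(d) - f_{CF}(c) \in F$; applying $f_{FI}$ shows $f_{FI}(x) = 0$ by hypothesis. First, middle exactness of the bottom-right square applied to $(x, 0) \in F \oplus H$ produces $e_1 \in E$ with $f_{EF}(e_1) = x$ and $f_{EH}(e_1) = 0$. Second, middle exactness of the bottom-left square applied to $(e_1, 0) \in E \oplus G$ produces $d_1 \in D$ with $f_{DE}(d_1) = e_1$. These two steps together give $f_{EF}(e - f_{DE}(d + d_1)) = f_{CF}(c)$. Third, middle exactness of the top-right square applied to $(c, \, e - f_{DE}(d + d_1)) \in C \oplus E$ yields $b \in B$ with $f_{BE}(b) = e - f_{DE}(d + d_1)$, giving the desired expression $e = f_{DE}(d + d_1) + f_{BE}(b)$.

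The main obstacle I foresee is organising the chase so that each application of middle exactness matches a pair whose hypothesis has actually been established: the intermediate elements $x$, $e_1$, $d_1$ must be engineered to land in the kernel required by the next step, and this only works in the order bottom-right, then bottom-left, then top-right. I note as a byproduct that only three of the four middle exact squares are invoked -- the top-left square plays no role -- so the proposition continues to hold under this weaker hypothesis. Finally, naturality in morphisms of such $3 \times 3$ diagrams is then formal, since $\bar{f}$ is defined entirely in terms of $f_{EI}$ and the formation of quotients is a natural construction.
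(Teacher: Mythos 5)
Your proof is correct, and it takes a genuinely different route from the paper's. The paper proceeds at the level of subspaces: it first establishes the auxiliary \cref{lem:twoMiddleExactSquares} (a preimage identity under two adjacent middle exact squares), applies it to the top row of the $3\times 3$ diagram, substitutes $\op{Im} f_{BF} = \op{Im} f_{EF} \cap \op{Im} f_{CF}$ from the top-right square, invokes the modular law and the second isomorphism theorem, and finally applies the lemma again to the transposed left column. Your proof instead works directly with elements: define $\bar f$ explicitly, observe well-definedness and surjectivity from commutativity alone, and prove injectivity by a three-step chase through the bottom-right, bottom-left, and top-right squares in that order, which is exactly what makes the kernel conditions line up. The chase is correct at every step; in particular, the computation $f_{EF}(e - f_{DE}(d+d_1)) = f_{CF}(c)$ in the transition from the second to the third step checks out.

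Your observation that the top-left square is never invoked is accurate and is a nice byproduct of the element-theoretic viewpoint. It is worth noting that the same is true of the paper's argument, though less visibly: the proof of \cref{lem:twoMiddleExactSquares} in fact uses middle exactness of only the right-hand of the two adjacent squares, so when the paper applies it to the top row it is secretly using only the top-right square, and when it applies it to the transposed diagram it uses only the composite of the bottom two squares. So both proofs rest on the same three squares, but your presentation makes the redundancy of the top-left square explicit rather than leaving it buried in a lemma whose hypothesis is stronger than its proof requires.
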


\begin{proof}
  As the upper right square
  of \eqref{eq:middleExactFourSquares}
  is middle exact
  we have
  \begin{equation}
    \label{eq:auxMiddleExact}
    \op{Im} f_{BF} =
    \op{Im} f_{EF} \cap \op{Im} f_{CF}
    .
  \end{equation}  
  With this we obtain the chain of three isomorphisms
  and two equalities
  \begin{align*}
    \frac{
    E
    }{
    \op{Im} f_{DE} + \op{Im} f_{BE}
    }
    & \cong
      \frac{
      \op{Im} f_{EF}
      }{
      \op{Im} f_{DF} + \op{Im} f_{BF}
      }
    \\
    & =
      \frac{
      \op{Im} f_{EF}
      }{
      \op{Im} f_{DF} + \op{Im} f_{EF} \cap \op{Im} f_{CF}
      }
    \\
    & =
      \frac{
      \op{Im} f_{EF}
      }{
      \op{Im} f_{EF}
      \cap
      \left(\op{Im} f_{DF} + \op{Im} f_{CF}\right)
      }
    \\
    & \cong
      \frac{
      \op{Im} f_{EF} + \op{Im} f_{CF}
      }{
      \op{Im} f_{DF} + \op{Im} f_{CF}
      }
    \\
    & \cong
      \frac{
      \op{Im} f_{EI} + \op{Im} f_{CI}
      }{
      \op{Im} f_{DI} + \op{Im} f_{CI}
      }
      .
    \end{align*}
    Here the first isomorphism follows from
    \cref{lem:twoMiddleExactSquares}
    % the previous lemma
    applied to the two squares at the top of \eqref{eq:middleExactFourSquares}
    and
    \href{
      https://en.wikipedia.org/wiki/Isomorphism_theorems#Theorem_A
    }{the first isomorphism theorem}.
    The first equality follows from \eqref{eq:auxMiddleExact}.
    The second equality follows from
    \href{
      https://en.wikipedia.org/wiki/Modular_lattice
    }{the modular law}
    for the lattice of subspaces.
    The second isomorphism follows from
    \href{
      https://en.wikipedia.org/wiki/Isomorphism_theorems#Theorem_B
    }{the second isomorphism theorem}
    and the last isomorphism again from
    \cref{lem:twoMiddleExactSquares}
    % the previous lemma
    applied to \eqref{eq:middleExactTranspose} and
    \href{
      https://en.wikipedia.org/wiki/Isomorphism_theorems#Theorem_A
    }{the first isomorphism theorem}.    
\end{proof}

\section{Cohomological Functors on $\M$}
\label{sec:cohomological}

\begin{figure}[t]
  \centering
  %% Creator: Inkscape 1.0 (4035a4fb49, 2020-05-01), www.inkscape.org
%% PDF/EPS/PS + LaTeX output extension by Johan Engelen, 2010
%% Accompanies image file 'general-rect.pdf' (pdf, eps, ps)
%%
%% To include the image in your LaTeX document, write
%%   \input{<filename>.pdf_tex}
%%  instead of
%%   \includegraphics{<filename>.pdf}
%% To scale the image, write
%%   \def\svgwidth{<desired width>}
%%   \input{<filename>.pdf_tex}
%%  instead of
%%   \includegraphics[width=<desired width>]{<filename>.pdf}
%%
%% Images with a different path to the parent latex file can
%% be accessed with the `import' package (which may need to be
%% installed) using
%%   \usepackage{import}
%% in the preamble, and then including the image with
%%   \import{<path to file>}{<filename>.pdf_tex}
%% Alternatively, one can specify
%%   \graphicspath{{<path to file>/}}
%% 
%% For more information, please see info/svg-inkscape on CTAN:
%%   http://tug.ctan.org/tex-archive/info/svg-inkscape
%%
\begingroup%
  \makeatletter%
  \providecommand\color[2][]{%
    \errmessage{(Inkscape) Color is used for the text in Inkscape, but the package 'color.sty' is not loaded}%
    \renewcommand\color[2][]{}%
  }%
  \providecommand\transparent[1]{%
    \errmessage{(Inkscape) Transparency is used (non-zero) for the text in Inkscape, but the package 'transparent.sty' is not loaded}%
    \renewcommand\transparent[1]{}%
  }%
  \providecommand\rotatebox[2]{#2}%
  \newcommand*\fsize{\dimexpr\f@size pt\relax}%
  \newcommand*\lineheight[1]{\fontsize{\fsize}{#1\fsize}\selectfont}%
  \ifx\svgwidth\undefined%
    \setlength{\unitlength}{275.49998474bp}%
    \ifx\svgscale\undefined%
      \relax%
    \else%
      \setlength{\unitlength}{\unitlength * \real{\svgscale}}%
    \fi%
  \else%
    \setlength{\unitlength}{\svgwidth}%
  \fi%
  \global\let\svgwidth\undefined%
  \global\let\svgscale\undefined%
  \makeatother%
  \begin{picture}(1,0.51724141)%
    \lineheight{1}%
    \setlength\tabcolsep{0pt}%
    \put(0,0){\includegraphics[width=\unitlength,page=1]{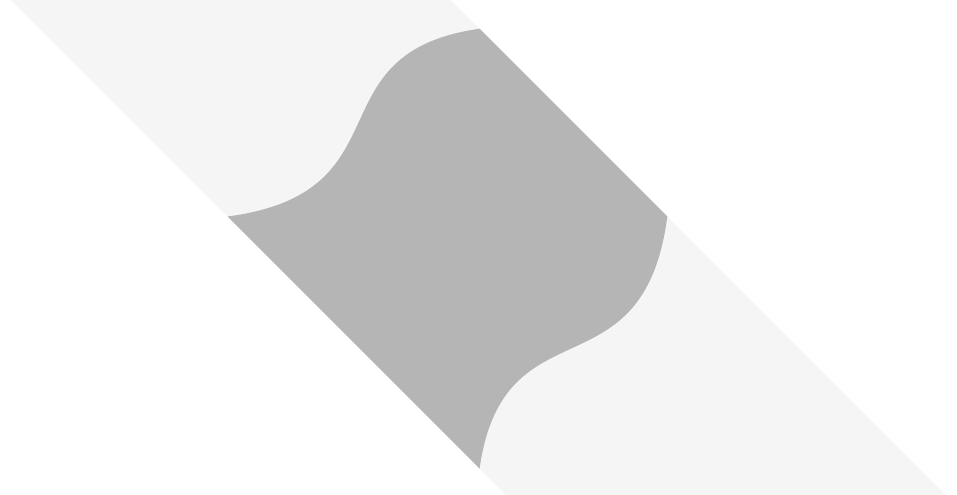}}%
    \put(0.40996173,0.3409963){\makebox(0,0)[t]{\lineheight{1.25}\smash{\begin{tabular}[t]{c}$w$\end{tabular}}}}%
    \put(0.6015327,0.3409963){\makebox(0,0)[t]{\lineheight{1.25}\smash{\begin{tabular}[t]{c}$v_2$\end{tabular}}}}%
    \put(0.40996173,0.18007651){\makebox(0,0)[t]{\lineheight{1.25}\smash{\begin{tabular}[t]{c}$v_1$\end{tabular}}}}%
    \put(0.5977013,0.18007651){\makebox(0,0)[t]{\lineheight{1.25}\smash{\begin{tabular}[t]{c}$u$\end{tabular}}}}%
    \put(0,0){\includegraphics[width=\unitlength,page=2]{general-rect.pdf}}%
    \put(0.78882473,0.23646263){\makebox(0,0)[t]{\lineheight{1.25}\smash{\begin{tabular}[t]{c}$l_1$\end{tabular}}}}%
    \put(0.28998568,0.19277288){\makebox(0,0)[t]{\lineheight{1.25}\smash{\begin{tabular}[t]{c}$l_0$\end{tabular}}}}%
  \end{picture}%
\endgroup%

  \caption{
    An axis-aligned rectangle $u \preceq v_1, v_2 \preceq w \in D$
    contained in the fundamental domain $D$,
    which is shaded in grey.
  }
  \label{fig:axisRect}
\end{figure}

Let $F \colon \M^{\circ} \rightarrow \VectK$
be a contravariant functor vanishing on $\partial \M$.
Moreover, suppose there is a convex fundamental domain $D \subset \M$
with respect to $\langle T \rangle$,
such that for any axis-aligned rectangle $u \preceq v_1, v_2 \preceq w \in D$
as shown in \cref{fig:axisRect},
the long sequence
\begin{equation}
  \label{eq:mvs}
  \begin{tikzcd}
    &
    \cdots
    \arrow[r]
    &
    F(T(u))
    \arrow[dll, out=0, in=180, looseness=2, overlay]
    \\
    F(w)
    \arrow[r]
    &
    F(v_1) \oplus F(v_2)
    \arrow[r, "(1 ~\, -1)"]
    &
    F(u)
    \arrow[dll, out=0, in=180, looseness=2, overlay]
    \\
    F(T^{-1}(w))
    \arrow[r]
    &
    \cdots
  \end{tikzcd}  
\end{equation}
is exact.
We show that \eqref{eq:mvs} is exact for any axis-aligned rectangle
$u \preceq v_1, v_2 \preceq w \in \M$.
We begin with the special case that $v_2 \in \partial \M$;
then we have $F(v_2) \cong \{0\}$ and we set $v := v_1$.
As shown in \cref{fig:homological}
the union of the orbits of $u$, $v$, and $w$ form a subposet,
which is isomorphic to $\Z$.
As $D$ is convex, the intersection of $D$ and this subposet
consists of three consecutive points;
in \cref{fig:homological} these are $T^2(v)$, $T^2(w)$, and $T^3(u)$.
Moreover, these three consecutive points describe an axis-aligned rectangle
contained in $D$.
Thus, the restriction of $F$ to this subposet yields the long exact sequence
\begin{equation}
  \label{eq:restrF}
  \begin{tikzcd}
    &
    \cdots
    \arrow[r]
    &
    F(T(u))
    \arrow[dll, out=0, in=180, looseness=2, overlay]
    \\
    F(w)
    \arrow[r]
    &
    F(v)
    \arrow[r]
    &
    F(u)
    \arrow[dll, out=0, in=180, looseness=2, overlay]
    \\
    F(T^{-1}(w))
    \arrow[r]
    &
    \cdots
    .
  \end{tikzcd}  
\end{equation}

\begin{figure}[t]
  \centering
  \import{strip-diagrams/_diagrams/}{general-homological.pdf_tex}
  \caption{
    The linear subposet given by the orbits of $u$, $v$, and $w$.
    The region shaded in dark grey is our fundamental domain $D$.
  }
  \label{fig:homological}
\end{figure}

\begin{dfn}
  \label{dfn:cohomological}
  We say that a contravariant functor
  $F \colon \M^{\circ} \rightarrow \mathrm{Vect}_K$
  vanishing on $\partial \M$ is \emph{cohomological},
  if for any axis-aligned rectangle
  with one corner lying on $l_1$ and the other corners
  ${u \preceq v \preceq w \in \M}$,
  the long sequence
  \eqref{eq:restrF}
  is exact.    
\end{dfn}

This notion of a cohomological functor is inspired by the theory
of \emph{triangulated categories}.
A \emph{cohomological} functor on a triangulated category
yields a long exact sequence for any \emph{distinguished triangle},
see for example \cite[Definitions 1.5.2 and 1.5.1]{Kashiwara1990}.
Here the defining property of a cohomological functor is
that it yields long exact sequences for certain \enquote{triangles}
in $\M$ or certain geodesic triangles on the Möbius strip
$\M / \langle T \rangle$.

\begin{figure}[t]
  \centering
  \import{strip-diagrams/_diagrams/}{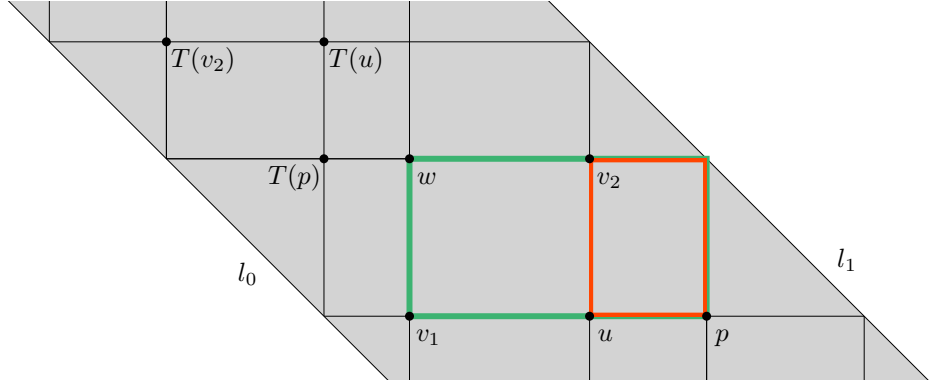}
  \caption{
    Axis-aligned rectangles determined by $u$, $v_1$, $v_2$, and $w$.
  }
  \label{fig:rect}
\end{figure}

Now suppose we have an arbitrary axis-aligned rectangle
$u \preceq v_1,\, v_2 \preceq w \in \M$ as shown in \cref{fig:rect}.
Together with the additional point $p \in \M$
we obtain two axis-aligned rectangles with one corner on $l_1$.
As $F$ is cohomological, the commutative diagram
\begin{equation*}
  \begin{tikzcd}
    \dots
    \arrow[r]
    &
    F(T(v_1))
    \arrow[r]
    \arrow[d]
    &
    F(T(p))
    \arrow[r]
    \arrow[d, equal]
    &
    F(w)
    \arrow[r]
    \arrow[d]
    &
    F(v_1)
    \arrow[r]
    \arrow[d]
    &
    F(p)
    \arrow[r]
    \arrow[d, equal]
    &
    \dots
    \\
    \dots
    \arrow[r]
    &
    F(T(u))
    \arrow[r]
    &
    F(T(p))
    \arrow[r]
    &
    F(v_2)
    \arrow[r]
    &
    F(u)
    \arrow[r]
    &
    F(p)
    \arrow[r]
    &
    \dots
  \end{tikzcd}
\end{equation*}
has exact rows.
Thus, by
the Barratt--Whitehead Lemma
\cite[Lemma 7.4]{doi:10.1112/plms/s3-6.3.417}
the long sequence \eqref{eq:mvs} is exact.
From this we obtain the following.

\begin{prp}
  \label{prp:cohomological}
  For a contravariant functor $F \colon \M^{\circ} \rightarrow \VectK$
  vanishing on $\partial \M$
  the following are equivalent.
  \begin{enumerate}[(1)]
  \item
    There is a convex fundamental domain $D \subset \M$
    such that for any axis-aligned rectangle
    $u \preceq v_1, v_2 \preceq w \in D$
    the long sequence
    \eqref{eq:mvs}
    is exact.
  \item
    The contravariant functor $F$ is cohomological.
  \item
    For any axis-aligned rectangle
    $u \preceq v_1, v_2 \preceq w \in \M$
    the long sequence \eqref{eq:mvs} is exact.
  \item
    For any axis-aligned rectangle
    $u \preceq v_1, v_2 \preceq w \in \M$
    the square
    \begin{equation*}
      \begin{tikzcd}
        F(w)
        \arrow[d]
        \arrow[r]
        &
        F(v_2)
        \arrow[d]
        \\
        F(v_1)
        \arrow[r]
        &
        F(u)
      \end{tikzcd}
    \end{equation*}
    is middle exact.
  \end{enumerate}
\end{prp}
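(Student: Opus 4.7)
The plan is to close the cycle $(3)\Rightarrow(4)\Rightarrow(2)\Rightarrow(3)$ together with $(1)\Leftrightarrow(3)$. Several directions are tautological: $(3)\Rightarrow(1)$ restricts the universal quantifier to rectangles in $D$; $(3)\Rightarrow(4)$ reads off exactness of \eqref{eq:mvs} at the middle spot $F(v_1)\oplus F(v_2)$; and $(3)\Rightarrow(2)$ uses that for a rectangle with a corner on $l_1$ the corresponding $F$-value vanishes, collapsing \eqref{eq:mvs} to \eqref{eq:restrF}. The implications $(1)\Rightarrow(2)$ and $(2)\Rightarrow(3)$ together are precisely the orbit-plus-Barratt--Whitehead argument already carried out in the text preceding the proposition: split a general rectangle by an auxiliary point $p\in l_1$ into two rectangles with a corner on $l_1$ and glue the two resulting exact LESs via Barratt--Whitehead, while for the degenerate case exploit that by convexity of $D$ three consecutive points of the $\Z$-chain of $\langle T\rangle$-orbits must lie in $D$.

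The main new step is $(4)\Rightarrow(2)$. For a rectangle $R$ with one intermediate corner $v'\in l_1$ and the other intermediate $v\in\op{int}\M$, applying $(4)$ to $R$ and to each shift $T^k R$ yields exactness of \eqref{eq:restrF} at every spot of the form $F(T^k v)$, since each shift remains degenerate (the glide reflection $T$ swaps $l_0$ and $l_1$), so that $F(T^k v')=0$ collapses the middle sum to a single term. For exactness at the remaining spots $F(T^k u)$ and $F(T^k w)$ I construct auxiliary axis-aligned rectangles each carrying a \emph{different} intermediate on $\partial\M$. Concretely, the rectangle with meet $T^{-1}(w)$ and join $v$ has its two intermediate corners at $u$ and at a point on $l_0$ (the coordinate identity forced by $v'\in l_1$, combined with the explicit formula for $T^{-1}$, lands the second intermediate exactly on $l_0$); middle exactness of this rectangle therefore reduces to exactness at $F(u)$ in \eqref{eq:restrF}. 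A symmetric construction, using the rectangle with meet $T^{-1}(v)$ and join $u$, has intermediate corners $T^{-1}(w)$ and a point on $l_1$, giving exactness at $F(T^{-1}(w))$. Applying $T^k$ to both families of auxiliary rectangles then handles every remaining spot and closes the cycle.

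The main obstacle is the geometric verification underpinning the auxiliary rectangles in the $(4)\Rightarrow(2)$ step: one has to check in the explicit coordinates of $T^{\pm1}$ that the equation characterising $v'\in l_1$ is precisely what places the additional intermediate corner on $\partial\M$, and that all four corners of each auxiliary rectangle lie in $\M$. Once this coordinate bookkeeping is done, every step of the argument reduces to a direct application of middle exactness of an appropriate square, together with the vanishing of $F$ on $\partial\M$.
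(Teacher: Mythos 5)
Your proposal is correct and follows essentially the same route as the paper: establish $(1)\Rightarrow(2)$ and $(2)\Rightarrow(3)$ via the convexity/orbit argument and the Barratt--Whitehead lemma, note that $(3)\Rightarrow(1)$ and $(3)\Rightarrow(4)$ are immediate, and close the cycle with $(4)\Rightarrow(2)$ by observing that every three consecutive points of the $\Z$-chain $\langle T\rangle\{u,v,w\}$ span an axis-aligned rectangle with one corner on $\partial\M$, so that middle exactness plus the vanishing of $F$ on $\partial\M$ yields exactness of \eqref{eq:restrF} spot by spot. The only difference is that the paper compresses the $(4)\Rightarrow(2)$ step into a single reference to \cref{fig:homological}, whereas you spell out explicitly which triple (equivalently which auxiliary rectangle) gives exactness at which spot and verify in coordinates that the fourth corner lands on $l_0$ or $l_1$ as appropriate; this is the same argument, just made explicit.
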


\begin{proof}
  Above we have shown that (1) implies (2)
  and that  (2) implies (3).
  It is clear that (3) implies both (1) and (4).
  Moreover, if we consider \cref{fig:homological},
  then we see that any three consecutive points of the
  sub-$\langle T \rangle$-set
  generated by $u$, $v$, and $w$
  describe an axis-aligned rectangle with one vertex
  on the boundary $\partial \M$
  and thus (4) implies (2).
\end{proof}

We may dualize \cref{dfn:cohomological} and \cref{prp:cohomological}
in the sense of \cref{remark:dual} as follows.

\begin{dfn}
  \label{dfn:homological}
  We say that a functor $F \colon \M \rightarrow \mathrm{Vect}_K$
  vanishing on $\partial \M$ is \emph{homological},
  if for any axis-aligned rectangle
  with one corner lying on $l_1$ and the other corners
  ${u \preceq v \preceq w \in \M}$,
  the long sequence
  \begin{equation*}
    \begin{tikzcd}
      &
      \cdots
      \arrow[r]
      &
      F(T^{-1}(w))
      \arrow[dll, out=0, in=180, looseness=2, overlay]
      \\
      F(u)
      \arrow[r]
      &
      F(v)
      \arrow[r]
      &
      F(w)
      \arrow[dll, out=0, in=180, looseness=2, overlay]
      \\
      F(T(u))
      \arrow[r]
      &
      \cdots
    \end{tikzcd}
  \end{equation*}
  is exact.  
\end{dfn}

\begin{prp}
  \label{prp:homological}
  For a functor $F \colon \M \rightarrow \VectK$ vanishing on $\partial \M$
  the following are equivalent.
  \begin{enumerate}[(1)]
  \item
    There is a convex fundamental domain $D \subset \M$
    such that for any axis-aligned rectangle
    $u \preceq v_1, v_2 \preceq w \in D$
    the long sequence
    \begin{equation}
      \label{eq:mvs2}
      \begin{tikzcd}
        &
        \cdots
        \arrow[r]
        &
        F(T^{-1}(w))
        \arrow[dll, out=0, in=180, looseness=2, overlay]
        \\
        F(u)
        \arrow[r]
        &
        F(v_1) \oplus F(v_2)
        \arrow[r, "(1 ~~ -1)"]
        &
        F(w)
        \arrow[dll, out=0, in=180, looseness=2, overlay]
        \\
        F(T(u))
        \arrow[r]
        &
        \cdots
      \end{tikzcd}  
    \end{equation}
    is exact.
  \item
    The functor $F$ is homological.
  \item
    For any axis-aligned rectangle
    $u \preceq v_1, v_2 \preceq w \in \M$
    the long sequence \eqref{eq:mvs2} is exact.
  \item
    For any axis-aligned rectangle
    $u \preceq v_1, v_2 \preceq w \in \M$
    the square
    \begin{equation*}
      \begin{tikzcd}
        F(u)
        \arrow[r]
        \arrow[d]
        &
        F(v_1)
        \arrow[d]
        \\
        F(v_2)
        \arrow[r]
        &
        F(w)
      \end{tikzcd}
    \end{equation*}
    is middle exact.
  \end{enumerate}
\end{prp}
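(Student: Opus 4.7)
The statement is the exact dual of Proposition \ref{prp:cohomological}, so the cleanest plan is to reduce to that proposition via the self-duality of $\M$ recalled in Remark \ref{remark:dual}. Let $\sigma \colon \M \to \M$ denote the diagonal reflection $(x,y) \mapsto (y,x)$. Then $\sigma$ preserves $\partial \M$ and fixes each of the slope-$-1$ lines $l_0, l_1$ setwise (since their defining equations $x+y = \pm\pi$ are symmetric in $x$ and $y$), and by Remark \ref{remark:dual} it is order-reversing and interchanges joins and meets. I would also verify the identity $\sigma \circ T = T^{-1} \circ \sigma$ from the explicit description of $T$ as a glide reflection along the bisecting line $y=-x$; this is a short coordinate calculation in Remark \ref{remark:alpha}-style notation.

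Given a functor $F \colon \M \to \VectK$ vanishing on $\partial \M$, I would set $G := F \circ \sigma \colon \M^{\circ} \to \VectK$, which likewise vanishes on $\partial \M$. Under $\sigma$, an axis-aligned rectangle with corners $u \preceq v_1, v_2 \preceq w$ corresponds to the axis-aligned rectangle with corners $\sigma(w) \preceq \sigma(v_1), \sigma(v_2) \preceq \sigma(u)$, and the identity $\sigma T = T^{-1}\sigma$ shows that the homological shift $F(T(u))$ matches the cohomological shift $G(T^{-1}(\sigma(u)))$. Consequently each of the conditions (1)--(4) of \cref{prp:homological} for $F$ transports, term-by-term, to the corresponding condition of \cref{prp:cohomological} for $G$ (note that convex fundamental domains $D \subset \M$ are sent to convex fundamental domains $\sigma(D)$, and vertices on $l_1$ remain on $l_1$). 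The desired equivalence then follows from \cref{prp:cohomological}.

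If one prefers to argue directly, the plan mirrors the discussion preceding \cref{prp:cohomological}: the implications $(3) \Rightarrow (1)$ and $(3) \Rightarrow (4)$ are immediate. For $(1) \Rightarrow (2)$ and $(4) \Rightarrow (2)$, given a rectangle with one corner on $l_1$ and opposite corners $u \preceq v \preceq w$ (so that the fourth corner $v_2$ lies on $\partial \M$ and $F(v_2) = 0$), the $\langle T \rangle$-orbit generated by $u, v, w$ forms a subposet isomorphic to $\Z$, and any three consecutive points in it describe an axis-aligned rectangle whose fourth vertex lies on $\partial \M$; applying (1) (respectively the middle-exactness in (4)) at such a triple in $D$ yields the required exactness. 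For $(2) \Rightarrow (3)$, given an arbitrary rectangle $u \preceq v_1, v_2 \preceq w$, one introduces the auxiliary point $p \in \M$ where the rectangle meets the line through $l_1$ to split it into two rectangles whose corners lie on $l_1$, assembles the two resulting long exact sequences into a commutative ladder, and invokes the Barratt--Whitehead lemma \cite[Lemma 7.4]{doi:10.1112/plms/s3-6.3.417} exactly as in the cohomological argument.

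The main obstacle is ensuring that $\sigma$ intertwines $T$ and $T^{-1}$ on the nose (not merely up to some automorphism) so that the cohomological/homological shifts align; this is the one nontrivial geometric input, but given the explicit glide-reflection description of $T$ it reduces to a straightforward check. Once established, the rest of the proof is a mechanical translation.
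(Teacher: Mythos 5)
Your proposal is correct and is exactly the paper's intended argument: the paper proves \cref{prp:homological} only by the remark ``We may dualize \cref{dfn:cohomological} and \cref{prp:cohomological} in the sense of \cref{remark:dual}'', which is precisely your reduction via the diagonal reflection $\sigma$, and your key identity $\sigma \circ T = T^{-1} \circ \sigma$ does hold since $\sigma$ fixes the glide axis $y=-x$ setwise while reversing the translation vector. The direct argument you sketch as an alternative likewise mirrors the paper's proof of \cref{prp:cohomological} step for step.
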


\end{document}